\newtheorem{prop}{Proposition}
\newtheorem{thm}[prop]{Theorem}
\newtheorem{lem}[prop]{Lemma}
\newtheorem{coro}[prop]{Corollary}
\newtheorem{rema}[prop]{Remark}
\title[Doubling of asymptotically flat half-spaces]{Doubling of asymptotically flat half-spaces and the Riemannian Penrose inequality}
\author{Michael Eichmair}
\address{
	\textnormal{Michael Eichmair \newline  \indent
		University of Vienna \newline \indent
		Faculty of Mathematics  \newline \indent
		Oskar-Morgenstern-Platz 1 \newline \indent
		1090 Vienna, 	Austria  \newline\indent 
		\href{https://orcid.org/0000-0001-7993-9536}{https://orcid.org/0000-0001-7993-9536} \newline\indent
		\href{mailto:michael.eichmair@univie.ac.atm}{michael.eichmair@univie.ac.at}}
}
\author{Thomas Koerber}
\address{\textnormal{Thomas Koerber  \newline \indent
		University of Vienna \newline \indent
		Faculty of Mathematics  \newline \indent
		Oskar-Morgenstern-Platz 1 \newline \indent 1090 Vienna,	Austria \newline\indent 
		\href{https://orcid.org/0000-0003-1676-0824}{https://orcid.org/0000-0003-1676-0824} \newline \indent 
		\href{mailto:thomas.koerber@univie.ac.atm}{thomas.koerber@univie.ac.at}}
}
\begin{document}

\date{\today}
\onehalfspacing

\begin{abstract}
Building on previous works of H.~L.~Bray, of P.~Miao, and of S.~Almaraz, E.~Barbosa, and L.~L.~de Lima, we develop a  doubling procedure for asymptotically flat half-spaces $(M,g)$ with horizon boundary $\Sigma\subset M$ and mass $m\in\mathbb{R}$.  If $3\leq \dim(M)\leq 7$, $(M,g)$ has non-negative scalar curvature, and the boundary $\partial M$ is mean-convex, we obtain the Riemannian Penrose-type inequality
$$
m\geq\left(\frac{1}{2}\right)^{\frac{n}{n-1}}\,\left(\frac{|\Sigma|}{\omega_{n-1}}\right)^{\frac{n-2}{n-1}}
$$
as a corollary. Moreover, in the case where $\partial M$ is not totally geodesic, we show how to construct local perturbations of $(M,g)$ that increase the scalar curvature. As a consequence, we show that equality holds in the above inequality if and only if the exterior region of $(M,g)$ is isometric to a Schwarzschild half-space. Previously, these results were  only known in the case where $\dim(M)=3$ and  $\Sigma$ is a connected free boundary hypersurface. 
\end{abstract}

\maketitle

\section{Introduction}
Let $(M,g)$ be a  connected, complete  Riemannian manifold of dimension $3\leq n\leq 7$ with integrable scalar curvature $R(g)$ and non-compact boundary $\partial M$ with integrable mean curvature $H(\partial M,g)$. Here, $H(\partial M,g)$ is computed as the divergence along $\partial M$  of the  normal $-\nu(\partial M,g)$ pointing out of $M$. \\ \indent  We say that $(M,g)$ is an asymptotically flat half-space if there is  a number $\tau>(n-2)/2$ and a non-empty compact subset of $M$ whose complement is diffeomorphic to $\{x\in\mathbb{R}^n_+:|x|_{\bar g}>1\}$ such that, in this so-called asymptotically flat chart, as $x\to\infty$,
\begin{align} \label{af intro} 
|g-\bar g|_{\bar g}+|x|_{\bar g}\,|D(\bar g)g|_{\bar g}+|x|^2_{\bar g}\,|D^2(\bar g)g|_{\bar g}=O(|x|_{\bar g}^{-\tau}).
\end{align} 
Here,  $\mathbb{R}^n_+=\{x\in\mathbb{R}^n:x^n\geq 0\}$ is the upper half-space and $\bar g$ the Euclidean metric.
\\ \indent 
J.~Escobar has studied asymptotically flat half-spaces in the context of the Yamabe problem for compact Riemannian manifolds with boundary; see \cite{Escobar} and also the related works of S.~Brendle \cite{BrendleAsian} and of S.~Brendle and S.-Y.~S. Chen \cite{BrendleChen}. S.~Almaraz \cite[pp.~2628-2629]{Almaraz} and S.~Almaraz, E.~Barbosa, and L.~L.~de Lima  \cite[p.~674]{ABDL} have studied asymptotically flat half-space in detail and associated to them a global geometric invariant called the mass. This mass,  whose definition is attributed to F.~C.~Marques on \cite[p.~677]{ABDL}, is given by 
\begin{equation} \label{half-space mass intro}
\begin{aligned}   
m(g)= \frac{1}{2\,(n-1)\,\omega_{n-1}}\,\lim_{\lambda\to\infty}\lambda^{-1}\,\bigg(\sum_{i,\,j=1}^n&\int_{ \mathbb{R}^n_+\cap {S}^{n-1}_\lambda(0)}x^i\,\big[(\partial_jg)(e_i,e_j)-(\partial_ig)(e_j,e_j)\big]\,\mathrm{d}\mu(\bar g)\\&+\sum_{i=1}^{n-1}\int_{(\mathbb{R}^{n-1}\times\{0\})\cap S^{n-1}_\lambda(0)} x^i\,g(e_i,e_n)\,\mathrm{d}l(\bar g) \bigg )
\end{aligned}
\end{equation} 
where the integrals are computed in the asymptotically flat chart. Here, $\omega_{n-1}=|\{x\in\mathbb{R}^n:|x|_{\bar g}=1\}|_{\bar g}$ denotes the Euclidean area of the $(n-1)$-dimensional unit sphere and $e_1,\dots,e_n$ is the standard basis of $\mathbb{R}^n$. In analogy with the work \cite{SchoenYamabe} of  R.~Schoen on closed manifolds, J.~Escobar  has established a connection between the magnitude of the Yamabe-invariant of a compact manifold with boundary and the sign of the mass \eqref{half-space mass intro} of an associated asymptotically flat half-space in \cite{Escobar}.  S.~Almaraz, E.~Barbosa, and L.~L.~de Lima have showed that the mass \eqref{half-space mass intro} is a geometric invariant and, in fact, non-negative provided that $(M,g)$ satisfies suitable energy conditions. As noted in \cite[p.~675]{ABDL}, previous results in this direction had been obtained by J.~Escobar  \cite[Appendix]{Escobar} and by S.~Raulot \cite[Theorem 23]{Raulot}.
\begin{thm}[{\cite[Theorem 1.3]{ABDL}}] \label{pmt half space}
Let $(M,g)$ be an asymptotically flat half-space of dimension $ 3\leq n\leq7$ such that $R(g)\geq 0$ and $H(\partial M,g)\geq 0$. Then $m(g)\geq 0$. Moreover, $m(g)=0$ if and only if $(M,g)$ is isometric to $(\mathbb{R}^n_+,\,\bar g)$.
\end{thm}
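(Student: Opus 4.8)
The plan is to deduce Theorem \ref{pmt half space} from the Riemannian positive mass theorem for asymptotically flat manifolds without boundary by doubling $(M,g)$ across $\partial M$. Let $(\hat M,\hat g)$ be the manifold obtained by gluing two copies of $(M,g)$ along $\partial M$; away from $\partial M$ the metric $\hat g$ is smooth, while across $\partial M$ it is merely Lipschitz, the induced metrics from the two sides agreeing but the two copies meeting with opposite orientations of the unit normal. Since the asymptotically flat chart identifies the end of $M$ with $\{x\in\mathbb{R}^n_+:|x|_{\bar g}>1\}$, whose boundary portion is $\{x\in\mathbb{R}^{n-1}\times\{0\}:|x|_{\bar g}>1\}$, the doubled manifold has a single asymptotically flat end modeled on $\{x\in\mathbb{R}^n:|x|_{\bar g}>1\}$, and from \eqref{af intro} one sees that $\hat g$ obeys the standard asymptotically flat decay with the same $\tau>(n-2)/2$. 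A direct computation then shows that the ADM mass of $(\hat M,\hat g)$ equals $2\,m(g)$; in this computation the bulk integrand of \eqref{half-space mass intro} contributes equally from the two mirror-symmetric copies, while the boundary integral over $\mathbb{R}^{n-1}\times\{0\}$ is exactly the correction term accounting for the difference between the half-space mass and half the ADM mass of the double.

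Next I would compute the scalar curvature of $(\hat M,\hat g)$ in the distributional sense. Away from $\partial M$ one has $R(\hat g)=R(g)\geq 0$ on each copy, and $R(g)\in L^1$ by hypothesis. Along $\partial M$, the jump in the transverse derivative of $\hat g$ contributes a singular term proportional to the mean curvature; because the two copies are glued with opposite orientations of the normal and $H(\partial M,g)\geq 0$ (and $H(\partial M,g)\in L^1(\partial M)$), this singular part equals $2\,H(\partial M,g)\,\mathrm{d}\mu\geq 0$, where $\mathrm{d}\mu$ is the area element of $\partial M$. Thus $(\hat M,\hat g)$ is an asymptotically flat Lipschitz metric with non-negative scalar curvature in the sense introduced by P. Miao. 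The positive mass theorem for metrics admitting corners along a hypersurface — due to H. L. Bray and P. Miao, and available here since $3\leq n\leq 7$, where one may alternatively mollify $\hat g$ near $\partial M$ in a way that preserves $R\geq 0$ and then invoke the smooth positive mass theorem of R. Schoen and S.-T. Yau — yields $m_{\mathrm{ADM}}(\hat g)\geq 0$. Hence $m(g)=\tfrac{1}{2}\,m_{\mathrm{ADM}}(\hat g)\geq 0$.

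For the equality case, assume $m(g)=0$, so that $m_{\mathrm{ADM}}(\hat g)=0$. The rigidity part of the corners positive mass theorem then forces the singular part of $R(\hat g)$ to vanish, so $H(\partial M,g)=0$ and $\hat g$ is in fact smooth across $\partial M$, and moreover $(\hat M,\hat g)$ is isometric to $(\mathbb{R}^n,\bar g)$. Under this isometry, the involution of $\hat M$ interchanging the two copies of $M$ becomes an isometric involution of $\mathbb{R}^n$ whose fixed-point set is the image of $\partial M$; any such involution is a Euclidean reflection across an affine hyperplane, and it carries $M$ onto the closure of one of the two half-spaces bounded by that hyperplane. Therefore $(M,g)$ is isometric to $(\mathbb{R}^n_+,\bar g)$, and in particular $\partial M$ is totally geodesic. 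The converse is clear, since for $(\mathbb{R}^n_+,\bar g)$ both integrands in \eqref{half-space mass intro} vanish identically, so $m(\bar g)=0$.

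I expect the main obstacle to be the low regularity of $\hat g$ along $\partial M$: the substance of the argument lies in making sense of non-negative scalar curvature for this Lipschitz metric and in proving both the inequality and the rigidity statement of the associated positive mass theorem, which is where one relies on Miao's analysis — or on a scalar-curvature-monotone mollification reducing matters to the smooth case — and on the dimensional restriction $n\leq 7$. A subsidiary, purely computational point is the verification that the boundary term in \eqref{half-space mass intro} is precisely what is needed for the identity $m_{\mathrm{ADM}}(\hat g)=2\,m(g)$, which in turn explains the structure of the half-space mass.
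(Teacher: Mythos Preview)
The paper does not prove Theorem \ref{pmt half space}; it quotes it from \cite{ABDL}. Your doubling strategy is exactly the one used there, and it is also the template on which this paper's own results (Theorem \ref{main thm}) are built, so in spirit you are on the right track.

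There is, however, a genuine gap. Miao's positive mass theorem with corners \cite{Miao} applies when the corner hypersurface is \emph{compact}. In your setup the corner lies along all of $\partial M$, which is non-compact and extends into the asymptotic end. Unless $h(\partial M,g)=0$ outside a compact set, the doubled metric $\hat g$ fails to be $C^2$ even at infinity, so $(\hat M,\hat g)$ is not an asymptotically flat manifold in the sense required by either the smooth positive mass theorem or by \cite{Miao}. Both \cite{ABDL} and the present paper deal with this by an approximation step (here, Proposition \ref{ahf to conformally flat prop}, adapting \cite[Proposition 4.1]{ABDL}): one first perturbs $g$ to a nearby metric that is conformally flat with $h(\partial M,\cdot)=0$ outside a compact set, preserving $R\geq 0$, $H\geq 0$, and changing the mass only by $o(1)$. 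After this reduction the corner is compact, the double is $C^2$-asymptotically flat, Lemma \ref{mass lemma} gives $\tilde m(\tilde g)=2\,m(g)$, and Miao's smoothing and conformal correction apply. Your sentence ``from \eqref{af intro} one sees that $\hat g$ obeys the standard asymptotically flat decay'' hides exactly this difficulty.

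A second, smaller point concerns rigidity. From $m_{\mathrm{ADM}}(\hat g)=0$ and the corners theorem you conclude $H(\partial M,g)=0$ and then that $\hat g$ is smooth; but $H=0$ only makes $\hat g$ of class $C^1$ across $\partial M$, not $C^2$, since the full second fundamental form may still jump. The clean way (used both in \cite{ABDL} and in this paper's Lemma \ref{totally geodesic}) is to show separately, by a variational argument, that equality forces $h(\partial M,g)=0$; then the double is genuinely $C^2$ and the smooth rigidity statement of Theorem \ref{pmt no boundary} applies.
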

\begin{rema}
As explained in \cite[\S2]{Miao}, the assumption that $H(\partial M,g)\geq 0$ can and should be viewed as a non-negativity condition for the scalar curvature $R(g)$ across $\partial M$ in a distributional sense; see also \cite[p.~207]{Bray}. We note that this condition also has a natural physical interpretation; \cite[Remark 2.7]{Luciano}.  \label{Miao remark} 
\end{rema}
Theorem \ref{pmt half space} is fashioned after the positive mass theorem for asymptotically flat initial data for the Einstein field equations, which has been proved   by R.~Schoen and S.-T.~Yau \cite{SchoenYau} using minimal surface techniques and by E.~Witten \cite{Witten} using certain solutions of the Dirac equation. In the presence of a so-called outermost minimal surface in the initial data set, a heuristic argument due to R.~Penrose \cite{Penrosenaked} suggests a stronger, quantitative version of the positive mass theorem which has been termed the Riemannian Penrose inequality. This inequality has been verified by G.~Huisken and T.~Ilmanen in \cite{HI} in dimension $n=3$ when the outermost minimal surface is connected, by H.~L.~Bray in \cite{Bray} in the case of a possibly disconnected outermost minimal surfaces, and by H.~L.~Bray and D.~Lee in \cite{BrayLee} in the case where $3\leq n\leq7$ and  the outermost minimal surface may be disconnected. We provide more details on asymptotically flat manifolds without boundary, the positive mass theorem, and the Riemannian Penrose inequality in Appendix \ref{af appendix}. \\ \indent 
 S.~Almaraz, L.~De Lima, and L.~Mari \cite{Luciano} have studied the mass \eqref{half-space mass intro} of  initial data sets with  non-compact boundary in a spacetime setting; see also the recent survey \cite{deLima} of L.~De Lima.  Moreover, they argue that, in the presence of an outermost minimal surface, a  Riemannian Penrose-type inequality should  hold for asymptotically flat half-spaces as well; see \cite[Remark 5.6]{Luciano}.  \\ \indent 
 	\begin{figure}\centering
 	\includegraphics[width=0.7\linewidth]{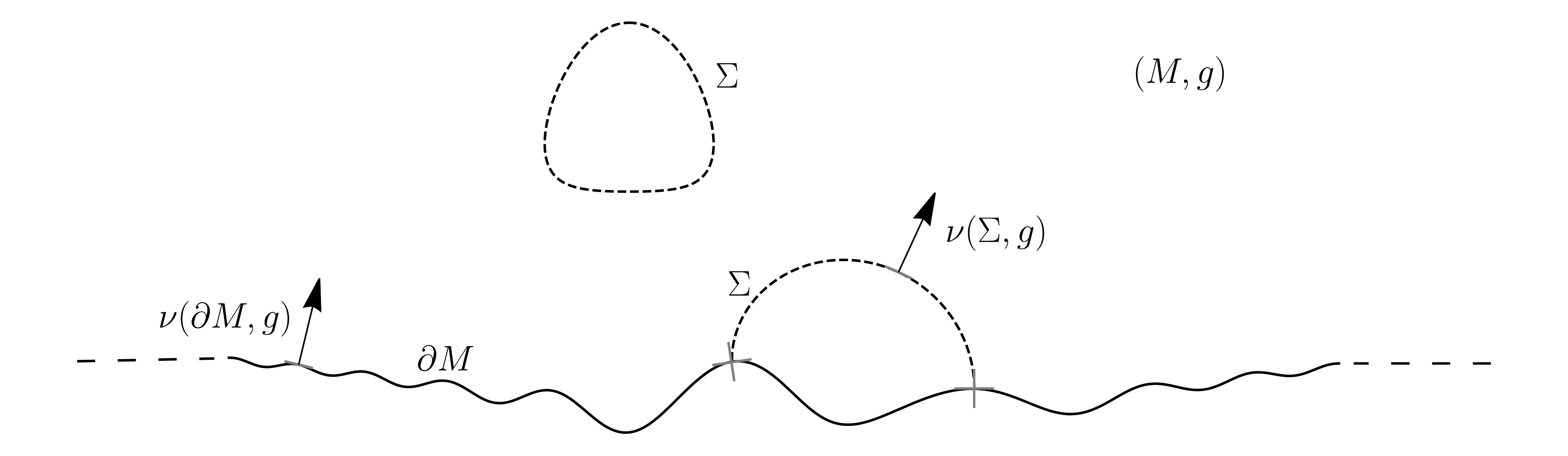}
 	\caption{An illustration of an asymptotically flat half-space $(M,g)$ with horizon boundary. $\partial M$ is illustrated by the solid black line. The horizon boundary $\Sigma$, consisting of a free boundary hypersurface and a closed hypersurface, is illustrated by the two dotted lines. The unit normals $\nu(M,g)$ and $\nu(\Sigma,g)$ are showed by the two arrows. }
 	\label{fb_closed}
 \end{figure}
  To describe recent results in this direction, we recall the following definitions from \cite{thomas_penrose}; see Figure \ref{fb_closed}. Let $\Sigma\subset M$ be a  compact separating  hypersurface satisfying $\Sigma\cap \partial M=\partial \Sigma$ with normal $\nu(\Sigma,g)$ pointing towards the closure $M(\Sigma)$ of the non-compact component  of $M\setminus \Sigma$.  We call a component $\Sigma^0$ of $\Sigma$  a free boundary hypersurface if $\partial \Sigma^0\neq \emptyset$ and $\nu(\Sigma)(x)\in T_x\partial M$ for every $x\in\partial \Sigma^0$. If $\partial \Sigma^0=\emptyset$, we call $\Sigma^0$ a closed hypersurface.  \\ \indent We say that an asymptotically flat half-space $(M,g)$ has horizon boundary $\Sigma\subset M$ if $\Sigma$ is a non-empty compact  minimal hypersurface with the following properties.
 \begin{itemize}
 	\item[$\circ$] The connected components of $\Sigma$ are either free boundary hypersurfaces or closed hypersurfaces.
 	\item[$\circ$] Every    minimal  free boundary hypersurfaces or minimal closed  hypersurfaces in $M(\Sigma)$ is a component of $\Sigma$.
 \end{itemize}
\begin{rema}
	The proof of \cite[Lemma 2.3]{thomas_penrose} shows that every asymptotically flat half-space $(M,g)$ of dimension $3\leq n\leq7$ with $H(\partial M,g)\geq 0$ either has  a  unique horizon boundary $\Sigma\subset M$ or contains no compact  minimal hypersurfaces. 
\end{rema}
 The region $M(\Sigma)$ outside of the horizon boundary is called an exterior region. The horizon $\Sigma$ is also called an outermost minimal surface.  An example of an exterior region with horizon boundary is the Schwarzschild half-space of mass $m>0$ and dimension $n\geq 3$, given by
 \begin{align} \label{Schwarzschild half space} 
 (M(\Sigma),g)=\bigg(\left\{x\in\mathbb{R}^n_+:|x|_{\bar g}\geq m^{\frac{1}{n-2}}\right\},\left(1+m\,|x|_{\bar g}^{2-n}\right)^\frac{4}{n-2}\,\bar g\bigg)
 \end{align}
where $\Sigma=\{x\in\mathbb{R}^n_+:|x|_{\bar g}=m^{\frac{1}{n-2}}\}$;
see \cite[Remark 3.10]{ABDL} and \cite[\S2]{thomas_penrose}. \\ \indent
The second-named author has recently proved the following Riemannian Penrose-type inequality for asymptotically flat half-spaces whose horizon boundary is a connected free boundary hypersurface.  
\begin{thm}[{\cite[Theorem 1.2]{thomas_penrose}}] \label{thomas penrose} Let $(M,g)$ be an asymptotically flat half-space of dimension $n=3$ with horizon boundary $\Sigma\subset M$ such that the following three conditions hold. 
	\begin{itemize}
		\item[$\circ$] $R(g)\geq 0$ in $M(\Sigma)$.
		\item[$\circ$] $H(\partial M,g)\geq 0$ on $ M(\Sigma)\cap\partial M$.
		\item[$\circ$] $\Sigma$ is a connected free boundary hypersurface. 
	\end{itemize}
Then 
$$
m(g)\geq\sqrt{\frac{|\Sigma|_g}{32\,\pi}}
$$ 
with equality if and only if $(M(\Sigma),g)$ is isometric to a Schwarzschild half-space \eqref{Schwarzschild half space}. 
\end{thm}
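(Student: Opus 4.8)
The plan is to reduce the inequality to the Riemannian Penrose inequality for asymptotically flat $3$-manifolds without boundary of Huisken and Ilmanen \cite{HI}, by the doubling construction developed in the body of this paper. First, I would form the \emph{double} $(\hat M,\hat g)$ of the exterior region $(M(\Sigma),g)$ across $\partial M$: glue two copies of $M(\Sigma)$ along $M(\Sigma)\cap\partial M$, with $\hat g$ restricting to $g$ on each copy. Then $\hat M$ is a connected complete $3$-manifold whose boundary is the double $\hat\Sigma$ of $\Sigma$; it carries an isometric involution $\iota$ with fixed-point set the seam $\partial M$; and the two half-space ends glue into a single asymptotically flat end of $\hat g$ with decay rate $\tau$, in the sense recalled in Appendix~\ref{af appendix}. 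The metric $\hat g$ is smooth away from the seam and Lipschitz across it, and smooth across the seam precisely when $\partial M$ is totally geodesic in $(M(\Sigma),g)$. Since $\Sigma$ is a free boundary hypersurface it meets $\partial M$ orthogonally, so $\hat\Sigma$ is a smooth \emph{closed, connected} minimal hypersurface with $|\hat\Sigma|_{\hat g}=2\,|\Sigma|_g$.

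Next, since the mean curvature of $\partial M$ with respect to the outward normal of either copy equals $H(\partial M,g)$, the distributional scalar curvature of $\hat g$ is the non-negative measure $R(g)\,\mathrm{d}\mu_{\hat g}+2\,H(\partial M,g)\,\mathrm{d}\mu_{\partial M}$ — exactly the corner situation analyzed by Miao \cite{Miao}; see also \cite{Bray}. Mollifying $\hat g$ near the seam and correcting conformally as in \cite{Miao}, I would produce smooth asymptotically flat metrics $\hat g_k$ on $\hat M$ with $R(\hat g_k)\geq 0$, $m(\hat g_k)\to m(\hat g)$, and outermost-minimal-surface area converging to $|\hat\Sigma|_{\hat g}$. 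I would also check that $\hat\Sigma$ is the outermost minimal surface of $(\hat M,\hat g)$: since the outermost minimal surface is unique it is $\iota$-invariant, and an $\iota$-invariant minimal surface either meets $\partial M$ orthogonally and descends to a minimal free boundary hypersurface in $M(\Sigma)$, or misses $\partial M$ and is a closed minimal hypersurface in $M(\Sigma)$; by the defining property of the horizon boundary it is therefore $\hat\Sigma$.

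With this in place, applying \cite{HI} to the smoothings $\hat g_k$ — legitimate because $\hat\Sigma$ is connected — and letting $k\to\infty$ yields $m(\hat g)\geq\sqrt{|\hat\Sigma|_{\hat g}/(16\,\pi)}$. The half-space mass \eqref{half-space mass intro} is normalized so that the ADM mass of the double obeys $m(\hat g)=2\,m(g)$: splitting the ADM flux integral for $\hat g$ over a large coordinate sphere into its two hemispheres, the boundary line integral in \eqref{half-space mass intro} accounts for the flux through the equatorial disk; this identity is established in \cite{ABDL}. Combining, $2\,m(g)=m(\hat g)\geq\sqrt{2\,|\Sigma|_g/(16\,\pi)}=\sqrt{|\Sigma|_g/(8\,\pi)}$, which is the asserted bound $m(g)\geq\sqrt{|\Sigma|_g/(32\,\pi)}$. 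For rigidity, equality forces equality in the Penrose inequality for $(\hat M,\hat g)$; the equality analysis for its corner version must then force $H(\partial M,g)\equiv 0$, so $\hat g$ is smooth and, by the rigidity in \cite{HI}, isometric to a spatial Schwarzschild manifold, of which $\iota$ is a reflection across a totally geodesic coordinate hyperplane through the center — whence $(M(\Sigma),g)$ is a Schwarzschild half-space \eqref{Schwarzschild half space}. The converse is a direct computation from \eqref{Schwarzschild half space}.

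The main obstacle will be the corner along the seam. For the inequality alone it is mild: Miao's mollification supplies the smooth approximants once one verifies that the mass and the outermost-minimal-surface area vary continuously under it. The real difficulty is the equality case, where smoothing can be used only once and might destroy equality: to conclude that $\partial M$ is totally geodesic one must show that if $H(\partial M,g)>0$ somewhere, then $(M(\Sigma),g)$ admits a perturbation — localized near such a point, preserving $R(g)\geq 0$ and the horizon — that strictly decreases $m(g)$, contradicting equality. Constructing such scalar-curvature-increasing local perturbations near a non-totally-geodesic boundary point is the technical heart of the argument. An alternative that sidesteps the corner is to run the weak inverse mean curvature flow of \cite{HI} directly on $(M(\Sigma),g)$ with a free boundary condition along $\partial M$, using $H(\partial M,g)\geq 0$ to control the boundary terms in the Geroch monotonicity of the Hawking mass, at the cost of developing the free boundary weak flow from scratch; this is the route of \cite{thomas_penrose}.
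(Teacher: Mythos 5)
Your proposal follows the doubling strategy that is the main contribution of this paper (Sections 2--6, culminating in Corollary~\ref{penrose coro} and Theorem~\ref{rigidity remark}), not the weak free boundary inverse mean curvature flow used in the cited reference \cite{thomas_penrose} to prove this exact statement. That is a reasonable choice: the doubling route is what generalizes to $3\leq n\leq 7$ and to disconnected horizons with closed and free boundary components, while the flow argument in \cite{thomas_penrose} is essentially limited to $n=3$ and a connected free boundary horizon, as the paper itself remarks; you acknowledge this trade-off in your final paragraph.

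Two points in your outline need repair. First, you treat the seam corner as a mild technical matter and invoke Miao's mollification directly, but Miao's construction is designed for a \emph{compact} corner, whereas here $\partial M$ is non-compact and $h(\partial M,g)$ merely decays like $|x|_{\bar g}^{-\tau-1}$ at infinity, so one cannot mollify on a uniform scale along the whole seam and still control the mass. The paper resolves this with Proposition~\ref{ahf to conformally flat prop}: it first approximates $g$ by metrics that are conformally flat, scalar flat, and have totally geodesic boundary outside a compact set, which compactifies the corner on the double before any smoothing is attempted. This step is not optional, and its absence is the principal gap in your plan. Second, you write that equality in the corner Penrose inequality ``forces $H(\partial M,g)\equiv 0$, so $\hat g$ is smooth''; vanishing mean curvature does not make the double $C^2$ across the seam, since the full second fundamental form $h(\partial M,g)$ must vanish. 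You do identify the correct key idea at the end (a local scalar-curvature-increasing perturbation near a non-umbilical boundary point, which is Lemma~\ref{perturbation}), but the logic should be: a preliminary conformal variation gives $R(g)=0$ and $H(\partial M,g)=0$ (Lemmas~\ref{scalar flat} and~\ref{mean flat}), and only then does the non-conformal perturbation of Lemma~\ref{perturbation} rule out $h(\partial M,g)\neq 0$ (Lemma~\ref{totally geodesic}), after which the double is genuinely $C^2$ and the rigidity of Theorem~\ref{RPI rigidity} applies.
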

\begin{rema}
Previous results in direction of Theorem \ref{thomas penrose} for asymptotically flat half-spaces arising as certain graphical hypersurfaces in Euclidean space had been obtained by E.~Barbosa and A.~Meira  \cite{BarbosaMeira}.
\end{rema}
\begin{rema}
The method of weak free boundary inverse mean curvature flow employed in the proof of Theorem \ref{thomas penrose} in \cite{thomas_penrose} had been studied previously by T.~Marquardt in \cite{Marquardt}. It appears to the authors of this paper that the scope of this method  is essentially limited to the case where $n=3$ and $\Sigma$ is a connected free boundary hypersurface; see \cite[p.~16]{thomas_penrose}.
\end{rema}
\begin{rema}
	Theorem \ref{thomas penrose} is related to a Penrose-type inequality for so-called asymptotically flat support surfaces conjectured by G.~Huisken and studied by A.~Volkmann; see \cite[p.~38]{volkmann2015free} and \cite[Lemma 2.1]{thomas_penrose}.
\end{rema}

\subsection*{Outline of our results}
\indent Comparing Theorem \ref{thomas penrose} with the Riemannian Penrose inequality for asymptotically flat manifolds, stated here as Theorem \ref{RPI no boundary}, suggests that the assumptions that $n=3$ and that $\Sigma$ be a connected  free boundary hypersurface in Theorem \ref{thomas penrose} are not necessary. The goal of this paper is to address this conjecture using a strategy different from  that in \cite{thomas_penrose}. In fact, we demonstrate how the gluing method developed by P.~Miao in \cite{Miao}, which in turn expands on an idea of H.~L.~Bray \cite{Bray}, can be used to develop  a doubling procedure for asymptotically flat half-spaces that reduces the Riemannian Penrose inequality for asymptotically flat half-spaces to the Riemannian Penrose inequality  for asymptotically flat manifolds. \\ \indent
For the statement of Theorem \ref{main thm}, recall from Appendix \ref{af appendix} the definition of an asymptotically flat manifold  $(\tilde M,\tilde g)$, of its mass $\tilde m$, of its horizon boundary $\tilde \Sigma$, and of the exterior region $\tilde M(\tilde \Sigma)$.
\begin{thm} \label{main thm}
Let $(M,g)$ be an asymptotically flat half-space of dimension $3\leq n\leq 7$ with horizon boundary $\Sigma\subset M$ such that the following two conditions hold.
\begin{itemize}
	\item[$\circ$] $R(g)\geq 0$ in $M(\Sigma)$.
	\item[$\circ$] $H(\partial M,g)\geq 0$ on $ M(\Sigma)\cap\partial M$.
\end{itemize}
Let $\varepsilon>0$. There exists an asymptotically flat manifold $(\tilde M,\tilde g)$ with horizon boundary $\tilde\Sigma\subset \tilde M$ such that
\begin{itemize}
	\item[$\circ$] $R(\tilde g)\geq 0$ in $\tilde M(\tilde \Sigma)$,
	\item[$\circ$] $|\tilde m(\tilde g)- 2\,m(g)|< \varepsilon$, and
	\item[$\circ$] $||\tilde \Sigma|_{\tilde g}-2\,|\Sigma|_g|< \varepsilon$. 
\end{itemize}
\end{thm}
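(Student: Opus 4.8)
The plan is to construct $(\tilde M,\tilde g)$ as a smoothed, conformally corrected double of the exterior region $(M(\Sigma),g)$ across $\partial M$, combining the gluing technique of P.~Miao \cite{Miao}, which expands on an idea of H.~L.~Bray \cite{Bray}, with the half-space mass \eqref{half-space mass intro} of \cite{ABDL}. \emph{Step 1 (Doubling).} Let $(\hat M,\hat g)$ be the double of $(M(\Sigma),g)$ along the face $\partial M\cap M(\Sigma)$ of its boundary. Then $\hat M$ is a manifold with compact boundary $\hat\Sigma$ obtained from $\Sigma$ by reflecting each free boundary component to a closed hypersurface and duplicating each closed component; in particular $|\hat\Sigma|_{\hat g}=2\,|\Sigma|_g$, and the half-space end of $M(\Sigma)$ becomes a single asymptotically flat end of $\hat M$ with the same decay rate. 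In Fermi coordinates relative to $\partial M$, $\hat g$ is smooth away from $\partial M$, continuous and even in the normal variable, hence Lipschitz, with a jump of its normal derivative governed by the second fundamental form of $\partial M$. Since $\partial M$ is mean-convex along $M(\Sigma)$, this is exactly the convex-corner situation of \cite[\S2]{Miao}: $\hat g$ has non-negative distributional scalar curvature, equal to $R(g)\geq 0$ away from $\partial M$ and with singular part along $\partial M$ equal to the non-negative quantity $2\,H(\partial M,g)$ times the induced hypersurface measure. A computation starting from \eqref{half-space mass intro}, using that $\hat g$ is reflection-symmetric, identifies the mass of $(\hat M,\hat g)$ with $2\,m(g)$, the boundary term in \eqref{half-space mass intro} being precisely what makes the two contributions agree. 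Finally, the free boundary condition makes $\hat\Sigma$ a $C^{1,1}$ hypersurface that is minimal away from $\partial M$, while each duplicated closed component of $\Sigma$ is genuinely minimal and disjoint from $\partial M$.

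\emph{Step 2 (Smoothing and conformal correction).} Mollifying $\hat g$ in a collar $U_\delta$ of $\partial M$ --- with mollification scale $O(\delta)$ that tapers to zero at infinity, where the corner of $\hat g$ is mild, so that the mass is preserved --- as in \cite[\S3]{Miao}, produces smooth asymptotically flat metrics $\hat g_\delta$ of mass $2\,m(g)$ that agree with $\hat g$ outside $U_\delta$, converge to $\hat g$ in $C^0$, and --- crucially, because $H(\partial M,g)\geq 0$ --- satisfy $R(\hat g_\delta)\geq 0$ outside $U_\delta$ and $\|R_\delta^-\|_{L^{n/2}(\hat M,\hat g_\delta)}\to 0$ as $\delta\to 0$, where $R_\delta^-:=\max\{-R(\hat g_\delta),0\}$. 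With $c:=4(n-1)/(n-2)$, once $\|R_\delta^-\|_{L^{n/2}}$ is small the operator $-c\,\Delta_{\hat g_\delta}-R_\delta^-$ is coercive, so there is a unique positive $u_\delta$ solving $c\,\Delta_{\hat g_\delta}u_\delta+R_\delta^-\,u_\delta=0$ with $\partial_\nu u_\delta=0$ on $\hat\Sigma$ and $u_\delta=1+a_\delta\,|x|^{2-n}+o(|x|^{2-n})$ at infinity; the maximum principle gives $u_\delta\geq 1$, and elliptic estimates give $u_\delta\to 1$ in $C^0$ and $a_\delta\to 0$. The metric $\tilde g_\delta:=u_\delta^{4/(n-2)}\,\hat g_\delta$ is then smooth and asymptotically flat, with $R(\tilde g_\delta)=u_\delta^{-4/(n-2)}\,R(\hat g_\delta)^+\geq 0$ and mass converging to $2\,m(g)$. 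Because of the Neumann condition, each duplicated closed component of $\Sigma$ stays minimal for $\tilde g_\delta$, and $|\hat\Sigma|_{\tilde g_\delta}\to 2\,|\Sigma|_g$.

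\emph{Step 3 (The horizon boundary).} For $3\leq n\leq 7$ and $R(\tilde g_\delta)\geq 0$, the asymptotically flat manifold $(\hat M,\tilde g_\delta)$ contains a compact minimal hypersurface --- a duplicated closed component of $\Sigma$ if one exists, otherwise one produced by minimizing area with the nearly minimal hypersurface $\hat\Sigma$ as a barrier --- and hence has a unique horizon boundary $\tilde\Sigma_\delta$. I then set $(\tilde M,\tilde g):=(\hat M(\tilde\Sigma_\delta),\tilde g_\delta)$, which has horizon boundary $\tilde\Sigma_\delta$ and satisfies $R(\tilde g)\geq 0$ throughout, and it remains only to show $|\tilde\Sigma_\delta|_{\tilde g_\delta}\to 2\,|\Sigma|_g$ as $\delta\to 0$; the theorem then follows by taking $\delta$ small.

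This last convergence is the main obstacle. Although $\tilde g_\delta\to\hat g$ in $C^0$, the perturbation is concentrated in the degenerating collar $U_\delta$, so one must rule out that $\tilde\Sigma_\delta$ drifts into $U_\delta$ and absorbs area there, or picks up spurious components near $\partial M$. The plan is to use two structural facts. First, away from $U_\delta$ the boundary $\hat\Sigma=\partial\hat M$ still has vanishing mean curvature for $\tilde g_\delta$ --- by the Neumann condition on $u_\delta$ --- so it confines $\tilde\Sigma_\delta$ to a shrinking neighborhood of $\hat\Sigma$. Second, $(M(\Sigma),g)$ contains no minimal closed or free boundary hypersurface besides $\Sigma$, so the maximum principle, applied in the region $M(\Sigma)\setminus U_\delta$ where $\tilde g_\delta$ differs from $\hat g$ only by the conformal factor $u_\delta\to 1$, together with $H(\partial M,g)\geq 0$, prevents $\tilde\Sigma_\delta$ from escaping outward or touching $\partial M$. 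Combining this with the outer area-minimizing property of $\tilde\Sigma_\delta$ and comparison with small outward translates of $\hat\Sigma$ should give $\tilde\Sigma_\delta\to\hat\Sigma$ as varifolds and $|\tilde\Sigma_\delta|_{\tilde g_\delta}\to 2\,|\Sigma|_g$. The mild non-smoothness of $\hat\Sigma$ along $\partial\Sigma$ is a secondary point, handled either by smoothing $\hat\Sigma$ beforehand as a submanifold or by checking that $C^{1,1}$ regularity suffices in all barrier and first-variation arguments.
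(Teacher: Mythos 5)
The proposal follows the paper's broad outline (double across $\partial M$, mollify à la Miao, conformally correct the negative scalar curvature, locate the horizon, track the area), but it is missing the step that makes the whole chain rigorous: a preliminary reduction to metrics that are conformally flat at infinity with $\partial M$ totally geodesic outside a compact set, which the paper carries out in Proposition \ref{ahf to conformally flat prop} by adapting \cite[Proposition 4.1]{ABDL}.

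Here is why this omission is not cosmetic. In your Step~1, the doubled metric $\hat g$ has a Lipschitz corner along the \emph{entire non-compact} hypersurface $\pi^{-1}(\partial M)$, because $\partial M$ is in general not totally geodesic anywhere. Consequently $\hat g$ is not $C^2$-asymptotically flat in the sense of \eqref{asymptotically flat mf}, so the identification $\tilde m(\hat g)=2\,m(g)$ has no meaning yet, and the Miao construction in \cite[\S 3]{Miao}, which is written for a corner along a \emph{compact} hypersurface, does not directly apply. You acknowledge this by proposing a mollification scale that tapers to zero at infinity; but this is a genuine modification of \cite{Miao} that you have not carried out, and it runs into competing constraints: making the mollified metric decay at rate $|x|^{-\tau-2}$ in its second derivatives forces the collar width to be comparable to $|x|$ (since the jump in $D(\bar g)\hat g$ across the corner decays only at rate $|x|^{-\tau-1}$), while making the collar volume and the $L^{n/2}$-norm of the negative part of $R$ small forces the width to be uniformly small. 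The paper avoids this tension entirely by first replacing $g$ with a nearby metric $g_i$ for which the corner is compactly supported, and then applying \cite{Miao} verbatim; the price is only an $o(1)$ change in mass and horizon area, which Proposition \ref{ahf to conformally flat prop} controls.

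A second, more local gap is in your Step~3. You use "$\hat\Sigma$ still has vanishing mean curvature for $\tilde g_\delta$ by the Neumann condition" as a barrier, but the Neumann condition on $u_\delta$ only preserves minimality of $\hat\Sigma$ for the mollified metric $\hat g_\delta$; near $\pi^{-1}(\partial\Sigma)$ the mollification itself can destroy $H(\hat\Sigma,\hat g_\delta)=0$, and the doubled horizon $\hat\Sigma$ is only $C^{1,1}$ there. The paper deals with this by first conformally perturbing $g$ so that $\Sigma$ becomes \emph{strictly} mean convex (Lemma \ref{strictly mean convex}), verifying that strict mean convexity survives the Miao smoothing (Lemma \ref{strictly mean convex delta}), and then using a mean-curvature-flow approximation of $\hat\Sigma$ by smooth strictly mean-convex hypersurfaces (Lemma \ref{mean curvature flow}) as barriers. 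Your proposal does not have this buffer, so the barrier argument and the claim $|\tilde\Sigma_\delta|_{\tilde g_\delta}\to 2\,|\Sigma|_g$ are not supported; indeed the area convergence in the paper requires both the outer-minimizing property of $\tilde\Sigma_\delta$ \emph{and} the inner-minimizing property of $\Sigma$ in $M(\Sigma)$, exploited via the reflection symmetry $\pi^{-1}(\pi(\tilde\Sigma_\delta))=\tilde\Sigma_\delta$ (Proposition \ref{gluing prop} and Lemma \ref{reverse area lemma}).

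Apart from these two gaps, the remaining ingredients you list — the distributional scalar curvature interpretation of $H(\partial M,g)\geq 0$, the doubling of the mass via the boundary term in \eqref{half-space mass intro}, the Schoen--Yau/Miao conformal correction with the $L^{n/2}$-smallness of $R^-$, and the use of the exterior region's uniqueness to pin down the limit of $\tilde\Sigma_\delta$ — match the paper's reasoning.
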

\begin{rema}
	Gluing constructions related to the one used in the proof of Theorem \ref{main thm}  have also been studied by P.~Miao and S.~McCormick in \cite{MiaoCormick} and by S.~Lu and P.~Miao in \cite{LuMiao}.
\end{rema}
Combining Theorem \ref{main thm} with Theorem \ref{RPI no boundary}, we are able to extend the Riemannian Penrose inequality for asymptotically flat half-spaces to dimensions less than $8$ and to horizon boundaries that may be disconnected. 
\begin{coro} \label{penrose coro} 
Let $(M,g)$ be an asymptotically flat half-space of dimension $3\leq n\leq 7$ with horizon boundary $\Sigma\subset M$ such that the following two conditions hold. 
\begin{itemize}
	\item[$\circ$] $R(g)\geq 0$ in $M(\Sigma)$.
	\item[$\circ$] $H(\partial M,g)\geq 0$ on $ M(\Sigma)\cap\partial M$.
\end{itemize}
Then 
\begin{align} \label{coro RPI} 
m(g)\geq\left(\frac{1}{2}\right)^{\frac{n}{n-1}}\,\left(\frac{|\Sigma|_g}{\omega_{n-1}}\right)^{\frac{n-2}{n-1}}.
\end{align} 
\end{coro}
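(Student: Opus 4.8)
\emph{Proof proposal.} The plan is to obtain \eqref{coro RPI} by combining the doubling procedure of Theorem \ref{main thm} with the Riemannian Penrose inequality for asymptotically flat manifolds, Theorem \ref{RPI no boundary}, through a limiting argument. Since $\Sigma$ is a non-empty compact minimal hypersurface, $|\Sigma|_g>0$. Given $\delta>0$, I would first choose $\varepsilon>0$ small enough that $2\,|\Sigma|_g-\varepsilon>0$ and that the corresponding error introduced below is at most $\delta$.

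Next, I would apply Theorem \ref{main thm} with this $\varepsilon$ to produce an asymptotically flat manifold $(\tilde M,\tilde g)$ with horizon boundary $\tilde\Sigma\subset\tilde M$ satisfying $R(\tilde g)\geq 0$ in $\tilde M(\tilde\Sigma)$, $|\tilde m(\tilde g)-2\,m(g)|<\varepsilon$, and $||\tilde\Sigma|_{\tilde g}-2\,|\Sigma|_g|<\varepsilon$. The manifold $(\tilde M,\tilde g)$ then meets exactly the hypotheses of Theorem \ref{RPI no boundary}, so that
$$
\tilde m(\tilde g)\geq\frac{1}{2}\,\left(\frac{|\tilde\Sigma|_{\tilde g}}{\omega_{n-1}}\right)^{\frac{n-2}{n-1}}.
$$
Since $n\geq 3$, the exponent $(n-2)/(n-1)$ is positive, so $t\mapsto t^{(n-2)/(n-1)}$ is increasing on $(0,\infty)$; together with the two approximation estimates this gives
$$
2\,m(g)+\varepsilon> \tilde m(\tilde g)\geq\frac{1}{2}\,\left(\frac{|\tilde\Sigma|_{\tilde g}}{\omega_{n-1}}\right)^{\frac{n-2}{n-1}}>\frac{1}{2}\,\left(\frac{2\,|\Sigma|_g-\varepsilon}{\omega_{n-1}}\right)^{\frac{n-2}{n-1}}.
$$

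Finally, letting $\varepsilon\to 0^{+}$ and using continuity of $t\mapsto t^{(n-2)/(n-1)}$ at $t=2\,|\Sigma|_g>0$, I obtain $2\,m(g)\geq\tfrac12\,(2\,|\Sigma|_g/\omega_{n-1})^{(n-2)/(n-1)}$; dividing by $2$ and simplifying the power of $2$ via $-2+\tfrac{n-2}{n-1}=-\tfrac{n}{n-1}$ yields precisely \eqref{coro RPI}. All of the substantive work is contained in Theorem \ref{main thm} (and in Theorem \ref{RPI no boundary}); at the level of the corollary, the only point requiring a moment's care is that the approximating areas $|\tilde\Sigma|_{\tilde g}$ need not be bounded below by $2\,|\Sigma|_g$ exactly, which is why the argument must be run with the two-sided estimates and a limit rather than a single application. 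I therefore do not anticipate any real obstacle in this step.
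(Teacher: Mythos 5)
Your proposal is correct and takes exactly the approach the paper intends: apply Theorem~\ref{main thm} to produce a doubled approximation, invoke Theorem~\ref{RPI no boundary}, and let $\varepsilon\searrow 0$. The power-of-two bookkeeping $-2+\tfrac{n-2}{n-1}=-\tfrac{n}{n-1}$ is right, and your remark that the approximating areas are only close to $2\,|\Sigma|_g$ (not bounded below by it) is precisely why the two-sided estimate and the limit are needed.
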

\begin{rema} A.~Carlotto and R.~Schoen have showed in \cite[Theorem 2.3]{CarlottoSchoen}
that there is an abundance of asymptotically flat Riemannian manifolds with non-negative scalar curvature that contain a Euclidean half-space isometrically. Note that Corollary \ref{penrose coro} shows that the Riemannian Penrose inequality, stated here as Theorem \ref{RPI no boundary}, can be localized to the geometrically non-trivial part of such initial data.
\end{rema}
The approximation argument used  to prove Corollary \ref{penrose coro} cannot be applied to characterize the case of equality in \eqref{coro RPI} directly. Yet, we observe that $(M,g)$ can be locally perturbed to increase the scalar curvature near non-umbilical points of the boundary $\partial M$. Combining this insight with a variational argument used by R.~Schoen and S.-T.~Yau \cite{SchoenYau} to characterize the case of equality in the positive mass theorem, we are able to prove the following rigidity result. 
\begin{thm} \label{rigidity remark}
Let $(M,g)$ be an asymptotically flat half-space of dimension $ 3\leq n\leq7$ with horizon boundary $\Sigma\subset M$ such that the following two conditions hold. 
\begin{itemize}
	\item[$\circ$] $R(g)\geq 0$ in $M(\Sigma)$.
	\item[$\circ$] $H(\partial M,g)\geq 0$ on $ M(\Sigma)\cap\partial M$.
\end{itemize}
Assume that 
\begin{align*} m(g)=\left(\frac{1}{2}\right)^{\frac{n}{n-1}}\,\left(\frac{|\Sigma|_g}{\omega_{n-1}}\right)^{\frac{n-2}{n-1}}.
\end{align*} 
Then $(M(\Sigma),g)$ is isometric to a Schwarzschild half-space \eqref{Schwarzschild half space}.
\end{thm}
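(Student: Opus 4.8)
The plan is to deduce the theorem from the rigidity statement in the Riemannian Penrose inequality for asymptotically flat manifolds, Theorem~\ref{RPI no boundary}, by means of the doubling construction underlying Theorem~\ref{main thm}. That construction is only approximate in general, so the crucial point is to show that the equality hypothesis forces $M(\Sigma)\cap\partial M$ to be \emph{totally geodesic}; once this is known, $(M(\Sigma),g)$ can be doubled exactly across $M(\Sigma)\cap\partial M$, and equality in the half-space inequality \eqref{coro RPI} translates into equality in Theorem~\ref{RPI no boundary}. To establish that $M(\Sigma)\cap\partial M$ is totally geodesic I would combine the local perturbation that increases the scalar curvature near non-umbilical boundary points with a variational argument modeled on the one used by R.~Schoen and S.-T.~Yau \cite{SchoenYau} to treat the equality case of the positive mass theorem.

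More precisely, I would first prove a \emph{strict-inequality lemma}: if $(M,g)$ is an asymptotically flat half-space with $R(g)\geq 0$ in $M(\Sigma)$ and $H(\partial M,g)\geq 0$ on $M(\Sigma)\cap\partial M$, horizon boundary $\Sigma$, and if either $R(g)>0$ at some interior point of $M(\Sigma)$ or $H(\partial M,g)>0$ at some point of $M(\Sigma)\cap\partial M$, then strict inequality holds in \eqref{coro RPI}. The idea is that the pointwise positive scalar (or mean) curvature provides enough room to construct a compactly supported perturbation of $g$, supported away from $\Sigma$ and from the end, that keeps $R(g)\geq 0$ and $H(\partial M,g)\geq 0$, leaves $\Sigma$ minimal and outer-minimizing with the same area, and strictly decreases the mass $m(g)$; applying Corollary~\ref{penrose coro} to the perturbed metric then yields the strict inequality. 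With this lemma in hand, equality in \eqref{coro RPI} forces $R(g)\equiv 0$ in $M(\Sigma)$ and $H(\partial M,g)\equiv 0$ on $M(\Sigma)\cap\partial M$. Moreover, if $M(\Sigma)\cap\partial M$ had a non-umbilical point, the local perturbation that increases the scalar curvature there—taken with support away from $\Sigma$ and the end, so that $m(g)$ and $|\Sigma|_g$ are unchanged—would produce an asymptotically flat half-space still satisfying equality in \eqref{coro RPI} but with $R(g)>0$ somewhere in $M(\Sigma)$, contradicting the lemma. Hence $M(\Sigma)\cap\partial M$ is totally umbilical, and being minimal it is totally geodesic.

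Now form the double $(\tilde M,\tilde g)$ of $(M(\Sigma),g)$ across $M(\Sigma)\cap\partial M$. Since that hypersurface is totally geodesic, the second fundamental forms match across the doubling hypersurface, $\tilde g$ is of class $C^{2}$, and $R(\tilde g)\equiv 0$. A symmetrization argument, as in \cite{thomas_penrose}, identifies the double $\tilde\Sigma$ of $\Sigma$ as the horizon boundary of $(\tilde M,\tilde g)$, so that $|\tilde\Sigma|_{\tilde g}=2\,|\Sigma|_g$; the relation between the half-space mass \eqref{half-space mass intro} and the ADM mass of the double—already needed for Theorem~\ref{main thm}—gives $\tilde m(\tilde g)=2\,m(g)$. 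Using the equality hypothesis one computes $\tilde m(\tilde g)=2\,m(g)=\frac{1}{2}\,(2\,|\Sigma|_g/\omega_{n-1})^{(n-2)/(n-1)}=\frac{1}{2}\,(|\tilde\Sigma|_{\tilde g}/\omega_{n-1})^{(n-2)/(n-1)}$, so $(\tilde M,\tilde g)$ realizes equality in Theorem~\ref{RPI no boundary}; hence $(\tilde M(\tilde\Sigma),\tilde g)$ is isometric to a Schwarzschild manifold. The double carries an isometric involution $\iota$ with fixed-point set $M(\Sigma)\cap\partial M$, and $(M(\Sigma),g)$ is the closure of one of the two components of the complement of $\mathrm{Fix}(\iota)$. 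Transporting $\iota$ through the isometry with Schwarzschild and using that, for $n\geq3$, every isometry of a Schwarzschild manifold is induced by an orthogonal transformation of the conformally Euclidean model in \eqref{Schwarzschild half space}, one sees that $\iota$ is a reflection across a hyperplane through the origin; the closure of one of the two sides is precisely a Schwarzschild half-space \eqref{Schwarzschild half space}. This identifies $(M(\Sigma),g)$ with a Schwarzschild half-space.

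The main obstacle is the strict-inequality lemma. A purely conformal deformation to a scalar-flat metric decreases the horizon area together with the mass and therefore does not by itself contradict the sharp inequality of Corollary~\ref{penrose coro}; what is needed is a non-conformal, compactly supported deformation, supported away from the horizon, that lowers the mass while keeping $\Sigma$ minimal, outer-minimizing, and of fixed area—this is exactly the delicate point in the Schoen–Yau variational argument and requires a careful analysis of the second variation of the mass. The verification that the mass and the horizon area double exactly under the construction, and the classification of the isometries of a Schwarzschild manifold used in the descent, are comparatively routine.
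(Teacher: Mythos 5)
Your proposed proof has the right overall architecture, which matches the paper's: show that equality forces $R(g)\equiv 0$ and $H(\partial M,g)\equiv 0$, then use a local non-conformal perturbation at non-umbilical boundary points (the paper's Lemma~\ref{perturbation}) to force $h(\partial M,g)\equiv 0$, double across the now totally geodesic $\partial M$, and apply the rigidity of the Riemannian Penrose inequality to the double. The ``strict-inequality lemma'' you formulate is exactly the contrapositive of Lemma~\ref{scalar flat} and Lemma~\ref{mean flat}, and the reduction from non-umbilicity to that lemma via the local scalar curvature-increasing perturbation mirrors Lemma~\ref{totally geodesic}.

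There is, however, a genuine gap in the one step you flag as ``the main obstacle.'' You propose proving the strict-inequality lemma by producing a \emph{compactly supported} perturbation of $g$, supported away from the end, that strictly decreases $m(g)$. This is impossible: the mass \eqref{half-space mass intro} depends only on the asymptotics of $g$, so any perturbation supported in a compact set leaves $m(g)$ unchanged. Your later remark that ``a purely conformal deformation to a scalar-flat metric decreases the horizon area together with the mass and therefore does not by itself contradict the sharp inequality'' is also off: the paper's conformal deformation in Lemma~\ref{scalar flat} uses a conformal factor $1+t\,v$ where $v\in C^{2,\alpha}_{\tau}(M(\Sigma))$ solves a mixed boundary value problem with Neumann data on $M(\Sigma)\cap\partial M$, \emph{Dirichlet} data $v=0$ on $\Sigma$, and decay at infinity. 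The Dirichlet condition on $\Sigma$, together with $H(\Sigma,g)=0$, ensures that the horizon area of the perturbed metric changes only at order $o(t)$, while the asymptotic growth estimate of Lemma~\ref{asymptotic growth} shows that the mass strictly decreases at order $O(t)$. Thus the global (non-compactly-supported) conformal deformation, with the right choice of boundary condition, does by itself yield the contradiction with Corollary~\ref{penrose coro}. The same mechanism—compactly supported non-conformal perturbation to produce excess scalar curvature, followed by a global conformal correction with Dirichlet data on $\Sigma$ to trade the excess for a strict mass decrease at fixed horizon area—is what drives Lemma~\ref{totally geodesic}; your sketch omits the second (global conformal) step, which is the only place the mass actually moves. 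There is no second variation of the mass to analyze; the first variation already suffices. One further point: the rigidity assertion in Theorem~\ref{RPI no boundary} carries a spin hypothesis, so the final descent from the double to Schwarzschild should invoke Theorem~\ref{RPI rigidity} (which the paper supplies as a short alternative to \cite{LuMiao}) rather than Theorem~\ref{RPI no boundary} directly.
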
  
\begin{rema} 
	H.~L.~Bray and D.~A.~Lee \cite{BrayLee} have proved rigidity of the Riemannian Penrose inequality for asymptotically flat manifolds $(\tilde M,\tilde g)$ of dimension $3\leq n\leq7$ under the additional assumption that $(\tilde M,\tilde g)$ be spin; see Theorem \ref{RPI no boundary}. Building on previous work \cite{McFeron} by D.~McFeron and G.~Sz\'{e}kelyhidi, S.~Lu and P.~Miao \cite[Theorem 1.1]{LuMiao} have showed that the spin assumption  can be dispensed with. 	Using the techniques developed in this paper, we are able to give a short alternative proof of this fact; see Theorem \ref{RPI rigidity}.
\end{rema}
\begin{rema}
	We survey several important contributions to scalar curvature rigidity results preceding Theorem \ref{rigidity remark} in Appendix \ref{scalar appendix}. 
\end{rema}
\begin{rema}
	For the proofs of Theorem \ref{main thm}, Corollary \ref{penrose coro}, and Theorem \ref{rigidity remark}, it is sufficient to require the metric $g$ to be of class $C^{2,\alpha}$. For the sake of readability, we will assume throughout that $g$ is smooth. 
\end{rema}
\subsection*{Outline of the proof}
Let $(M,g)$ be an asymptotically flat half-space with horizon boundary $\Sigma\subset M$ and suppose that $R(g)\geq 0$ in $M(\Sigma)$ and $H(\partial M,g) \geq 0$ on $ M(\Sigma)\cap\partial M$. The basic idea to prove Theorem \ref{main thm} is to consider the double $(\tilde M, \tilde g)$ of $(M,g)$   obtained by reflection across $\partial M$. The metric $\tilde g$ is only $C^0$ across $\partial M$. The condition $H(\partial M,g)\geq 0$ suggests that the scalar curvature of $\tilde g$ is non-negative in a distributional sense; see Remark \ref{Miao remark}. Moreover, since $\Sigma\subset M$ is an outermost minimal surface that intersects $\partial M$ orthogonally,  its double $\tilde \Sigma\subset \tilde M$ is  an outermost minimal surface without boundary. \\ \indent
The difficulty in rendering this heuristic argument rigorous is that $(\tilde M,\tilde g)$ needs to be smoothed in a way that allows us to keep track of the mass, the horizon boundary, and the relevant energy conditions all at the same time. To this end, we first adapt an approximation procedure developed by  S.~Almaraz, E.~Barbosa, and L.~L.~de Lima in  \cite[Proposition 4.1]{ABDL} to arrange that $g$ is scalar flat and conformally flat at infinity and that $\partial M$ is totally geodesic at infinity; see Proposition \ref{ahf to conformally flat prop}. In particular, the reflected metric $\tilde g$ is $C^2$ outside of a bounded open set $W\subset M$. Moreover, using a local conformal perturbation of the metric, we may arrange that $\Sigma$ is strictly mean convex; see Lemma \ref{strictly mean convex}. \\ \indent
Next, we smooth $\tilde g$ near $ W\cap\partial M$ using a technique developed by P.~Miao in \cite{Miao}. In this step, the mean convexity of $\partial M$ ensures that the scalar curvature of the smoothed metric remains uniformly bounded from below near $\partial M$; see Lemma \ref{scalar curvature estimate from below}. Moreover, we show that the strict mean convexity of $\tilde \Sigma$ is not affected by this  procedure; see Lemma \ref{strictly mean convex delta}. \\ \indent  By a conformal transformation similar to that developed by P.~Miao in \cite[\S4]{Miao}, building in turn on \cite[Lemma 3.3]{SchoenYau}, we remove the small amount of negative scalar curvature that may have been created close to $\partial M$ in the approximation process. This conformal transformation only changes the mass of the smoothed manifold by a small amount; see Proposition \ref{conf change prop}. Finally, using $\tilde \Sigma$ as a barrier, it follows that the smoothed metric has horizon boundary. Since $\Sigma\subset M$ is area-minimizing, it follows that the area of the new horizon boundary is at least as large as that of $\tilde \Sigma$; see Lemma \ref{horizon boundary hat }. This is how we obtain Theorem \ref{main thm}. \\ \indent
To prove Theorem \ref{rigidity remark}, we first construct a global conformal perturbation of $(M(\Sigma),g)$ that preserves the conditions $R(g)\geq 0$ and $H(\partial M,g)\geq 0$, strictly decreases $m(g)$  unless $R(g)=0$, and which changes the area of $\Sigma$ only marginally. Second, if the second fundamental form $h(\partial M,g)$ of $\partial M$ does not vanish, we construct a local  perturbation of $(M(\Sigma),g)$ that increases $R(g)$, preserves the condition $H(\partial M,g)\geq 0$, and changes neither $m(g)$ nor $|\Sigma|_g$. We note that a perturbation with these properties could not possibly be conformal; it has to be fine-tuned to the geometry of $\partial M$.  Consequently, if equality in \eqref{coro RPI} holds, then $\partial M$ is totally geodesic and the double $(\tilde M,\tilde g)$ is $C^2$-asymptotically flat. Theorem \ref{rigidity remark} now follows from Theorem \ref{RPI no boundary}.
  \subsection*{Acknowledgments}
 The authors acknowledge the support of the START-Project Y963 of the Austrian Science Fund. The second-named author acknowledges the support of the Lise-Meitner-Project M3184 of the Austrian Science Fund. The authors thank Pengzi Miao for helpful feedback on the statement of Theorem \ref{RPI rigidity} and for bringing the results in \cite{LuMiao} to their attention. The authors thank the anonymous referees for their feedback which has improved the exposition of this paper. \\ \indent This paper is dedicated to the memory of Robert Bartnik.

\section{Reduction to conformally flat ends}
In this section, we assume that $(M, g)$ is an asymptotically flat half-space of dimension $3 \leq n \leq 7$ and decay rate $\tau>(n-2)/2$. We also assume that $(M,g)$ has horizon boundary $\Sigma\subset M$ and that $R(g)\geq 0$ in $M(\Sigma)$ and $H(\partial M, g)\geq 0$ on $ M(\Sigma)\cap\partial M$. \\ \indent The goal of this section is to  approximate the Riemannian metric  $g$ by a sequence $\{g_i\}_{i=1}^\infty$ of Riemannian metrics $g_i$ on $M$ that are scalar flat, conformally flat, and such that $h(\partial M, g_i)=0$ outside of some compact set. \\ \indent  Here and below, $\Sigma$ and $\partial M$ are oriented by their unit normal vectors $\nu(\Sigma,g)$ and $\nu(\partial M,g)$ pointing towards $M(\Sigma)$.  $H(\Sigma,g)$ and $H(\partial M,g)$ are  computed as the divergence of $-\nu(\Sigma,g)$ along $\Sigma$ and the divergence of $-\nu(\partial M,g)$ along $\partial M$, respectively.
\begin{prop} \label{ahf to conformally flat prop} 
Let $\tau'\in\mathbb{R}$ be such that $(n-2)/2<\tau'<\tau$. There exist sequences $\{g_i\}_{i=1}^\infty$ of Riemannian metrics $g_i$ on $M$ and $\{K_i\}_{i=1}^\infty$ of compact sets $K_i\subset M$ such that $(M, g_i)$ is an asymptotically flat half-space with horizon boundary $\Sigma_i\subset M(\Sigma)$ and such that the following properties hold.
\begin{itemize}
	\item[$\circ$] $(M,g_i)$ is conformally flat in $M\setminus K_i$.
	\item[$\circ$] $R(g_i)=0$ in $M\setminus K_i$.
	\item[$\circ$] $h(\partial M,g_i)=0$ on $\partial M\setminus K_i$.
	\item[$\circ$] $R(g_i)\geq 0$ in $M(\Sigma)$.
	\item[$\circ$] $H(\partial M,g_i)\geq 0$ on $ M(\Sigma)\cap\partial M$.
	\item[$\circ$] $m(g_i)=m(g)+o(1)$ as $i\to\infty$.
	\item[$\circ$] $|\Sigma_i|_{g_i}=|\Sigma|_g+o(1)$ as $i\to\infty$.
	\item[$\circ$] $g_i\to g$ in $C^0(M)$ as $i\to\infty$.
\end{itemize}
Moreover, 
\begin{align} \label{uniform hemisphere} 
	\sup_{i\geq 1}\,\limsup_{x\to\infty}
	\left[|x|_{\bar g}^{\tau}\,|g_i-\bar g|_{\bar g}+|x|^{\tau+1}_{\bar g}\,|D(\bar g)g_i|_{\bar g}+|x|^{\tau+2}_{\bar g}\,|D^2(\bar g)g_i|_{\bar g}\right]<\infty.
\end{align} 

\end{prop}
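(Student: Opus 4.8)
The plan is to adapt the density theorem of R.~Schoen and S.-T.~Yau, in the form carried out for asymptotically flat half-spaces by S.~Almaraz, E.~Barbosa, and L.~L.~de Lima in \cite[Proposition 4.1]{ABDL}, while in addition keeping track of the horizon boundary. I would work in the asymptotically flat chart, so that $\partial M$ is identified with $\mathbb{R}^{n-1}\times\{0\}$; the half-space structure enters through the observation that a metric which near infinity is conformally flat with a radial conformal factor automatically has totally geodesic boundary there, and that the boundary integral in \eqref{half-space mass intro} then vanishes identically. First, for large $\lambda$, I would choose a cutoff $\chi_\lambda$ equal to $1$ on $\{|x|_{\bar g}\le\lambda\}$ and to $0$ on $\{|x|_{\bar g}\ge 2\lambda\}$, fix $A_0\in\mathbb{R}$ with $c_n\,A_0=m(g)$, where $c_n>0$ is the dimensional constant relating the mass of a conformally flat asymptotically flat half-space to the coefficient of $|x|_{\bar g}^{2-n}$ in its conformal factor, and interpolate via $\chi_\lambda$ between $g$ and the model metric $(1+A_0|x|_{\bar g}^{2-n})^{4/(n-2)}\bar g$, calling the result $\hat g_\lambda$ (equal to $g$ away from the chart). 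Then $\hat g_\lambda$ is smooth, agrees with $g$ on a compact set containing $\Sigma$ and $M\setminus M(\Sigma)$, coincides with the scalar flat, conformally flat model with totally geodesic boundary outside $\{|x|_{\bar g}\le 2\lambda\}$, and has mass exactly $m(g)$. By \eqref{af intro}, on the annulus $\{\lambda<|x|_{\bar g}<2\lambda\}$ one has $|R(\hat g_\lambda)|=O(\lambda^{-\tau-2})$ and $|h(\partial M,\hat g_\lambda)|=O(\lambda^{-\tau-1})$, both vanishing outside the annulus, so the negative parts of $R(\hat g_\lambda)$ and of $H(\partial M,\hat g_\lambda)$ tend to $0$ in $L^{n/2}(M)$ and $L^{n-1}(\partial M)$ as $\lambda\to\infty$; also $\hat g_\lambda\to g$ in $C^0(M)$ and the uniform estimate \eqref{uniform hemisphere} holds.

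Next I would conformally correct $\hat g_\lambda$ on the exterior region $M(\Sigma)$ by solving the conformal Laplacian boundary value problem
\[
-\tfrac{4(n-1)}{n-2}\,\Delta_{\hat g_\lambda}u+R(\hat g_\lambda)\,u=0 \ \text{in } M(\Sigma),\qquad u\to 1 \ \text{at infinity},
\]
with the boundary conditions $\tfrac{2(n-1)}{n-2}\,\partial_\nu u+H(\partial M,\hat g_\lambda)\,u=0$ on $M(\Sigma)\cap\partial M$ and $\partial_\nu u=0$ on $\Sigma$. Because $R(g)\ge 0$ in $M(\Sigma)$ and $H(\partial M,g)\ge 0$ on $M(\Sigma)\cap\partial M$, and the negative parts created in the first step are small for $\lambda$ large, the Sobolev and trace inequalities show that the associated quadratic form is coercive, so there is a positive solution $u=1+O(|x|_{\bar g}^{2-n})$ with $\partial_\nu u=0$ near infinity on $\partial M$; this is the half-space analogue of \cite[Lemma 3.3]{SchoenYau} underlying \cite{ABDL}. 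I would then extend $u$ to a smooth positive function on all of $M$ — its $1$-jet along $\Sigma$ is inherited from $M(\Sigma)$, so $\partial_\nu u=0$ on $\Sigma$ still — and set $g_i=u^{4/(n-2)}\hat g_\lambda$, $K_i=\{|x|_{\bar g}\le 2\lambda\}$. The conformal transformation laws for the scalar curvature and for the boundary mean curvature, together with conformal invariance of angles, then give: $R(g_i)=0$ in $M(\Sigma)$; $H(\partial M,g_i)=0$ on $M(\Sigma)\cap\partial M$; $\Sigma$ is still a minimal hypersurface whose components are free boundary or closed; $R(g_i)=0$, $h(\partial M,g_i)=0$ and $g_i$ is conformally flat on $M\setminus K_i$ (its conformal factor relative to $\bar g$ is $\bar g$-harmonic there); and $m(g_i)=m(g)+o(1)$, since the coefficient of $|x|_{\bar g}^{2-n}$ in that conformal factor differs from $A_0$ by a term controlled by the small negative parts of the first step. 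Finally $g_i\to g$ in $C^0(M)$.

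It remains to identify the horizon boundary. Since $\Sigma$ is $g_i$-minimal, \cite[Lemma 2.3]{thomas_penrose} shows that $(M,g_i)$ has a unique horizon boundary $\Sigma_i$, and $\Sigma_i\subset M(\Sigma)$: otherwise $\Sigma$ would be a minimal free boundary or closed hypersurface contained in the exterior region $M(\Sigma_i)$ of $(M,g_i)$ without being a component of $\Sigma_i$, contradicting the definition. Moreover $g_i\to g$ in $C^2_{\mathrm{loc}}$ and the $g_i$ are uniformly close to $\bar g$ near infinity, so no minimal hypersurface of $(M,g_i)$ escapes a fixed compact set; hence $\Sigma_i$ subconverges to a compact minimal hypersurface of $(M,g)$ lying in $M(\Sigma)$, which by the outermost property of $\Sigma$ must be $\Sigma$ itself, and then the regularity theory for minimal hypersurfaces gives $|\Sigma_i|_{g_i}\to|\Sigma|_g$. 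Taking $\lambda=\lambda(i)\to\infty$ produces the desired sequence.

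The main difficulty is that the three requirements — no loss of mass, survival of the outermost minimal surface with almost unchanged area, and both energy conditions on all of $M(\Sigma)$ — pull the construction in opposite directions. The truncation must glue in an end carrying the correct mass, which forbids the naive flat end; the conformal factor must leave $\Sigma$ minimal (so it cannot simply be a small localised perturbation supported near infinity) yet it must still absorb the negative scalar curvature created by the cutoff. I expect the heart of the matter to be verifying that the conformal boundary value problem of the second step is solvable with the positivity and the $|x|_{\bar g}^{2-n}$-expansion needed to make the mass, the decay \eqref{uniform hemisphere}, and the horizon estimates hold simultaneously; once this is in hand, the remaining steps are routine conformal bookkeeping.
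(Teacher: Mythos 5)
Your overall strategy is the same as the paper's: carry out the Schoen--Yau-type density argument of \cite[Proposition 4.1]{ABDL}, modified near the horizon so that the conformal factor satisfies a Neumann condition on $\Sigma$ (the paper achieves this by substituting its Proposition \ref{PDE prop} for \cite[Proposition 3.3]{ABDL}), and then run a compactness argument to identify and estimate the horizon boundary of the deformed metric. Your cutoff is slightly nonstandard --- you interpolate against a Schwarzschild-type tail of the correct mass rather than against $\bar g$, and you solve the conformal BVP to make $R$ and $H$ vanish identically on $M(\Sigma)$ rather than merely remain non-negative --- but these are benign variants and they do buy exact mass preservation at the first step, which is a pleasant simplification of the bookkeeping.

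There is, however, a genuine gap in the final step. You conclude from the subconvergence $\Sigma_i\to\Sigma$ in $C^{1,\alpha}$ that ``the regularity theory for minimal hypersurfaces gives $|\Sigma_i|_{g_i}\to|\Sigma|_g$.'' This does not follow: the curvature estimate and elliptic theory only give subconvergence \emph{possibly with finite multiplicity}, and a limit of multiplicity $k\geq 2$ would yield $|\Sigma_i|_{g_i}\to k\,|\Sigma|_g$. To rule this out one needs the two-sided area comparison, which is exactly what the paper supplies: since $\Sigma$ is area-minimizing in $M(\Sigma)$ with respect to $g$ (by \cite[Lemma 2.3]{thomas_penrose}) and $\Sigma_i$ is homologous to $\Sigma$ with $\partial\Sigma_i$ homotopic to $\partial\Sigma$, one has $|\Sigma|_g\leq|\Sigma_i|_g\leq(1+o(1))\,|\Sigma_i|_{g_i}$; conversely, since $\Sigma_i$ is area-minimizing in $M(\Sigma_i)$ with respect to $g_i$ and $\Sigma$ is a competitor, $|\Sigma_i|_{g_i}\leq(1+o(1))\,|\Sigma|_{g_i}\leq(1+o(1))\,|\Sigma|_g$. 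The sandwich forces multiplicity one and gives the area convergence. You cite the area-minimization property implicitly (via Lemma 2.3 of \cite{thomas_penrose}) but never deploy it where it is actually needed; without it, the step ``area converges'' is unjustified. A secondary, smaller point: you also need to record that your $\Sigma_i$ is homologous to $\Sigma$ and has boundary in the correct homotopy class (the paper obtains this from the Meeks--Yau existence theorem) before $\Sigma$ can be used as an area competitor for $\Sigma_i$.
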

\begin{proof}
 Arguing as in the proof of \cite[Proposition 4.1]{ABDL} but using Proposition \ref{PDE prop} instead of \cite[Proposition 3.3]{ABDL}, we obtain a sequence  $\{g_i\}_{i=1}^\infty$ of Riemannian metrics $g_i$ on $M$ and a sequence $\{K_i\}_{i=1}^\infty$ of compact sets $K_i\subset M$ exhausting $M$ such that the following properties hold.
\begin{itemize}
	\item[$\circ$] $g_i$ is conformally flat  in $M\setminus K_i$.
		\item[$\circ$] $R(g_i)=0$ in $ M\setminus K_i$.

	\item[$\circ$] $h(\partial M,g_i)=0$ on $\partial M\setminus K_i$.
	\item[$\circ$] $R(g_i)\geq 0$ in $M(\Sigma)$.
	\item[$\circ$] $H(\partial M,g_i)\geq 0$ on $ M(\Sigma)\cap\partial M$.
	\item[$\circ$] $H(\Sigma,g_i)=0$ on $\Sigma$.
		\item[$\circ$] $m(g_i)=m(g)+o(1)$ as $i\to\infty$.
\end{itemize}
 Moreover,  $g_i\to g$ in $C^0(M)$, $g_i\to g$  in $C_{loc}^2(M)$, and \eqref{uniform hemisphere} holds. \\
  \indent 
By \eqref{uniform hemisphere}, there is $\lambda_0>1$ such that the hemispheres $\mathbb{R}^n_+\cap S_\lambda(-1/2\,\lambda\,e_n)$ have negative mean curvature with respect to $g_i$ and meet $\partial M$ at an acute angle  with respect to $g_i$ provided that $\lambda \geq \lambda_0$ and $i$ is  sufficiently large. We consider the class of all embedded hypersurfaces of $M(\Sigma)$ that are homologous to $\Sigma$ in $M(\Sigma)$  and whose boundary is  contained in $\partial M$  and homotopy equivalent to $\partial \Sigma$ in $ M(\Sigma) \cap \partial M$. Since $H(\Sigma,g_i)=0$, it follows from \cite[Theorem 1]{meeks1982existence} that there is an  outermost  minimal hypersurface $ \Sigma_i\subset M(\Sigma)$ that is homologous to $\Sigma$ in $M(\Sigma)$, whose boundary is homotopy equivalent to $\partial \Sigma$ in $ M(\Sigma)\cap\partial M$, and whose components are either free boundary hypersurfaces or closed hypersurfaces. Moreover,
\begin{align} \label{area estimate} 
\sup_{i\geq 1} |\Sigma_i|_{g_i}<\infty.
\end{align}  Recall from \cite[Lemma 2.3]{thomas_penrose} that $\Sigma$ is area-minimizing with respect to $g$ in $M(\Sigma)$. Consequently, as $i\to\infty$,
	$$
	|\Sigma|_{g}\leq|\Sigma_i|_{g}\leq (1+o(1))| \Sigma_i|_{g_i}.  
	$$
Finally, using \eqref{area estimate}, the curvature estimate \cite[Lemma 3.3]{thomas_penrose}, and standard elliptic theory, it follows that, passing to another subsequence if necessary, $\{\Sigma_i\}_{i=1}^\infty$ converges to a minimal surface $\Sigma_0\subset  M(\Sigma)$ with respect to $g$ in $C^{1,\alpha}(M)$ and smoothly away from $\partial M$, possibly with finite multiplicity. Since $M(\Sigma)$ is an exterior region, it follows that $\Sigma_0=\Sigma$. Since $\Sigma_i$ is area-minimizing in $M(\Sigma_i)$ with respect to $g_i$, we obtain that, as $i\to\infty$,
	$$
|\Sigma_i|_{g_i}\leq (1+o(1)) |\Sigma|_{g_i}\leq (1+o(1))\,|\Sigma|_{g}.  
$$
	\indent The assertion follows.
\end{proof} 
\section{Gluing of asymptotically flat half-spaces} \label{gluing section}
In this section, we assume that $(M,g)$ is an asymptotically flat half-space of dimension $3\leq n\leq 7$ with horizon boundary $\Sigma\subset M$ such that the following properties are satisfied. 
\begin{itemize}
	\item[$\circ$] $(M,g)$ is conformally flat outside of a compact set. 
	\item[$\circ$] $R(g)=0$ outside of a compact set.
	\item[$\circ$] $h(\partial M, g)=0$ outside of a compact set.
	\item[$\circ$] $H(\partial M,g)\geq 0$ on $ M(\Sigma)\cap\partial M$.
\end{itemize}
\indent \indent The goal of this section is to double $(M,g)$ by reflection across $\partial M$ and  to appropriately smooth the metric of the double.

\begin{figure}\centering
	\includegraphics[width=0.5\linewidth]{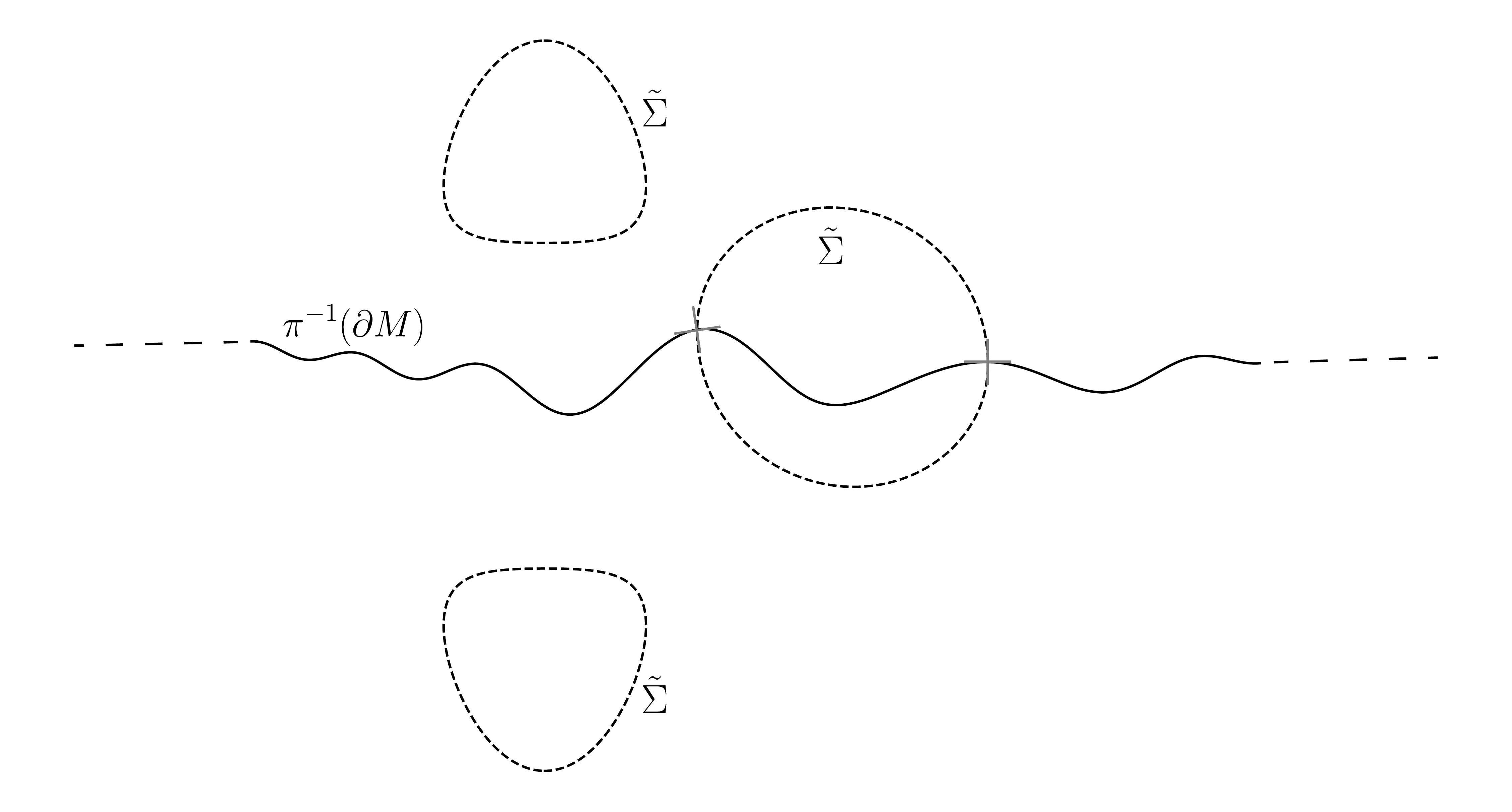}
	\caption{An illustration of the double $(\tilde M,\tilde g)$ of $(M,g)$. $(\tilde M,\tilde g)$ is obtained by reflection across $\partial M$ which is illustrated by the solid black line. Here, $\Sigma$ has two components while $\tilde \Sigma$ has three components illustrated by the dashed lines.  }
	\label{double}
\end{figure}
\begin{lem}
	There is $\delta_0>0$ with the following property. The map \label{Phi lemma}
	$$
	\Phi:\partial M\times[0,\delta_0)\to\{x\in M:\operatorname{dist}(x,\partial M,g)<\delta_0\}$$ given by 
	$$\Phi(y,t)=\exp(g)_y(t\,\nu(\partial M,g)(y))
	$$ 
	is a diffeomorphism. 
\end{lem}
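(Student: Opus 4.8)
The plan is to invoke the standard tubular neighborhood theorem for the embedded hypersurface $\partial M$ inside $M$, adapted to the case where $\partial M$ is a boundary component and the exponential map is taken only in the inward normal direction. First I would verify that $\Phi$ is well-defined and smooth: since $(M,g)$ is a complete Riemannian manifold with boundary (completeness being part of the definition of an asymptotically flat half-space), the geodesic $t\mapsto \exp(g)_y(t\,\nu(\partial M,g)(y))$ emanating from $y\in\partial M$ in the inward unit normal direction exists for all small $t\geq 0$, and depends smoothly on $(y,t)$ by smooth dependence of ODE solutions on initial conditions; this gives smoothness of $\Phi$ on $\partial M\times[0,\delta)$ for some $\delta>0$.

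Next I would compute the differential of $\Phi$ along $\partial M\times\{0\}$. At a point $(y,0)$, the derivative in the $t$-direction is $\nu(\partial M,g)(y)$, while the derivative in directions tangent to $\partial M$ is the identity on $T_y\partial M$; hence $d\Phi_{(y,0)}$ is an isomorphism $T_y\partial M\times\mathbb{R}\to T_yM$. By the inverse function theorem, $\Phi$ is a local diffeomorphism near each point of $\partial M\times\{0\}$. The remaining task is to upgrade this to a genuine (global in $y$) diffeomorphism onto a one-sided collar $\{x\in M:\operatorname{dist}(x,\partial M,g)<\delta_0\}$ for a uniform $\delta_0>0$.

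The main obstacle is precisely the \emph{uniformity} of $\delta_0$: $\partial M$ is non-compact, so a compactness argument does not immediately produce a single $\delta_0$ that works everywhere, and in general a noncompact hypersurface need not admit a collar of uniform width. This is where the asymptotic flatness hypothesis enters: by \eqref{af intro}, outside a compact set of $M$ the metric $g$ is $C^2$-close to the Euclidean metric $\bar g$ on $\mathbb{R}^n_+$ in the asymptotically flat chart, and for $\bar g$ the map $\Phi$ is literally $(y,t)\mapsto y+t\,e_n$, a diffeomorphism onto $\{x^n<\delta_0\}$ for every $\delta_0$. A perturbation estimate — controlling the second fundamental form of $\partial M$, the injectivity radius, and the focal distance of $\partial M$ uniformly using \eqref{af intro} — shows that $\Phi$ remains a diffeomorphism with uniform width on the asymptotic region. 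On the complementary compact region, $\overline{\partial M\cap K}$ is compact, so finitely many of the local diffeomorphisms from the previous step cover it and a Lebesgue-number argument yields a uniform lower bound for $\delta_0$ there. Injectivity of $\Phi$ on all of $\partial M\times[0,\delta_0)$ then follows by a standard argument: if $\Phi(y_1,t_1)=\Phi(y_2,t_2)$ with $\delta_0$ small, the minimizing property of short normal geodesics forces $t_1=t_2=\operatorname{dist}(\Phi(y_i,t_i),\partial M,g)$ and $y_1=y_2$. Shrinking $\delta_0$ once more if necessary so that $\operatorname{dist}(\cdot,\partial M,g)<\delta_0$ is exactly the image, we conclude that $\Phi$ is a diffeomorphism onto the stated collar neighborhood.
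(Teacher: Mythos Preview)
Your proposal is correct and follows essentially the same strategy as the paper: local diffeomorphism via the inverse function theorem, a compactness argument on the bounded part of $\partial M$, and the asymptotic flatness hypothesis \eqref{af intro} to obtain a uniform collar width at infinity. The paper packages the last step as a separate quantitative statement (Lemma~\ref{geodesic lemma}), which proves injectivity of the normal exponential map directly by an ODE comparison of two nearby normal geodesics when $g$ is $C^2$-close to $\bar g$ on a half-ball; your appeal to uniform bounds on the focal distance and injectivity radius is a slightly different phrasing of the same perturbation argument.
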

\begin{proof}
Clearly, $\Phi$ is a local diffeomorphism and surjective. Moreover, by compactness and Lemma \ref{geodesic lemma}, using the fact that $g$ is asymptotically flat \eqref{af intro}, it follows that $\Phi$ is injective.   
\end{proof}
\begin{rema}
	It follows from Lemma \ref{Phi lemma} that there is a  smooth family $\{\gamma(g)_t:t\in[0,\delta_0)\}$ of Riemannian metrics $\gamma(g)_t$ on $\partial M$ such that
	$$
	\Phi^*g=\gamma(g)_t+dt^2.
	$$ 
\end{rema}
Let \begin{align} \tilde M=(M\times\{-1,1\})/\sim \label{tilde M}\end{align} where
\begin{itemize}
	\item[$\circ$] $(x_1,\pm1)\sim(x_2,\pm1)$ if and only if $x_1=x_2$  and
	\item[$\circ$] $(x_1,\pm1)\sim(x_2,\mp1)$ if and only if $x_1,\,x_2\in \partial M$ and  $x_1=x_2.$
\end{itemize}   Let \begin{align} \label{projection} \pi:\tilde M\to M\qquad\text{be given by}\qquad\pi([(x,\pm1)])=x\end{align}  and $$\tilde \Sigma=\pi^{-1}(\Sigma);$$ see Figure \ref{double}.
  \\ \indent  We consider the map
$$
\tilde \Phi:\partial M\times(-\delta_0,\delta_0)\to\{\tilde x\in \tilde M:\operatorname{dist}(\pi(\tilde x),\partial M,g)<\delta_0\}$$ given by
$$\qquad \tilde \Phi(y,t)=[(\Phi(y,|t|),\operatorname{sign}(t))].
$$
We obtain a smooth structure on $\tilde M$ by requiring that the map $\tilde \Phi$ be smooth. Moreover, 
given $t\in(-\delta_0,0]$, we define $\gamma(g)_{t}=\gamma(g)_{-t}$. 
Note that $\gamma(g)_t+dt^2$ is continuous on $\partial M\times(-\delta_0,\delta_0)$. It follows that the Riemannian metric $\tilde g$ on $\tilde M$ defined by
$$\tilde g=\pi^*g$$ 
is continuous across $\partial M$. \\ \indent
For the following lemma, recall from Appendix \ref{af appendix} the definitions \eqref{asymptotically flat mf} of an asymptotically flat metric  and \eqref{mass} of the mass of an asymptotically flat manifold without boundary. 
\begin{lem} \label{mass lemma} 
$\tilde g$ is of class $C^0$ and $C^2$-asymptotically flat. Moreover, the following properties  hold.
\begin{itemize}
\item[$\circ$]	$
\tilde m(\tilde g)=2\,m(g).
$
\item[$\circ$]  	$
|\tilde \Sigma|_{\tilde g}=2\,|\Sigma|_{g}.
$
\item[$\circ$] $\tilde \Sigma\subset M$ is of class $C^{1,1}$.

\item[$\circ$] $\tilde \Sigma$ is area-minimizing in its homology class in $\tilde M(\tilde \Sigma)$ with respect to $\tilde g$.
\end{itemize}
\end{lem}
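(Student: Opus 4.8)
The plan is to verify each of the four asserted properties of $\tilde g$ in turn, exploiting that $g$ is scalar flat, conformally flat, and has totally geodesic boundary outside a compact set.

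For the asymptotic statements, I would first argue that $\tilde g$ is of class $C^0$ globally and of class $C^2$ away from $\partial M$: away from $\partial M$ this is immediate since $\pi$ is a local diffeomorphism there, and across $\partial M$ the continuity was already established in the preceding discussion via the identity $\Phi^*g=\gamma(g)_t+dt^2$ with $\gamma(g)_t$ extended evenly. To see that $\tilde g$ is $C^2$-asymptotically flat, I would use that $(M,g)$ is conformally flat with $h(\partial M,g)=0$ outside a compact set. Writing $g=u^{4/(n-2)}\bar g$ near infinity in the asymptotically flat half-space chart, the condition $h(\partial M,g)=0$ together with $\partial M$ being a coordinate hyperplane $\{x^n=0\}$ forces $\partial_n u=0$ along $\{x^n=0\}$; hence the even reflection of $u$ across $\{x^n=0\}$ is $C^2$ (in fact the odd-order normal derivatives all vanish on the hyperplane, so reflection preserves $C^2$ regularity), and the doubled chart $\{x\in\mathbb{R}^n:|x|_{\bar g}>\lambda_0\}$ endows $\tilde M$ with a genuine $C^2$-asymptotically flat end. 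Concretely, the reflection map $x\mapsto(x^1,\dots,x^{n-1},-x^n)$ identifies the two asymptotically flat charts, and the reflected metric agrees with $\pi^*g$ there. The decay estimate \eqref{af intro} for $g$, which is preserved under this reflection, gives the required fall-off for $\tilde g$. Computing the mass of $\tilde g$ from the ADM integral \eqref{mass} over spheres $S^{n-1}_\lambda$ in the doubled end and splitting each sphere into its two hemispheres $\mathbb{R}^n_+\cap S^{n-1}_\lambda$ and $\mathbb{R}^n_-\cap S^{n-1}_\lambda$, each hemisphere contributes exactly the bulk integral appearing in the first line of \eqref{half-space mass intro}; the boundary line integral in \eqref{half-space mass intro} vanishes because $g(e_i,e_n)=0$ along $\partial M$ once $h(\partial M,g)=0$ and $g$ is conformally flat there. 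This yields $\tilde m(\tilde g)=2\,m(g)$. The area identity $|\tilde\Sigma|_{\tilde g}=2\,|\Sigma|_g$ is immediate since $\tilde\Sigma=\pi^{-1}(\Sigma)$ is two isometric copies of $\Sigma$ glued along $\partial\Sigma$, which has measure zero.

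For the regularity of $\tilde\Sigma$: away from $\partial M$, $\tilde\Sigma$ is two smooth copies of $\Sigma$, so the only issue is along $\partial\Sigma$. Since each component of $\Sigma$ meeting $\partial M$ is a free boundary minimal hypersurface, $\nu(\Sigma,g)\in T\partial M$ along $\partial\Sigma$, equivalently $\Sigma$ meets $\partial M$ orthogonally. In the boundary coordinates from Lemma \ref{Phi lemma}, $\Sigma$ is locally a graph over $\partial M$ of a function whose normal derivative vanishes on $\partial\Sigma$; its even reflection is therefore $C^{1,1}$ (the second normal derivative need not match, hence not $C^2$). This gives $\tilde\Sigma\in C^{1,1}$.

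Finally, for the area-minimizing property of $\tilde\Sigma$ in $\tilde M(\tilde\Sigma)$: I would use that $\Sigma$ is area-minimizing in its homology class in $M(\Sigma)$ with respect to $g$ (recorded as \cite[Lemma 2.3]{thomas_penrose} in the proof of Proposition \ref{ahf to conformally flat prop}), together with a reflection/symmetrization argument. Given any competitor $\tilde T$ homologous to $\tilde\Sigma$ in $\tilde M(\tilde\Sigma)$, the reflection symmetry of $(\tilde M,\tilde g)$ lets one replace $\tilde T$ by $\tfrac12(\tilde T+\iota_\#\tilde T)$, where $\iota$ is the isometric involution swapping the two copies; this has no greater mass and is reflection-symmetric, hence descends to a competitor for $\Sigma$ in $M(\Sigma)$ (after checking the boundary condition along $\partial M$ is respected, which holds because the symmetric current meets $\partial M$ orthogonally in the measure-theoretic sense). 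Applying the minimizing property of $\Sigma$ downstairs and doubling back up gives $|\tilde\Sigma|_{\tilde g}\le|\tilde T|_{\tilde g}$. The main obstacle I anticipate is making this last symmetrization argument fully rigorous at the level of integral currents across the $C^0$-corner of $\tilde g$ along $\partial M$ — in particular justifying that the reflected/averaged current is admissible and that slicing and projecting it to $M$ is legitimate despite the metric being merely Lipschitz there; the conformally flat, totally geodesic structure of $\partial M$ outside a compact set and the $C^0$ closeness should make the relevant cut-and-paste estimates manageable.
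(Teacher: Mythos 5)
Most of your proposal matches the paper's intent: the reflection argument based on conformal flatness, $h(\partial M,g)=0$, and orthogonal intersection of $\Sigma$ with $\partial M$, and the hemisphere decomposition of the ADM integral, is exactly what the paper has in mind (its own proof is a one-sentence appeal to ``the above construction'' together with Figure \ref{competitor}). Two small imprecisions in your regularity discussion are worth noting, though they do not affect the conclusion: from $h(\partial M,g)=0$ you only get $\partial_n u=0$ on the hyperplane, not that ``the odd-order normal derivatives all vanish'' --- that would require more; fortunately vanishing of the \emph{first} normal derivative is all that is used for the even reflection to be $C^2$. Likewise, $\Sigma$ is not ``a graph over $\partial M$'' (they are transverse hypersurfaces); what you mean is that, in Fermi coordinates for $\partial M$, $\Sigma$ is locally a graph over a slice and the free boundary condition makes the normal derivative of that graph function vanish at $t=0$, so the even reflection is at least $C^{1,1}$.

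The one genuinely different step is the area-minimizing property of $\tilde\Sigma$, and here your symmetrization has a real gap that you yourself flag. Replacing a competitor $\tilde T$ by $\tfrac12(\tilde T+\iota_\#\tilde T)$ takes you out of the class of integral currents: if $\Sigma$ is area-minimizing among integral currents (or $\mathbb{Z}/2$ flat chains) in its relative homology/boundary-homotopy class, the averaged object is not an admissible competitor, so the minimizing property downstairs does not directly apply to it. The paper's intended argument (this is what Figure \ref{competitor} is illustrating) bypasses the averaging entirely: given $\hat\Sigma\subset\tilde M(\tilde\Sigma)$ homologous to $\tilde\Sigma$, one cuts along $\pi^{-1}(\partial M)$ and pushes each piece of $\hat\Sigma\setminus\pi^{-1}(\partial M)$ down to $M(\Sigma)$ via $\pi$. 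Each piece is then a competitor for $\Sigma$ in $M(\Sigma)$ in the correct relative homology and boundary homotopy class, and $\pi$ is a local isometry on each copy of $M$, so $|\Sigma|_g$ is a lower bound for the area of the piece lying over each side. Summing over the two sides gives $|\hat\Sigma|_{\tilde g}\geq 2\,|\Sigma|_g=|\tilde\Sigma|_{\tilde g}$ without ever taking a convex combination of currents. This stays inside the class of integral currents, does not require the averaged object to ``meet $\partial M$ orthogonally in a measure-theoretic sense,'' and uses exactly the form of minimality recorded for $\Sigma$ in \cite[Lemma 2.3]{thomas_penrose}. I would rewrite the last paragraph of your proof along these lines; the rest stands.
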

\begin{proof}
	The assertions follow from the above construction, using that $(M,g)$ is conformally flat at infinity, that $\partial M$ is totally geodesic at infinity,  that $\Sigma$ intersects $\partial M$ orthogonally, and that $\Sigma$ is area-minimizing in its homology class and boundary homotopy class in $M(\Sigma)$; see Figure \ref{competitor}. 	
\end{proof} 
\begin{figure}\centering
	\includegraphics[width=0.5\linewidth]{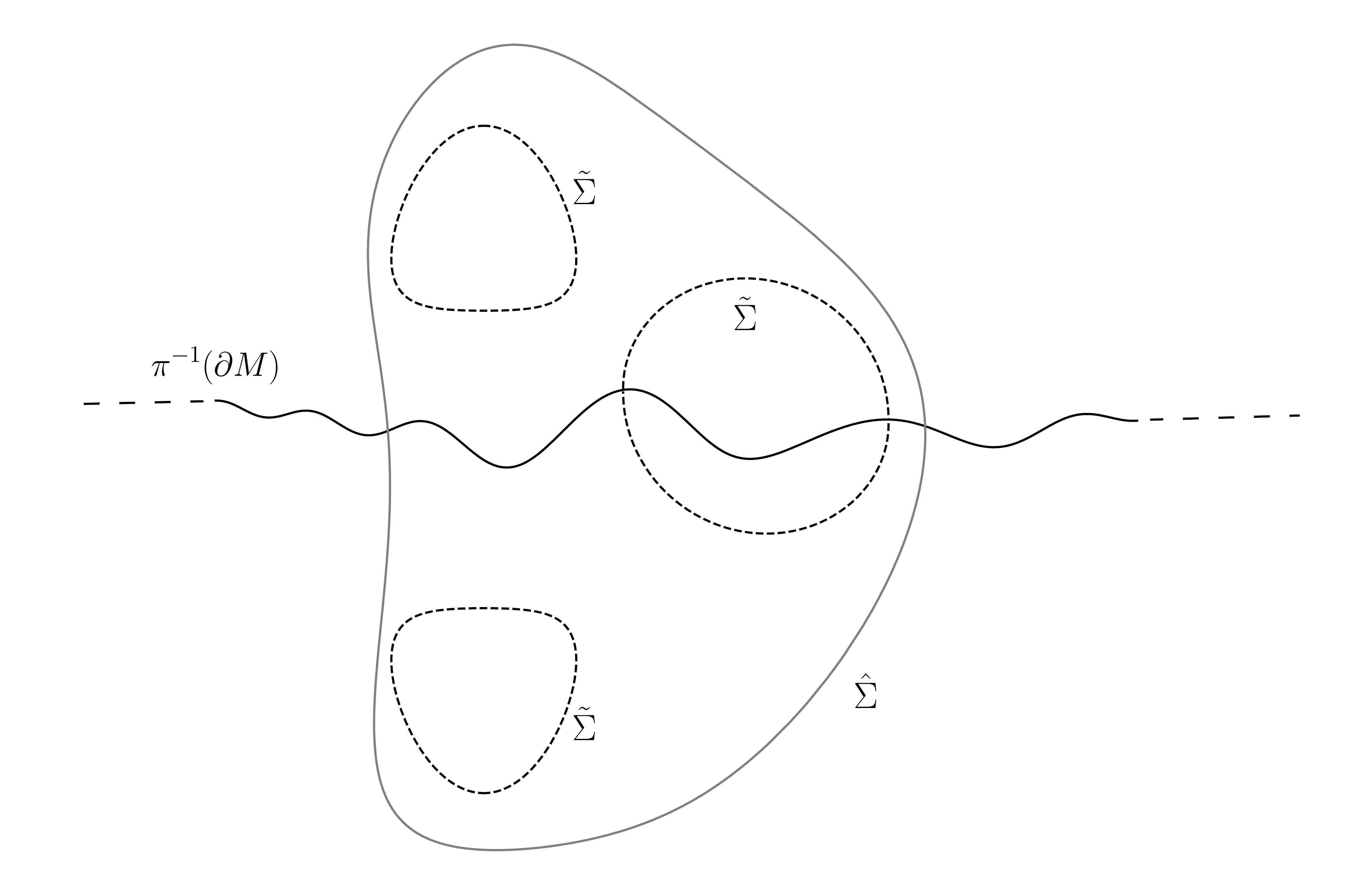}
	\caption{An illustration of the proof of Lemma \ref{mass lemma}. $\pi^{-1}(\partial M)$ is depicted by the solid black line,  $\tilde \Sigma$ is presented by the dashed line. Another hypersurface $\hat \Sigma\subset \tilde M(\tilde \Sigma)$  homologous to $\tilde \Sigma$ is presented by the solid gray line.   Each component of $\pi (\hat \Sigma\setminus \partial M)$ is homologous to $\Sigma$ and  in the same boundary homotopy class as $\Sigma$.   }
	\label{competitor}
\end{figure}
\begin{lem} \label{strictly mean convex}
	There exist a sequence $\{g^i\}_{i=1}^\infty$ of Riemannian metrics $g^i$ on $M$ and a  neighborhood $W\Subset M$ of $\Sigma$ such that $g^i$ and $g$ are conformally equivalent, $g^i=g$ outside of $W,$ and $g^i\to g$ in $C^2(M)$. Moreover,
	\begin{itemize}
	\item[$\circ$] $H(\Sigma,g^i)>0$ and
	\item[$\circ$] $H(\partial M, g^i)\geq 0$.
	\end{itemize}
\end{lem}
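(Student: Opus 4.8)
The plan is to obtain $g^i$ as a conformal perturbation $g^i = u_i^{\frac{4}{n-2}}\, g$, where $u_i = 1 + i^{-1} v$ for a single fixed smooth function $v$ supported in a relatively compact tubular neighborhood $W \Subset M$ of $\Sigma$. Since $v$ is bounded, $u_i > 0$ for $i$ large; $g^i = g$ outside $W$; and $g^i \to g$ in $C^2(M)$ because $\|u_i - 1\|_{C^2(M)} = i^{-1}\|v\|_{C^2(M)} \to 0$. Recall that, under the conformal change $\bar g = u^{\frac{4}{n-2}} g$ of an $n$-dimensional metric, the mean curvature of a hypersurface $S$ with $g$-unit normal $\nu$ --- always computed as $\operatorname{div}_S(-\nu)$ --- transforms as
$$
H(S, \bar g) = u^{-\frac{2}{n-2}}\Big( H(S, g) - \tfrac{2(n-1)}{n-2}\, u^{-1}\, \partial_\nu u \Big).
$$
Taking $S = \Sigma$, $\nu = \nu(\Sigma, g)$ and using that $H(\Sigma, g) = 0$, we obtain $H(\Sigma, g^i) = -\tfrac{2(n-1)}{n-2}\, i^{-1}\, u_i^{-\frac{n}{n-2}}\, \partial_{\nu(\Sigma, g)} v$, which is positive exactly when $\partial_{\nu(\Sigma, g)} v < 0$ along $\Sigma$. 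Taking $S = \partial M$, $\nu = \nu(\partial M, g)$, we see that if $\partial_{\nu(\partial M, g)} v = 0$ along $\partial M$ then $H(\partial M, g^i) = u_i^{-\frac{2}{n-2}}\, H(\partial M, g)$, which is non-negative wherever $H(\partial M, g)$ is; in particular on $M(\Sigma) \cap \partial M$. So everything reduces to finding one smooth $v$, supported near $\Sigma$, with $\partial_{\nu(\Sigma, g)} v < 0$ on $\Sigma$ and $\partial_{\nu(\partial M, g)} v = 0$ on $\partial M$.

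Near the closed components of $\Sigma$ --- which lie in the interior of $M(\Sigma)$ --- this is immediate: one sets $v = -\chi\, \rho$, where $\rho$ is the signed $g$-distance to $\Sigma$ (positive towards $M(\Sigma)$) and $\chi$ is a cutoff supported near $\Sigma$ with $\chi \equiv 1$ on $\Sigma$, so that $\partial_{\nu(\Sigma, g)} v = -1$ there. Near a free boundary component I would instead use the coordinates $\Phi\colon \partial M \times [0, \delta_0) \to M$ of Lemma \ref{Phi lemma}, in which $\Phi^* g = \gamma(g)_t + dt^2$ and $\partial_t$ is the $g$-unit normal of $\partial M = \{t = 0\}$: there I would take $v$ to be independent of $t$ for $t$ small, which forces $\partial_{\nu(\partial M, g)} v = 0$ on $\partial M$, and prescribe its restriction to $\partial M$ to be $-\chi'\, \sigma$, where $\sigma$ is the signed $\gamma(g)_0$-distance within $\partial M$ to $\partial \Sigma$ (positive towards $M(\Sigma)$) and $\chi'$ is a cutoff. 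The reason the two prescriptions are compatible at $\partial \Sigma$ is that each free boundary component meets $\partial M$ orthogonally: along $\partial \Sigma$ one then has $\nu(\Sigma, g) \perp \nu(\partial M, g)$, and in fact $\nu(\Sigma, g)$ agrees along $\partial \Sigma$ with the outward $\gamma(g)_0$-unit normal of $\partial \Sigma$ in $\partial M$, so that $\partial_{\nu(\Sigma, g)} v = -1$ along $\partial \Sigma$; by continuity this persists as $\partial_{\nu(\Sigma, g)} v < 0$ on a neighborhood of $\partial \Sigma$ in $\Sigma$, provided $\delta_0$ is small. One then patches the two local constructions by a $t$-cutoff partition of unity and lets $W$ be the support of the resulting $v$; the patching does not destroy $\partial_{\nu(\Sigma, g)} v < 0$ on $\Sigma$ because in the transition region $v$ and its derivatives along $\Sigma$ are $O(\delta_0)$-close to those of $-\chi\, \rho$.

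Granting such a $v$, the remaining verifications are routine: $g^i$ and $g$ are conformally equivalent and coincide outside $W \Subset M$; $g^i \to g$ in $C^2(M)$; and, for $i$ large, $H(\Sigma, g^i) > 0$ and $H(\partial M, g^i) = u_i^{-\frac{2}{n-2}}\, H(\partial M, g) \geq 0$ on $M(\Sigma) \cap \partial M$, by the two displayed identities. I expect the main obstacle to be precisely the construction of $v$ near the free boundary of $\Sigma$: producing a genuinely smooth function that is $t$-independent near $\partial M$ yet has the prescribed, correctly signed, non-vanishing derivative along $\Sigma$, and checking that these demands are mutually consistent. This rests entirely on the orthogonality of $\Sigma$ and $\partial M$ along $\partial \Sigma$; by contrast, the conformal transformation formula and the $C^2$-convergence are standard.
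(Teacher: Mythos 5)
Your approach is correct and relies on the same two key ingredients as the paper's proof: the Kazdan--Warner conformal change formula for mean curvature (Lemma~\ref{conformal change} in the paper, which your displayed identity matches after accounting for the sign convention) and the orthogonality of free boundary components of $\Sigma$ with $\partial M$ along $\partial \Sigma$. Where you diverge is in the construction of the conformal factor. The paper delegates this to Lemma~\ref{f conformal}, which produces a single explicit sequence
\[
\psi_i(x) = i^{-3}\,\big[\alpha(i\,a(x)) + \beta(i\,a(x))\,\alpha(i\,b(x))\big],
\]
where $a,b$ are the signed distances to $\Sigma$ and to $\partial M$ and $\alpha,\beta$ are fixed cutoffs with $\alpha(0)=0$, $\alpha'(0)=-1$, $\beta\equiv 1$ near $0$. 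Orthogonality enters through the estimates $D(g)_{\nu(\Sigma,g)}b = O(b)$ on $\Sigma$ near $\partial\Sigma$ and $D(g)_{\nu(\partial M,g)}a = O(a)$ on $\partial M$ near $\partial\Sigma$, which make the cross terms harmless once $i$ is large. The resulting boundary condition is one-sided, $D(g)_{\nu(\partial M,g)}\psi_i \leq 0$, rather than the exact Neumann condition $\partial_{\nu(\partial M,g)}v = 0$ you impose; under the conformal formula both give $H(\partial M,g^i) \geq 0$ wherever $H(\partial M,g)\geq 0$. You instead build one fixed $v$ by patching a $t$-independent function determined by the intrinsic signed distance $\sigma$ to $\partial\Sigma$ in $\partial M$ with the interior construction $-\chi\rho$, and then set $u_i = 1 + i^{-1}v$. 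This works, and the patching indeed only contributes errors of order $\delta_0$ (the mismatch $v_2 - v_1$ on $\Sigma$ is $O(t^2)$ by orthogonality, and the normal component of $\nu(\Sigma,g)$ in the $t$-direction is $O(t)$, so the $\eta'$-term is genuinely higher order). Still, the paper's single closed-form expression builds the compatibility near $\partial\Sigma$ directly into the formula via the $\beta(ia)\alpha(ib)$ cross term and thereby avoids the patching step entirely --- precisely the part of your argument you flag as the main remaining obstacle. Both constructions ultimately pivot on the same geometric fact, so this is a variant of the paper's proof rather than an essentially different one.
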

\begin{proof} 
		Let $\{\psi_i\}^\infty_{i=1}$ be the sequence from Lemma \ref{f conformal} and let $g^i=(1+\psi_i)^{\frac{4}{n-2}}\,g$. The assertions follow from Lemma \ref{f conformal} and Lemma \ref{conformal change}.
\end{proof} 
\begin{lem}
Notation as in Lemma \ref{strictly mean convex}. For all $\delta_0>0$ sufficiently small, the maps
$$
\Phi_i:\partial M\times[0,\delta_0)\to\{x\in M:\operatorname{dist}(x,\partial M,g^i)<\delta_0\}$$ given by 
$$\Phi_i(y,t)=\exp(g^i)_y(t\,\nu(\partial M,g^i)(y))
$$
are diffeomorphisms for all $i$. Moreover, there are smooth families $\{\gamma(g^i)_t:t\in[0,\delta_0)\}$ of Riemannian metrics $\gamma(g^i)_t$ on $\partial M$ with
$$
\Phi_i^*g^i=\gamma(g^i)_t+dt^2.
$$ 
\end{lem}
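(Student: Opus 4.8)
The plan is to establish a version of Lemma \ref{Phi lemma} that is uniform in $i$, taking advantage of the fact that $g^i=g$ outside the fixed compact set $W$ and that $g^i\to g$ in $C^2(M)$. As in the proof of Lemma \ref{Phi lemma}, the points to check are that each $\Phi_i$ is a local diffeomorphism, that it is surjective, and that it is injective, with the additional requirement that a single $\delta_0>0$ work for all $i$ simultaneously; the decomposition $\Phi_i^*g^i=\gamma(g^i)_t+dt^2$ is then a consequence of the Gauss lemma.

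First I would observe that $d\Phi_i$ at $(y,0)$ is an isomorphism for every $y\in\partial M$, and that $d\Phi_i$ can degenerate along $\{y\}\times[0,\infty)$ only at a focal point of $\partial M$ with respect to $g^i$. The distance to the first such focal point is bounded below in terms of upper bounds for the curvature of $g^i$ and for $|h(\partial M,g^i)|_{g^i}$; these bounds are uniform in $i$, being controlled on $W$ by the $C^2$-convergence $g^i\to g$ and, outside $W$, by the asymptotic flatness \eqref{af intro} since there $g^i=g$. Hence there is $\delta_0>0$, independent of $i$, such that each $\Phi_i$ is a local diffeomorphism on $\partial M\times[0,\delta_0)$. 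Shrinking $\delta_0$ and invoking the inverse function theorem with these uniform bounds, I would also extract a uniform radius $r_0>0$ of local injectivity: $\Phi_i$ is injective on each ball of radius $r_0$ in $\partial M\times[0,\delta_0)$, for every $i$.

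Surjectivity onto $\{x\in M:\operatorname{dist}(x,\partial M,g^i)<\delta_0\}$ is immediate: by completeness, a point $x$ with $\operatorname{dist}(x,\partial M,g^i)=t<\delta_0$ is joined to a footpoint $y\in\partial M$ by a minimizing $g^i$-geodesic, which meets $\partial M$ orthogonally, so that $x=\Phi_i(y,t)$. For injectivity I would argue by contradiction: were it to fail for every small $\delta_0$, there would be $\delta_j\downarrow 0$, indices $i_j$, and pairs $(y_j,t_j)\neq (y_j',t_j')$ in $\partial M\times[0,\delta_j)$ with $\Phi_{i_j}(y_j,t_j)=\Phi_{i_j}(y_j',t_j')=:x_j$. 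Since $t_j,t_j'\to 0$, the uniform local injectivity forces $\operatorname{dist}(y_j,y_j',g^{i_j})$ to be bounded away from $0$. If $x_j\to\infty$, then for $j$ large $x_j$ and the footpoints $y_j,y_j'$ all lie in $M\setminus W$, where $g^{i_j}=g$, contradicting the injectivity of $\Phi$ asserted in Lemma \ref{Phi lemma}. If instead $x_j$ stays in a compact set, then after passing to a subsequence $x_j\to x_\infty\in\partial M$ and $y_j,y_j'\to x_\infty$, so that $(y_j,t_j)$ and $(y_j',t_j')$ eventually lie in a common ball of radius $r_0$, again a contradiction. Hence $\Phi_i$ is a diffeomorphism onto the stated collar for all $i$ once $\delta_0$ is small enough.

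Finally, the decomposition $\Phi_i^*g^i=\gamma(g^i)_t+dt^2$ for a smooth family $\{\gamma(g^i)_t:t\in[0,\delta_0)\}$ of Riemannian metrics on $\partial M$ follows from the Gauss lemma, exactly as in the remark following Lemma \ref{Phi lemma}: the curves $t\mapsto\Phi_i(y,t)$ are unit-speed $g^i$-geodesics issuing orthogonally from $\partial M$, and smoothness in $(y,t)$ is inherited from that of the exponential map. I expect the only genuinely delicate point to be the uniformity in $i$ of the first focal distance and of the local injectivity radius; this is precisely where it is essential that $g^i=g$ outside the fixed set $W$ and that $g^i\to g$ in $C^2$, not merely in $C^0$. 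Granting these two facts, what remains is a routine compactness argument modeled on the proof of Lemma \ref{Phi lemma}.
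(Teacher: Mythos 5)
Your proposal is correct and follows essentially the same route as the paper: the paper's proof simply notes that the claim follows as in Lemma \ref{Phi lemma}, using that $g^i=g$ outside of $W$ and that $g^i\to g$ in $C^2(M)$. You make explicit the uniformity in $i$ (of the first focal distance and of the local injectivity radius) and, for injectivity near infinity, invoke the already-proved Lemma \ref{Phi lemma} on the region where $g^i=g$, which is the same mechanism the paper uses by pointing to Lemma \ref{geodesic lemma} together with compactness.
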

\begin{proof}
	This follows as in Lemma \ref{Phi lemma}, using also that $g^i=g$ outside of $W$ and that $g^i\to g$ in $C^2(M)$; see Lemma \ref{strictly mean convex}.
\end{proof}
As before, we consider the maps
$$
\tilde \Phi_i:\partial M\times(-\delta_0,\delta_0)\to\{\tilde x\in \tilde M:\operatorname{dist}(\pi(\tilde x),\partial M,g^i)<\delta_0\}$$ given by
$$\qquad \tilde \Phi_i(y,t)=[(\Phi_i(y,|t|),\operatorname{sign}(t))].
$$
Since $g$ and $g^i$ are conformally equivalent, the maps $\tilde \Phi_i$ are of class $C^2$. As before, 
given $t\in(-\delta_0,0]$, we define $\gamma(g^i)_{t}=\gamma(g^i)_{-t}$ and obtain a continuous metric $\tilde g^i$ given by 
$$\tilde g^i=\pi^*g^i.$$
\indent  
To smooth the metrics  $\tilde g^i$, we recall some steps from the construction in \cite[\S3]{Miao}.
To this end, let $\varphi\in C^\infty(\mathbb{R})$ with 
\begin{align*} 
&\circ \qquad\operatorname{spt}(\varphi)\subset(0,1),\qquad\qquad\qquad\qquad\qquad\qquad\qquad\qquad\qquad\qquad\qquad\qquad\qquad\qquad\quad \\&\circ \qquad 
0\leq\varphi\leq 1, \text{ and}\qquad\qquad\qquad\qquad\qquad \\&\circ \qquad  
\int_{0}^1\varphi(t)\,\mathrm{d}t=1.
\end{align*} 
Moreover, let $\eta\in C^\infty(\mathbb{R})$ with 
\begin{itemize}
	\item[$\circ$] $\operatorname{spt}(\eta)\subset(-1/2,1/2)$,
	\item[$\circ$] $\eta(t)=1/100$ if $|t|<1/4$, and
	\item[$\circ$] $0\leq \eta(t)\leq 1/100$ if $1/4<|t|<1/2.$
\end{itemize}
Let $\delta\in(0,\delta_0)$. We define $\eta_{\delta}\in C^\infty(\mathbb{R})$ by 
$$
\eta_{\delta}(t)=\delta^{2}\,\eta(\delta^{-1}\,t).
$$ Given an integer $i\geq 1$ and $t\in(-\delta_0,\delta_0)$, we define the Riemannian metric
\begin{align} \label{gamma delta} 
\gamma(\tilde g^i)^\delta_t=\int_{0}^1\gamma(\tilde g^i)_{t-t\,\eta_\delta(s)}\,\varphi(s)\,\mathrm{d}s
\end{align} 
on $\partial M$. 
\begin{lem}[{\cite[Lemma 3.2]{Miao}}]
The metric $\gamma(\tilde g^i)^\delta_t+dt^2$ is $C^2$ in $\partial M\times(-\delta_0,\delta_0)$ and agrees with $\gamma(\tilde g^i)_t+dt^2$ outside of   $\partial M\times (-\delta/2,\delta/2)$.
\end{lem}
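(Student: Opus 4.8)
The plan is to adapt the proof of \cite[Lemma 3.2]{Miao}; the only features specific to our setting are the auxiliary index $i$ and the smooth dependence of all the tensors involved on the base point $y\in\partial M$, and neither of these affects any of the estimates.

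I would treat the two assertions in turn. That $\gamma(\tilde g^i)^\delta_t+\mathrm{d}t^2$ agrees with $\gamma(\tilde g^i)_t+\mathrm{d}t^2$ outside $\partial M\times(-\delta/2,\delta/2)$ is built into the construction: since $\operatorname{spt}(\eta)\subset(-1/2,1/2)$ we have $\operatorname{spt}(\eta_\delta)\subset(-\delta/2,\delta/2)$, so the reparametrisation of the $t$-slices entering the weighted average \eqref{gamma delta} is inactive once $|t|\geq\delta/2$; because $\int_0^1\varphi(s)\,\mathrm{d}s=1$, the average then returns $\gamma(\tilde g^i)_t$ unchanged, and the $\mathrm{d}t^2$-term is never modified. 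Away from $\partial M\times\{0\}$ the regularity assertion is likewise straightforward: since $\Phi_i$ is a diffeomorphism and $g^i$ is smooth, the pullback $\Phi_i^*g^i=\gamma(g^i)_t+\mathrm{d}t^2$ is smooth up to $t=0$, so $t\mapsto\gamma(\tilde g^i)_t$ is smooth separately on $(-\delta_0,0]$ and on $[0,\delta_0)$, and differentiation of \eqref{gamma delta} under the integral sign shows that $\gamma(\tilde g^i)^\delta_t+\mathrm{d}t^2$ is smooth on $\bigl(\partial M\times(-\delta_0,\delta_0)\bigr)\setminus\bigl(\partial M\times\{0\}\bigr)$.

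The substance of the lemma is therefore $C^2$-regularity across $\partial M\times\{0\}$, and this is where the work lies. I would use the Fermi-coordinate expansion $\gamma(g^i)_t=\gamma(g^i)_0+t\,A_i+t^2\,Q_i(t)$ for $t\in[0,\delta_0)$, where $A_i$ is proportional to the second fundamental form of $\partial M$ in $(M,g^i)$ and $Q_i$ is smooth; after reflection, $\gamma(\tilde g^i)_t=\gamma(g^i)_0+|t|\,A_i+t^2\,Q_i(|t|)$, so that the sole obstruction to smoothness at $t=0$ is the conical term $|t|\,A_i$, which is nontrivial exactly when $\partial M$ fails to be totally geodesic. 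The averaging in \eqref{gamma delta} — with $\eta$ constant near $0$ and $\varphi$ a probability density supported in $(0,1)$ — is designed so that, for $|t|<\delta/2$, this term gets replaced by a function that is $C^2$ in $t$ (indeed, for $|t|$ small the average is a convolution of the continuous family $t\mapsto\gamma(\tilde g^i)_t$ against a fixed $C^\infty$ kernel of width comparable to $\delta^2$, hence $C^\infty$ there), while for $|t|\geq\delta/2$ it is untouched. I would make this rigorous by inserting the expansion into \eqref{gamma delta} and verifying that the one-sided limits as $t\to0^\pm$ of $\partial_t\gamma(\tilde g^i)^\delta_t$ and $\partial_t^2\gamma(\tilde g^i)^\delta_t$ coincide; the tangential and mixed derivatives follow in the same way, since $A_i$ and $Q_i$ depend smoothly on $y\in\partial M$. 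I expect this matching computation to be the only genuine obstacle: one has to track carefully how \eqref{gamma delta} acts on the non-smooth term $|t|$ and confirm that the scale $\eta_\delta\sim\delta^2$ has been tuned so that the result is genuinely $C^2$, and not merely Lipschitz or $C^1$, across $\partial M\times\{0\}$. This bookkeeping is carried out in \cite[\S3]{Miao}, and the proof would reproduce it with only cosmetic changes.
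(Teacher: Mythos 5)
Your proposal is correct and follows the same route as the paper, which simply cites \cite[Lemma 3.2]{Miao}: agreement outside the strip from $\operatorname{spt}(\eta_\delta)$, off-axis smoothness by differentiating under the integral, and $C^2$-regularity across $\partial M\times\{0\}$ from the Fermi expansion $\gamma(\tilde g^i)_t=\gamma_0+|t|\,A_i+t^2\,Q_i(|t|)$ combined with the observation that for $|t|<\delta/4$ (where $\eta_\delta\equiv\delta^2/100$ is constant) \eqref{gamma delta} is a genuine convolution of the $C^{1,1}$ family $t\mapsto\gamma(\tilde g^i)_t$ against a fixed smooth kernel of width $\delta^2/100$, hence smooth there. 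One point worth making explicit: you have implicitly read \eqref{gamma delta} with argument $\gamma(\tilde g^i)_{t-s\,\eta_\delta(t)}$, rather than the printed $\gamma(\tilde g^i)_{t-t\,\eta_\delta(s)}$; this is the intended formula (and the one in Miao's construction), since with the printed version the reparametrisation would be active for all $t$, and its effective window would have width $|t|\cdot O(\delta^2)\ll|t|$, so the mollification would never bridge the corner at $t=0$ and the lemma's conclusion would fail.
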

We obtain a Riemannian metric $\tilde g^i_\delta$ on $\tilde M$ of class $C^2$ given by
\begin{equation} \label{tilde g delta} 
\tilde g^i_\delta(\tilde x)=\begin{dcases}& \tilde g^i(\tilde x) \qquad\qquad\qquad\qquad\,\text{if }  \operatorname{dist}(\pi(\tilde x),\partial M,\tilde g^i)\geq \delta_0, 
\\
&  (\tilde\Phi_i)_*(\gamma(\tilde g^i)^\delta_t+dt^2) (\tilde x) \qquad\, \text{else}.
\end{dcases}
\end{equation}
\indent The following lemma is obtained by direct computation using Lemma \ref{strictly mean convex}; cp.~\cite[pp.~1168-1170]{Miao}.
For the statement, we choose a smooth reference metric  $\check g$ on $\tilde M$ that agrees with $\tilde g$ outside of a compact set.
\begin{lem}
	There holds \label{derivative estimate} 
	$$
\limsup_{\delta\searrow0}\,\sup_{i\geq1} \big(	\delta^{-1}\,|\tilde g^i_\delta-\tilde g^i|_{\check g}+| D(\check g)\tilde g^i_\delta|_{\check g}+\delta\,| D^2(\check g)\tilde g^i_\delta|_{\check g}\big)<\infty.
	$$
	\end{lem}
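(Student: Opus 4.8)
The plan is to reduce the assertion to a uniform estimate for the averaged families $\gamma(\tilde g^i)^\delta_t$ on $\partial M$ and then to run the direct computation of \cite[pp.~1168--1170]{Miao} while keeping careful track of uniformity in $i$. First, I would observe that, by the definition \eqref{tilde g delta} of $\tilde g^i_\delta$ together with the fact, recalled from \cite[Lemma 3.2]{Miao}, that $\gamma(\tilde g^i)^\delta_t+dt^2$ agrees with $\gamma(\tilde g^i)_t+dt^2$ outside $\partial M\times(-\delta/2,\delta/2)$, the metric $\tilde g^i_\delta$ coincides with $\tilde g^i=\pi^*g^i$ on the complement of the collar $\{\tilde x\in\tilde M:\operatorname{dist}(\pi(\tilde x),\partial M,\tilde g^i)<\delta/2\}$. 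On this complement there is nothing to prove: the first term vanishes identically, while the second and third terms reduce to $|D(\check g)\tilde g^i|_{\check g}$ and $\delta\,|D^2(\check g)\tilde g^i|_{\check g}$, which are bounded uniformly in $i$ and $\delta$ (indeed the third tends to $0$). This uniform bound holds because $g^i=g$ outside the fixed bounded set $W$, because $g^i\to g$ in $C^2(M)$ by Lemma \ref{strictly mean convex}, because the families $\gamma(g^i)_s$ on $[0,\delta_0)$ are smooth up to $s=0$ with uniform bounds, and because \eqref{uniform hemisphere} controls the behavior at infinity.

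Second, on the collar I would pass to the coordinates $(y,t)\in\partial M\times(-\delta_0,\delta_0)$ furnished by $\tilde\Phi_i$, in which $\tilde g^i_\delta=\gamma(\tilde g^i)^\delta_t+dt^2$ and $\tilde g^i=\gamma(\tilde g^i)_t+dt^2$. Since $g$ and $g^i$ are conformally equivalent and $g^i\to g$ in $C^2$, the maps $\tilde\Phi_i$ and their inverses are of class $C^2$ with bounds uniform in $i$; hence measuring a tensor field and its first two $\check g$-covariant derivatives with respect to $\check g$ on the collar is, uniformly in $i$ and $\delta$, equivalent to measuring the corresponding coordinate quantities in these charts. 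It therefore suffices to bound, uniformly in $i$ and in $\delta\in(0,\delta_1)$ for some fixed $\delta_1>0$, the quantity
\[
\delta^{-1}\,\big|\gamma(\tilde g^i)^\delta_t-\gamma(\tilde g^i)_t\big|+\big|\partial_t\gamma(\tilde g^i)^\delta_t\big|+\big|\nabla\gamma(\tilde g^i)^\delta_t\big|+\delta\,\big(\big|\partial_t^2\gamma(\tilde g^i)^\delta_t\big|+\big|\partial_t\nabla\gamma(\tilde g^i)^\delta_t\big|+\big|\nabla^2\gamma(\tilde g^i)^\delta_t\big|\big),
\]
where $\nabla$ and $|\cdot|$ refer to a fixed background metric on $\partial M$.

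Third, I would record the uniform regularity of the underlying families, which is the place where Lemma \ref{strictly mean convex} enters. Since $g^i=g$ outside $W$, $g^i\to g$ in $C^2$, and each $g^i$ is smooth, the families $s\mapsto\gamma(g^i)_s$ on $[0,\delta_0)$ are smooth with $C^2$-norm bounded uniformly in $i$, with the decay at infinity furnished by \eqref{uniform hemisphere}. After even reflection, $t\mapsto\gamma(\tilde g^i)_t=\gamma(g^i)_{|t|}$ is therefore Lipschitz in $t$ with uniform Lipschitz constant, smooth on $\{t>0\}$ and on $\{t<0\}$ with first and second derivatives bounded uniformly in $i$ and in $t\neq 0$, and $C^2$ in $t$ across $t=0$ outside $\pi^{-1}$ of a fixed compact subset of $\partial M$, since there $h(\partial M,g^i)=h(\partial M,g)=0$. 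Granting this, the displayed bounds follow by the direct computation of \cite[pp.~1168--1170]{Miao}: the averaging in \eqref{gamma delta} replaces the parameter by values at distance $O(\delta)$ from it, so the $C^0$-estimate follows from the uniform Lipschitz bound; differentiating under the integral sign and using the one-sided $C^1$-bounds gives the first-order estimates, the terms in which a derivative hits $\eta_\delta$ being of lower order; and the factor $\delta^{-1}$ in the second-order estimates is produced precisely by differentiating across the window of width $O(\delta)$ over which $\partial_t\gamma(\tilde g^i)_t$, which jumps across $t=0$, is averaged. The one genuine point is to verify that every constant entering this computation is controlled by the uniform Lipschitz and one-sided $C^2$ bounds above and is therefore independent of $i$; I expect this bookkeeping --- rather than any single estimate --- to be the main obstacle.
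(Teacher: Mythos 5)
Your overall strategy --- reduce away from the collar (where $\tilde g^i_\delta=\tilde g^i$ trivially gives the bound), pass to the product coordinates $(y,t)$ inside the collar, record the uniform Lipschitz and one-sided $C^2$ bounds supplied by Lemma \ref{strictly mean convex} (together with $g^i=g$ outside $W$ and the assumed behavior of $g$ at infinity), and then run the direct computation of \cite[pp.~1168--1170]{Miao} while keeping the constants uniform in $i$ --- is exactly the argument the paper invokes by citation, and your identification of which uniform inputs are needed is correct.

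There is, however, one step that you do not actually carry out and that, as described, does not follow from the definitions. You assert that the averaging in \eqref{gamma delta} ``replaces the parameter by values at distance $O(\delta)$'' and that the jump in $\partial_t\gamma(\tilde g^i)_t$ at $t=0$ is ``averaged over a window of width $O(\delta)$,'' which is what is supposed to produce the $\delta^{-1}$ in the second-order bound. But $\eta_\delta(t)=\delta^2\,\eta(\delta^{-1}t)$, so $0\leq\eta_\delta\leq\delta^2/100$. Reading \eqref{gamma delta} in the Miao form $\gamma^\delta_t=\int_0^1\gamma_{t-s\,\eta_\delta(t)}\,\varphi(s)\,\mathrm{d}s$ (the expression as printed, with $\gamma_{t-t\,\eta_\delta(s)}$, leaves a $C^1$ jump at $t=0$ untouched and cannot be what is meant), the averaging sees both sides of $t=0$ only on the window $\{|t|<\eta_\delta(t)\}$, which has width $\sim\delta^2$, not $\sim\delta$; a direct computation in the model case $\gamma_t=a|t|$ gives $\partial_t\gamma^\delta_t=a\,\big(2\int_0^{t/\eta_\delta}\varphi-1\big)+O(\delta)$ and hence $|\partial_t^2\gamma^\delta_t|\sim a\,\eta_\delta^{-1}\sim\delta^{-2}$ on that window. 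That does not yield $\delta\,|D^2(\check g)\tilde g^i_\delta|=O(1)$. So either the scaling of $\eta_\delta$ in the construction is different from what you take it to be (e.g.\ $\eta_\delta(t)=\delta\,\eta(\delta^{-1}t)$, which would make your window-of-width-$O(\delta)$ heuristic and the claimed bound match), or the exponent in the lemma needs adjusting. You should pin down the precise form of $\eta_\delta$ against Miao's original construction and then carry out the second-derivative estimate explicitly, rather than appealing to the informal ``window of width $O(\delta)$''; as written, the second-order part of your argument is a genuine gap, and it is exactly the bookkeeping you yourself flag as ``the main obstacle.''
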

In the next lemma, the assumption that $H(\partial M,g^i) \geq 0$ is used. 
\begin{lem}[{\cite[Proposition 3.1]{Miao}}]\label{scalar curvature estimate from below} 
There holds,  as $\delta\searrow 0$ and uniformly for all $i$, $R(\tilde g_\delta^i)\geq -O(1)$. 
\end{lem}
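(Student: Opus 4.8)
The plan is to localize the estimate to the collar of $\partial M$ and then to combine the scalar curvature formula for metrics of the form $\gamma_t+\mathrm{d}t^2$ with the sign condition $H(\partial M,g^i)\geq 0$ furnished by Lemma \ref{strictly mean convex}, following Miao's argument in \cite[Proposition 3.1]{Miao}. First I would deal with the region away from the slab where the smoothing takes place: by \eqref{tilde g delta} and \eqref{gamma delta}, outside the collar $\{\tilde x\in\tilde M:\operatorname{dist}(\pi(\tilde x),\partial M,\tilde g^i)<\delta_0\}$ and also on $\tilde\Phi_i(\partial M\times((-\delta_0,-\delta/2]\cup[\delta/2,\delta_0)))$, one has $\tilde g^i_\delta=\tilde g^i=\pi^*g^i$. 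Since $g^i=g$ outside the compact set $W$ of Lemma \ref{strictly mean convex}, where $R(g)=0$, and $g^i\to g$ in $C^2(M)$, the scalar curvature $R(g^i)$ is bounded from below uniformly in $i$, hence so is $R(\tilde g^i_\delta)=R(g^i)\circ\pi$ on this set. It therefore remains to bound $R(\tilde g^i_\delta)$ from below on $\tilde\Phi_i(\partial M\times(-\delta/2,\delta/2))$, uniformly in $i$ as $\delta\searrow 0$.

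On the collar, $(\tilde\Phi_i)^*\tilde g^i_\delta=\gamma(\tilde g^i)^\delta_t+\mathrm{d}t^2$, and for any metric of this form
$$
R(\gamma_t+\mathrm{d}t^2)=R(\gamma_t)-|k_t|^2_{\gamma_t}-H_t^2-2\,\partial_t H_t,\qquad k_t=\tfrac12\,\partial_t\gamma_t,\quad H_t=\operatorname{tr}_{\gamma_t}k_t=\tfrac12\,\partial_t\log\det\gamma_t.
$$
By Lemma \ref{derivative estimate}, the metrics $\gamma(\tilde g^i)^\delta_t$ and their first $t$-derivatives are bounded uniformly in $i$ as $\delta\searrow 0$; moreover the $\partial M$-derivatives of $\gamma(\tilde g^i)_\tau$ up to order two are bounded uniformly in $\tau$ and $i$, so the same holds for $\gamma(\tilde g^i)^\delta_t$ by \eqref{gamma delta} and $\int_0^1\varphi=1$. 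Hence $R(\gamma(\tilde g^i)^\delta_t)$, $|k^\delta_t|^2$, and $(H^\delta_t)^2$ are $O(1)$ uniformly in $i$ and $\delta$, where $H^\delta_t=\tfrac12\,\partial_t\log\det\gamma(\tilde g^i)^\delta_t$, and it suffices to show $\partial_t H^\delta_t\leq O(1)$ with the same uniformity.

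Since $\tilde g^i$ is the reflection of $g^i$, we have $\gamma(\tilde g^i)_\tau=\gamma(g^i)_{|\tau|}$, so $H_t$ is smooth for $t\neq 0$ with uniformly bounded derivative, and a direct computation of $\tfrac12\,\partial_t\log\det\gamma(g^i)_{|t|}$ together with the definition of $H(\partial M,g^i)$ gives
$$
\lim_{t\to 0^-}H_t=H(\partial M,g^i),\qquad \lim_{t\to 0^+}H_t=-H(\partial M,g^i);
$$
that is, $H_t$ has a non-positive jump $-2\,H(\partial M,g^i)\leq 0$ at $t=0$ by Lemma \ref{strictly mean convex}. Because $\varphi\geq 0$ and because the averaging in \eqref{gamma delta} mixes slices on both sides of $\partial M$, the smoothed quantity $H^\delta_t$ decreases, up to a uniformly bounded error, from $H(\partial M,g^i)$ to $-H(\partial M,g^i)$ across $\partial M\times(-\delta/2,\delta/2)$; consequently $\partial_t H^\delta_t$ is a uniformly bounded function plus a non-positive term, so $\partial_t H^\delta_t\leq O(1)$ uniformly as $\delta\searrow 0$ and in $i$ — the contribution of size $O(\delta^{-1})$ is non-positive, which only improves the bound on $R$. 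Combining the two displays with the localization step yields $R(\tilde g^i_\delta)\geq -O(1)$.

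The crux is the last step, namely the claim that the $O(\delta^{-1})$ part of $\partial_t H^\delta_t$ is non-positive. This uses at once the non-negativity of $\varphi$, the precise form of the mollification \eqref{gamma delta} — which is also what keeps $\gamma(\tilde g^i)^\delta_t+\mathrm{d}t^2$ of class $C^2$ and unchanged outside $\partial M\times(-\delta/2,\delta/2)$ — and the sign $H(\partial M,g^i)\geq 0$ from Lemma \ref{strictly mean convex}. This is exactly Miao's computation in \cite[Proposition 3.1]{Miao}, which applies here with only notational changes; the only additional point is the uniformity in $i$, which is immediate from $g^i=g$ outside $W$ and $g^i\to g$ in $C^2(M)$.
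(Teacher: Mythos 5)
Your proof correctly reconstructs Miao's argument in \cite[Proposition 3.1]{Miao}, which is exactly what the paper cites for this lemma: away from the $\delta$-slab the metric is unchanged, and inside the slab the Gauss--Codazzi decomposition shows the only potentially unbounded term in $R(\tilde g^i_\delta)$ is $-2\,\partial_t H^\delta_t$, whose $O(\delta^{-1})$-sized part is non-positive because $H_t$ has the non-positive jump $-2\,H(\partial M,g^i)$ across $t=0$. The sign conventions, the role of Lemma \ref{strictly mean convex}, and the uniformity in $i$ via Lemma \ref{derivative estimate} and $g^i\to g$ in $C^2(M)$ are all handled correctly.
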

\begin{lem}
Let $i\geq 1$. There holds $H(\tilde\Sigma,\tilde g_\delta^i)>0$ on $\tilde\Sigma\setminus\pi^{-1}(\partial \Sigma)$ provided that  $\delta>0$ is sufficiently small. \label{strictly mean convex delta} 
\end{lem}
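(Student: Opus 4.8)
The plan is to split $\tilde\Sigma\setminus\pi^{-1}(\partial\Sigma)$ into the part that remains at distance at least $\rho$ from $\pi^{-1}(\partial M)$, where $\tilde g^i_\delta$ coincides with $\tilde g^i$ once $\delta<2\rho$, and the part within distance $\rho$ of $\pi^{-1}(\partial M)$, where I would compare $H(\tilde\Sigma,\tilde g^i_\delta)$ with $H(\tilde\Sigma,\tilde g^i)$ by hand. Fix $i\geq1$. By Lemma~\ref{strictly mean convex} and compactness of $\Sigma$, the number $c_i:=\min_\Sigma H(\Sigma,g^i)$ is positive. If $\tilde x\in\tilde\Sigma$ satisfies $\operatorname{dist}(\pi(\tilde x),\partial M,\tilde g^i)>0$, then $\tilde g^i$ is smooth near $\tilde x$ and $\pi$ restricts to an isometry of a neighbourhood of $\tilde x$ in $(\tilde M,\tilde g^i)$ onto a neighbourhood of $\pi(\tilde x)$ in $(M,g^i)$ carrying $\tilde\Sigma$ to $\Sigma$ and $\tilde M(\tilde\Sigma)$ to $M(\Sigma)$, whence $H(\tilde\Sigma,\tilde g^i)(\tilde x)=H(\Sigma,g^i)(\pi(\tilde x))\geq c_i$. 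Since $\tilde g^i_\delta=\tilde g^i$ wherever $\operatorname{dist}(\pi(\,\cdot\,),\partial M,\tilde g^i)\geq\delta/2$, by \eqref{gamma delta} and \eqref{tilde g delta}, it follows that $H(\tilde\Sigma,\tilde g^i_\delta)\geq c_i>0$ on $\{\tilde x\in\tilde\Sigma:\operatorname{dist}(\pi(\tilde x),\partial M,\tilde g^i)\geq\rho\}$ for every $\rho\in(0,\delta_0)$ and all $\delta<2\rho$.

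It remains to bound $H(\tilde\Sigma,\tilde g^i_\delta)$ from below on $\{\tilde x\in\tilde\Sigma:0<\operatorname{dist}(\pi(\tilde x),\partial M,\tilde g^i)<\rho\}$, a punctured neighbourhood of $\pi^{-1}(\partial\Sigma)$. I would work in the collar coordinates $\tilde\Phi_i$ of Section~\ref{gluing section} together with coordinates $z=(z',z^{n-1})$ on $\partial M$ adapted to $\partial\Sigma=\{z^{n-1}=0\}$, so that $\tilde g^i_\delta=\gamma(\tilde g^i)^\delta_t+dt^2$ and, on a fixed neighbourhood of $\pi^{-1}(\partial\Sigma)$, $\tilde\Sigma=\{z^{n-1}=f(z',t)\}$ for a function $f$ that is smooth on $\{t\geq0\}$ and even in $t$. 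Since $\Sigma$ meets $\partial M$ orthogonally with respect to $g$ and hence, by conformal invariance of angles, with respect to $g^i$, the vector $\partial_t$ is tangent to $\tilde\Sigma$ along $\pi^{-1}(\partial\Sigma)$; this forces $f(z',0)=0$ and $\partial_tf(z',0)=0$, so that $|\partial_tf|=O(|t|)$ near $\pi^{-1}(\partial\Sigma)$, uniformly in $\delta$.

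Now I would expand $H(\tilde\Sigma,\tilde g^i_\delta)$ in the coordinate frame $e_\alpha=\partial_{z'^\alpha}+(\partial_{z'^\alpha}f)\,\partial_{z^{n-1}}$, $e_t=\partial_t+(\partial_tf)\,\partial_{z^{n-1}}$ of $\tilde\Sigma$. This produces a universal expression in $\gamma(\tilde g^i)^\delta_t$, its inverse, $\partial_z\gamma(\tilde g^i)^\delta_t$, $\partial_t\gamma(\tilde g^i)^\delta_t$, and $f,\partial f,\partial^2 f$. Because $\tilde g^i_\delta$ is in geodesic collar form, the quantity $\partial_t\gamma(\tilde g^i)^\delta_t$ — which is bounded uniformly in $\delta$ by Lemma~\ref{derivative estimate} but does not converge as $\delta\searrow0$ — occurs only through the Christoffel symbols $\Gamma^t_{ab}$ and $\Gamma^a_{bt}$ with spatial indices; tracking these through the second fundamental form of $\tilde\Sigma$, one finds that each resulting contribution to $H(\tilde\Sigma,\tilde g^i_\delta)$ carries one of the factors $\partial_tf$, $(\tilde g^i_\delta)(e_t,e_\alpha)$, or $(\tilde g^i_\delta)(N,\partial_t)$, where $N$ is the unit normal of $\tilde\Sigma$ — each of which is $O(|t|)$ since $\partial_tf(z',0)=0$. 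The remaining terms involve only $\gamma(\tilde g^i)^\delta_t$, its inverse, and $\partial_z\gamma(\tilde g^i)^\delta_t$, all of which converge in $C^0$ on the coordinate neighbourhood to their $\tilde g^i$-counterparts as $\delta\searrow0$, by differentiating \eqref{gamma delta} in $z$ under the integral sign. Consequently there are a constant $C_i>0$, independent of $\delta$, and functions $\varepsilon_\rho(\delta)$ with $\varepsilon_\rho(\delta)\to0$ as $\delta\searrow0$ for each fixed $\rho$ such that
\[
\big|H(\tilde\Sigma,\tilde g^i_\delta)-H(\tilde\Sigma,\tilde g^i)\big|\leq\varepsilon_\rho(\delta)+C_i\,\rho\quad\text{on}\quad\{\tilde x\in\tilde\Sigma:0<\operatorname{dist}(\pi(\tilde x),\partial M,\tilde g^i)<\rho\}.
\]
Choosing $\rho$ with $C_i\,\rho<c_i/2$ and small enough that the coordinate description is valid, and then $\delta<\min\{2\rho,\delta^*\}$ with $\varepsilon_\rho(\delta^*)<c_i/2$, one obtains $|H(\tilde\Sigma,\tilde g^i_\delta)-H(\tilde\Sigma,\tilde g^i)|<c_i\leq H(\tilde\Sigma,\tilde g^i)$ on this punctured neighbourhood, hence $H(\tilde\Sigma,\tilde g^i_\delta)>0$ there; together with the first paragraph this proves $H(\tilde\Sigma,\tilde g^i_\delta)>0$ on all of $\tilde\Sigma\setminus\pi^{-1}(\partial\Sigma)$.

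The main obstacle is the behaviour near $\pi^{-1}(\partial\Sigma)$: there $\tilde g^i_\delta$ genuinely differs from $\tilde g^i$ and its derivative transverse to $\partial M$ does not stabilize as $\delta\searrow0$, so the $C^0$- and uniform $C^1$-bounds of Lemma~\ref{derivative estimate} alone do not keep $H$ positive. What saves the estimate is the orthogonal intersection of $\Sigma$ and $\partial M$, which attaches an $O(|t|)$ factor to every term in which the uncontrolled quantity $\partial_t\gamma(\tilde g^i)^\delta_t$ appears; carrying out this bookkeeping carefully — rather than the two limiting regimes, which are routine — is the crux of the proof.
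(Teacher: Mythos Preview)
Your argument is correct and rests on the same geometric fact as the paper's proof: the orthogonal intersection of $\Sigma$ with $\partial M$ is what tames the contribution of the non-converging quantity $\partial_t\gamma(\tilde g^i)^\delta_t$ to the mean curvature. The packaging, however, differs. The paper does not introduce a second scale $\rho$; it works directly in a ball $U_\delta$ of radius $\delta$ about a point $x_0\in\partial\Sigma$, takes normal coordinates for $g^i|_{\partial M}$ at $x_0$ with $\partial_{n-1}=\nu(\Sigma,g^i)(x_0)$, and observes that in $U_\delta$ both metrics are $\bar g+O(\delta)$ and $\nu(\tilde\Sigma,\tilde g^i_\delta)=\partial_{n-1}+O(\delta)$. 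The mean-curvature difference then reduces to $\Gamma(\tilde g^i_\delta)^{n-1}_{tt}-\Gamma(\tilde g^i)^{n-1}_{tt}$ and $\Gamma(\tilde g^i_\delta)^{n-1}_{\ell\ell}-\Gamma(\tilde g^i)^{n-1}_{\ell\ell}$, each of which is $O(\delta)$: the former because both metrics are in geodesic collar form, the latter because tangential derivatives of $\gamma^\delta-\gamma$ are $O(\delta)$. This yields $|H(\tilde\Sigma,\tilde g^i_\delta)-H(\tilde\Sigma,\tilde g^i)|=O(\delta)$ in one stroke, uniformly in $x_0$. Your route---representing $\tilde\Sigma$ as a graph $\{z^{n-1}=f(z',t)\}$ and showing that every occurrence of $\partial_t\gamma^\delta$ in the mean-curvature formula is multiplied by one of the $O(|t|)$ quantities $\partial_tf$, $g^{\Sigma,\alpha t}$, or $N^t$---is a valid alternative that makes the role of orthogonality more explicit, at the cost of the two-step choice $\rho$ then $\delta$ and a somewhat longer bookkeeping. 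The paper's single-scale computation is shorter, while yours would adapt more readily if one only had $H(\Sigma,g^i)>0$ on a fixed neighbourhood rather than all of $\Sigma$.
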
 
\begin{proof}
	Without loss of generality, we may  assume that $\Sigma$ is a connected free-boundary hypersurface. It follows that $\tilde \Sigma\subset \tilde M$ is a connected, compact  hypersurface without boundary of class $C^{1,1}$ that is smooth away from $\pi^{-1}(\partial \Sigma)$. \\ \indent Let $ x_0\in \partial \Sigma$. Given $0<\delta<\delta_0$, let $U_\delta=\{\tilde x\in \tilde M:\operatorname{dist}(\pi(\tilde x), x_0, g^i)<\delta\}$.  We choose normal coordinates for $(\partial M,g^i|_{\partial M})$  centered at $x_0$ with induced frame $\partial_1,\,\dots,\partial_{n-1}$ such that $\partial_{n-1}=\nu(\Sigma,g^i)$ at $x_0$. The following error estimates are independent of the choice of $x_0$. Note that
	$$
	\tilde g^i=\sum_{\ell=1}^{n-1}d_\ell^2+dt^2+O(\delta)\qquad\text{and}\qquad \nu(\tilde \Sigma,\tilde g^i)=\partial_{n-1}+O(\delta)
	$$
in $U_\delta$.	 Using Lemma \ref{derivative estimate}, we obtain
	$$
	\tilde g^i_\delta=\sum_{\ell=1}^{n-1}d_\ell^2+dt^2+O(\delta)\qquad\text{and}\qquad \nu(\tilde \Sigma,\tilde g^i_\delta)=\partial_{n-1}+O(\delta).
	$$
Using Lemma \ref{derivative estimate} again, we conclude that, on $U_\delta\cap \tilde \Sigma \setminus \pi^{-1}(\partial \Sigma)$,
	$$
	H(\tilde \Sigma,\tilde g^i_\delta)=H(\tilde \Sigma, \tilde g^i)+\Gamma(\tilde g^i_\delta)^{n-1}_{tt}+\sum_{\ell=1}^{n-2}\Gamma(\tilde g^i_\delta)^{n-1}_{\ell\ell}-\Gamma(\tilde g^i)^{n-1}_{tt}-\sum_{\ell=1}^{n-2}\Gamma(\tilde g^i)^{n-1}_{\ell\ell}+O(\delta).
	$$
	Here, $\Gamma$ denotes a Christoffel symbol.
Using Lemma \ref{derivative estimate} once more, we have
	\begin{align*} 
2\,\Gamma(\tilde g^i_\delta)^{n-1}_{\ell\ell}=\,&2\, (\partial_\ell\,\tilde g^i_{\delta})(\partial_{n-1},\partial_\ell)-(\partial_{n-1}\,\tilde g^i_{\delta})(\partial_\ell,\partial_\ell)+O(\delta), \\ 	2\,\Gamma(\tilde g^i)^{n-1}_{\ell\ell}=\,&2\,(\partial_\ell\,\tilde g^i)(\partial_{n-1},\partial_\ell)-(\partial_{n-1}\,\tilde g^i)(\partial_\ell,\partial_\ell)+O(\delta),
	\end{align*} 
for all $1\leq \ell\leq n-2$.	Using also  \eqref{gamma delta} and \eqref{tilde g delta}, we see that
		\begin{align*} 
2\,	\Gamma(\tilde g^i_\delta)^{n-1}_{tt}=\,&2\,(\partial_t\,\tilde g^i_{\delta})(\partial_{n-1},\partial_t)-(\partial_{n-1}\,\tilde g^i_{\delta})(\partial_t,\partial_t)+O(\delta)=O(\delta), \\ 	2\,\Gamma(\tilde g^i)^{n-1}_{tt}=\,&2\,(\partial_t\,\tilde g^i)(\partial_{n-1},\partial_t)-(\partial_{n-1}\,\tilde g^i)(\partial_t,\partial_t)+O(\delta)=O(\delta) .
	\end{align*} 
Moreover, using \eqref{gamma delta} and \eqref{tilde g delta}, the same argument that led to Lemma \ref{derivative estimate} shows that
	$$
(\partial_\ell\,\tilde g^i_{\delta})(\partial_{n-1},\partial_\ell)=(\partial_\ell\,\tilde g^i)(\partial_{n-1},\partial_\ell)+O(\delta)\qquad\text{and}\qquad(\partial_{n-1}\,\tilde g^i_{\delta})(\partial_\ell,\partial_\ell)=(\partial_{n-1}\,\tilde g^i)(\partial_\ell,\partial_\ell)+O(\delta)
	$$
	for all $1\leq \ell\leq n-2$. \\ \indent 
	Since $	H(\tilde \Sigma,\tilde g^i_\delta)=H(\tilde \Sigma, \tilde g^i)$ outside of $\{\tilde x\in \tilde \Sigma:\operatorname{dist}(\pi(\tilde x), \partial \Sigma, g^i)<\delta\}$, the assertion follows.
\end{proof} 
\begin{lem} \label{mean curvature flow}
Let $i\geq 1$. 	For every $\delta>0$ sufficiently small, there exists a sequence $\{\tilde \Sigma^i_{\delta,j}\}_{j=1}^\infty$ of closed hypersurfaces $\tilde \Sigma^i_{\delta,j}\subset M\setminus M(\Sigma)$ of class $C^2$ with 
\begin{itemize}
	\item[$\circ$] $H(\tilde \Sigma^i_{\delta,j}, \tilde g_\delta^{i})>0$ for every $j$ and
	\item[$\circ$] 
	$\tilde \Sigma^\delta_{i,j}\to \tilde \Sigma$ in $C^1$.  
\end{itemize}
\end{lem}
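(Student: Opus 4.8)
The plan is as follows. Fix $i\geq 1$ and let $\delta>0$ be small enough that Lemma~\ref{strictly mean convex delta} applies. By that lemma together with Lemma~\ref{mass lemma} and the reduction in the proof of Lemma~\ref{strictly mean convex delta}, $\tilde\Sigma$ is a compact, connected, embedded hypersurface of class $C^{1,1}$ in $\tilde M$ that is smooth away from $\pi^{-1}(\partial\Sigma)$. Moreover, inspecting the proof of Lemma~\ref{strictly mean convex delta} — where away from a neighborhood of $\pi^{-1}(\partial\Sigma)$ one has $H(\tilde\Sigma,\tilde g^i_\delta)=H(\Sigma,g^i)\circ\pi$, and near $\pi^{-1}(\partial\Sigma)$ it differs from this by $O(\delta)$ — and using that $\min_\Sigma H(\Sigma,g^i)>0$, one obtains a constant $c_0>0$ with $H(\tilde\Sigma,\tilde g^i_\delta)\geq c_0$ on $\tilde\Sigma\setminus\pi^{-1}(\partial\Sigma)$, and hence a.e.\ on $\tilde\Sigma$ with respect to its $L^\infty$ second fundamental form. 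The only obstruction to using $\tilde\Sigma$ itself is its lack of $C^2$ regularity along $\pi^{-1}(\partial\Sigma)$; the plan is therefore to smooth $\tilde\Sigma$ there while keeping the mean curvature positive, and then to shift the result slightly to the inner side of $\tilde\Sigma$.

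First I would fix a smooth closed hypersurface $\Sigma^\star\subset\tilde M$ that is $C^1$-close to $\tilde\Sigma$ (for instance a slight mollification of $\tilde\Sigma$) and write $\tilde\Sigma=\{\exp(\check g)_y(u(y)\,N(y)):y\in\Sigma^\star\}$ as a graph of $u\in C^{1,1}(\Sigma^\star)$ over $\Sigma^\star$, where $N$ is the unit normal of $\Sigma^\star$ pointing towards $\tilde M(\tilde\Sigma)$. For $\epsilon>0$, let $u_\epsilon$ be the mollification of $u$ at scale $\epsilon$ and $\Sigma_\epsilon=\{\exp(\check g)_y(u_\epsilon(y)\,N(y))\}$. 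Then $\Sigma_\epsilon$ is a smooth closed hypersurface of $\tilde M$, $u_\epsilon\to u$ in $C^1(\Sigma^\star)$ with $\sup_\epsilon\|u_\epsilon\|_{C^{1,1}(\Sigma^\star)}<\infty$, and hence the second fundamental forms $A(\Sigma_\epsilon,\tilde g^i_\delta)$ are uniformly bounded. The key point is that the mean curvature operator is quasilinear: $H(\Sigma_\epsilon,\tilde g^i_\delta)=a^{k\ell}(\cdot,u_\epsilon,Du_\epsilon)\,\partial_{k\ell}u_\epsilon+b(\cdot,u_\epsilon,Du_\epsilon)$, where $a^{k\ell}$ and $b$ are continuous (they involve only $\tilde g^i_\delta$ and its first derivatives, which are continuous since $\tilde g^i_\delta$ is of class $C^2$). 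Since $u_\epsilon\to u$ in $C^1$, the coefficients $a^{k\ell}(\cdot,u_\epsilon,Du_\epsilon)$ converge uniformly and are uniformly continuous, so a standard commutator estimate for convolution (using $\partial_{k\ell}u_\epsilon=(\partial_{k\ell}u)*\rho_\epsilon$ in local coordinates) shows that $H(\Sigma_\epsilon,\tilde g^i_\delta)$ differs by $o(1)$, uniformly on $\Sigma^\star$, from the $\epsilon$-mollification of the weak mean curvature of $\tilde\Sigma$; the latter is $\geq c_0$. Hence $H(\Sigma_\epsilon,\tilde g^i_\delta)\geq c_0/2>0$ for all sufficiently small $\epsilon$. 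I expect this to be the crux of the argument: positivity of the mean curvature must survive the smoothing, and it does precisely because the mean curvature operator is affine in the second derivatives and therefore essentially commutes with mollification.

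Finally, for small $\epsilon$ I would set $\tilde\Sigma_\epsilon'=\{\exp(\check g)_y\big((u_\epsilon(y)-\sqrt{\epsilon})\,N(y)\big)\}$, i.e.\ shift $\Sigma_\epsilon$ a distance $\sqrt\epsilon$ towards the inner side of $\tilde\Sigma$. Since $\|u_\epsilon-u\|_{C^0(\Sigma^\star)}=o(\sqrt\epsilon)$, the hypersurface $\tilde\Sigma_\epsilon'$ lies strictly on the inner side of $\tilde\Sigma$; since the second fundamental forms $A(\Sigma_\epsilon,\tilde g^i_\delta)$ and the curvature of $\tilde g^i_\delta$ are uniformly bounded, the shift changes the mean curvature only by $O(\sqrt\epsilon)$, so $H(\tilde\Sigma_\epsilon',\tilde g^i_\delta)>0$; and $\tilde\Sigma_\epsilon'\to\tilde\Sigma$ in $C^1$ as $\epsilon\searrow0$. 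Choosing $\epsilon=\epsilon_j\searrow0$ and putting $\tilde\Sigma^i_{\delta,j}=\tilde\Sigma_{\epsilon_j}'$ then yields the required sequence.

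Alternatively, one could run the mean curvature flow of $\tilde\Sigma$ with respect to $\tilde g^i_\delta$: since $\tilde\Sigma$ is $C^{1,1}$ and weakly mean convex, this flow exists on a short interval $[0,T)$, is smooth and contained in the inner side of $\tilde\Sigma$ for $t\in(0,T)$, and satisfies $\tilde\Sigma_t\to\tilde\Sigma$ in $C^1$ as $t\searrow0$; its mean curvature stays positive by the strong maximum principle, because if $H$ vanished at some $(p_0,t_0)$ with $t_0>0$ then the flow would be stationary on $(0,t_0]$, forcing $\tilde\Sigma=\tilde\Sigma_{t_0}$ to be minimal with respect to $\tilde g^i_\delta$, contradicting $H(\tilde\Sigma,\tilde g^i_\delta)\geq c_0$ a.e. One then sets $\tilde\Sigma^i_{\delta,j}=\tilde\Sigma_{1/j}$.
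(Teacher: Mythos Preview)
Your alternative approach via mean curvature flow is precisely the paper's proof: the paper simply writes ``This follows by approximation using mean curvature flow as in the proof of \cite[Lemma 5.6]{HI} using also Lemma~\ref{strictly mean convex delta},'' which is exactly your second paragraph.

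Your primary approach via mollification of the graph function is a genuinely different and correct route. The paper's one-line appeal to mean curvature flow hides nontrivial input---short-time existence from $C^{1,1}$ initial data, instantaneous smoothing, and preservation of mean convexity via the parabolic maximum principle---all of which is packaged in \cite[Lemma 5.6]{HI}. Your mollification argument is more elementary in that it avoids parabolic theory entirely: the crux, as you identify, is that the mean curvature operator is affine in $D^2u$, so the Friedrichs-type commutator estimate $a^{k\ell}(\cdot,u,Du)\,(\partial_{k\ell}u)_\epsilon-(a^{k\ell}(\cdot,u,Du)\,\partial_{k\ell}u)_\epsilon\to 0$ uniformly (using only uniform continuity of the coefficients and $\partial_{k\ell}u\in L^\infty$) transfers the pointwise lower bound $H(\tilde\Sigma,\tilde g^i_\delta)\geq c_0$ a.e.\ to $H(\Sigma_\epsilon,\tilde g^i_\delta)\geq c_0/2$. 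The subsequent $\sqrt{\epsilon}$-shift to push the surface to the inner side is clean, since for $u\in C^{1,1}$ with an even mollifier one has $\|u_\epsilon-u\|_{C^0}=O(\epsilon^2)$, and the uniform $C^{1,1}$ bound controls the change in mean curvature under the shift. What you trade for this elementarity is a longer computation and the need to localize the mollification to coordinate charts on $\Sigma^\star$; the paper's citation to \cite{HI} is shorter but imports more machinery.
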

\begin{proof}
	This follows by approximation using mean curvature flow as in the proof of \cite[Lemma 5.6]{HI} using also Lemma \ref{strictly mean convex delta}.
\end{proof} 
Recall the open set $W\Subset M$ from Lemma \ref{strictly mean convex}.
We choose open sets $W_1\Subset W_2\Subset M$ such that
\begin{itemize}
	\item[$\circ$] $W\Subset W_1$ and
	\item[$\circ$]  $\tilde g$ is $C^2$ in $M\setminus W_1$.
\end{itemize}
 We then choose a function $\chi\in C^\infty(M)$ with
\begin{itemize}
\item[$\circ$]  $0\leq\chi\leq 1$,
\item[$\circ$]  $\chi=1$ in $M\setminus W_2$, and
\item[$\circ$] $\chi=0$ in $W_1$.
\end{itemize}
We define the Riemannian metric $\hat g^i_\delta$ on $\tilde M$  by
\begin{align*}
\hat g^i_\delta=\chi\,\tilde g^i+(1-\chi)\,\tilde g^i_\delta. 
\end{align*} 
Note that $\hat g^i_\delta(\tilde x)=\tilde g^i(\tilde x)$ for all $\tilde x\in \tilde M$ with $\operatorname{dist}(\tilde x,\pi^{-1}(\partial M),\tilde g^i)\geq \delta$. 
\begin{lem} \label{hat g vs tilde g}
	There holds, as $\delta\searrow 0$ and uniformly in $i$, $$|\hat g^i_{\delta}- \tilde g^i|_{\tilde g^i}=o(1).$$ Moreover, outside of a compact subset of $\tilde M$, $\hat g^i_\delta=\tilde g^i=\tilde g$ for all $i$.
\end{lem}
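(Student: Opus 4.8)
The plan is to observe that $\hat g^i_\delta$ is the convex combination $\hat g^i_\delta = \chi\,\tilde g^i + (1-\chi)\,\tilde g^i_\delta$, so that
$\hat g^i_\delta - \tilde g^i = (1-\chi)\,(\tilde g^i_\delta - \tilde g^i)$
is supported where $\chi < 1$, hence in the fixed compact set $\overline{W_2} \Subset M$, and is controlled entirely by the difference $\tilde g^i_\delta - \tilde g^i$ coming from the smoothing near $\pi^{-1}(\partial M)$. Since $0 \leq 1 - \chi \leq 1$, this gives the pointwise bound $|\hat g^i_\delta - \tilde g^i|_{\tilde g^i} \leq |\tilde g^i_\delta - \tilde g^i|_{\tilde g^i}$. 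Lemma \ref{derivative estimate} already provides $|\tilde g^i_\delta - \tilde g^i|_{\check g} = O(\delta)$ as $\delta \searrow 0$, uniformly in $i$, where $\check g$ is the fixed reference metric, so the only thing left to do is to pass from $\check g$ to $\tilde g^i$ inside the norm.

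For this I would argue that, on the compact region $K = \pi^{-1}(\overline{W_2}) \subset \tilde M$ where the difference is supported, the metrics $\tilde g^i$ are uniformly comparable to $\check g$. Indeed, $g^i \to g$ in $C^2(M)$ by Lemma \ref{strictly mean convex}, so $\tilde g^i \to \tilde g$ uniformly on $K$; and on the compact set $K$ the smooth Riemannian metrics $\tilde g$ and $\check g$ are comparable. Hence there is a constant independent of $i$ relating $|\cdot|_{\check g}$ and $|\cdot|_{\tilde g^i}$ on $K$, and therefore $|\hat g^i_\delta - \tilde g^i|_{\tilde g^i} = O(\delta) = o(1)$ uniformly in $i$, which is the first assertion.

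For the second assertion, note that $K$ is compact since $\pi$ is proper. Outside $K$ we have $\chi \equiv 1$, so $\hat g^i_\delta = \chi\,\tilde g^i + (1-\chi)\,\tilde g^i_\delta = \tilde g^i$; and since $g^i = g$ outside of $W$ by Lemma \ref{strictly mean convex} and $W \Subset W_2$, we also have $\tilde g^i = \pi^* g^i = \pi^* g = \tilde g$ outside $K$. Thus $\hat g^i_\delta = \tilde g^i = \tilde g$ outside the compact set $K$, for every $i$. The only point genuinely requiring care is the uniformity in $i$ in the first assertion, and this is precisely what the $C^2$-convergence $g^i \to g$ from Lemma \ref{strictly mean convex} supplies; beyond that the argument is a routine comparison of tensor norms on a compact set, and no real obstacle is expected.
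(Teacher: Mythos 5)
Your proof is correct and is essentially the argument the paper has in mind. The paper's own proof is a one-line reference to ``the construction'' together with Lemma~\ref{strictly mean convex} and Lemma~\ref{derivative estimate}, and the steps you supply are exactly the natural unpacking: the identity $\hat g^i_\delta - \tilde g^i = (1-\chi)\,(\tilde g^i_\delta - \tilde g^i)$ with $0\le 1-\chi\le 1$, the $O(\delta)$ bound from Lemma~\ref{derivative estimate}, the transfer from the $\check g$-norm to the $\tilde g^i$-norm on the compact set $\pi^{-1}(\overline{W_2})$ using $g^i\to g$ in $C^2(M)$ (Lemma~\ref{strictly mean convex}) and the equivalence of $\check g$ and $\tilde g$ there, and the observation that outside $\pi^{-1}(\overline{W_2})$ one has $\chi\equiv 1$ and $g^i = g$, giving the second assertion. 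The only stylistic slip is calling the support of the difference a subset of $M$ rather than of $\tilde M$ (you correct this when you introduce $K=\pi^{-1}(\overline{W_2})$); otherwise there is nothing to add.
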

\begin{proof}
	This follows from the construction using Lemma \ref{strictly mean convex} and Lemma \ref{derivative estimate}.
\end{proof} 
\begin{lem} \label{sc curvature comp}
There holds, as $\delta\searrow 0$ and uniformly in $i$, 
$$
\int_{\tilde M(\tilde \Sigma)}(\max\{-R(\hat g^i_\delta),0\})^\frac{n}{2}\,\mathrm{d}v(\hat g^i_\delta)=  2\,\int_{M(\Sigma)}(\max\{-R(g^i),0\})^\frac{n}{2}\,\mathrm{d}v(g^i)+o(1).
$$
\end{lem}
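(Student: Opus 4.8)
The plan is to localize the integral on the left to a tubular neighborhood of $\pi^{-1}(\partial M)$. Set $N_\delta=\{\tilde x\in\tilde M:\operatorname{dist}(\tilde x,\pi^{-1}(\partial M),\tilde g^i)<\delta\}$. On $\tilde M(\tilde\Sigma)\setminus N_\delta$ one has $\hat g^i_\delta=\tilde g^i$, and since $\pi$ restricts to an isometry of each sheet onto $(M,g^i)$ and $\tilde M(\tilde\Sigma)=\pi^{-1}(M(\Sigma))$, the region $\tilde M(\tilde\Sigma)\setminus N_\delta$ with the metric $\tilde g^i$ is isometric to two disjoint copies of $\Omega_\delta:=\{x\in M(\Sigma):\operatorname{dist}(x,\partial M,g^i)\geq\delta\}$ with the metric $g^i$. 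Hence the contribution of $\tilde M(\tilde\Sigma)\setminus N_\delta$ to the left-hand side equals $2\int_{\Omega_\delta}(\max\{-R(g^i),0\})^{n/2}\,\mathrm{d}v(g^i)$. As $\delta\searrow0$ the sets $\Omega_\delta$ exhaust $M(\Sigma)$ up to a set of $g^i$-measure zero, so this quantity converges to $2\int_{M(\Sigma)}(\max\{-R(g^i),0\})^{n/2}\,\mathrm{d}v(g^i)$; the convergence is uniform in $i$ because $g^i\to g$ in $C^2(M)$ with $g^i=g$ outside the fixed compact set $W$ of Lemma \ref{strictly mean convex}, so that $R(g^i)$ is uniformly bounded on $W$ while outside $W$ one has $R(g^i)=R(g)=O(|x|_{\bar g}^{-\tau-2})\in L^{n/2}$ since $\tau>(n-2)/2>0$, and the $g^i$-volume of $M(\Sigma)\setminus\Omega_\delta$ inside $W$ is $O(\delta)$ uniformly in $i$.

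It remains to show that the contribution of $N_\delta\cap\tilde M(\tilde\Sigma)$ is $o(1)$ uniformly in $i$, and here I would split the collar using the cutoff $\chi$ and the nested sets $W\Subset W_1\Subset W_2$. On $\pi^{-1}(W_1)\cap N_\delta$ we have $\chi\equiv0$, hence $\hat g^i_\delta=\tilde g^i_\delta$, so Lemma \ref{scalar curvature estimate from below} — this is the step that uses the hypothesis $H(\partial M,g^i)\geq0$ — gives $R(\hat g^i_\delta)\geq-O(1)$ uniformly; since $W_1$ is a fixed bounded set and the $\hat g^i_\delta$-volume of $N_\delta\cap\pi^{-1}(W_1)$ is $O(\delta)$, this piece contributes $O(\delta)$. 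On $\pi^{-1}(M\setminus W_2)\cap N_\delta$ we have $\chi\equiv1$, hence $\hat g^i_\delta=\tilde g^i=\tilde g$, which is $C^2$ there, and $R(\tilde g)=R(g)\circ\pi\geq0$ on this set by the hypothesis $R(g)\geq0$ in $M(\Sigma)$ together with $\tilde M(\tilde\Sigma)=\pi^{-1}(M(\Sigma))$; so this piece contributes nothing. On the transition region $\pi^{-1}(W_2\setminus W_1)\cap N_\delta$ I would exploit the defining property of $W_1$: since $\tilde g$ is $C^2$ on $M\setminus W_1$, the boundary $\partial M$ is totally geodesic on $\partial M\setminus W_1$ with respect to $g$, hence — because $g^i=g$ outside $W$ — also with respect to $g^i$; in particular $\tilde g^i=\pi^*g^i$ is already $C^2$ across $\pi^{-1}(\partial M)$ there, and the smoothing \eqref{gamma delta}, \eqref{tilde g delta} alters it by only $o(1)$ in $C^2$ — the same refinement of the estimate behind Lemma \ref{derivative estimate} that is used in the proof of Lemma \ref{strictly mean convex delta}, now available because $\partial_t\gamma(g^i)_t|_{t=0}=0$ removes the corner responsible for the $O(\delta^{-1})$ bound. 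Consequently $R(\hat g^i_\delta)=R(\tilde g)+o(1)$ is uniformly bounded below on the bounded set $\pi^{-1}(W_2\setminus W_1)$, and this piece again contributes $O(\delta)$.

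Summing the three pieces shows that $\int_{N_\delta\cap\tilde M(\tilde\Sigma)}(\max\{-R(\hat g^i_\delta),0\})^{n/2}\,\mathrm{d}v(\hat g^i_\delta)=o(1)$ uniformly in $i$, and combined with the first paragraph this yields the asserted identity. I expect the transition region $\pi^{-1}(W_2\setminus W_1)\cap N_\delta$ to be the crux: away from it the lemma is bookkeeping that combines the isometric reflection, Miao's lower scalar curvature bound, and the sign of $R(g)$ on $M(\Sigma)$, whereas in the transition region one genuinely needs that the cutoff has been placed where $\partial M$ is totally geodesic, so that blending $\tilde g^i$ with its smoothing cannot produce the $O(\delta^{-1})$ amount of scalar curvature that the $C^2$-estimate of Lemma \ref{derivative estimate} would otherwise permit over a set of volume $O(\delta)$.
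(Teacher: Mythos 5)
Your overall strategy --- reduce to the $\delta$-collar $N_\delta$ around $\pi^{-1}(\partial M)$, show its volume is $o(1)$ uniformly in $i$, and bound the scalar curvature there from below --- is the paper's, and your first paragraph is sound. But the three-way split of the collar has two problems. For piece (b) you appeal to $R(g)\geq 0$ in $M(\Sigma)$, which is not among the standing assumptions of Section \ref{gluing section}; the right-hand side of the lemma retains the term $\int_{M(\Sigma)}(\max\{-R(g^i),0\})^{n/2}$ precisely because no such sign is assumed there. This is harmless in the end --- on piece (b) one has $\hat g^i_\delta=\tilde g^i$, so its contribution is just a reflected portion of the $\delta$-collar of $M(\Sigma)\setminus W_2$, already covered by your $\Omega_\delta\nearrow M(\Sigma)$ estimate --- but the justification as written is not available.

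The more serious gap is the transition region. The estimate you borrow from the proof of Lemma \ref{strictly mean convex delta}, namely that tangential first derivatives of $\tilde g^i_\delta$ and $\tilde g^i$ agree up to $O(\delta)$, is a consequence of the mollification acting only in the $t$-variable; it says nothing about second derivatives and does not yield the $C^2$-closeness $\tilde g^i_\delta-\tilde g^i=o(1)$ that your argument needs. That would be a genuinely new estimate (obtainable by Taylor-expanding in \eqref{gamma delta}, using that $\gamma(\tilde g^i)_t$ is $C^2$ across $t=0$ where $\partial M$ is totally geodesic), but it is not what the cited proof provides. Moreover the observation is unnecessary, and your identification of the transition region as the crux is off. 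The paper handles all of $\pi^{-1}(W_2)\cap N_\delta$ in one step: writing $R$ in local coordinates and invoking Lemma \ref{derivative estimate} and Lemma \ref{hat g vs tilde g} gives
$$
R(\hat g^i_\delta)=\chi\,R(\tilde g^i)+(1-\chi)\,R(\tilde g^i_\delta)+O(1)
$$
uniformly on $\pi^{-1}(W_2)\cap N_\delta$. The only $O(\delta^{-1})$ second derivatives of $\hat g^i_\delta$ come from $(1-\chi)D^2\tilde g^i_\delta$, whose effect on scalar curvature is exactly what Lemma \ref{scalar curvature estimate from below} controls from below, while the cutoff cross-terms involving $D\chi$ and $D^2\chi$ are $O(1)$ because $\tilde g^i-\tilde g^i_\delta=O(\delta)$ in $C^0$ and $O(1)$ in $C^1$. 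So blending with $\chi$ cannot create $O(\delta^{-1})$ amounts of negative scalar curvature, irrespective of whether the cutoff sits in the totally geodesic zone.
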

\begin{proof}
	On the one hand, if $\tilde x\in \tilde M\setminus W_2$ or $\operatorname{dist}(\tilde x,\pi^{-1}(\partial M),\tilde g^i)\geq \delta$, we have $\hat g^{i}_\delta(\tilde x)=\tilde g^i(\tilde x)$. Consequently,  
	$$
	R(\hat g^{i}_\delta)(\tilde x)=R(\tilde g^i)(\tilde x)=R(g^i)(\pi(\tilde x)).
	$$
	On the other hand, note that, as $\delta\searrow 0$,
	$$
	|\{\tilde x\in W_2:\operatorname{dist}(\tilde x,\pi^{-1}(\partial M),\tilde g^i)< \delta\}|_{\hat g^i_{\delta}}=o(1).
	$$
	Moreover, recall that, in local coordinates,
	$$
R(\hat g^i_\delta )=\sum_{a,b,k=1}^n (\hat g^i_\delta)^{ab}\bigg(\partial_k\Gamma^k_{ab}(\hat g^i_\delta )-\partial_a\Gamma^k_{kb}(\hat g^i_\delta )+\sum_{\ell=1}^n\big[\Gamma^k_{k\ell}(\hat g^i_\delta )\,\Gamma^\ell_{ab}(\hat g^i_\delta )-\Gamma^k_{a\ell}(\hat g^i_\delta )\,\Gamma^\ell_{kb}(\hat g^i_\delta )\big]\bigg).
	$$
Using Lemma \ref{derivative estimate}, Lemma \ref{scalar curvature estimate from below}, and Lemma \ref{hat g vs tilde g}, we conclude that, as $\delta\searrow0$,
$$
R(\hat g^i_\delta )=\chi\,R(\tilde g^i)+(1-\chi)\,R(\tilde g^i_\delta)+O(1)\geq -O(1)
$$
uniformly in $\{\tilde x\in W_2:\operatorname{dist}(\tilde x,\pi^{-1}(\partial M),\tilde g^i)< \delta\}$.
\\ \indent The assertion follows from these estimates. 
\end{proof} 
\begin{lem} Let $i\geq1$. For every $\delta>0$ sufficiently small, $(\tilde M,\hat g^i_{\delta})$ has horizon boundary $\tilde \Sigma^i_{\delta}\subset  \tilde M(\tilde \Sigma)$ homologous to $\tilde \Sigma$. \label{horizon boundary}
\end{lem}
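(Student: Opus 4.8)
The plan is to show that $(\tilde M, \hat g^i_\delta)$ possesses a horizon boundary contained in $\tilde M(\tilde \Sigma)$ and homologous to $\tilde \Sigma$ by running the standard existence argument for outermost minimal hypersurfaces, using $\tilde \Sigma$ (via the strictly mean-convex barriers constructed earlier) as an inner barrier and the large coordinate spheres as an outer barrier. First I would fix $i$ and take $\delta>0$ small enough that Lemma \ref{strictly mean convex delta} applies, so that $H(\tilde\Sigma, \tilde g^i_\delta)>0$ away from $\pi^{-1}(\partial\Sigma)$, and then invoke Lemma \ref{mean curvature flow} to replace $\tilde\Sigma$ by genuinely smooth, strictly mean-convex closed hypersurfaces $\tilde\Sigma^i_{\delta,j}\subset M\setminus M(\Sigma)$ with $\tilde\Sigma^i_{\delta,j}\to\tilde\Sigma$ in $C^1$. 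Since $\hat g^i_\delta = \tilde g^i_\delta$ in a neighborhood of $\pi^{-1}(\partial M)$ (up to the region where $\chi\neq 0$, which is away from $\tilde\Sigma$ by the choice $W\Subset W_1$ and $\chi=0$ on $W_1$), and $\hat g^i_\delta=\tilde g^i$ far out, these hypersurfaces are also strictly mean-convex with respect to $\hat g^i_\delta$ for $j$ large. On the outer side, by Lemma \ref{mass lemma} and Lemma \ref{hat g vs tilde g} the metric $\hat g^i_\delta$ equals $\tilde g$ outside a compact set and is $C^2$-asymptotically flat, so large coordinate spheres have positive mean curvature with respect to $\hat g^i_\delta$ and serve as outer barriers.

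Next I would minimize area in the homology class of $\tilde\Sigma^i_{\delta,j}$ within the region of $\tilde M$ bounded by $\tilde\Sigma^i_{\delta,j}$ on the inside and a large coordinate sphere on the outside. Because $3\le n\le 7$, geometric measure theory (as in \cite{Bray, BrayLee, HI}) yields a smooth, embedded, area-minimizing hypersurface; the barriers prevent it from touching either boundary by the maximum principle, using strict mean-convexity and the correct orientations. This produces a minimal hypersurface; passing to the outermost such hypersurface (the boundary of the union of all regions enclosed by minimal hypersurfaces homologous to $\tilde\Sigma^i_{\delta,j}$, compare \cite[Lemma 4.1]{HI} and \cite[§4]{Bray}) gives $\tilde\Sigma^i_\delta$, which is outermost and hence a horizon boundary for $(\tilde M,\hat g^i_\delta)$ in the sense recalled in the introduction and in Appendix \ref{af appendix}. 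That $\tilde\Sigma^i_\delta$ lies in $\tilde M(\tilde\Sigma)$ and is homologous to $\tilde\Sigma$ follows since it lies outside every $\tilde\Sigma^i_{\delta,j}$ and, by the $C^1$-convergence $\tilde\Sigma^i_{\delta,j}\to\tilde\Sigma$, cannot cross $\tilde\Sigma$; alternatively one argues that, were there a minimal hypersurface meeting $\tilde M\setminus\tilde M(\tilde\Sigma)$, it would violate strict mean-convexity of the barriers $\tilde\Sigma^i_{\delta,j}$.

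The main obstacle I anticipate is the regularity and barrier argument near $\pi^{-1}(\partial\Sigma)$: the hypersurface $\tilde\Sigma$ itself is only $C^{1,1}$ there, and the metric $\hat g^i_\delta$, while $C^2$, has curvature controlled only through the one-sided bound of Lemma \ref{scalar curvature estimate from below} and the derivative estimates of Lemma \ref{derivative estimate}, which blow up as $\delta\searrow 0$. For this reason it is essential to work with the smoothed barriers $\tilde\Sigma^i_{\delta,j}$ from Lemma \ref{mean curvature flow} rather than with $\tilde\Sigma$ directly, and to keep $\delta$ fixed (the lemma is stated for fixed $i$ and all sufficiently small $\delta$, so no uniformity in $\delta$ is needed here). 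A secondary point requiring care is verifying that no spurious minimal hypersurface appears in the transition region $W_2\setminus W_1$ where $\hat g^i_\delta$ interpolates between $\tilde g^i$ and $\tilde g^i_\delta$; but this region is a fixed compact set disjoint from $\tilde\Sigma$ and from infinity, and for $\delta$ small the metric there is a $C^2$-small perturbation of $\tilde g^i$ by Lemma \ref{hat g vs tilde g}, so the outermost minimal hypersurface of $(\tilde M,\hat g^i_\delta)$ is forced by the barriers to remain in $\tilde M(\tilde\Sigma)$ as claimed.
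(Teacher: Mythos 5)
Your proposal takes essentially the same route as the paper: invoke the smooth strictly mean-convex approximations $\tilde\Sigma^i_{\delta,j}$ of Lemma \ref{mean curvature flow} as inner barriers, note that $\hat g^i_\delta$ agrees with $\tilde g^i_\delta$ near $\tilde\Sigma$ because $\chi$ vanishes on $W_1\supset W$, take the outermost closed minimal hypersurface homologous to $\tilde\Sigma^i_{\delta,j_0}$, and use the maximum principle together with the $C^1$-convergence $\tilde\Sigma^i_{\delta,j}\to\tilde\Sigma$ to conclude that it stays inside $\tilde M(\tilde\Sigma)$.

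One small inaccuracy worth flagging: in your final paragraph you claim that in the transition region $W_2\setminus W_1$ the metric $\hat g^i_\delta$ is a $C^2$-small perturbation of $\tilde g^i$ by Lemma \ref{hat g vs tilde g}. That lemma only gives $C^0$-closeness; by Lemma \ref{derivative estimate} the second derivatives of $\tilde g^i_\delta$ actually scale like $\delta^{-1}$, so the perturbation is not small in $C^2$. This does not damage the argument, since the barrier/maximum-principle step already forces the outermost minimal hypersurface outside every $\tilde\Sigma^i_{\delta,j}$ and hence into $\tilde M(\tilde\Sigma)$ without any additional assertion about the transition region, but the incorrect appeal to $C^2$-smallness should be dropped.
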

\begin{proof}
	Let $\delta>0$ be sufficiently small  such that there is an integer $j_0\geq 1$ with $H(\tilde \Sigma^i_{\delta,j}, \tilde g^\delta_{i})>0$ for every $j\geq j_0$; see Lemma \ref{mean curvature flow}. It follows that there is an outermost closed minimal hypersurface $\tilde \Sigma^i_\delta\subset \tilde M(\tilde \Sigma^i_{\delta,j_0})$ homologous to $\tilde \Sigma^i_{\delta,j_0}$.  Since $H(\tilde \Sigma^i_{\delta,j}, \tilde g^\delta_{i})>0$ for every $j\geq j_0$, by the maximum principle,  $\tilde\Sigma^i_\delta$ cannot touch $\tilde \Sigma^i_{\delta,j}$ for any $j\geq j_0$. Using Lemma \ref{mean curvature flow}, it follows that $\tilde \Sigma^i_{\delta}\subset  \tilde M(\tilde \Sigma)$. 
\end{proof} 

\begin{prop} \label{gluing prop} 
There exists a sequence $\{\tilde g_i\}_{i=1}^\infty$ of $C^2$-asymptotically flat metrics of class $C^2$ on $\tilde M$ with horizon boundary $\tilde \Sigma_i\subset  M(\tilde \Sigma)$ such that
\begin{itemize}
	\item[$\circ$] $\tilde m(\tilde g_i)=2\,m(g)$,
	\item[$\circ$] $|\tilde \Sigma_i|_{\tilde g_i}=2\,|\Sigma|_g+o(1)$ as $i\to\infty$,
	\item[$\circ$] $\tilde g_i=\tilde g$ outside of a compact set, and
	\item[$\circ$] $\tilde g_i\to \tilde g$ in $C^0(\tilde M)$ as $i\to\infty$.
\end{itemize}
Moreover, there holds, as $i\to\infty$,
$$
\int_{\tilde M(\tilde \Sigma)}(\max\{-R(\tilde g_i),0\})^\frac{n}{2}\,\mathrm{d}v(\tilde g_i)= 2\,\int_{M(\Sigma)}(\max\{-R(g),0\})^\frac{n}{2}\,\mathrm{d}v( g)+o(1).
$$
For every $\alpha\in(0,1)$,  $\tilde \Sigma_i\to\tilde \Sigma$ in $C^{1,\alpha}$ with multiplicity $1$.
\end{prop}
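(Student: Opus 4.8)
The plan is a diagonal argument over the two parameters at play: the index $i$ from the conformal reduction $g^i\to g$ in $C^2(M)$ of Lemma \ref{strictly mean convex}, and the smoothing parameter $\delta\searrow 0$ used to produce $\tilde g^i_\delta$ from the double $\tilde g^i=\pi^*g^i$. I would set $\tilde g_i=\hat g^i_{\delta(i)}$ and $\tilde\Sigma_i=\tilde\Sigma^i_{\delta(i)}$, where $\delta(i)\in(0,1/i)$ is chosen, using the uniform-in-$i$ estimates recorded in Lemmas \ref{derivative estimate}, \ref{scalar curvature estimate from below}, \ref{hat g vs tilde g}, \ref{sc curvature comp}, \ref{mean curvature flow}, and \ref{horizon boundary}, small enough that $(\tilde M,\tilde g_i)$ has horizon boundary $\tilde\Sigma_i\subset\tilde M(\tilde\Sigma)$ homologous to $\tilde\Sigma$, that $|\tilde g_i-\tilde g^i|_{\tilde g^i}<1/i$, and that the identity in Lemma \ref{sc curvature comp} holds with error $<1/i$. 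Since $g^i$ is conformal to $g$ and equals $g$ outside the bounded set $W$ (Lemma \ref{strictly mean convex}), we have $m(g^i)=m(g)$, $R(g^i)=R(g)=0$ outside a compact set, and $\tilde g^i=\pi^*g^i=\tilde g$ outside a compact subset of $\tilde M$; hence Lemma \ref{mass lemma} shows that $\tilde g^i$ is $C^2$-asymptotically flat with $\tilde m(\tilde g^i)=\tilde m(\tilde g)=2\,m(g)$. As $\tilde g_i=\tilde g^i$ outside a compact set (Lemma \ref{hat g vs tilde g}), this gives $\tilde g_i=\tilde g$ outside a compact set and $\tilde m(\tilde g_i)=2\,m(g)$, and $\tilde g_i\to\tilde g$ in $C^0(\tilde M)$ follows from $g^i\to g$ in $C^2(M)$ together with $|\tilde g_i-\tilde g^i|_{\tilde g^i}<1/i$.

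The scalar-curvature identity is obtained by letting $i\to\infty$ in Lemma \ref{sc curvature comp}: since $g^i\to g$ in $C^2(M)$ and $R(g^i)=R(g)=0$ off a compact set, $\int_{M(\Sigma)}(\max\{-R(g^i),0\})^{n/2}\,\mathrm{d}v(g^i)\to\int_{M(\Sigma)}(\max\{-R(g),0\})^{n/2}\,\mathrm{d}v(g)$. For the area of $\tilde\Sigma_i$ I would mimic the area comparison in the proof of Proposition \ref{ahf to conformally flat prop}. By Lemma \ref{horizon boundary}, $\tilde\Sigma_i$ is area-minimizing in its homology class in the region exterior to the strictly mean-convex barrier $\tilde\Sigma^i_{\delta(i),j_0}$ of Lemma \ref{mean curvature flow}; since $\tilde\Sigma$ lies in this region it is an admissible competitor, so $|\tilde\Sigma_i|_{\tilde g_i}\le|\tilde\Sigma|_{\tilde g_i}=2\,|\Sigma|_g+o(1)$ by Lemma \ref{mass lemma} and $\tilde g_i\to\tilde g$ in $C^0$. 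Conversely, $\tilde\Sigma_i\subset\tilde M(\tilde\Sigma)$ is homologous to $\tilde\Sigma$ in $\overline{\tilde M(\tilde\Sigma)}$ and $\tilde\Sigma$ is area-minimizing in its homology class in $\tilde M(\tilde\Sigma)$ with respect to $\tilde g$ (Lemma \ref{mass lemma}), so $2\,|\Sigma|_g=|\tilde\Sigma|_{\tilde g}\le|\tilde\Sigma_i|_{\tilde g}=(1+o(1))\,|\tilde\Sigma_i|_{\tilde g_i}$. Hence $|\tilde\Sigma_i|_{\tilde g_i}=2\,|\Sigma|_g+o(1)$.

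With this uniform area bound, and using that $\tilde g_i$ agrees with the smooth metric $\tilde g$ away from an arbitrarily small neighborhood of $\pi^{-1}(\partial M)$ together with a barrier argument near infinity to confine $\tilde\Sigma_i$ to a fixed compact set, I would invoke the standard compactness theory for area-minimizing hypersurfaces in dimension $3\le n\le 7$ to extract a subsequence of $\tilde\Sigma_i$ converging, for every $\alpha\in(0,1)$, in $C^{1,\alpha}$ to a minimal hypersurface $\tilde\Sigma_0$ of $(\tilde M(\tilde\Sigma),\tilde g)$, possibly with multiplicity. Since $\tilde M(\tilde\Sigma)$ is an exterior region, $\tilde\Sigma_0=\tilde\Sigma$; the area identity $|\tilde\Sigma_i|_{\tilde g_i}\to 2\,|\Sigma|_g=|\tilde\Sigma|_{\tilde g}$ then forces multiplicity one; and since the limit does not depend on the subsequence, the full sequence converges.

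The step I expect to be most delicate is this last one. In contrast to the situation in Proposition \ref{ahf to conformally flat prop}, the metrics $\tilde g_i$ converge only in $C^0$ and their second derivatives blow up as one approaches $\pi^{-1}(\partial M)$, so curvature estimates for the hypersurfaces $\tilde\Sigma_i$ are not directly available near $\pi^{-1}(\partial M)$. The way around this is that the set where $\tilde g_i\ne\tilde g$ shrinks to $\pi^{-1}(\partial M)$, that $\tilde\Sigma_i$ meets $\pi^{-1}(\partial M)$ almost orthogonally and is uniformly $C^{1,1}$-controlled there (by the computation underlying Lemma \ref{strictly mean convex delta}), and that the strictly mean-convex barriers of Lemma \ref{mean curvature flow}, together with $C^0$-closeness of $\tilde g_i$ to $\tilde g$, trap $\tilde\Sigma_i$ in a thin neighborhood of $\tilde\Sigma$, so that no area — and no component of $\tilde\Sigma$ — is lost in the limit. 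A secondary, purely bookkeeping, difficulty is to organize the double limit $\delta\searrow 0$, $i\to\infty$ so that each uniform-in-$i$ estimate survives the diagonal choice $\delta=\delta(i)$; this is routine given the uniformity already recorded in the preceding lemmas.
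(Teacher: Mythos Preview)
Your diagonal construction and the handling of the mass, the $C^0$-convergence, the scalar-curvature integral, and the two-sided area estimate are exactly what the paper does. Two points in your sketch deserve sharpening.

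First, on the compactness of $\tilde\Sigma_i$: you correctly flag that the metrics $\tilde g_i$ converge only in $C^0$ and that their second derivatives blow up near $\pi^{-1}(\partial M)$, but the observation you are missing is that Lemma \ref{derivative estimate} actually gives a \emph{uniform $C^1$ bound} on the metrics (only $D^2(\check g)\tilde g^i_\delta$ degenerates, like $\delta^{-1}$). That is precisely the input required by the regularity theorem \cite[Theorem~1.3(ii)]{HI} for locally area-minimizing hypersurfaces, which then yields uniform $C^{1,\alpha}$ estimates for $\tilde\Sigma_i$ across $\pi^{-1}(\partial M)$. The paper invokes this directly, so your vaguer workaround (trapping by barriers, almost-orthogonal intersection, $C^{1,1}$-control from Lemma \ref{strictly mean convex delta}) is not needed.

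Second, your identification of the limit via ``since $\tilde M(\tilde\Sigma)$ is an exterior region, $\tilde\Sigma_0=\tilde\Sigma$'' is not quite legitimate: $\tilde g$ is only $C^0$ across $\pi^{-1}(\partial M)$, so the phrase ``exterior region'' is not well-defined there, and the limit $\tilde\Sigma_0$ is a priori only known to be minimal away from $\pi^{-1}(\partial M)$. The paper handles this by first establishing the reflection symmetry $\pi^{-1}(\pi(\tilde\Sigma_i))=\tilde\Sigma_i$ (otherwise the area-minimizing enclosure of the symmetrized set would contradict outermostness of $\tilde\Sigma_i$), and then projecting $\Sigma_i=\pi(\tilde\Sigma_i^+)$ down to $M(\Sigma)$, where $g$ is smooth and $M(\Sigma)$ genuinely is an exterior region, to conclude $\Sigma_0=\Sigma$ and hence $\tilde\Sigma_0=\tilde\Sigma$.
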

\begin{proof}
	By Lemma \ref{strictly mean convex},
	$$
	\int_{M(\Sigma)}(\max\{-R(g^i),0\})^\frac{n}{2}\,\mathrm{d}v(g^i)=\int_{M(\Sigma)}(\max\{-R(g),0\})^\frac{n}{2}\,\mathrm{d}v(g)+o(1).
	$$
Using  Lemma \ref{sc curvature comp},	Lemma \ref{derivative estimate},  and Lemma \ref{hat g vs tilde g},   we see that, passing to a diagonal subsequence,  there is a sequence $\{\delta_i\}_{i=1}^\infty$ with $\delta_i\searrow 0$  such that the metrics $\tilde g_i=\hat g^i_{\delta_i}$ satisfy $\tilde g_i=\tilde g$ outside of a compact set,  $|\tilde g_i- \tilde g|_{\tilde g}=o(1)$,
and
$$
\int_{\tilde M(\tilde \Sigma)}(\max\{-R(\tilde g_i),0\})^{\frac{n}{2}}\,\mathrm{d}v(\tilde g_i)= 2\,\int_{M(\Sigma)}(\max\{-R(g),0\})^\frac{n}{2}\,\mathrm{d}v(g)+o(1).
$$	
Moreover, by Lemma \ref{mass lemma} and Lemma \ref{hat g vs tilde g}, $\tilde m(\tilde g_i)=2\,m(g)$.\\ 
\indent By Lemma \ref{horizon boundary}, $(\tilde M,\tilde g_i)$ has horizon boundary $\tilde \Sigma_i=\tilde \Sigma^i_{\delta_i}\subset \tilde M(\tilde \Sigma)$. By comparison with a large coordinate hemisphere, we see that
\begin{align*} 
\limsup_{i\to\infty}|\tilde \Sigma_i|_{\tilde g_i}<\infty.
\end{align*}
Moreover, we have $\pi^{-1}(\pi(\tilde \Sigma_i))=\tilde \Sigma_i$. In fact, by area-minimization, there is a closed  embedded minimal hypersurface that encloses $\pi^{-1}(\pi(\tilde \Sigma_i))$. Since $\tilde \Sigma_i$ is outermost, this minimal surface coincides with $\tilde \Sigma_i$.
\\ \indent 
  Fix $\alpha\in(0,1)$. By \cite[Regularity Theorem 1.3 (ii)]{HI}, Lemma \ref{derivative estimate}, and compactness, it follows that, passing to another subsequence if necessary, $\tilde \Sigma_i$ converges to an embedded hypersurface $\tilde\Sigma_0\subset \tilde M(\tilde \Sigma)$ of class $C^{1,\alpha}$ in $C^{1,\alpha}$ possibly with multiplicity. By standard elliptic estimates, this convergence is smooth away from $\pi^{-1}(\partial M)$ and there holds $H(\tilde \Sigma_0,\tilde g)=0$ on $\tilde\Sigma_0\setminus \pi^{-1}(\partial M)$. Since $\pi^{-1}(\pi(\tilde \Sigma_i))=\tilde \Sigma_i$, there holds $\tilde \Sigma_i\setminus \pi^{-1}(\partial M)=\tilde \Sigma_i^+\cup\tilde \Sigma_i^-$ with $\pi(\tilde \Sigma_i^\pm)=\pi(\tilde \Sigma_i)$ and $\tilde \Sigma_i^+\cap\tilde \Sigma^-_i=\emptyset$. Let $\Sigma_i=\pi(\tilde \Sigma_i^+)$. By Lemma \ref{horizon boundary}, $\Sigma_i$  is homologous to $\Sigma$ in $M(\Sigma)$ and $\partial \Sigma_i$ is  homotopy equivalent to $\partial \Sigma$ in $ M(\Sigma)\cap\partial M$. 
 Since $\Sigma$ is area minimizing in $M(\Sigma)$, we have, using also Lemma \ref{mass lemma},
  \begin{align} \label{liminf} 
  \liminf_{i\to\infty}|\tilde \Sigma_i|_{\tilde g_i}\geq 2\,\liminf_{i\to\infty}|\Sigma_i|_g\geq 2\,|\Sigma|_{g}=|\tilde \Sigma|_{\tilde g}. 
  \end{align} 
 Passing to a further subsequence if necessary, we have $\Sigma_i\to\Sigma_0\subset M(\Sigma)$  in $C^{1,\alpha}$ possibly with multiplicity, where $\Sigma_0$ satisfies $H(\Sigma_0,g)=0$ on $\Sigma_0\setminus \partial \Sigma_0$. Since $M(\Sigma)$ is an exterior region, it follows that $\Sigma_0=\Sigma$ and $\tilde \Sigma_0=\tilde \Sigma$. Since $\tilde \Sigma_i$ is area-minimizing in $M(\tilde \Sigma_i)$, we conclude that 
\begin{align*} 
\limsup_{i\to\infty}|\tilde \Sigma_i|_{\tilde g_i}\leq |\tilde \Sigma|_{\tilde g}.
\end{align*} 
In particular, using also \eqref{liminf},  we see that $\tilde \Sigma_i$ converges to $\tilde\Sigma$ with multiplicity one. \\ \indent
The assertion follows. 
\end{proof}

\section{Conformal transformation to non-negative scalar curvature}
In this section, we assume that $\tilde M$ is a smooth manifold of dimension  $3\leq n\leq 7$ and that $\tilde g$ is a Riemannian metric on $M$ of class $C^0$. We also assume that $\tilde g$ is $C^2$-asymptotically flat and that there is a closed separating  hypersurface $\tilde \Sigma\subset \tilde M$ of class $C^{1,1}$. Moreover, we assume that $\{\tilde g_i\}_{i=1}^\infty$ is a sequence of Riemannian metrics $\tilde g_i$ on $\tilde M$ of class $C^2$ with  the following properties.
\begin{itemize}
	\item[$\circ$]  $(\tilde M,\tilde g_i)$ is $C^2$-asymptotically flat with horizon boundary $\tilde \Sigma_i\subset\tilde M(\tilde \Sigma)$.
	\item[$\circ$] $\tilde m(\tilde g_i)=\tilde m(\tilde g)+o(1)$ as $i\to\infty$.
	\item[$\circ$] $|\tilde \Sigma_i|_{\tilde g_i}=|\tilde \Sigma|_{\tilde g}+o(1)$ as $i\to\infty$.
	\item[$\circ$] $\tilde g_i\to \tilde g$ in $C^0(M)$ as $i\to\infty$.
		\item[$\circ$] For every $\alpha\in(0,1)$, $\tilde \Sigma_i\to\tilde \Sigma$ in $C^{1,\alpha}$ as $i\to\infty$.
\end{itemize}
Finally, we assume that, as $i\to\infty$,
\begin{align} \label{small scalar curvature}  
\int_{\tilde M(\tilde\Sigma_i)}(\max\{0,-R(\tilde g_i)\})^\frac{n}{2}\,\mathrm{d}v(\tilde g_i)=o(1)
\end{align} 
and that, for some $\tau>(n-2)/2$,
$$
\sup_{i\geq1}\,\limsup_{\tilde x\to\infty} \left[|\tilde x|_{\bar g}^{\tau}\,|\tilde g_i-\bar g|_{\bar g}+|\tilde x|^{\tau+1}_{\bar g}\,|D(\bar g)\tilde g_i|_{\bar g}+|\tilde x|^{\tau+2}_{\bar g}\,|D^2(\bar g)\tilde g_i|_{\bar g}\right] <\infty.
$$
\indent In this section, we construct Riemannian metrics $\hat g_i$ conformally related to $\tilde g_i$ which have non-negative scalar curvature. 
\begin{prop} \label{conf change prop} 
For every $i$ sufficiently large, there exists a function $\tilde u_i\in C^\infty(\tilde M(\tilde \Sigma_i))$ such that the Riemannian metric $$\hat g_i=\tilde u_i^{\frac{4}{n-2}}\,\tilde g_i$$ has the following properties.
\begin{itemize}
\item[$\circ$] $R(\hat g_i)\geq 0$ in $\tilde M(\tilde \Sigma_i)$.
\item[$\circ$] $H(\tilde \Sigma_i,\hat g_i)=0$.
\item[$\circ$] $(\tilde M,\hat g_i)$ is $C^2$-asymptotically flat with $\tilde m(\hat g_i)=\tilde m(\tilde g)+o(1)$ as $i\to\infty$.
\item[$\circ$] $\hat g_i\to \tilde g$ in $C^0(\tilde M(\tilde \Sigma))$ as $i\to\infty$.
\item[$\circ$] For every $\alpha\in(0,1),$ $\tilde u_i\to 1$ in $C_{loc}^{1,\alpha}(\tilde M(\tilde \Sigma))$ as $i\to\infty$.
\end{itemize} 
\end{prop}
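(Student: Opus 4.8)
The plan is to construct $\hat g_i$ as a conformal deformation of $\tilde g_i$ that cancels the small negative part of the scalar curvature while keeping $\tilde \Sigma_i$ minimal, following the scheme of R.~Schoen and S.-T.~Yau \cite[Lemma 3.3]{SchoenYau} and of P.~Miao \cite[\S4]{Miao}. Write $c_n=(n-2)/(4\,(n-1))$ and recall that, for $u>0$,
\begin{align*}
R\big(u^{\frac{4}{n-2}}\,\tilde g_i\big)&=u^{-\frac{n+2}{n-2}}\Big({-\tfrac1{c_n}}\,\Delta_{\tilde g_i}u+R(\tilde g_i)\,u\Big),\\
H\big(\tilde \Sigma_i,u^{\frac{4}{n-2}}\,\tilde g_i\big)&=u^{-\frac2{n-2}}\Big(H(\tilde \Sigma_i,\tilde g_i)+\tfrac{2\,(n-1)}{n-2}\,u^{-1}\,\partial_{\nu}u\Big),
\end{align*}
where $\nu=\nu(\tilde \Sigma_i,\tilde g_i)$. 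First I would fix a smooth function $f_i\geq\max\{-R(\tilde g_i),0\}$ with $\|f_i\|_{L^{n/2}(\tilde g_i)}\to 0$ as $i\to\infty$ --- obtained by slightly regularizing $\max\{-R(\tilde g_i),0\}$ from above, using \eqref{small scalar curvature} --- and solve the linear Neumann problem
$$
-\Delta_{\tilde g_i}\tilde u_i=c_n\,f_i\,\tilde u_i\ \text{ in }\tilde M(\tilde \Sigma_i),\qquad \partial_{\nu}\tilde u_i=0\ \text{ on }\tilde \Sigma_i,\qquad \tilde u_i\to 1\ \text{ at infinity}.
$$
Since $H(\tilde \Sigma_i,\tilde g_i)=0$, the Neumann condition forces $H(\tilde \Sigma_i,\hat g_i)=0$; since $f_i+R(\tilde g_i)\geq 0$ and $\tilde u_i>0$, the first identity gives $R(\hat g_i)=(f_i+R(\tilde g_i))\,\tilde u_i^{-\frac4{n-2}}\geq 0$ in $\tilde M(\tilde \Sigma_i)$. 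As $f_i$ is smooth, elliptic regularity gives $\tilde u_i\in C^\infty(\tilde M(\tilde \Sigma_i))$.

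For existence, I would exploit coercivity. Because $\tilde g_i\to\tilde g$ in $C^0$, the metrics $\tilde g_i$ are uniformly $C^2$-asymptotically flat, and $\tilde \Sigma_i\to\tilde \Sigma$ in $C^{1,\alpha}$, the Sobolev inequality $\|\phi\|_{L^{2n/(n-2)}(\tilde g_i)}\leq C_S\,\|\nabla_{\tilde g_i}\phi\|_{L^2(\tilde g_i)}$ holds on the homogeneous space $\dot H^1(\tilde M(\tilde \Sigma_i))$ with $C_S$ independent of $i$. Together with $\|f_i\|_{L^{n/2}(\tilde g_i)}\to 0$, this makes the bilinear form $B_i(\phi,\psi)=\int_{\tilde M(\tilde \Sigma_i)}(\langle\nabla\phi,\nabla\psi\rangle-c_n\,f_i\,\phi\,\psi)\,\mathrm{d}v(\tilde g_i)$ coercive on $\dot H^1$ once $i$ is large. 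Writing $\tilde u_i=1+\tilde v_i$, the problem becomes $B_i(\tilde v_i,\phi)=c_n\int f_i\,\phi\,\mathrm{d}v(\tilde g_i)$ for all $\phi\in\dot H^1$; since the uniform asymptotics force $f_i\to 0$ in $L^{2n/(n+2)}\hookrightarrow(\dot H^1)^*$, Lax--Milgram yields a unique $\tilde v_i\in\dot H^1$ with $\|\tilde v_i\|_{\dot H^1}\to 0$ and hence $\tilde v_i\to 0$ in $L^{2n/(n-2)}$. For positivity I would test the equation against the negative part $(\tilde u_i)^-$ (which is compactly supported as $\tilde u_i\to 1$), obtaining $\int|\nabla(\tilde u_i)^-|^2=c_n\int f_i\,((\tilde u_i)^-)^2\leq\tfrac12\int|\nabla(\tilde u_i)^-|^2$, so $(\tilde u_i)^-\equiv 0$; then $\tilde u_i>0$ by the strong maximum principle for $-\Delta_{\tilde g_i}\tilde u_i=c_n\,f_i\,\tilde u_i\geq 0$. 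Finally, local $L^p$- and Schauder estimates, using $\|\tilde v_i\|_{L^{2n/(n-2)}}\to 0$ and $\|f_i\|_{L^{n/2}_{loc}}\to 0$, upgrade this to $\tilde u_i\to 1$ in $C_{loc}^{1,\alpha}(\tilde M(\tilde \Sigma))$; combined with $\tilde g_i\to\tilde g$ in $C^0$, this gives $\hat g_i\to\tilde g$ in $C^0(\tilde M(\tilde \Sigma))$.

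It remains to track the asymptotics and the mass. Since $f_i$ inherits the uniform decay of $R(\tilde g_i)$ and $\tilde u_i\to 1$, asymptotic analysis of $-\Delta_{\tilde g_i}\tilde u_i=c_n\,f_i\,\tilde u_i$ on the end gives an expansion $\tilde u_i=1+A_i\,|x|_{\bar g}^{2-n}+O(|x|_{\bar g}^{1-n})$ with matching derivative bounds, so $(\tilde M,\hat g_i)$ is $C^2$-asymptotically flat and $\tilde m(\hat g_i)=\tilde m(\tilde g_i)+c(n)\,A_i$. Integrating the equation over $\tilde M(\tilde \Sigma_i)$ and applying the divergence theorem together with the Neumann condition on $\tilde \Sigma_i$ yields $c(n)\,A_i=-c_n\int_{\tilde M(\tilde \Sigma_i)}f_i\,\tilde u_i\,\mathrm{d}v(\tilde g_i)$, and $\big|\int f_i\,\tilde u_i\,\mathrm{d}v(\tilde g_i)\big|\leq\|f_i\|_{L^1}+\|f_i\|_{L^{2n/(n+2)}}\,\|\tilde v_i\|_{L^{2n/(n-2)}}\to 0$; hence $\tilde m(\hat g_i)=\tilde m(\tilde g_i)+o(1)=\tilde m(\tilde g)+o(1)$. (In the generality stated here the decay $\tau>(n-2)/2$ only guarantees that $\tilde m(\hat g_i)$ is well defined, and the mass comparison is carried out via the conformal mass identity of \cite[\S4]{Miao}.)

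I expect the main obstacle to be the \emph{uniformity in $i$} of every estimate above --- coercivity, the $\dot H^1$-bound, positivity, and the $C_{loc}^{1,\alpha}$-convergence all have to be extracted from the smallness of the \emph{negative} part of $R(\tilde g_i)$ in $L^{n/2}$ and from the uniform asymptotics alone, while $R(\tilde g_i)$ itself need not be small and $\tilde g_i$ converges only in $C^0$. Closely related technical points are that the boundary value problem lives on the varying exterior regions $\tilde M(\tilde \Sigma_i)$, whose boundaries converge only in $C^{1,\alpha}$, so the uniform Sobolev inequality and the boundary elliptic estimates have to be robust under this convergence; and that the conformal factor need not have a genuine $|x|^{2-n}$-expansion unless $R(\tilde g_i)$ decays fast enough, which is what makes the final mass comparison delicate.
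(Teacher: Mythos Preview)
Your proposal is correct and follows essentially the same route as the paper: solve the linear Neumann problem $-\tfrac{4(n-1)}{n-2}\Delta_{\tilde g_i}\tilde u_i=\max\{-R(\tilde g_i),0\}\,\tilde u_i$ on $\tilde M(\tilde\Sigma_i)$ with $\partial_\nu\tilde u_i=0$ on $\tilde\Sigma_i$ and $\tilde u_i\to1$ at infinity, then run the analysis of \cite[\S4]{Miao} (which is exactly the Schoen--Yau/Lax--Milgram coercivity scheme you outline), noting that the boundary elliptic estimates are uniform because the $\tilde\Sigma_i$ are uniformly $C^{1,\alpha}$. The paper simply cites \cite[Lemma 3.2]{SchoenYau} and \cite[Lemma 4.1, Proposition 4.1, Lemma 4.2]{Miao} for the existence, positivity, convergence, and mass steps you spell out in detail; your smoothing of $\max\{-R(\tilde g_i),0\}$ to a smooth $f_i$ is a harmless variant.
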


\begin{proof}
This has been proved in \cite[\S4.1]{Miao} in the special case where $\tilde\Sigma_i=\tilde \Sigma=\emptyset$. Compared to \cite[(35)]{Miao}, we take $\tilde u_i\in C^\infty(\tilde M(\tilde \Sigma))$ to be the unique solution of
\begin{equation} \label{tilde ui}
\begin{dcases} 
\qquad -\frac{4\,(n-1)}{n-2} \Delta(\tilde g_i)\,\tilde u_i-\max\{-R(\tilde g_i),0\}\,\tilde u_i=0\qquad&\text{in }\tilde M(\tilde \Sigma_i),\qquad\qquad\qquad\qquad\\\qquad 
D(\tilde g_i)_{\nu(\tilde \Sigma_i,\tilde g_i)}\tilde u_i=0 &\text{on }\tilde \Sigma_i,\text{ and}\\\qquad 
\lim_{\tilde x\to\infty} \tilde u_i=1.
\end{dcases}
\end{equation}
As shown in \cite[Lemma 3.2]{SchoenYau}, the existence of such a solution follows from \eqref{small scalar curvature} and the fact that, for every $\alpha\in(0,1)$, $\tilde \Sigma_i$ is of class $C^{1,\alpha}$. We may now repeat the proofs of \cite[Lemma 4.1, Proposition 4.1, and Lemma 4.2]{Miao};  the only difference is that the elliptic estimates for the function $\tilde u_i$ now also depend on estimates on the $C^{1,\alpha}$-regularity of $\tilde \Sigma_i$, which, by assumption, are uniform in $i$. By Lemma \ref{conformal change}, we have that $H(\tilde \Sigma_i,\hat g_i)=0$.
\end{proof}
\begin{lem} \label{horizon boundary hat }
$(\tilde M,\hat g_i)$ has horizon boundary $\hat\Sigma_i\subset \tilde M(\tilde \Sigma_i)$ and there holds
$
|\hat \Sigma_i|_{\hat g_i}\geq |\tilde \Sigma_i|_{\tilde g_i}-o(1)
$	as $i\to\infty$.
\end{lem}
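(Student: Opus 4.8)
The plan is to produce $\hat\Sigma_i$ as the outermost minimal hypersurface of the asymptotically flat manifold with minimal boundary $(\tilde M(\tilde\Sigma_i),\hat g_i)$ and then to deduce the area bound by replacing $\hat g_i$ with $\tilde g_i$ up to an error $o(1)$ and invoking that the horizon boundary $\tilde\Sigma_i$ minimizes area in its homology class in $\tilde M(\tilde\Sigma_i)$ with respect to $\tilde g_i$. To this end, I would first record, from Proposition \ref{conf change prop}, that $\hat g_i=\tilde u_i^{4/(n-2)}\tilde g_i$ is $C^2$-asymptotically flat with decay uniform in $i$ (the $\tilde g_i$ decay uniformly and $\tilde u_i\to1$ at infinity at the appropriate rate), that $R(\hat g_i)\geq0$ in $\tilde M(\tilde\Sigma_i)$, that $H(\tilde\Sigma_i,\hat g_i)=0$, and that $\hat g_i\to\tilde g$ in $C^0(\tilde M(\tilde\Sigma))$; note also that $\tilde M(\tilde\Sigma_i)\subseteq\tilde M(\tilde\Sigma)$. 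Thus $\tilde M(\tilde\Sigma_i)$, equipped with $\hat g_i$, is an asymptotically flat manifold whose compact inner boundary $\tilde\Sigma_i$ is minimal. Since $\hat g_i$ is asymptotically flat uniformly in $i$, large coordinate spheres have positive mean curvature with respect to $\hat g_i$ and, together with the minimal inner boundary $\tilde\Sigma_i$, furnish the barriers required for the existence of outermost minimal hypersurfaces exactly as in the proofs of Proposition \ref{ahf to conformally flat prop} and of Lemma \ref{horizon boundary} (see also \cite[Theorem 1]{meeks1982existence} and Appendix \ref{af appendix}). This produces an outermost minimal hypersurface $\hat\Sigma_i\subset\tilde M(\tilde\Sigma_i)$, that is, $(\tilde M,\hat g_i)$ has horizon boundary $\hat\Sigma_i$; by construction $\hat\Sigma_i$ weakly encloses $\tilde\Sigma_i$ (possibly $\hat\Sigma_i=\tilde\Sigma_i$) and is homologous to $\tilde\Sigma_i$ in $\tilde M(\tilde\Sigma_i)$. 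Moreover, $\hat\Sigma_i$ lies inside a fixed large coordinate sphere and encloses $\tilde\Sigma_i$, and since $\tilde\Sigma_i\to\tilde\Sigma$ it is contained in a compact region independent of $i$; by its outer-minimizing property, comparison with that sphere gives $\sup_i|\hat\Sigma_i|_{\hat g_i}<\infty$.

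For the area estimate, I would argue as follows. Both $\hat g_i$ and $\tilde g_i$ converge to $\tilde g$ in $C^0(\tilde M(\tilde\Sigma))$ and satisfy uniform asymptotically flat decay near infinity, so they are uniformly $C^0$-close on a fixed compact region containing all the $\hat\Sigma_i$. Since $|\hat\Sigma_i|_{\hat g_i}$ and $|\hat\Sigma_i|_{\tilde g_i}$ are uniformly bounded and differ by a factor $1+o(1)$, it follows that
$$
|\hat\Sigma_i|_{\hat g_i}=|\hat\Sigma_i|_{\tilde g_i}+o(1).
$$
On the other hand, $\tilde\Sigma_i$ is the horizon boundary of the asymptotically flat manifold $(\tilde M,\tilde g_i)$ and hence minimizes area in its homology class in $\tilde M(\tilde\Sigma_i)$ with respect to $\tilde g_i$ (cf.~the corresponding statement for $\tilde\Sigma$ in Lemma \ref{mass lemma} and Appendix \ref{af appendix}). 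Since $\hat\Sigma_i$ lies in $\tilde M(\tilde\Sigma_i)$ and is homologous there to $\tilde\Sigma_i$, we get $|\tilde\Sigma_i|_{\tilde g_i}\leq|\hat\Sigma_i|_{\tilde g_i}$; combined with the previous display this yields $|\hat\Sigma_i|_{\hat g_i}\geq|\tilde\Sigma_i|_{\tilde g_i}-o(1)$, as asserted.

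The point requiring care is the control on the a priori unknown location of $\hat\Sigma_i$: the passage from $\hat g_i$-area to $\tilde g_i$-area with a \emph{uniform} error $o(1)$ rests on confining $\hat\Sigma_i$ to a fixed compact region via the barrier argument and on the uniform $C^0$-closeness of the metrics there, and the existence and outer-minimizing property of $\hat\Sigma_i$ must be established for a metric $\hat g_i$ of limited regularity on a domain whose boundary $\tilde\Sigma_i$ is only of class $C^{1,\alpha}$. If one prefers, one can instead first show --- exactly as in the proof of Proposition \ref{gluing prop}, using uniform curvature estimates for minimal hypersurfaces, the area bound, and the fact that $\tilde M(\tilde\Sigma)$ is an exterior region --- that $\hat\Sigma_i\to\tilde\Sigma$ in $C^{1,\alpha}$ for every $\alpha\in(0,1)$ with multiplicity at least one, and then read off the estimate from $|\hat\Sigma_i|_{\hat g_i}=|\hat\Sigma_i|_{\tilde g}+o(1)$ together with $|\hat\Sigma_i|_{\tilde g}\geq|\tilde\Sigma|_{\tilde g}+o(1)=|\tilde\Sigma_i|_{\tilde g_i}+o(1)$.
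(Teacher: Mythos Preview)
Your proposal is correct and follows essentially the same approach as the paper: the paper's proof simply records that $H(\tilde\Sigma_i,\hat g_i)=0$ yields the horizon boundary $\hat\Sigma_i\subset\tilde M(\tilde\Sigma_i)$, and then appeals to Proposition~\ref{conf change prop} (for the $C^0$-closeness of $\hat g_i$ and $\tilde g_i$) together with the fact that $\tilde\Sigma_i$ is area-minimizing in $\tilde M(\tilde\Sigma_i)$ with respect to $\tilde g_i$. Your write-up spells out the barrier argument and the compact confinement of $\hat\Sigma_i$ more explicitly than the paper does, but the logical skeleton is identical.
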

\begin{proof}
Using  $H(\tilde \Sigma_i,\hat g_i)=0$, it follows that $(\tilde M,\hat g^i)$ has horizon boundary $\hat\Sigma_i\subset \tilde M(\tilde \Sigma_i)$. The assertion now follows from Proposition \ref{conf change prop}, using that $\tilde \Sigma_i$ is area-minimizing in $\tilde M(\tilde \Sigma_i)$ with respect to $\tilde g_i$. 
\end{proof}
\begin{lem}
		Suppose that there is a map $\tilde F:\tilde M\to \tilde M$ with the following properties. 
	\begin{itemize}
		\item[$\circ$] $\tilde F$ is an isometry with respect to $\tilde g_i$ for every $i$.
		\item[$\circ$] $\tilde S=\{\tilde x\in \tilde M:\tilde F(\tilde x)=\tilde x\}$ is a  separating  hypersurface such that $\tilde M\setminus \tilde S=M^+\cup M^-$ with $M^+\cap M^-=\emptyset$.
		\item[$\circ$] $\tilde g$ is $C^2$ in $M^\pm$ and  $M^\pm\cap\tilde \Sigma\subset M^\pm$ is an  outermost  minimal hypersurface. 
	\end{itemize}
Then \label{reverse area lemma}
$
|\hat \Sigma_i|_{\hat g_i}\leq |\tilde \Sigma_i|_{\tilde g_i}+o(1). 
$	
\end{lem}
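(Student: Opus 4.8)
The plan is to use the isometry $\tilde F$ to symmetrize the conformal change of Proposition \ref{conf change prop}, to cut $(\tilde M,\hat g_i)$ in half along $\tilde S$, and to reduce the estimate to the area-minimizing property \cite[Lemma 2.3]{thomas_penrose} of the horizon of an asymptotically flat half-space, combined with a compactness argument as in the proof of Proposition \ref{gluing prop}. I would first show that $\tilde F$ is an isometry of $\hat g_i$. Since $\tilde F$ is an isometry of $\tilde g_i$ for every $i$ and $\tilde g_i\to\tilde g$ in $C^0$, the fixed point set $\tilde S$ is totally geodesic with respect to every $\tilde g_i$, and $\tilde F$ maps the unique asymptotically flat end of $\tilde M$ to itself; in particular $\tilde F$ fixes the horizon boundary $\tilde\Sigma_i$ of $\tilde g_i$, so that $\tilde M(\tilde\Sigma_i)$ is $\tilde F$-invariant. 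As $R(\tilde g_i)$ is $\tilde F$-invariant and the boundary value problem \eqref{tilde ui} has a unique solution, $\tilde u_i\circ\tilde F=\tilde u_i$, hence $\tilde F$ is an isometry of $\hat g_i=\tilde u_i^{4/(n-2)}\,\tilde g_i$. Consequently $\tilde F$ fixes the horizon boundary $\hat\Sigma_i$ of $\hat g_i$, and, as $\tilde S$ is a hypersurface of fixed points, $\tilde F$ interchanges $M^+$ and $M^-$.

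Next, I would pass to the half $\overline{M^+}$. Writing $\hat\Sigma_i^+=\hat\Sigma_i\cap\overline{M^+}$ and $\Sigma^+=\tilde\Sigma\cap\overline{M^+}$, the $\tilde F$-invariance of $\hat\Sigma_i$ and of $\hat g_i$ gives $|\hat\Sigma_i|_{\hat g_i}=2\,|\hat\Sigma_i^+|_{\hat g_i}$, and $\hat\Sigma_i^+$ meets the totally geodesic hypersurface $\tilde S$ orthogonally. Thus $\hat\Sigma_i^+$ is a free boundary minimal hypersurface in the asymptotically flat half-space $(\overline{M^+},\hat g_i)$, which has $R(\hat g_i)\geq 0$ and $H(\tilde S,\hat g_i)=0$; doubling minimal hypersurfaces across $\tilde S$ shows that $\hat\Sigma_i^+$ is in fact the horizon boundary of $(\overline{M^+},\hat g_i)$, so that by \cite[Lemma 2.3]{thomas_penrose} it is area-minimizing in $M^+(\hat\Sigma_i^+)$ with respect to $\hat g_i$. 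By hypothesis, $\Sigma^+$ is the horizon boundary of $(\overline{M^+},\tilde g)$.

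It remains to prove that $\limsup_{i\to\infty}|\hat\Sigma_i^+|_{\hat g_i}\leq|\Sigma^+|_{\tilde g}$. Comparison with a fixed large coordinate hemisphere confines $\hat\Sigma_i^+$ to a fixed compact set and bounds $|\hat\Sigma_i^+|_{\hat g_i}$ uniformly in $i$; using $\hat g_i\to\tilde g$ in $C^0$, $\tilde u_i\to 1$ in $C^{1,\alpha}_{loc}$, and the compactness for area-minimizing hypersurfaces as in the proof of Proposition \ref{gluing prop}, a subsequence of $\hat\Sigma_i^+$ converges in $C^{1,\alpha}$, possibly with multiplicity, to a free boundary minimal hypersurface of $(\overline{M^+},\tilde g)$ that lies in $M^+(\Sigma^+)$, is homologous to $\Sigma^+$, and lies in the same boundary homotopy class; since $\Sigma^+$ is the horizon boundary of $(\overline{M^+},\tilde g)$, this limit equals $\Sigma^+$ as a set, so $\hat\Sigma_i^+$ lies within Hausdorff distance $\varepsilon$ of $\Sigma^+$ for each $\varepsilon>0$ and all large $i$. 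For such an $\varepsilon$, let $S_\varepsilon$ be the hypersurface obtained by displacing $\Sigma^+$ outward by $2\,\varepsilon$, modified near $\tilde S$ so as to keep its boundary on $\tilde S$; then $S_\varepsilon$ is an admissible competitor for $\hat\Sigma_i^+$ in $M^+(\hat\Sigma_i^+)$ for all large $i$, whence $|\hat\Sigma_i^+|_{\hat g_i}\leq|S_\varepsilon|_{\hat g_i}\leq|\Sigma^+|_{\tilde g}+C\,\varepsilon+o(1)$. Letting $i\to\infty$ and then $\varepsilon\to 0$ yields $\limsup_{i\to\infty}|\hat\Sigma_i|_{\hat g_i}=2\,\limsup_{i\to\infty}|\hat\Sigma_i^+|_{\hat g_i}\leq 2\,|\Sigma^+|_{\tilde g}=|\tilde\Sigma|_{\tilde g}=\lim_{i\to\infty}|\tilde\Sigma_i|_{\tilde g_i}$, which is the assertion.

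The main obstacle is the last step: because the ambient metrics $\hat g_i$ converge only in $C^0$, the usual compactness for minimal hypersurfaces is unavailable, and one must instead use that $\hat\Sigma_i^+$ is area-minimizing — hence of controlled $C^{1,\alpha}$-regularity — in $(\overline{M^+},\hat g_i)$. This area-minimization in the full homology and boundary homotopy class is available on the half-space by \cite[Lemma 2.3]{thomas_penrose} but fails in general for the outermost minimal hypersurface $\hat\Sigma_i$ of the double; it is precisely this that the symmetrization of the first paragraph makes accessible. Checking that the displacement $S_\varepsilon$ is admissible and that no area is lost in the limit — so that the limiting multiplicity is one — is the technical heart of the proof.
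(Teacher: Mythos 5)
Your proof follows the same overall route as the paper, which simply refers back to Proposition~\ref{gluing prop}: exploit the symmetry to cut the double in half, then combine compactness, the exterior-region hypothesis on $\overline{M^\pm}$, and area-minimization to obtain the upper area bound. Your first paragraph — deducing $\tilde u_i\circ\tilde F=\tilde u_i$ from the uniqueness of the solution of \eqref{tilde ui}, hence that $\tilde F$ is an isometry of $\hat g_i$ and that $\hat\Sigma_i$ is $\tilde F$-invariant — is exactly the observation that makes the argument go through, and the pushed-out competitor $S_\varepsilon$ is the right way to rule out higher-multiplicity convergence. Substantively the proof is correct and fills in details the paper leaves implicit.

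However, the concluding rationale is mistaken. You write that ``area-minimization in the full homology and boundary homotopy class\,\dots\,fails in general for the outermost minimal hypersurface $\hat\Sigma_i$ of the double; it is precisely this that the symmetrization of the first paragraph makes accessible.'' This is not so, and in fact contradicts the paper: Lemma~\ref{horizon boundary hat } already uses that ``$\tilde\Sigma_i$ is area-minimizing in $\tilde M(\tilde\Sigma_i)$ with respect to $\tilde g_i$,'' and the proof of Proposition~\ref{gluing prop} relies on the same property on the double. That the outermost minimal hypersurface of an asymptotically flat manifold (with $3\le n\le 7$) is area-minimizing in the exterior region holds irrespective of any reflection symmetry — it is a consequence of comparing with the homologically minimizing hypersurface and using the outermost property, with a large coordinate sphere as barrier. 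The reason you actually need the $\tilde F$-invariance of $\hat\Sigma_i$ is different: to \emph{identify the limit} as $\tilde\Sigma$. The hypothesis in the lemma only asserts that $M^\pm\cap\tilde\Sigma$ is outermost in $\overline{M^\pm}$; this controls $\tilde F$-invariant competitors of $\tilde M$ but says nothing a priori about where a non-symmetric sequence of minimal hypersurfaces could accumulate. Once you know $\hat\Sigma_i$ is $\tilde F$-invariant, you may pass to the half $\hat\Sigma_i^+\subset\overline{M^+}$ and invoke the outermost property there to pin down the limit; this is precisely the role of the symmetrization, both in your proof and in the paper's.
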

\begin{proof}
	This follows as in the proof of Proposition \ref{gluing prop}.
\end{proof}
\begin{rema}
	In the situation of Section \ref{gluing section}, we may take $\tilde F:\tilde M\to\tilde M$ to be the unique map with $\pi\circ\tilde F=\pi$ and $\tilde F\neq \operatorname{Id}$.
\end{rema}
\section{Proof of Theorem \ref{main thm}}
Let $(M,g)$ be an asymptotically flat half-space of dimension $3\leq n\leq 7$ with horizon boundary $\Sigma\subset M$ and such that $R(g)\geq 0$ in $M(\Sigma)$ and $H(\partial M,g)\geq 0$ on $ M(\Sigma)\cap\partial M$.
\begin{proof}[Proof of Theorem \ref{main thm}]
Using Proposition \ref{ahf to conformally flat prop} to obtain a conformally flat approximation of $(M,g)$, Proposition \ref{gluing prop} to double the approximation, and Proposition \ref{conf change prop}, Lemma \ref{horizon boundary hat }, and Lemma \ref{reverse area lemma} to conformally transform the double to non-negative scalar curvature,  we see that there exists a smooth manifold $\tilde M$ of dimension $n$ and a sequence $\{\hat g_i\}_{i=1}^\infty$ of Riemannian metrics $\hat g_i$ on $\tilde M$  with  the following properties.
\begin{itemize}
	\item[$\circ$]  $(\tilde M,\hat g_i)$ is asymptotically flat with horizon boundary $\hat \Sigma_i\subset \tilde M$
	\item[$\circ$] $R(\hat g_i)\geq 0$ in $\tilde M(\hat \Sigma_i)$
	\item[$\circ$] $\tilde m(\hat g_i)=2\,m(g)+o(1)$ as $i\to\infty$
	\item[$\circ$] $|\hat \Sigma_i|_{\hat g_i}=2\,|\Sigma|_{g}+o(1)$ as $i\to\infty$
\end{itemize}

\indent The assertion follows.
\end{proof}
\section{Mass-decreasing variations and rigidity} 
Let $(M,g)$ be an asymptotically flat half-space of dimension $3\leq n\leq7$ with horizon boundary $\Sigma\subset M$ such that $R(g)\geq 0$ in $M(\Sigma)$ and $H(\partial M,g)\geq 0$ on $ M(\Sigma)\cap\partial M$. We also assume that equality holds in \eqref{coro RPI}, i.e.\,, that
$$
m(g)=\left(\frac{1}{2}\right)^{\frac{n}{n-1}}\,\left(\frac{|\Sigma|_g}{\omega_{n-1}}\right)^{\frac{n-2}{n-1}}.
$$
\indent The goal of this section is to show that $(M(\Sigma),g)$ is isometric to the  exterior region of a Schwarzschild half-space \eqref{Schwarzschild half space}. \\ \indent
The argument in Lemma \ref{scalar flat} below is modeled on the proof of \cite[Corollary 3.1]{SchoenYau}, where R.~Schoen and S.-T.~Yau study the equality case of the positive mass theorem.
\begin{lem}[Cp.~{\cite[Corollary 3.1]{SchoenYau}}]
	There holds $R(g)=0$ in $M(\Sigma)$. \label{scalar flat}
\end{lem}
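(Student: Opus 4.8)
The plan is to argue by contradiction: suppose $R(g)$ does not vanish identically in $M(\Sigma)$, so that
\[
\int_{M(\Sigma)}(\max\{R(g),0\})^{\frac n2}\,\mathrm dv(g)>0.
\]
The idea, following \cite[Corollary 3.1]{SchoenYau}, is to use this strictly positive scalar curvature as fuel for a global conformal deformation of $(M(\Sigma),g)$ that strictly decreases the mass while keeping the horizon area essentially fixed and preserving the two energy conditions $R\geq 0$ and $H(\partial M,g)\geq 0$. First I would solve, on $M(\Sigma)$, the boundary value problem for $u>0$,
\[
\begin{dcases}
-\tfrac{4(n-1)}{n-2}\,\Delta(g)u+R(g)\,u=0\qquad&\text{in }M(\Sigma),\\
D(g)_{\nu(\partial M,g)}u=0&\text{on }M(\Sigma)\cap\partial M,\\
D(g)_{\nu(\Sigma,g)}u=0&\text{on }\Sigma,\\
\lim_{x\to\infty}u=1,
\end{dcases}
\]
whose solvability follows from $R(g)\geq 0$ together with the maximum principle and the asymptotics; by the strong maximum principle and the Hopf lemma, $u$ is either identically $1$ (which forces $R(g)\equiv0$) or strictly less than $1$ somewhere and strictly below $1$ in the asymptotic region in an integrated sense. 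Setting $\bar g=u^{\frac4{n-2}}g$, the conformal transformation law gives $R(\bar g)=u^{-\frac{n+2}{n-2}}\big(R(g)u-\tfrac{4(n-1)}{n-2}\Delta(g)u\big)=0$ in the non-flat region and, where $g$ was already scalar flat, $\bar g$ stays scalar flat; the Neumann conditions ensure $H(\partial M,\bar g)=H(\partial M,g)\,u^{-\frac2{n-2}}\geq 0$ and $H(\Sigma,\bar g)=0$, using Lemma \ref{conformal change}. Hence $(M(\Sigma),\bar g)$ is again an asymptotically flat half-space satisfying all hypotheses of Corollary \ref{penrose coro}.

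Next I would read off the effect on the two geometric quantities appearing in \eqref{coro RPI}. The expansion $u=1+A\,|x|_{\bar g}^{2-n}+o(|x|_{\bar g}^{2-n})$ near infinity, combined with the standard formula for how the ADM-type mass \eqref{half-space mass intro} changes under a conformal factor that is $1+O(|x|^{2-n})$ at infinity, yields
\[
m(\bar g)=m(g)+2A,
\]
and integrating the equation for $u$ against a cutoff shows $A\leq 0$, with $A<0$ precisely when $\int_{M(\Sigma)}(\max\{R(g),0\})^{n/2}\,\mathrm dv(g)>0$ — this is the quantitative half of the strong maximum principle. On the horizon side, since $\Sigma$ with respect to $\bar g$ need no longer be outermost, one passes to the horizon boundary $\bar\Sigma$ of $(M(\Sigma),\bar g)$, which lies in $M(\Sigma)$ and is area-minimizing; because $u$ is close to $1$ on compact sets when $R(g)$ is small but here $R(g)$ is \emph{fixed}, I would instead scale: replacing $g$ by a one-parameter family where only a small multiple of the positive part of $R(g)$ is used as the potential, one gets a conformal factor $u_s=1+sA_1+o(s)$ with $A_1<0$, so $m$ strictly decreases to first order in $s$ while $|\bar\Sigma_s|_{\bar g_s}=|\Sigma|_g+O(s^2)$, the area changing only to second order because $\Sigma$ is minimal and $u_s\to1$ in $C^{1}_{loc}$. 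For $s$ small this produces an asymptotically flat half-space with the same hypotheses but with
\[
m(\bar g_s)<\left(\tfrac12\right)^{\frac n{n-1}}\left(\frac{|\bar\Sigma_s|_{\bar g_s}}{\omega_{n-1}}\right)^{\frac{n-2}{n-1}},
\]
contradicting Corollary \ref{penrose coro}.

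The main obstacle I anticipate is the bookkeeping at the horizon: one must verify that the area of the (possibly new, possibly smaller) horizon boundary of the deformed metric does not drop by more than a quantity of higher order than the mass gain, so that the penrose quantity genuinely moves below the mass. Two points require care here. First, one needs that $\Sigma$ remains a minimal barrier — or more precisely that $H(\Sigma,\bar g_s)=0$ so that the new horizon is trapped between $\Sigma$ and infinity and hence $|\bar\Sigma_s|_{\bar g_s}\le|\Sigma|_{\bar g_s}=(1+o(s))|\Sigma|_g$ by area-minimization, giving the upper bound on the area in the correct direction. Second, one needs a matching lower bound $|\bar\Sigma_s|_{\bar g_s}\ge(1-Cs^2)|\Sigma|_g$, which follows from the monotonicity of mass under the inverse-type comparison or, more directly in this setting, from the $C^{1,\alpha}$-convergence of the horizon boundaries together with the fact that $\Sigma$ itself is the unique minimal representative in $M(\Sigma)$ (since $M(\Sigma)$ is an exterior region), exactly as in the proof of Proposition \ref{gluing prop}. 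Assembling these two estimates and comparing against the exponents $\tfrac n{n-1}$ and $\tfrac{n-2}{n-1}$ in \eqref{coro RPI} closes the argument; I expect the remaining steps — the conformal curvature identity, the mass expansion, and the sign of $A$ — to be routine given the tools already set up in the previous sections.
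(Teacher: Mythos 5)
Your approach is close to the paper's in spirit — a conformal deformation that uses nonzero scalar curvature to strictly decrease the mass while controlling the horizon area — but the choice of boundary condition on $\Sigma$ contains the key error, and it breaks the argument. The paper imposes the \emph{Dirichlet} condition $v=0$ on $\Sigma$ (for $v=u-1$), whereas you impose the Neumann condition $D(g)_{\nu(\Sigma,g)}u=0$. The Dirichlet condition is precisely what makes the horizon bookkeeping work: it forces the conformal factor to equal $1$ on $\Sigma$, so $|\Sigma|_{g_t}=|\Sigma|_g$ \emph{exactly}, and by the Hopf lemma it also yields $D(g)_{\nu(\Sigma,g)}v<0$, hence $H(\Sigma,g_t)>0$ strictly, which pins down the horizon $\Sigma_t$ as an $O(t)$-perturbation of $\Sigma$. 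The area difference $|\Sigma_t|_{g_t}-|\Sigma|_{g_t}$ is then of order $O(t^2)$ because both the displacement and the mean curvature of the barrier are $O(t)$, which is exactly how the paper gets $\lim_{t\searrow 0}t^{-1}\big(|\Sigma_t|_{g_t}-|\Sigma|_g\big)=0$.

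With your Neumann condition, $u_s$ drops strictly below $1$ \emph{on} $\Sigma$ (as soon as $R(g)\not\equiv 0$, by the strong maximum principle applied to the subharmonic first variation $\dot u\le 0$), so
\[
|\Sigma|_{\bar g_s}=\int_\Sigma u_s^{\frac{2(n-1)}{n-2}}\,\mathrm d\mu(g)=|\Sigma|_g+s\,\frac{2(n-1)}{n-2}\int_\Sigma\dot u\,\mathrm d\mu(g)+o(s),
\]
and the integral is strictly negative. This is a genuine first-order decrease of the horizon area; your claim that the area changes only to second order conflates the second-order variation of the area functional under a displacement of the minimal surface (which would indeed be $O(s^2)$) with the first-order conformal rescaling of the area of the \emph{fixed} surface $\Sigma$, which is $O(s)$. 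Consequently both $m(\bar g_s)$ and the Penrose quantity $\left(\tfrac12\right)^{\frac n{n-1}}\!\left(\frac{|\bar\Sigma_s|_{\bar g_s}}{\omega_{n-1}}\right)^{\frac{n-2}{n-1}}$ decrease at first order in $s$, and nothing in your setup forces the mass to fall faster; the intended contradiction with Corollary \ref{penrose coro} does not follow. To close the gap you must fix $u\equiv 1$ on $\Sigma$ as in the paper (using Proposition \ref{PDE prop 2}, which is designed for this mixed Dirichlet-on-$\Sigma$/Neumann-on-$\partial M$ problem), accept $H(\Sigma,g_t)>0$ rather than $H(\Sigma,g_t)=0$, and then argue as the paper does that the new horizon stays $O(t)$-close to $\Sigma$ with second-order area deviation. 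Your treatment of the mass via the $A|x|^{2-n}$ expansion and of the conditions $R(\bar g)\geq 0$, $H(\partial M,\bar g)\geq 0$ is fine and interchangeable with the paper's direct computation using Lemma \ref{asymptotic growth}.
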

\begin{proof}
	Suppose, for a contradiction, that $R(g)\neq 0$. Recall the definition \eqref{weighted hoelder} of the weighted Hölder space $C^{2,\alpha}_{\tau}(M(\Sigma))$. By Proposition \ref{PDE prop 2}, there is  a unique solution $v\in C_\tau^{2,\alpha}(M(\Sigma))$ of 
	\begin{equation*}
	\begin{dcases} \qquad
	-\frac{4\,(n-1)}{(n-2)}\Delta_{g}\,v+R(g)\,(1+v)=0\qquad &\text{in }\operatorname{int}(M(\Sigma)) ,\qquad\qquad\qquad\qquad\\\qquad
	D(g)_{\nu(\partial M,g)}v=0&\text{on } M(\Sigma)\cap\partial M,\text{ and} \\\qquad
	v=0& \text{on } \Sigma.
	\end{dcases} 
	\end{equation*} Since $R(g)\neq 0$, $v$ is non-constant.  By the maximum principle, we have $-1<v<0$ in $M(\Sigma)\setminus \Sigma $ and
	$
	D(g)_{\nu(\Sigma,g)}v<0
	$
	on $\Sigma$. We define the family $\{g_t\}_{t\in[0,1)}$ of Riemannian metrics $$g_t=(1+t\,v)^{\frac{4}{n-2}}\,g$$ on $M(\Sigma)$. Note that $g_t$ is asymptotically flat for every $t\in[0,1)$. Moreover, by Lemma \ref{conformal change}, we have, for every $t\in(0,1)$,
	\begin{itemize}
		\item[$\circ$] $R(g_t)\geq0$ in $M(\Sigma)$,
\item[$\circ$]  $H(\partial M,g_t)\geq 0$ on $M(\Sigma)\cap \partial M$, and
\item[$\circ$] $H(\Sigma,g_t)>0$.	
\end{itemize}
	 \indent \indent Arguing as in the proof of Proposition \ref{ahf to conformally flat prop}, we find that  $(M,g_t)$ has horizon boundary $\Sigma_t\subset M(\Sigma)$ and that $\Sigma_t\to\Sigma$ smoothly as $t\searrow 0$. Using that $v=0$ on $\Sigma$ and that $H(\Sigma,g)=0$, we conclude that
	$$
	\lim_{t\searrow0}t^{-1}\,\big(|\Sigma_t|_{g_t}-|\Sigma|_g\big)=0.
	$$ \indent
	Next, we compute $m(g_t)$ in the asymptotically flat chart of $(M,g)$. By \eqref{half-space mass intro},
	\begin{equation*} 
	\begin{aligned}   
	&2\,(n-1)\,\omega_{n-1}\,m(g_t)
	\\&\qquad=\lim_{\lambda\to\infty}\lambda^{-1}\,\bigg(\sum_{i,\,j=1}^n\int_{ \mathbb{R}^n_+\cap {S}^{n-1}_\lambda(0)}(1+t\,v)^{\frac{4}{n-2}}\,x^i\,\big[(\partial_jg)(e_i,e_j)-(\partial_ig)(e_j,e_j)\big]\,\mathrm{d}\mu(\bar g)
	\\&\qquad \qquad \qquad \qquad \qquad +\sum_{i=1}^{n-1}\int_{(\mathbb{R}^{n-1}\times\{0\})\cap S^{n-1}_\lambda(0)} (1+t\,v)^{\frac{4}{n-2}}\,x^i\,g(e_i,e_n)\,\mathrm{d}l(\bar g) \bigg )
	\\&\qquad \qquad  +\frac{4}{n-2}\,t\,\lim_{\lambda\to\infty}\lambda^{-1}\,\sum_{i,\,j=1}^n\int_{ \mathbb{R}^n_+\cap{S}^{n-1}_\lambda(0)}(1+t\,v)^{\frac{6-n}{n-2}}\,x^i\,\big[\partial_jv\,g(e_i,e_j)-\partial_iv\,g(e_j,e_j)\big]\,\mathrm{d}\mu(\bar g).
	\end{aligned}
	\end{equation*} 
	Using that $g$ is asymptotically flat \eqref{af intro} and that $v\in C_{\tau}^{2,\alpha}(M(\Sigma))$, we have
$$\lim_{\lambda\to\infty}\lambda^{-1}\,\sum_{i,\,j=1}^n\int_{ \mathbb{R}^n_+\cap {S}^{n-1}_\lambda(0)}v\,x^i\,\big[(\partial_jg)(e_i,e_j)-(\partial_ig)(e_j,e_j)\big]\,\mathrm{d}\mu(\bar g)=0$$ and $$
		 \lim_{\lambda\to\infty}\lambda^{-1}\sum_{i=1}^{n-1}\int_{(\mathbb{R}^{n-1}\times\{0\})\cap S^{n-1}_\lambda(0)} v\,x^i\,g(e_i,e_n)\,\mathrm{d}l(\bar g) =0.
$$
It follows that
	$$
	\lim_{t\searrow 0} t^{-1}\,\big(m(g_t)-m(g)\big)=-\frac{2}{(n-2)\,\omega_{n-1}}\,\lim_{\lambda\to\infty}\lambda^{-1}\, \sum_{i=1}^n\int_{ \mathbb{R}^n_+\cap {S}^{n-1}_\lambda(0)}x^i\,\partial_iv\,\mathrm{d}\mu(\bar g).
	$$
	In conjunction with Lemma \ref{asymptotic growth}, we conclude that 
$$
	\lim_{t\searrow 0} t^{-1}\,\big(m(g_t)-m(g)\big)<0
$$
and, in particular, that
	$$
	\lim_{t\searrow 0} t^{-1}\,\bigg[m(g_t)-\left(\frac{1}{2}\right)^{\frac{n}{n-1}}\,\left(\frac{|\Sigma_t|_{g_t}}{\omega_{n-1}}\right)^{\frac{n-2}{n-1}}	-m(g)+\left(\frac{1}{2}\right)^{\frac{n}{n-1}}\,\left(\frac{|\Sigma|_{g}}{\omega_{n-1}}\right)^{\frac{n-2}{n-1}}\bigg]<0.
	$$
As this is not compatible with 	Corollary \ref{penrose coro}, the assertion follows.

\end{proof}
\begin{lem}
	There holds $H(\partial M,g)=0$ on $ M(\Sigma)\cap\partial M$. \label{mean flat} 
\end{lem}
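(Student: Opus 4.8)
The plan is to argue by contradiction, in the spirit of Lemma \ref{scalar flat}, now using the inequality $H(\partial M,g)\geq 0$ as the source of a mass-decreasing perturbation in place of $R(g)\geq 0$. Recall from Lemma \ref{scalar flat} that $R(g)=0$ in $M(\Sigma)$, so $g$ is scalar flat there. Suppose, for a contradiction, that $H(\partial M,g)$ does not vanish identically on $M(\Sigma)\cap\partial M$. Mirroring the proof of Lemma \ref{scalar flat}, but using Proposition \ref{PDE prop 2} with an inhomogeneous boundary term, I would first produce a unique solution $v\in C^{2,\alpha}_\tau(M(\Sigma))$ of the mixed boundary value problem
\begin{equation*}
\begin{dcases} \qquad
-\frac{4\,(n-1)}{n-2}\,\Delta_g\,v=0 &\text{in }\operatorname{int}(M(\Sigma)),\\\qquad
\frac{2\,(n-1)}{n-2}\,D(g)_{\nu(\partial M,g)}v-H(\partial M,g)\,v=H(\partial M,g) &\text{on }M(\Sigma)\cap\partial M,\\\qquad
v=0 &\text{on }\Sigma,\text{ and}\\\qquad
v\to 0 &\text{as }x\to\infty.
\end{dcases}
\end{equation*}

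Next, I would determine the sign and boundary behavior of $v$. Comparing with the constant functions $0$ and $-1$ and invoking the strong maximum principle together with the Hopf boundary point lemma — using $H(\partial M,g)\geq 0$ and, crucially, $H(\partial M,g)\not\equiv 0$ to rule out $v\equiv 0$ — I expect to obtain $-1<v<0$ in $M(\Sigma)\setminus\Sigma$ and $D(g)_{\nu(\Sigma,g)}v<0$ on $\Sigma$, exactly as in Lemma \ref{scalar flat}. Setting $g_t=(1+t\,v)^{\frac{4}{n-2}}\,g$ for $t\in[0,1)$ and using the conformal transformation formulas (Lemma \ref{conformal change}) together with $R(g)=0$, $\Delta_g v=0$, and the boundary condition for $v$, I would check that
$$R(g_t)=0\ \text{ in }M(\Sigma),\qquad H(\partial M,g_t)=(1-t)\,(1+t\,v)^{-\frac{n}{n-2}}\,H(\partial M,g)\geq 0,\qquad H(\Sigma,g_t)>0.$$
Here the inhomogeneous boundary term in the equation for $v$ is precisely calibrated so that the boundary term in the conformal change of $H(\partial M,\cdot)$ cancels against $H(\partial M,g)$ up to the factor $(1-t)$.

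The remainder of the argument then runs exactly as in Lemma \ref{scalar flat}. Since $H(\Sigma,g_t)>0$, arguing as in the proof of Proposition \ref{ahf to conformally flat prop}, $(M,g_t)$ has horizon boundary $\Sigma_t\subset M(\Sigma)$ with $\Sigma_t\to\Sigma$ smoothly as $t\searrow 0$; since $v=0$ and $H(\Sigma,g)=0$ on $\Sigma$, one gets $\lim_{t\searrow 0}t^{-1}(|\Sigma_t|_{g_t}-|\Sigma|_g)=0$. Computing $m(g_t)$ in the asymptotically flat chart of $g$ exactly as in Lemma \ref{scalar flat}, and applying Lemma \ref{asymptotic growth} (which applies since $v\leq 0$, $\Delta_g v\geq 0$, $v\to 0$, and $v\not\equiv 0$), yields $\lim_{t\searrow 0}t^{-1}(m(g_t)-m(g))<0$. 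Combined with the equality $m(g)=\left(1/2\right)^{\frac{n}{n-1}}\left(|\Sigma|_g/\omega_{n-1}\right)^{\frac{n-2}{n-1}}$, this gives
$$\lim_{t\searrow 0}t^{-1}\left[m(g_t)-\left(\frac12\right)^{\frac{n}{n-1}}\left(\frac{|\Sigma_t|_{g_t}}{\omega_{n-1}}\right)^{\frac{n-2}{n-1}}\right]<0,$$
so $m(g_t)<\left(1/2\right)^{\frac{n}{n-1}}\left(|\Sigma_t|_{g_t}/\omega_{n-1}\right)^{\frac{n-2}{n-1}}$ for all sufficiently small $t>0$, which is incompatible with Corollary \ref{penrose coro} applied to $(M,g_t)$. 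The assertion follows.

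The main obstacle I anticipate is the sign bookkeeping in the conformal transformations of $H(\partial M,\cdot)$ and $H(\Sigma,\cdot)$, and verifying that $H(\partial M,g)\not\equiv 0$ genuinely forces $v\not\equiv 0$ and hence, via Lemma \ref{asymptotic growth}, a \emph{strict} decay correction of $v$ at infinity. The new feature compared with Lemma \ref{scalar flat} is that the nontriviality of $v$ now stems from the boundary data rather than from an interior source, so one must make sure that the Phragmén–Lindelöf/positivity mechanism behind Lemma \ref{asymptotic growth} still delivers a strict sign; since $-v$ is a nonnegative, nontrivial, superharmonic function on $M(\Sigma)$ tending to $0$, it is bounded below by a positive multiple of $|x|_{\bar g}^{2-n}$ near infinity, which is exactly what is needed.
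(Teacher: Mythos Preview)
Your approach is genuinely different from the paper's, and while the overall strategy is plausible, there are several technical gaps and at least one computational error.

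\textbf{Comparison with the paper.} The paper does \emph{not} mimic the global conformal argument of Lemma \ref{scalar flat}. Instead, it uses a \emph{local, compactly supported} perturbation: near a point where $H(\partial M,g)>0$, Lemma \ref{scalar curvature for mean curvature} produces $\psi\in C^\infty(M)$ with compact support in an open set $U\Subset M(\Sigma)\setminus\Sigma$, $\Delta_g\psi\geq 0$, and $\Delta_g\psi>0$ somewhere. Setting $g_t=(1-t\psi)^{4/(n-2)}g$ and using $R(g)=0$ (Lemma \ref{scalar flat}), one gets $R(g_t)\geq 0$ with strict inequality somewhere, and $H(\partial M,g_t)\geq 0$ for small $t$ since $H(\partial M,g)>0$ on $\operatorname{spt}(\psi)\cap\partial M$. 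Because the perturbation is compactly supported away from both $\Sigma$ and infinity, one has $m(g_t)=m(g)$ and $\Sigma_t=\Sigma$ \emph{exactly}, so equality in \eqref{coro RPI} persists for $g_t$ --- contradicting Lemma \ref{scalar flat}. This bypasses all asymptotic analysis and is considerably shorter.

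\textbf{Issues with your argument.} First, your boundary condition has a sign error: with $\frac{2(n-1)}{n-2}D(g)_{\nu(\partial M,g)}v-Hv=H$ and $u=1+tv$, Lemma \ref{conformal change} gives $H(\partial M,g_t)=(1+tv)^{-n/(n-2)}H\,(1+t+2tv)$, not $(1-t)(1+tv)^{-n/(n-2)}H$. You need $\frac{2(n-1)}{n-2}D(g)_{\nu(\partial M,g)}v+Hv=-H$ instead. Second, Proposition \ref{PDE prop 2} as stated imposes the \emph{homogeneous} Neumann condition $D(g)_{\nu(\partial M,g)}v=0$ on $\partial M$, not the inhomogeneous Robin condition you require; you would need to prove a separate existence result. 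Third, Lemma \ref{asymptotic growth} explicitly assumes $D(g)_{\nu(\partial\mathbb{R}^n_+,g)}u=0$ outside a compact set, whereas your $v$ satisfies $D(g)_{\nu(\partial M,g)}v=O(H)=O(|x|_{\bar g}^{-\tau-1})$, which decays but is not identically zero; the proof of Lemma \ref{asymptotic growth} would need to be revisited. These gaps are all repairable, but the paper's local argument avoids them entirely.
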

\begin{proof}
	Suppose, for a contradiction, that there is $U\Subset M(\Sigma)\setminus \Sigma$ open with $U\cap \partial M\neq \emptyset$ such that  $H(\partial M,g)>0$ on $U\cap \partial M$.  \\ \indent 
	By Lemma \ref{scalar curvature for mean curvature}, there exists  $\psi\in C^\infty(M)$ with the following properties.
	 \begin{itemize}
			\item[$\circ$] $\psi$ has compact support in $U\cap \partial M $.
		\item[$\circ$] $\Delta_g\psi \geq 0$ in $M(\Sigma)$ and $\Delta_g\psi> 0$ at some point.

	\end{itemize}
	Let $0<t_0<\sup_{x\in M}|\psi|$. We define the family $\{g_t\}_{t\in[0,t_0)}$ of Riemannian metrics
	$$
	g_t=(1-t\,\psi)^{\frac{4}{n-2}}\,g.
	$$ 
	By Lemma \ref{conformal change} and Lemma \ref{scalar flat}, there holds $R(g_t)\geq 0$ in $M(\Sigma)$ and $R(g_t)> 0$ at some point  for every $t\in(0,t_0)$. Decreasing $t_0>0$ if necessary, we also have $H(\partial M,g_t)\geq 0$ on $ M(\Sigma)\cap\partial M$ for every $t\in(0,t_0)$.\\ \indent On the one hand, arguing as in the proof of Proposition \ref{ahf to conformally flat prop}, $(M,g_t)$ has horizon boundary $\Sigma_t\subset M(\Sigma)$ with $\Sigma_t\to \Sigma$ smoothly as $t\searrow 0$. Using that $g_t=g$ near $\Sigma$ and that $M(\Sigma)$ is an exterior region, we conclude that $\Sigma_t=\Sigma$ for every $t\in(0,t_0)$ sufficiently small. On the other hand, clearly, $m(g_t)=m(g)$ for every $t\in(0,t_0)$. It follows that 
	$$
	m(g_t)=\left(\frac{1}{2}\right)^{\frac{n}{n-1}}\,\left(\frac{|\Sigma_t|_{g_t}}{\omega_{n-1}}\right)^{\frac{n-2}{n-1}}
	$$
	for every $t\in(0,t_0)$ sufficiently small.
	As this is not compatible with Lemma \ref{scalar flat}, the assertion follows.
\end{proof} 
The following lemma is the key technical step in the proof of Theorem \ref{rigidity remark}.
\begin{lem} \label{perturbation}
	Suppose that there is   $U\Subset M(\Sigma)\setminus \Sigma $ open with $U\cap \partial M\neq\emptyset$ such that $h(\partial M,g)\neq 0$ on $U\cap \partial M$.    There exists a smooth family $\{g_t\}_{t\in[0,t_0)}$ of Riemannian metrics $g_t$ on $M$ such that
	\begin{itemize}
		\item[$\circ$] $g_t=g$ outside of $U$ and $g_0=g$, 
		\item[$\circ$] $\lim_{t\searrow 0} t^{-1}(|U\cap\partial M|_{g_t}-|U\cap\partial M|_g)=0$, and 
		\item[$\circ$] $\lim_{t\searrow 0} t^{-1}\,H(\partial M, g_t)= 0$ on $ M(\Sigma)\cap\partial M$.
	\end{itemize} 
Moreover,
\begin{equation*}
\begin{aligned}
\lim_{t\searrow0}\,t^{-1}\,R(g_t)\geq 0
\end{aligned}
\end{equation*}
with strict inequality at some point.
\end{lem}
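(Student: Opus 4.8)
The plan is to reduce the statement to a pointwise, first-order assertion about the linearised scalar and mean curvatures, and then to construct the required metric family explicitly in Fermi coordinates near a point where $h(\partial M,g)$ does not vanish.

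\medskip
By Lemma \ref{scalar flat} and Lemma \ref{mean flat}, $R(g)=0$ in $M(\Sigma)$ and $H(\partial M,g)=0$ on $M(\Sigma)\cap\partial M$; in particular $h(\partial M,g)$ is trace-free on $U\cap\partial M$, so there is $p\in U\cap\partial M$ with $h(\partial M,g)(p)\neq0$, and $h(\partial M,g)\neq0$ on a neighbourhood of $p$. Denoting by $DR_g$ and $DH_g$ the linearisations at $g$ of the scalar curvature and of the mean curvature of $\partial M$, it suffices to produce a smooth symmetric $(0,2)$-tensor $k$ on $M$ with $\operatorname{spt}(k)\Subset U$ such that
\[
DR_g(k)\geq0 \ \text{in }M(\Sigma), \quad DR_g(k)>0 \ \text{somewhere}, \quad DH_g(k)=0 \ \text{on }M(\Sigma)\cap\partial M, \quad \int_{U\cap\partial M}\operatorname{tr}_{g}(k|_{T\partial M})\,\mathrm{d}\mu(g)=0.
\]
Indeed, set $g_t=g+t\,k$; this is a Riemannian metric on $M$ for $t\in[0,t_0)$ with $t_0>0$ small, it equals $g$ near $\Sigma$ and near infinity because $\operatorname{spt}(k)\Subset U\subset M(\Sigma)\setminus\Sigma$, and since $R(g)=0$ and $H(\partial M,g)=0$ a Taylor expansion gives $\lim_{t\searrow0}t^{-1}R(g_t)=DR_g(k)$, $\lim_{t\searrow0}t^{-1}H(\partial M,g_t)=DH_g(k)$, and $\lim_{t\searrow0}t^{-1}\big(|U\cap\partial M|_{g_t}-|U\cap\partial M|_g\big)=\tfrac12\int_{U\cap\partial M}\operatorname{tr}_{g}(k|_{T\partial M})\,\mathrm{d}\mu(g)$.

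\medskip
To build $k$, fix Fermi coordinates $(y,s)$ for $(\partial M,g)$ near $p$, so that $g=\mathrm{d}s^2+\gamma_s$ with $\gamma_0=g|_{T\partial M}$ and $\tfrac12\partial_s\gamma_s|_{s=0}=h:=h(\partial M,g)$, and let $\varphi\geq0$ be a bump on $\partial M$ supported near $p$ with $\varphi\equiv1$ on a smaller neighbourhood. We take
\[
k=2\,\psi(s)\,\varphi(y)\,h_{ij}(y)\,\mathrm{d}y^i\mathrm{d}y^j\;+\;2\,\phi(s)\,\varphi(y)\,\mathrm{d}s^2,
\]
extended by zero, where $\psi,\phi$ are profiles on $[0,\delta_0)$ with $\psi(0)=0$ to be chosen. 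The first (tangential, "bending") term has $k|_{T\partial M}=0$, so it fixes the first-order variation of $|U\cap\partial M|_{g_t}$; moreover $g_t$ remains of the form $g_{nn}(y,s)\,\mathrm{d}s^2+(\gamma_s+t\cdot\text{tangential})$, so the second fundamental form of the deformed boundary is a positive multiple of $h$, again trace-free, whence $H(\partial M,g_t)=0$; in particular the lapse term contributes nothing to the induced metric or to $H(\partial M,g_t)$. Thus the last two conditions above hold for any admissible $\psi,\phi$ with $\psi(0)=0$. A direct computation of $DR_g(k)=-\Delta_g\operatorname{tr}_g k+\operatorname{div}_g\operatorname{div}_g k-\langle\operatorname{Ric}(g),k\rangle$, using $R(g)=0$ (equivalently $R(\gamma_s)=H_s^2+|A_s|^2+2\,\partial_sH_s$ along the collar) and $\operatorname{tr}_{\gamma_0}h=0$, shows that along $\partial M$
\[
DR_g(k)\big|_{\partial M}=c\,\psi'(0)\,|h|^2_{\gamma_0}\,\varphi
\]
for a positive constant $c$; choosing $\psi'(0)>0$, this is positive near $p$. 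The reason this works is that every term in $DR_g(k)$ that could have the wrong sign is proportional to $\operatorname{tr}_{\gamma_s}h$, which vanishes at $s=0$ and is therefore of order $s$ in a collar.

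\medskip
It remains to choose $\psi,\phi$ and the width of the collar so that $DR_g(k)\geq0$ on all of $M(\Sigma)$, and this is the main obstacle. Near $\partial M$ the leading term $c\,\psi'(0)\,|h|^2\varphi$ dominates the $O(s)$ errors produced by the bending term; where $\psi$ tapers back to zero one uses the lapse term, whose linearised scalar curvature along the normal rays is $2\,H_s\,\phi'+2\,\phi\,R(\gamma_s)$ — with no Laplacian, since $\phi$ depends only on $s$ near $p$ — to correct the sign, again invoking $R(\gamma_s)=H_s^2+|A_s|^2+2\,\partial_sH_s$. This is precisely the step at which the perturbation must be fine-tuned to the geometry of $\partial M$ rather than being conformal: a conformal perturbation $g\mapsto(1+t\,w)^{4/(n-2)}g$ has linearised scalar curvature $-\tfrac{4(n-1)}{n-2}\Delta_g w$, whose integral over any region on which $w$ has compact support vanishes, so it can never be made nonnegative and not identically zero by a compactly supported conformal change; the bending term supplies instead a sign-definite zeroth-order contribution $\propto|h|^2$ at $\partial M$, which forces a careful balancing of the collar width against the derivatives of $\psi$. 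Once $\psi,\phi$ are fixed accordingly, $g_t=g+t\,k$ satisfies all four requirements, and the lemma follows.
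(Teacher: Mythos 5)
Your overall strategy parallels the paper's: reduce to the linearization $DR_g(k)=\operatorname{div}_g\operatorname{div}_g k-\Delta_g\operatorname{tr}_gk-\langle\operatorname{Ric}(g),k\rangle$ via the Kazdan--Warner formula, work in Fermi coordinates near a non-umbilical boundary point, and make an ansatz with a tangential piece together with a normal ("lapse") piece. However, the specific ansatz is genuinely different and, as written, the argument has gaps that are not cosmetic.

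First, the sign of the boundary contribution in your computation appears to be wrong. For the tangential piece $k_{ij}=2\psi(s)\varphi(y)h_{ij}$ with $\psi(0)=0$, one has $\operatorname{tr}_gk=2\psi\varphi\operatorname{tr}_{\gamma_s}h$ and, using $\operatorname{tr}_{\gamma_0}h=0$, $H(\partial M,g)=0$, and $\partial_s(\operatorname{tr}_{\gamma_s}h)|_{s=0}=2|h|^2_{\gamma_0}$, a direct computation gives $-\Delta_g\operatorname{tr}_gk|_{s=0}=-8\,\psi'(0)\,\varphi\,|h|^2_{\gamma_0}$, while $\operatorname{div}_g\operatorname{div}_gk|_{s=0}=2\,\psi'(0)\,\varphi\,|h|^2_{\gamma_0}$ and $\langle\operatorname{Ric},k\rangle|_{s=0}=0$, so $DR_g(k)|_{s=0}=-6\,\psi'(0)\,\varphi\,|h|^2_{\gamma_0}$. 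Thus the constant $c$ you invoke is negative, and $\psi'(0)>0$ produces the wrong sign at $\partial M$. This is fixable by flipping $\psi'(0)$, but it shows that the computation you describe as "direct" was not carried out.

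Second, and more seriously, your tangential tensor $\psi(s)\,h$ is trace-free only at $s=0$, not for $s>0$. This means $\operatorname{tr}_gk=2\psi\varphi\operatorname{tr}_{\gamma_s}h$ is nonzero in the collar, and $-\Delta_g\operatorname{tr}_gk$ contributes a term $-2\psi''\,\varphi\,\operatorname{tr}_{\gamma_s}h$. There is no matching $\psi''$ term in $\operatorname{div}_g\operatorname{div}_gk$ of a purely tangential tensor (that term cancels only for the normal--normal component, which is what you and the paper both use to kill $\phi''$), so this $\psi''$ contribution survives. If $\psi$ is supported on a collar of width $\delta$ with $\psi(0)=0$ and $|\psi'(0)|\sim 1$, then $\psi''\sim\delta^{-1}$ somewhere while $\operatorname{tr}_{\gamma_s}h\sim 2|h|^2 s\sim\delta$, so $\psi''\operatorname{tr}_{\gamma_s}h\sim 1$ is of the same order as the boundary term you wish to exploit, and of indefinite sign. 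Your lapse cannot absorb this: as you note, the lapse contributes no $\phi''$, so it only supplies terms of the form $H_s\phi'$ and $\phi\cdot(\text{curvature})$, which do not cancel a $\psi''$-term pointwise. This is precisely the difficulty the paper's ansatz is engineered to avoid: by taking the tangential part $f_{\varepsilon,\delta}\,A^tEA$ with $\gamma_s=A^tA$ and $E=\operatorname{diag}(-1,1,0,\dots,0)$, the paper arranges $\operatorname{tr}_{\gamma_s}(\sigma^{\varepsilon,\delta}|_{\mathbb{R}^{n-1}})=0$ for all $s$, so $\operatorname{tr}_g\sigma$ is entirely due to the normal--normal component $-Kf_{\varepsilon,\delta}$, whose $\eta_\delta''$-contributions cancel exactly between $\operatorname{div}\operatorname{div}$ and $\Delta\operatorname{tr}$. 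The sign at the boundary then comes cleanly from the eigenvalue ordering \eqref{p order}, and the remaining error terms are absorbed using the large constant $K$ and the special profile $\rho_\varepsilon$ of Lemma \ref{scalar curvature balance}. You identify the interior balancing as "the main obstacle" but do not resolve it, and the structural feature that makes the paper's proof go through — exact trace-freeness of the tangential perturbation along the whole collar — is absent from your ansatz. As it stands the proposal is a plausible sketch of a different construction, but it does not yet constitute a proof.
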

\begin{proof}
	Let $x_0\in \partial M\setminus \Sigma$ be such that $h(\partial M,g)(x_0) \neq 0$. Using that $H(\partial M,g)=0$ on $M(\Sigma)\cap\partial M$, see Lemma \ref{mean flat}, we see that there is an orthonormal basis  	 $e_1,\,e_2,\dots,e_{n-1}$ of $T_{x_0}(\partial M)$ of principal directions of $h(\partial M,g)$ with
\begin{equation}   \label{p order} 
\begin{aligned}
&h(\partial M,g)(x_0)(e_1,e_1)\geq\max\{|h(\partial M,g)(x_0)(e_2,e_2)|,\dots,|h(\partial M,g)(x_0)(e_{n-1},e_{n-1})|\},\\
&\qquad h(\partial M,g)(x_0)(e_1,e_1)>0,\qquad\text{and}\qquad -h(\partial M,g)(x_0)(e_2,e_2)> 0.
\end{aligned} 
\end{equation} 
\indent 
Given $\varepsilon>0$ sufficiently small,	we define a local parametrization
	$$
	\Psi_\varepsilon:\{y\in\mathbb{R}^{n-1}:|y|_{\bar g}<\varepsilon\}\to \partial M \qquad\text{given by} \qquad \Psi_\varepsilon(y)=\exp(g|_{\partial M})_{x_0}\bigg(\sum_{\ell=1}^{n-1}y^\ell\,e_\ell\bigg)
	$$
	of $\partial M$ near $x_0$. Decreasing $\varepsilon>0$ if necessary, we obtain a local parametrization 
	$$
	\Phi_\varepsilon:\{y\in\mathbb{R}^{n-1}:|y|_{\bar g}<\varepsilon\}\times\{s\in\mathbb{R}:0\leq s< \varepsilon\}\to M$$ of $M$ near $x_0$ given by
	$$\Phi_\varepsilon(y,s)=\exp(g)_{\Psi_\varepsilon(y)}(s\,\nu(\partial M,g)(\Psi_\varepsilon(y))).
	$$
	By construction, 
	\begin{align} \label{chart metric} 
	\Phi_\varepsilon^*g=\gamma_s+ds^2
	\end{align} 
	where, for each $s\in[0,\varepsilon)$, $\gamma_s$ is a Riemmanian metric on $\{y\in\mathbb{R}^{n-1}:|y|_{\bar g}<\varepsilon\}$. Note that for all $s\in[0,\varepsilon]$, as $\varepsilon\searrow 0$,
	\begin{equation}\label{properties} \begin{aligned}
	&\circ\qquad\gamma_s=\bar g|_{\mathbb{R}^{n-1}}+o(1),\qquad\qquad\qquad\qquad \qquad \\
	&\circ\qquad D(\bar g)\gamma_s=O(1), \text{ and}\\
&\circ\qquad D^2(\bar g)\gamma_s=O(1).
	\end{aligned}
	\end{equation}
	Moreover, computing the Christoffel symbols of $g$ in the chart $\Phi_{\varepsilon}$, we see that, as $\varepsilon \searrow 0$,
	\begin{align} \label{h vs gamma} 
\sup_{s\in[0,\varepsilon)}\,\sup_{\,|y|_{\bar g}<\varepsilon}\bigg|\frac12\,	(D(\bar g)_{e_n}\gamma_s)(y)+h(\partial M,g)(\Psi_\varepsilon(y))\bigg|_{\bar g}=o(1).
	\end{align} 
	In particular,
	\begin{align} \label{mc gamma} 
\sup_{s\in[0,\varepsilon)}\,\sup_{\,|y|_{\bar g}<\varepsilon}\operatorname{tr}_{\bar g|_{\mathbb{R}^{n-1}}}(D(\bar g)_{e_n}\gamma_s)(y)=o(1).
\end{align}	
By \eqref{p order}, there holds on $\Phi( \mathbb{R}^n_+\cap B^n_\varepsilon(0))$ for all $1\leq i,\, j\leq n-1$ with $i\neq j$,  as $\varepsilon\searrow 0$,
\begin{equation}\label{p order 3}
\begin{aligned}  
h(\partial M,g)(e_1,e_1)\geq\,&|h(\partial M,g)(e_i,e_i)|+o(1), \\
|h(\partial M,g)(e_i,e_j)|=\,&o(1),\\
h(\partial M,g)(e_1,e_1)\geq\,&h(\partial M,g)(x_0)(e_1,e_1)-o(1), \text{ and}
\\-h(\partial M,g)(e_2,e_2)\geq\,& 0.
\end{aligned}
\end{equation} 
 \indent 
 Let $K>1$ large be a constant to be chosen later, $\rho\in C^\infty(\mathbb{R}^{n-1})$ be the function from Lemma \ref{scalar curvature balance}, and $\delta\in(0,\varepsilon)$. We define $\rho_\varepsilon\in C^\infty(\mathbb{R}^{n-1})$ by
 $\rho_\varepsilon(y)=\rho(\varepsilon^{-1}\,y)$ and 	\begin{equation} \label{eta def} 
	\eta_{\delta}:[0,\infty)\to\mathbb{R}\qquad\text{ by}\qquad \eta_\delta(s)=\begin{dcases}
	\exp^{-(1-\delta^{-1}\,s)^{-1}}\qquad\,\,\,\,&\text{if}\qquad0\leq s<\delta, 
	\\0&\text{else}.
	\end{dcases}
	\end{equation} 
	Let $$f_{\varepsilon,\delta}(y,s)=\eta_\delta(s)\,\rho_\varepsilon(y).$$ By the Gram-Schmidt process, given $(y,s)\in\mathbb{R}^{n-1}\times\mathbb{R}$ with $|y|_{\bar g}<\varepsilon$ and $0\leq s<\varepsilon$, there exists an invertible $(n-1)\times(n-1)$-matrix $A=A(y,s)$ smoothly depending on $(y,s)$ such that $\gamma_s=A^t\,A$. Moreover, as $\varepsilon\searrow 0$,
	\begin{align} \label{O estimate}
	A=\operatorname{Id}+o(1),\qquad  D(\bar g)A=O(1),\qquad \text{and}\qquad D^2(\bar g)A=O(1).
	\end{align}
Let $E:\mathbb{R}^{n-1}\to\mathbb{R}^{n-1}$ be the linear map given by $E(e_1)=-e_1$, $E(e_2)=e_2$, and $E(e_i)=0$ when $3\leq i\leq n-1$. We define a  symmetric $(0,2)$-tensor $\sigma^{\varepsilon,\delta}$ on  $\mathbb{R}^n_+\cap B^n_\varepsilon(0)$ by
\begin{equation*}
		\begin{aligned} 
&\sigma^{\varepsilon,\delta}|_{\mathbb{R}^{n-1}}=f_{\varepsilon,\delta}\,A^t\,E\,A,\\& \sigma^{\varepsilon,\delta}(e_n,e_i)=\sigma^{\varepsilon,\delta}(e_i,e_n)=0 \text{ for }\,i=1,\,2,\dots,n-1, \text{ and} \\&
\sigma^{\varepsilon,\delta}(e_n,e_n)=-K\,f_{\varepsilon,\delta}.
	\end{aligned} 
\end{equation*}
	Note that 
	\begin{align} \label{tr one} 
	\operatorname{tr}_{\Phi_\varepsilon^*g}\sigma^{\varepsilon,\delta}=-K\,f_{\varepsilon,\delta}\end{align}  and, for each $s\in[0,\varepsilon)$, \begin{align} \label{tr two} \operatorname{tr}_{\gamma_s}(\sigma^{\varepsilon,\delta}|_{\mathbb{R}^{n-1}})=0.
	\end{align}
	For $t_0>0$ sufficiently small, we obtain a family $\{g^{\varepsilon,\delta}_t\}_{t\in[0,t_0)}$ of Riemannian metrics $g^{\varepsilon,\delta}_t$ on $M$ where
	$$
	g^{\varepsilon,\delta}_t(x)=\begin{dcases}&g(x)+t\,((\Phi_\varepsilon)_*\sigma^{\varepsilon,\delta})(x)\qquad\text{if } x\in\operatorname{Im}(\Phi_{\varepsilon}),\\
	&g(x)\qquad\qquad\qquad \qquad\quad\,\,\,\,\,\text{else}.
	\end{dcases}
	$$
	Note that
	$$
	\Phi^*_\varepsilon g_t^{\varepsilon,\delta}=\gamma_s+ds^2+t\,\sigma^{\varepsilon,\delta}.
	$$
	Moreover, by \eqref{tr two},
	$$
	\lim_{t\searrow 0} t^{-1}(|U\cap\partial M|_{g^{\varepsilon,\delta}_t}-|U\cap\partial M|_g)=0.
	$$
	 \indent 
We estimate the linearization of $R(g^{\varepsilon,\delta}_t)$ at $t=0$. All geometric expressions below are computed in the chart $\Phi_\varepsilon$.  Recall from \cite[(6.7)]{kazdanwarner} that
	\begin{align} \label{kazdanwarner} 
	\lim_{t\searrow 0}\,t^{-1}\,R(g^{\varepsilon,\delta}_t)=\operatorname{div}_g\operatorname{div}_g\sigma^{\varepsilon,\delta}-\Delta_g\operatorname{tr}_g\sigma^{\varepsilon,\delta}-g(\operatorname{Ric}(g),\sigma^{\varepsilon,\delta}).
	\end{align} 
	By \eqref{tr one}, \eqref{chart metric}, and \eqref{mc gamma}, we have, as $\varepsilon\searrow 0$, 
	\begin{align*} 
	\Delta_g\operatorname{tr}_g\sigma^{\varepsilon,\delta}=\,&-K\,\eta''_\delta\,\rho_\varepsilon+K\,\eta_\delta'\,\rho_{\varepsilon}\,\sum_{i,\,j=1}^{n-1}g^{ij}\,\Gamma(g)^n_{ij}-K\,\eta_\delta\,\sum_{i,j=1}^{n-1}g^{ij}\,\left[\partial_i\partial_j\rho_{\varepsilon}-\sum_{\ell=1}^{n-1}\Gamma(g)^\ell_{ij}\,\partial_\ell\rho_{\varepsilon}\right]
	\\=\,&-K\,\eta''_\delta\,\rho_\varepsilon-K\,\eta_\delta\,\sum_{i,j=1}^{n-1}\left[g^{ij}\,\partial_i\partial_j\rho_{\varepsilon}-\sum_{\ell=1}^{n-1}g^{ij}\,\Gamma(g)^\ell_{ij}\,\partial_\ell\rho_{\varepsilon}\right]+K\,o(\eta_\delta'\,\rho_{\varepsilon}).
	\end{align*}
	Moreover, 
	$$
	g(\operatorname{Ric}(g),\sigma^{\varepsilon,\delta})=K\,O(f_{\varepsilon,\delta}).
	$$
Next, we compute
	\begin{align*}
	&\operatorname{div}_g\operatorname{div}_g{\sigma^{\varepsilon,\delta}}\\&\qquad =\sum_{a,\,b,\,i,\,j=1}^n\bigg[g^{ab}\,g^{ij}\,\partial_i\partial_a \sigma^{\varepsilon,\delta}_{jb}+g^{ab}\,\partial_a g^{ij}\,\partial_i \sigma^{\varepsilon,\delta}_{jb}\\&\qquad\qquad\qquad\quad-\sum_{\ell=1}^n\bigg(g^{ab}\,g^{ij}\,\Gamma(g)^\ell_{ij}\,\partial_a\sigma^{\varepsilon,\delta}_{\ell b}+g^{ab}\,g^{ij}\,\Gamma(g)^{\ell}_{ib}\,\partial_a\sigma^{\varepsilon,\delta}_{j\ell}+g^{ab}\,g^{ij}\,\Gamma(g)^{\ell}_{ab}\,\partial_i\sigma^{\varepsilon,\delta}_{j\ell}\bigg)\bigg]\\
	&\qquad\qquad+K\,O(f_{\varepsilon,\delta})
	\end{align*}
	Using \eqref{chart metric}, \eqref{properties}, and \eqref{O estimate},
	we have
	$$
\sum_{a,\,b,\,i,\,j=1}^n	g^{ab}\,g^{ij}\,\partial_i\partial_a \sigma^{\varepsilon,\delta}_{jb}=-K\,\eta''_\delta\,\rho_\varepsilon+O(\eta_\delta\,D^2(\bar g|_{\mathbb{R}^{n-1}})\rho_{\varepsilon})+O(\eta_\delta\,D(\bar g|_{\mathbb{R}^{n-1}})\rho_{\varepsilon})+O(f_{\varepsilon,\delta})
	$$
	and, using also \eqref{mc gamma},
	\begin{align*} 
&\sum_{a,\,b,\,i,\,j=1}^n\bigg[	g^{ab}\,\partial_a g^{ij}\,\partial_i \sigma^{\varepsilon,\delta}_{jb}-\sum_{\ell=1}^n\bigg( g^{ab}\,g^{ij}\,\Gamma(g)^\ell_{ij}\,\partial_a\sigma^{\varepsilon,\delta}_{\ell b}+g^{ab}\,g^{ij}\,\Gamma(g)^{\ell}_{ab}\,\partial_i\sigma^{\varepsilon,\delta}_{j\ell}\bigg)\bigg]\\&\qquad=2\,K\,\eta'_\delta\,\rho_{\varepsilon}\,\sum_{i,\,j=1}^{n-1}g^{ij}\,\Gamma(g)^n_{ij} +O(\eta_\delta\,D(\bar g|_{\mathbb{R}^{n-1}})\rho_{\varepsilon})+O(f_{\varepsilon,\delta}) 
\\&\qquad =K\,o(\eta'_\delta\,\rho_{\varepsilon})+O(\eta_\delta\,D(\bar g|_{\mathbb{R}^{n-1}})\rho_{\varepsilon})+O(f_{\varepsilon,\delta}). 
	\end{align*} 
	Likewise, using also \eqref{h vs gamma} and \eqref{p order 3},
	\begin{align*} 
	g^{ab}\,g^{ij}\,\Gamma(g)^{\ell}_{ib}\,\partial_a\sigma^{\varepsilon,\delta}_{j\ell}=\,&[h(\partial M,g)(e_1,e_1)-h(\partial M,g)(e_2,e_2)]\,\eta'_\delta\,\rho_\varepsilon\\&\qquad +o(\eta'_\delta\,\rho_{\varepsilon})+O(\eta_\delta\,D(\bar g|_{\mathbb{R}^{n-1}})\rho_{\varepsilon})+O(f_{\varepsilon,\delta}).
	\end{align*} 
	We conclude that, as $\varepsilon\searrow 0$, 
	\begin{align*} 
		\lim_{t\searrow 0}\,t^{-1}\,R(g^{\varepsilon,\delta}_t)&\geq -[h(\partial M,g)(e_1,e_1)-h(\partial M,g)(e_2,e_2)]\,\eta'_\delta\,\rho_\varepsilon \\&\qquad
		+K\,\eta_\delta\,\sum_{i,j=1}^{n-1}\left[g^{ij}\,\partial_i\partial_j\rho_{\varepsilon}-\sum_{\ell=1}^{n-1}g^{ij}\,\Gamma(g)^\ell_{ij}\,\partial_\ell\rho_{\varepsilon}\right]
		\\&\qquad 
		 -K\,o(\eta'_\delta\,\rho_{\varepsilon})-O(\eta_\delta\,D^2(\bar g|_{\mathbb{R}^{n-1}})\rho_{\varepsilon})-O(\eta_\delta\,D(\bar g|_{\mathbb{R}^{n-1}})\rho_{\varepsilon})-K\,O(f_{\varepsilon,\delta}).
	\end{align*} 
	By Lemma \ref{scalar curvature balance} and \eqref{properties}, we may choose $K>1$ such that, for all $y\in\mathbb{R}^{n-1}$ with $\varepsilon/2\leq |y|_{\bar g}<\varepsilon$ and $s\in[0,\delta)$,
	$$
K\,\eta_\delta\,\sum_{i,j=1}^{n-1}\left[g^{ij}\,\partial_i\partial_j\rho_{\varepsilon}-\sum_{\ell=1}^{n-1}g^{ij}\,\Gamma(g)^\ell_{ij}\,\partial_\ell\rho_{\varepsilon}\right]\geq 	O(\eta_\delta\,D^2(\bar g|_{\mathbb{R}^{n-1}})\rho_{\varepsilon})+O(\eta_\delta\,D(\bar g|_{\mathbb{R}^{n-1}})\rho_{\varepsilon})+K\,O(f_{\varepsilon,\delta}).
	$$
	 provided that $\varepsilon>0$  is sufficiently small. Moreover, by \eqref{p order 3}, we have, for all $y\in\mathbb{R}^{n-1}$ with $|y|_{\bar g}<\varepsilon$ and $s\in[0,\delta)$,
	 \begin{align} \label{strict} 
	- [h(\partial M,g)(e_1,e_1)-h(\partial M,g)(e_2,e_2)]\,\eta'_\delta\,\rho_\varepsilon-K\, o(\eta'_\delta\,\rho_{\varepsilon})>0
	 \end{align} 
	 provided that $\varepsilon>0$ is sufficiently small. Consequently, for all $y\in\mathbb{R}^{n-1}$ with $\varepsilon/2\leq |y|_{\bar g}<\varepsilon$ and $s\in[0,\delta)$,
	$$
\lim_{t\searrow 0}\,t^{-1}\,R(g^{\varepsilon,\delta}_t)\geq 0 
$$
provided that $\varepsilon>0$ is sufficiently small. 
	 Finally, by \eqref{eta def} we have $\eta_\delta=o(\eta'_\delta)$ as $\delta\searrow 0$ and, by Lemma \ref{scalar curvature balance}, we have $$\liminf_{\varepsilon\searrow 0}\sup\{\rho_{\varepsilon}(y):y\in\mathbb{R}^{n-1}\text{ and }|y|_{\bar g}\leq \varepsilon/2\}>0.$$ Using \eqref{strict}, we conclude that,  for all $y\in\mathbb{R}^{n-1}$ with $|y|_{\bar g}\leq \varepsilon/2$ and $s\in[0,\delta)$,
	$$
		\lim_{t\searrow 0}\,t^{-1}\,R(g^{\varepsilon,\delta}_t)> 0 
	$$
	 provided that $\varepsilon>0$ and $\delta\in(0,\varepsilon)$ are sufficiently small.\\
	  \indent
Next, we compute	the linearization of $H(\partial M, g^{\varepsilon,\delta}_t)$ at $t=0$. As before, all geometric expressions are computed in the chart $\Phi_\varepsilon$.
The argument that led to \eqref{h vs gamma} also shows that
	\begin{align*} 
h(\partial M,g^{\varepsilon,\delta}_t)(\Psi_\varepsilon)=-\frac12\,	\big(D(\bar g)_{e_n}\gamma_s+t\,(D(\bar g)_{e_n} \sigma^{\varepsilon,\delta})|_{\mathbb{R}^{n-1}}\big)\big|_{s=0}
\end{align*} 
and
	$$
	H(\partial M,g^{\varepsilon,\delta}_t)=-\frac12\,\operatorname{tr}_{\bar g|_{\mathbb{R}^{n-1}}}\left[(\gamma_0+t\,\sigma^{\varepsilon,\delta}|_{\mathbb{R}^{n-1}})^{-1}\,\big(D(\bar g)_{e_n}\gamma_s+t\,(D(\bar g)_{e_n}\sigma^{\varepsilon,\delta})|_{\mathbb{R}^{n-1}}\big)\big|_{s=0}\right].
	$$
Using that $\gamma_s=A^t\,A$ and that $\sigma^{\varepsilon,\delta}|_{\mathbb{R}^{n-1}}=A^T\,E\,A$,	we obtain
\begin{align*} 
&H(\partial M,g_t^{\varepsilon,\delta})\\&\qquad =-\frac12\,\operatorname{tr}_{\bar g|_{\mathbb{R}^{n-1}}}\left[(A^t\,(\operatorname{Id}+t\,f_{\varepsilon,\delta}\,E)\,A)^{-1}\,D(\bar g)_{e_n}(A^t\,(\operatorname{Id}+t\,f_{\varepsilon,\delta}\,E)\,A)\right]\big|_{s=0}
\\&\qquad =
-\frac12\,\operatorname{tr}_{\bar g|_{\mathbb{R}^{n-1}}}\left[(A^t)^{-1}\,D(\bar g)_{e_n}A^t+A^{-1}\,D(\bar g)_{e_n}A+(\operatorname{Id}+t\,f_{\varepsilon,\delta}\,E)^{-1}\,t\,(D(\bar g)_{e_n} f_{\varepsilon,\delta})\,E\right]\big|_{s=0}.
\end{align*} 
Note that
$$
\frac{d}{dt}\bigg|_{t=0}\operatorname{tr}_{\bar g|_{\mathbb{R}^{n-1}}}\left[(\operatorname{Id}+t\,f_{\varepsilon,\delta}\,E)^{-1}\,t\,\,E\right]=\operatorname{tr}_{\bar g|_{\mathbb{R}^{n-1}}}[E]=0.
$$

We conclude that
	\begin{align*} 
	\frac{d}{dt}\bigg|_{t=0} H(\partial M,g^{\varepsilon,\delta}_t)=0.
	\end{align*}
\indent The assertion follows.  
	\end{proof} 
	\begin{lem} \label{totally geodesic} 
		There holds $h(\partial M,g)=0$ on $M(\Sigma)\cap\partial M$.
	\end{lem}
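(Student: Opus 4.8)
The plan is to argue by contradiction, combining Lemma~\ref{perturbation} with a conformal correction modelled on Lemma~\ref{scalar flat}. Assume that $h(\partial M,g)\neq 0$ at some point of $M(\Sigma)\cap\partial M$. Since $h(\partial M,g)$ is continuous and $\partial\Sigma$ has empty interior in $\partial M$, one finds an open set $U\Subset M(\Sigma)\setminus\Sigma$ with $U\cap\partial M\neq\emptyset$ on which $h(\partial M,g)\neq 0$. Applying Lemma~\ref{perturbation} to this $U$ produces a smooth family $\{g_t\}_{t\in[0,t_0)}$ with $g_0=g$ and $g_t=g$ outside of $U$; in particular $g_t=g$ near $\Sigma$ and near infinity, so $m(g_t)=m(g)$ and $|\Sigma|_{g_t}=|\Sigma|_g$ for all $t$, $\Sigma$ is minimal for $g_t$, and, because $M(\Sigma)$ is an exterior region and $g_t\to g$ smoothly, $(M,g_t)$ has horizon boundary $\Sigma$ for all small $t$. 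Writing $\rho=\frac{d}{dt}\big|_{0}R(g_t)$, Lemma~\ref{perturbation} gives $\rho\geq 0$ with $\rho\not\equiv 0$ and $\frac{d}{dt}\big|_{0}H(\partial M,g_t)=0$; since the variation is supported in the fixed compact set $\overline{U}$, one has $R(g_t)=t\,\rho+O(t^2)$ and $H(\partial M,g_t)=O(t^2)$ uniformly.

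Next I would conformally correct $g_t$. For each small $t>0$, let $\phi_t\in C^{2,\alpha}_\tau(M(\Sigma))$ solve
\begin{equation*}
\begin{dcases}
-\frac{4\,(n-1)}{n-2}\,\Delta_{g_t}\phi_t+R(g_t)\,(1+\phi_t)=0 &\text{in }\operatorname{int}(M(\Sigma)),\\
D(g_t)_{\nu(\partial M,g_t)}\phi_t=\frac{n-2}{2\,(n-1)}\,H(\partial M,g_t)\,(1+\phi_t) &\text{on }M(\Sigma)\cap\partial M,\\
\phi_t=0 &\text{on }\Sigma,\text{ and}\\
\lim_{x\to\infty}\phi_t=0,
\end{dcases}
\end{equation*}
the Robin datum being chosen so that the conformal metric $\hat g_t=(1+\phi_t)^{\frac{4}{n-2}}g_t$ has $H(\partial M,\hat g_t)=0$. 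By Lemma~\ref{scalar flat} and Lemma~\ref{mean flat} this problem at $t=0$ reduces to $\Delta_g\phi_0=0$ with homogeneous Neumann and Dirichlet data, whose only decaying solution is $\phi_0=0$; since the associated operator is invertible on $C^{2,\alpha}_\tau(M(\Sigma))$ by Proposition~\ref{PDE prop 2}, a perturbation argument yields $\phi_t$ for small $t$, depending smoothly on $t$, with $\phi_t=t\,\psi+O(t^2)$, where $\psi$ solves $-\frac{4(n-1)}{n-2}\Delta_g\psi+\rho=0$ with $D(g)_{\nu(\partial M,g)}\psi=0$ on $M(\Sigma)\cap\partial M$ (here $\frac{d}{dt}\big|_0H(\partial M,g_t)=0$ enters), $\psi=0$ on $\Sigma$, $\psi\to 0$ at infinity. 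Since $\rho\geq 0$ and $\rho\not\equiv 0$, the strong maximum principle and the Hopf lemma give $-1<\psi<0$ in $\operatorname{int}(M(\Sigma))$ and $D(g)_{\nu(\Sigma,g)}\psi<0$ on $\Sigma$, exactly as for $v$ in the proof of Lemma~\ref{scalar flat}. Hence for small $t$ the metric $\hat g_t$ has, by Lemma~\ref{conformal change}, $R(\hat g_t)=0$ in $M(\Sigma)$, $H(\partial M,\hat g_t)=0$ on $M(\Sigma)\cap\partial M$, and $H(\Sigma,\hat g_t)>0$; arguing as in the proof of Proposition~\ref{ahf to conformally flat prop}, $(M,\hat g_t)$ has horizon boundary $\hat\Sigma_t\subset M(\Sigma)$ with $\hat\Sigma_t\to\Sigma$ smoothly, and since $\phi_t=0$ on $\Sigma$ and $H(\Sigma,g)=0$ one obtains $|\hat\Sigma_t|_{\hat g_t}=|\Sigma|_g+o(t)$.

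I would then finish as in Lemma~\ref{scalar flat}: repeating its computation of $m(g_t)$ with $g$ replaced by $g_t$ and $v$ by $\psi$, and invoking Lemma~\ref{asymptotic growth}, one gets
\begin{equation*}
\lim_{t\searrow 0}t^{-1}\big(m(\hat g_t)-m(g)\big)=-\frac{2}{(n-2)\,\omega_{n-1}}\,\lim_{\lambda\to\infty}\lambda^{-1}\sum_{i=1}^n\int_{\mathbb{R}^n_+\cap S^{n-1}_\lambda(0)}x^i\,\partial_i\psi\,\mathrm{d}\mu(\bar g)<0 .
\end{equation*}
On the other hand, $(M,\hat g_t)$ with horizon boundary $\hat\Sigma_t$ satisfies $R(\hat g_t)\geq 0$ and $H(\partial M,\hat g_t)\geq 0$, so Corollary~\ref{penrose coro} applies; combined with equality in \eqref{coro RPI} and $|\hat\Sigma_t|_{\hat g_t}=|\Sigma|_g+o(t)$, this gives
\begin{equation*}
m(\hat g_t)-\left(\frac12\right)^{\frac{n}{n-1}}\left(\frac{|\hat\Sigma_t|_{\hat g_t}}{\omega_{n-1}}\right)^{\frac{n-2}{n-1}}\geq 0 \qquad\text{while}\qquad \lim_{t\searrow 0}t^{-1}\left[m(\hat g_t)-\left(\frac12\right)^{\frac{n}{n-1}}\left(\frac{|\hat\Sigma_t|_{\hat g_t}}{\omega_{n-1}}\right)^{\frac{n-2}{n-1}}\right]<0 ,
\end{equation*}
which is impossible for small $t>0$. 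Therefore $h(\partial M,g)=0$ on $M(\Sigma)\cap\partial M$.

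The main obstacle is that Lemma~\ref{perturbation} only delivers $\lim_{t\searrow0}t^{-1}R(g_t)\geq 0$, so $g_t$ itself need not have non-negative scalar curvature and Corollary~\ref{penrose coro} cannot be applied to it directly. The conformal correction $\hat g_t$ repairs this by making both $R$ and $H(\partial M,\cdot)$ vanish identically, but then one must control the correction to first order and confirm that the genuinely positive part of $\rho$ forces $\psi<0$, hence a strict first-order decrease of the mass. This is precisely where $R(g)=0$, $H(\partial M,g)=0$ (Lemmas~\ref{scalar flat} and \ref{mean flat}), and the vanishing of the first variation of $H(\partial M,g_t)$ (Lemma~\ref{perturbation}) are needed: they ensure that $\psi$ solves a homogeneous elliptic boundary value problem with non-negative, nontrivial source, which is what drives the mass strictly down and contradicts rigidity.
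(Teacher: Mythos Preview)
Your argument is essentially the same as the paper's: your $1+\phi_t$ is the paper's $u_t$ and your $\psi$ is the paper's $\dot u$, solving the same linearized Dirichlet--Neumann problem and leading to the same mass computation via Lemma~\ref{asymptotic growth} and the same contradiction with Corollary~\ref{penrose coro}. Two cosmetic slips: the sign in your Robin condition should be chosen so that $H(\partial M,\hat g_t)=0$ (compare the paper's \eqref{ut def}); and there is no reason for $\psi>-1$ (you copied this from the $v$ in Lemma~\ref{scalar flat}, where it mattered because $1+v$ had to be positive), but only $\psi<0$ and $D(g)_{\nu(\Sigma,g)}\psi<0$ are needed and used.
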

\begin{proof} 
Suppose, for a contradiction, that there is  $U\Subset M(\Sigma)\setminus \Sigma$ open with $U\cap\partial M\neq \emptyset$ such that $h(\partial M,g)\neq 0$ on $U\cap \partial M$. Let $\{g_t\}_{t\in[0,t_0)}$ be the family of Riemannian metrics on $M$  from Lemma \ref{perturbation}. 
 Arguing as in the proof of \cite[Lemma 4.3]{ABDL}, using Proposition \ref{PDE prop 2} instead of \cite[Proposition 3.3]{ABDL}, we find that, for all $t\geq 0$ sufficiently small, there is a unique  solution  $u_t\in C^{2,\alpha}(M)$ of
\begin{equation} \label{ut def}
\begin{dcases} \qquad
-\frac{4\,(n-1)}{(n-2)}\Delta_{g_t}\,u_t+R(g_t)\,u_t=0\qquad &\text{in }\operatorname{int}(M(\Sigma))\qquad\qquad\qquad\qquad,\\\qquad
\frac{2\,(n-1)}{n-2}\,D(g)_{\nu(\partial M,g_t)}u_t+H(\partial M,g_t)\,u_t=0&\text{on } M(\Sigma)\cap\partial M, \\\qquad
u_t=1& \text{on } \Sigma,
\end{dcases} 
\end{equation}
such that $(u_t-1)\in C_{\tau}^{2,\alpha}(M(\Sigma))$. Moreover, the limit
$$
\dot u=\lim_{t\searrow0}\,t^{-1}\,(u_t-1)
$$
exists in $ C_{\tau}^{2,\beta}(M(\Sigma)$ for every $\beta\in(0,\alpha)$; see \cite[pp.~73-74]{SchoenYau}.  By \eqref{ut def},
\begin{equation*}
	\begin{dcases} \qquad
		-\frac{4\,(n-1)}{(n-2)}\Delta_{g}\,\dot{u}+\lim_{t\searrow0} \,t^{-1}\,R(g_t)=0\qquad&\text{in }\operatorname{int}(M(\Sigma)),\qquad\qquad\qquad\qquad\\\qquad
		D(g)_{\nu(\partial M,g)}\dot{u}^i=0&\text{on } M(\Sigma)\cap\partial M, \\\qquad
		\dot{u}=0& \text{on } \Sigma.
	\end{dcases} 
\end{equation*}
  \indent  Let $$\hat g_t=(1+u_t)^\frac{4}{n-2}\,g_t.$$ Using that $(u_t-1)\in C_\tau^{2,\alpha}(M(\Sigma))$, we see that $\hat g_t$ is $C^2$-asymptotically flat and, using also Lemma \ref{conformal change},  that 
$R(\hat g_t)= 0$ in $M(\Sigma)$ and $H(\partial M,\hat g_t)= 0$ on $ M(\Sigma)\cap\partial M$.  
 Using Lemma \ref{perturbation}, we see that $\dot{u}$ is non-constant. By the maximum principle, $\dot{u}<0$ in $M(\Sigma)\setminus \Sigma$ and $D(g)_{\nu(\Sigma,g)}{\dot u}<0$ on $\Sigma$. Now, Lemma \ref{conformal change} and Lemma \ref{perturbation} imply that $H(\Sigma,\hat g_t)>0$ for all $t>0$ sufficiently small. As in the proof of Proposition \ref{ahf to conformally flat prop}, it follows that $(M,\hat g_t)$ has horizon boundary $\hat \Sigma_t\subset M(\Sigma)$ and that $\hat \Sigma_t\to \Sigma$ smoothly as $t\searrow 0$. \\ \indent On the one hand, using that $g_t=g$ on $\Sigma$, $\dot{u}=0$ on $\Sigma$, and $H(\Sigma,g)=0$, we conclude that 
\begin{align} \label{area constant} 
\lim_{t\searrow 0} t^{-1}\,\big(|\hat \Sigma_t|_{\hat g_t}-|\Sigma|_g\big)=0.
\end{align} 
Moreover, arguing as in the proof of Lemma \ref{scalar flat}, we have
$$
\lim_{t\searrow0}t^{-1}\,(m(\hat g_t)-m(g))=-\frac{2}{(n-2)\,\omega_{n-1}}\,\lim_{\lambda\to\infty}\lambda^{-1}\,\sum_{i=1}^n\int_{ \mathbb{R}^n_+\cap{S}^{n-1}_\lambda(0)}x^i\,\partial_i\dot u\,\mathrm{d}\mu(\bar g).
$$
In conjunction with Lemma \ref{asymptotic growth}, we conclude that
\begin{align} \label{mass decreases}  
\lim_{t\searrow0}t^{-1}\,(m(\hat g_t)-m(g))<0.
\end{align} 
On the other hand, by Corollary \ref{penrose coro}, we have
$$
\lim_{t\searrow0}\,t^{-1}\,\bigg(m(\hat g_t)-\left(\frac{1}{2}\right)^{\frac{n}{n-1}}\,\left(\frac{|\hat \Sigma_t|_{\hat g_t}}{\omega_{n-1}}\right)^{\frac{n-2}{n-1}}-m(g)+\left(\frac{1}{2}\right)^{\frac{n}{n-1}}\,\left(\frac{| \Sigma|_{ g}}{\omega_{n-1}}\right)^{\frac{n-2}{n-1}}\bigg)\geq 0
$$
This is not compatible with \eqref{area constant} and \eqref{mass decreases}.\\ \indent The assertion follows.
\end{proof} 
\begin{proof}[Proof of Theorem \ref{rigidity remark}]
	Suppose that $(M,g)$ is an asymptotically flat half-space with horizon boundary $\Sigma\subset M$  with $R(g)\geq 0$ in $M(\Sigma)$ and $H(\partial M,g)\geq 0$ on $ M(\Sigma)\cap\partial M$ such that
	$$
	m(g)=\left(\frac{1}{2}\right)^{\frac{n}{n-1}}\,\left(\frac{|\Sigma|_{g}}{\omega_{n-1}}\right)^{\frac{n-2}{n-1}}.
	$$
\indent 	Recall the definitions \eqref{tilde M} of the doubled manifold $(\tilde M,\tilde g)$ and \eqref{projection} of the projection $\pi:\tilde M\to M$. Moreover, recall that  $\tilde \Sigma=\pi^{-1}(\Sigma)$.
	By Lemma \ref{totally geodesic}, $h(\partial M,g)=0$ on $M(\Sigma)\cap\partial M$. In particular, $(\tilde M(\tilde \Sigma),\tilde g)$ is a $C^2$-asymptotically flat manifold with mass $\tilde m(\tilde g)=2\,m(g)$ and $\tilde \Sigma$ is a closed minimal surface  with $|\tilde \Sigma|_{\tilde g}=2\,|\Sigma|_{g}$. By symmetry, using that $M(\Sigma)$ is an exterior region, it follows that $\tilde M(\tilde \Sigma)$ is an exterior region.   By Theorem \ref{RPI rigidity}, $(\tilde M(\tilde \Sigma),\tilde g)$ is isometric to the exterior region of the Schwarzschild space \eqref{Schwarzschild space} with mass $\tilde m(\tilde g)$. It follows that $(M(\Sigma),g)$ is isometric to the exterior region of the Schwarzschild half-space \eqref{Schwarzschild half space} of mass $m(g)$. 
\end{proof}

\section{Rigidity in the Riemannian Penrose inequality}
In this section, we give an argument alternative to that in \cite{LuMiao} to show that the assumption that $(\tilde M,\tilde g)$ be spin in the rigidity statement of \cite[Theorem 1.4]{BrayLee}, stated here as Theorem \ref{RPI no boundary}, is not necessary. \\ \indent For the statement of Theorem \ref{RPI rigidity} below, recall from Appendix \ref{af appendix} the definition of an asymptotically flat manifold $(\tilde M,\tilde g)$, of its horizon boundary $\tilde \Sigma$, and of the exterior region $M(\tilde \Sigma)$. 
\begin{thm}
	\label{RPI rigidity} Let $(\tilde M,\tilde g)$ be an asymptotically flat manifold of dimension $3\leq n\leq7$ with horizon boundary $\tilde \Sigma\subset \tilde M$ such that $R(\tilde g)\geq 0$ in $M(\tilde \Sigma)$ and
	$$
	\tilde m(\tilde g)=\frac12\,\left(\frac{|\tilde \Sigma|_{\tilde g}}{\omega_{n-1}}\right)^{\frac{n-2}{n-1}}.
	$$
	Then $(\tilde M(\tilde \Sigma),\tilde g)$ is isometric to the exterior region of a Schwarzschild space \eqref{Schwarzschild space}.
\end{thm}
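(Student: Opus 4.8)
The plan is to combine the mass--decreasing conformal variations of the type used in Lemma \ref{scalar flat} with a Schoen--Yau--style first--variation argument that promotes the exterior region to a static vacuum metric, which is then identified with the Schwarzschild exterior.

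\emph{Step 1: $R(\tilde g)=0$ in $\tilde M(\tilde \Sigma)$.} This is the analogue of Lemma \ref{scalar flat} in the setting without boundary. Suppose $R(\tilde g)\not\equiv 0$. Solve $-\tfrac{4(n-1)}{n-2}\Delta_{\tilde g}v+R(\tilde g)(1+v)=0$ with $v=0$ on $\tilde \Sigma$ and $v\to 0$ at infinity; the maximum principle gives $-1<v<0$ and $D_{\nu(\tilde \Sigma,\tilde g)}v<0$ on $\tilde \Sigma$. The conformal family $\tilde g_t=(1+t\,v)^{4/(n-2)}\,\tilde g$ then has $R(\tilde g_t)\geq 0$ in $\tilde M(\tilde \Sigma)$ and $H(\tilde \Sigma,\tilde g_t)>0$, so, arguing as in Proposition \ref{ahf to conformally flat prop}, $(\tilde M,\tilde g_t)$ has horizon boundary $\tilde \Sigma_t\subset \tilde M(\tilde \Sigma)$ with $\tilde \Sigma_t\to\tilde\Sigma$. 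Since $v=0$ on $\tilde\Sigma$ and $H(\tilde\Sigma,\tilde g)=0$, the first variation of the horizon area vanishes, $|\tilde \Sigma_t|_{\tilde g_t}=|\tilde \Sigma|_{\tilde g}+o(t)$; on the other hand, by the first--variation formula for the ADM mass under conformal changes and Lemma \ref{asymptotic growth} (using that $v$ is $\tilde g$-subharmonic and negative), $\tilde m(\tilde g_t)=\tilde m(\tilde g)+2At+o(t)$ with $A<0$. Hence $\tilde m(\tilde g_t)<\tfrac12(|\tilde \Sigma_t|_{\tilde g_t}/\omega_{n-1})^{(n-2)/(n-1)}$ for small $t>0$, contradicting Theorem \ref{RPI no boundary}.

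\emph{Step 2: static structure of $(\tilde M(\tilde\Sigma),\tilde g)$.} With $R(\tilde g)=0$, perturb by symmetric $2$-tensors $h$ compactly supported in $\tilde M(\tilde \Sigma)\setminus\tilde\Sigma$: set $\tilde g_t=\tilde g+t\,h$ and, as in Proposition \ref{conf change prop}, solve a Yamabe--type equation for $u_t$ so that $\hat g_t=u_t^{4/(n-2)}\tilde g_t$ is scalar flat, $u_t\to 1$ at infinity, and $D_{\nu(\tilde\Sigma)}u_t=0$ on $\tilde\Sigma$. Because $h$ vanishes near $\tilde\Sigma$, the surface $\tilde\Sigma$ is $\hat g_t$-minimal, and it remains outermost for small $t$ by the exterior--region argument of Proposition \ref{ahf to conformally flat prop}; thus Theorem \ref{RPI no boundary} applies to $(\tilde M,\hat g_t)$ with equality at $t=0$, whence $\tfrac{d}{dt}\big|_{0}\big[\tilde m(\hat g_t)-\tfrac12(|\tilde\Sigma|_{\hat g_t}/\omega_{n-1})^{(n-2)/(n-1)}\big]=0$. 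Computing the first variation of the mass as in Lemma \ref{scalar flat} and the first variation of the $\hat g_t$-area of the minimal surface $\tilde\Sigma$, integrating by parts, and using $R(\tilde g)=0$, one is led to $\int_{\tilde M(\tilde\Sigma)}\langle\, \mathrm{Ric}(\tilde g)+C\,DR_{\tilde g}^{*}[\Psi]\,,\,h\,\rangle\,\mathrm{d}v(\tilde g)=0$ for all admissible $h$, where $DR^{*}_{\tilde g}$ is the formal adjoint of the linearized scalar curvature, $\Psi$ is the potential on $\tilde M(\tilde\Sigma)$ determined by the Neumann Green's function of $\tilde g$ together with the area measure of $\tilde\Sigma$, and $C>0$ is explicit. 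Since $h$ is arbitrary away from $\tilde\Sigma$, $\mathrm{Ric}(\tilde g)=-C\,DR^{*}_{\tilde g}[\Psi]$ there; writing $w=1-C\,\Psi$ and using that $\Psi$ is $\tilde g$-harmonic and $R(\tilde g)=0$, this is exactly the static vacuum system $\mathrm{Hess}_{\tilde g}\,w=w\,\mathrm{Ric}(\tilde g)$, $\Delta_{\tilde g}w=0$, with $w\to 1$ at infinity. A Fischer--Marsden--type regularity argument, using the contracted second Bianchi identity $\mathrm{div}\,\mathrm{Ric}(\tilde g)=\tfrac12\,dR(\tilde g)=0$, promotes $w$ to a smooth static potential on $\overline{\tilde M(\tilde\Sigma)}$.

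\emph{Step 3: identification with Schwarzschild.} It remains to show that $w=0$ on $\tilde\Sigma$, so that $\tilde\Sigma$ is a static horizon; this I would extract from a further variation that moves the horizon, together with the equality in Theorem \ref{RPI no boundary}, which fixes the constant $C$ and forces the boundary value $w|_{\tilde\Sigma}=0$. Once this is known, the Bunting--Masood-ul-Alam doubling applied to $(\tilde M(\tilde\Sigma),\tilde g,w)$ --- gluing $(\tfrac{1+w}{2})^{4/(n-2)}\tilde g$ and $(\tfrac{1-w}{2})^{4/(n-2)}\tilde g$ across $\{w=0\}=\tilde\Sigma$ --- yields a complete $C^{1,1}$-asymptotically flat manifold with non-negative (distributional) scalar curvature and zero mass; smoothing it and invoking the rigidity case of the positive mass theorem (or, equivalently, running the argument of Section \ref{gluing section} in reverse) shows it is flat, so $(\tilde M(\tilde\Sigma),\tilde g)$ is the exterior region of a Schwarzschild space, the mass parameter being fixed by the equality relation. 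I expect Step 3 --- controlling the horizon under the variations of Step 2 and pinning down $w|_{\tilde\Sigma}=0$ --- to be the main obstacle, with the elliptic regularity turning the integral identity of Step 2 into a bona fide static potential as the second delicate point.
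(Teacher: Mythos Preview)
Your Step 1 coincides with the paper's first move: the argument of Lemma \ref{scalar flat} carries over verbatim to show $R(\tilde g)=0$ in $\tilde M(\tilde\Sigma)$.

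From Step 2 on, however, you take a genuinely different route from the paper, and your route has real gaps. The paper does \emph{not} attempt to derive the static vacuum equations. Instead, its entire strategy is to show that $\tilde\Sigma$ is \emph{totally geodesic}, so that the double $(\hat M,\hat g)$ obtained by reflecting $(\tilde M(\tilde\Sigma),\tilde g)$ across $\tilde\Sigma$ is of class $C^2$. This is done by invoking the key technical Lemma \ref{perturbation}: if $h(\tilde\Sigma,\tilde g)\neq 0$ at some point, one builds an explicit, non-conformal local perturbation $\tilde g_t$ supported in a small Fermi tube around that point, engineered so that $\lim_{t\searrow 0}t^{-1}R(\tilde g_t)\geq 0$ with strict inequality somewhere, while $\lim_{t\searrow 0}t^{-1}H(\tilde\Sigma,\tilde g_t)=0$ and $\lim_{t\searrow 0}t^{-1}(|\tilde\Sigma|_{\tilde g_t}-|\tilde\Sigma|_{\tilde g})=0$. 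Feeding this into the argument of Lemma \ref{totally geodesic} (conformally rescale to scalar flat, track the mass drop via Lemma \ref{asymptotic growth}) then contradicts the inequality in Theorem \ref{RPI no boundary}. Once $\tilde\Sigma$ is totally geodesic, Bray--Lee's own rigidity argument in \cite[\S6]{BrayLee} applies to the smooth double: one takes the harmonic function $\hat u$ on $(\hat M,\hat g)$ with limits $1$ and $0$ in the two ends and observes that $(\hat M,\hat u^{4/(n-2)}\hat g)$ has zero mass, whence Theorem \ref{pmt no boundary} forces it to be flat and the original to be Schwarzschild.

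By contrast, your Step 2 tries to obtain the static system $\mathrm{Hess}\,w=w\,\mathrm{Ric}$, $\Delta w=0$ from the first-variation identity for the Penrose functional. Even granting that the formal computation closes up to the shape you write, the identification of the potential $\Psi$ and of the constant $C$, and the passage from an integral identity against compactly supported $h$ to a pointwise static equation with the correct behavior near $\tilde\Sigma$, are not routine; this is precisely the kind of boundary-term bookkeeping that the paper's approach is designed to avoid. More seriously, your Step 3 hinges on proving $w|_{\tilde\Sigma}=0$, which you yourself flag as the main obstacle and for which you offer no concrete mechanism. Without this, Bunting--Masood-ul-Alam does not even get started. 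The paper's insight is that one never needs the static structure: showing $h(\tilde\Sigma,\tilde g)=0$ via the tailored perturbation of Lemma \ref{perturbation} is both simpler and sufficient, because it reduces the problem to the already-established smooth rigidity of Bray--Lee.
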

\begin{proof} 
Following the argument given in \cite[\S6]{BrayLee}, we aim to show that the manifold $(\hat M,\hat g)$ obtained by reflection of $(\tilde M,\tilde g)$ across $\tilde \Sigma$ is smooth so that the characterization of equality in the positive mass theorem, stated here as Theorem \ref{pmt no boundary}, applies to  $(\hat M,\hat u^{\frac{4}{n-2}}\,\hat g)$. Here, $\hat u\in C^2(\hat M)$ is the unique harmonic function that approaches $1$ respectively $0$ in the two ends of $(\hat M,\hat g)$. To this end, it suffices to show that $\tilde \Sigma$ is totally geodesic. \\ \indent 
	The argument presented in Lemma \ref{scalar flat} shows that $R(\tilde g)=0$. If $h(\tilde \Sigma,\tilde g)\neq 0$, the argument presented in Lemma \ref{perturbation} shows that there exists a family $\{\tilde g_t\}_{t\in[0,t_0)}$ of Riemannian metrics on $(\tilde M,\tilde g)$ such that
	\begin{itemize}
		\item[$\circ$] $\tilde g_t=\tilde g$ outside of a compact set,
		\item[$\circ$] $\tilde g_t\to \tilde g$ smoothly as $t\searrow 0$, 
		\item[$\circ$] $\lim_{t\searrow 0} t^{-1}(|\tilde \Sigma|_{\tilde g_t}-|\tilde \Sigma|_{\tilde g})=0$,  
		\item[$\circ$] $\lim_{t\searrow 0} t^{-1}\,H(\tilde \Sigma, \tilde g_t)= 0$,
	\end{itemize} 
	and 
	\begin{equation*}
		\begin{aligned}
			\lim_{t\searrow0}\,t^{-1}\,R(\tilde g_t)\geq 0 \text{ with strict inequality at some point}.
		\end{aligned}
	\end{equation*}
	Adapting the argument in the proof of Lemma \ref{totally geodesic} to the case of an asymptotically flat manifold, we see that this leads to a contradiction with the inequality in Theorem \ref{RPI no boundary}.
\end{proof} 
\begin{appendices} 
	\section{Asymptotically flat manifolds} \label{af appendix}
	In this section, we recall some facts about asymptotically flat manifolds. \\ \indent
			Let $ 3\leq n\leq 7$.  A metric $\tilde g$ on $\{\tilde x\in\mathbb{R}^n:|\tilde x|_{\bar g}>1/2\}$ is called $C^2$-asymptotically flat if its scalar curvature is integrable and if there is $\tau>(n-2)/2$ such that, as $\tilde x\to\infty$,
	\begin{equation} \label{asymptotically flat mf} 
	\begin{aligned}  
	&|\tilde g-\bar g|_{\bar g}+|x|_{\bar g}\,|D(\bar g)\tilde g|_{\bar g}+|x|^2_{\bar g}\,|D^2(\bar g)\tilde g|_{\bar g}=O\left(|\tilde x|_{\bar g}^{-\tau}\right).
	\end{aligned} 
	\end{equation} 
	\indent 
	A   complete connected Riemannian manifold $(\tilde M,\tilde g)$ of dimension $n$ is said to be an asymptotically flat manifold if the following properties all hold.
	\begin{itemize}
		\item[$\circ$]  $\tilde g$ is of class $C^2$.
		\item[$\circ$] There is a non-empty compact subset of $\tilde M$ whose complement  is diffeomorphic to the set $\{\tilde x\in\mathbb{R}^n:|\tilde x|_{\bar g}>1/2\}$.
		\item[$\circ$ ] The pull-back of $\tilde g$ by this diffeomorphism is $C^2$-asymptotically flat.
	\end{itemize}
\indent \indent	We usually fix such a diffeomorphism and refer to it as the asymptotically flat chart. 
	The mass of an asymptotically flat manifold is the quantity
	\begin{align} \label{mass}  
\tilde	m(\tilde g)=\lim_{\lambda\to\infty} \frac{1}{2\,(n-1)\,\omega_{n-1}}\,\lambda^{-1}\,\sum_{i,\,j=1}^n&\int_{S^{n-1}_\lambda(0)}\tilde x^i\,[(\partial_j\tilde{g})\big(e_i,e_j)-(\partial_i\tilde{g})(e_j,e_j)\big]\,\mathrm{d}\mu(\bar  g);
\end{align}
see \cite[p.~999]{ADM}. Here, $e_1,\dots,e_n$ are the canonical basis vectors of $\mathbb{R}^n$ and $\omega_{n-1}=|S^{n-1}_1(0)|_{\bar g}$ denotes the area of the $(n-1)$-dimensional unit sphere.
	R.~Bartnik has showed that the mass \eqref{mass} of a $C^2$-asymptotically flat manifold converges and does not depend on the choice of asymptotically flat chart; see \cite[Theorem 4.2]{Bartnik}. 
	\\ \indent
	Let $\tilde \Sigma\subset M$ be a  compact  hypersurface without boundary. We call the components of such a hypersurface closed. If $\tilde \Sigma$ is separating, we orient $\tilde \Sigma$ by the unit normal $\nu(\tilde \Sigma,\tilde g)$ pointing towards the closure $\tilde M(\tilde \Sigma)$ of the non-compact component of $\tilde M\setminus \tilde \Sigma$. The mean curvature 	$H(\tilde \Sigma,\tilde g)$ is then computed as the divergence of $-\nu(\tilde\Sigma,\tilde g)$ along $\tilde \Sigma$. \\ \indent      We say that $(\tilde M,\tilde g)$ has horizon boundary if there is a non-empty  hypersurface $\tilde \Sigma\subset \tilde M$ with the following two properties.
	\begin{itemize}
		\item[$\circ$]Each component of $\tilde \Sigma$ is a closed minimal hypersurface.
		\item[$\circ$] Every closed minimal hypersurfaces in $\tilde M(\tilde \Sigma)$ is a component of $\tilde \Sigma$.
	\end{itemize} 
If $(\tilde M,\tilde g)$ has horizon boundary $\tilde \Sigma\subset M$, we say that $\tilde M(\tilde \Sigma)$ is	 the exterior region of $\tilde M$ and we call the horizon $\tilde \Sigma$ an outermost minimal surface. An example of an exterior region with horizon boundary is the Schwarzschild space of mass $\tilde m>0$ and dimension $n\geq 3$ defined by 
\begin{align} \label{Schwarzschild space} 
(\tilde M(\tilde \Sigma),\tilde g)=\bigg(\bigg\{\tilde x\in\mathbb{R}^n:|\tilde x|_{\bar g}\geq\left(\frac{\tilde m}{2}\right)^{\frac{1}{n-2}}\bigg\},\bigg(1+\frac{\tilde m}{2}\,|\tilde x|_{\bar g}^{2-n}\bigg)^\frac{4}{n-2}\,\bar g\bigg).
\end{align}
where
$$
\tilde \Sigma=\left\{\tilde x\in\mathbb{R}^n:|\tilde x|_{\bar g}=\left(\frac{\tilde m}{2}\right)^{\frac{1}{n-2}}\right\}.
$$
 \indent
The positive mass theorem has been  proved by R.~Schoen and S.-T.~Yau in \cite{SchoenYau}  using minimal surface techniques and subsequently by E.~Witten in \cite{Witten} using certain solutions of the Dirac equation.  
	\begin{thm}[{\cite[Theorem 4.2]{schoenvariational}}] \label{pmt no boundary} Let $(\tilde M,\tilde g)$ be an asymptotically flat manifold of dimension $ 3\leq n\leq7$ whose scalar curvature is non-negative. There holds $\tilde m(\tilde g)\geq 0$. Moreover, $\tilde m(\tilde g)=0$  if and only if $(\tilde M,\tilde g)$ is isometric to $(\mathbb{R}^n,\bar g)$.
	\end{thm}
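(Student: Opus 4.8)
The statement is the Riemannian positive mass theorem. The plan is to reconstruct the minimal hypersurface descent argument of R.~Schoen and S.-T.~Yau underlying \cite{SchoenYau} and \cite{schoenvariational}, and then to deduce rigidity by conformal deformation arguments in the spirit of Lemma \ref{scalar flat} and Lemma \ref{perturbation}. (When $\tilde M$ is spin, one could instead appeal to E.~Witten's spinorial argument \cite{Witten}, but the minimal surface proof requires no spin assumption and respects the range $3\le n\le 7$.)

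To prove $\tilde m(\tilde g)\ge 0$, I would argue by contradiction, assuming $\tilde m(\tilde g)<0$. First I would reduce to the harmonically flat case: adapting the conformal density procedure behind Proposition \ref{ahf to conformally flat prop}, but without the boundary so that only the interior conformal change is needed, I approximate $\tilde g$ by metrics with $R(\tilde g)\ge 0$ that equal $U^{4/(n-2)}\bar g$ outside a compact set for a positive $\bar g$-harmonic function $U=1+c_n\,\tilde m(\tilde g)\,|\tilde x|_{\bar g}^{2-n}+O(|\tilde x|_{\bar g}^{1-n})$, at the cost of an arbitrarily small change of mass, so that $\tilde m(\tilde g)<0$ is preserved. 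Next I would carry out the dimensional descent: since the mass is negative, large coordinate hyperplanes are barriers in the correct direction, so solving a Plateau problem in an exhausting family of large coordinate slabs produces a complete, properly embedded, two-sided, area-minimizing and hence stable minimal hypersurface $\Sigma^{n-1}\subset\tilde M$ that is asymptotically planar. The stability inequality for $\Sigma$, combined with the Gauss equation and $R(\tilde g)\ge 0$, shows that a suitable conformal multiple of the metric induced on $\Sigma$ is an asymptotically flat metric of dimension $n-1$ with non-negative scalar curvature, and tracking the conformal factor keeps the mass negative. Iterating, I descend until the cross section is a surface, $n-1=2$, where the stability inequality applied to a logarithmic cutoff, the Gauss--Bonnet theorem, and the planar asymptotics of the surface are mutually incompatible. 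This proves $\tilde m(\tilde g)\ge 0$; the hypothesis $n\le 7$ is used precisely to keep each hypersurface in the descent smooth, since area-minimizing hypersurfaces may be singular in ambient dimension at least $8$.

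For rigidity, the direction ``$\tilde g$ flat $\Rightarrow\tilde m(\tilde g)=0$'' is immediate, so assume $R(\tilde g)\ge 0$ and $\tilde m(\tilde g)=0$. Step (i), $R(\tilde g)\equiv 0$: otherwise, solving $-\tfrac{4(n-1)}{n-2}\Delta_{\tilde g}v+R(\tilde g)(1+v)=0$ with $v\to 0$ at infinity gives $-1<v<0$ by the maximum principle, and the family $\tilde g_t=(1+tv)^{4/(n-2)}\tilde g$ satisfies $R(\tilde g_t)\ge 0$ while the first-variation-of-mass computation of Lemma \ref{scalar flat}, with the $\partial M$ integrals absent, gives $\tfrac{d}{dt}\big|_{t=0}\tilde m(\tilde g_t)<0$, contradicting the inequality just proved. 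Step (ii), $\mathrm{Ric}(\tilde g)\equiv 0$: with $R(\tilde g)=0$ the contracted Bianchi identity shows that the linearization of scalar curvature in the direction $-\mathrm{Ric}(\tilde g)$ equals $|\mathrm{Ric}(\tilde g)|^2\ge 0$, so a compactly supported deformation $\tilde g_t=\tilde g-t\chi\,\mathrm{Ric}(\tilde g)$ makes $R(\tilde g_t)$ non-negative to first order up to a small error concentrated where $\chi$ is non-constant, which can be absorbed by a compensating conformal change --- affordable only because $\tilde m(\tilde g)=0$; if $\mathrm{Ric}(\tilde g)\not\equiv 0$, the resulting metric has $R\ge 0$ and is positive somewhere, so its mass is pushed below $0$ by the deformation of step (i), a contradiction. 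Step (iii): a Ricci-flat asymptotically flat manifold is flat, since in harmonic coordinates $\tilde g$ solves an elliptic system and is real-analytic, and the decay at infinity together with unique continuation forces $\tilde g$ to be flat; a complete flat $n$-manifold with a single Euclidean end is then isometric to $(\mathbb{R}^n,\bar g)$.

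The hard part is the construction and asymptotic analysis of the descending hypersurface $\Sigma$: producing it by geometric measure theory, establishing its smoothness (this is exactly where $n\le 7$ is essential, and is why the theorem is not stated in all dimensions), showing it is asymptotically planar with decay strong enough that the induced conformal metric is genuinely asymptotically flat, and confirming that its mass stays strictly negative --- all of which rest on the harmonically flat reduction and on a barrier argument driven by $\tilde m(\tilde g)<0$. In the rigidity part, the delicate step is (ii): the deformation in the direction $-\mathrm{Ric}(\tilde g)$ is not conformal and need not preserve $R\ge 0$ globally, so one must spend the equality $\tilde m(\tilde g)=0$ to recover positivity --- the same mechanism that is used for the reflected metric in the proof of Theorem \ref{RPI rigidity}.
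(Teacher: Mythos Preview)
The paper does not prove this theorem; it is stated in Appendix \ref{af appendix} as a result quoted from \cite[Theorem 4.2]{schoenvariational} and is used as a black box, most notably in the proof of Theorem \ref{RPI rigidity}. There is therefore no ``paper's own proof'' to compare your attempt against.

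That said, your sketch is a faithful outline of the Schoen--Yau minimal hypersurface descent that underlies the cited reference, and your rigidity steps (i) and (ii) are the classical conformal and Bourguignon deformations. One point deserves care: in step (iii) you deduce flatness from $\mathrm{Ric}(\tilde g)=0$ via ``decay at infinity together with unique continuation''. This is not the standard route, and unique continuation \emph{from infinity} for the curvature is not an argument one can invoke off the shelf. The usual argument is volume comparison: an asymptotically flat manifold has Euclidean volume growth, and Bishop--Gromov rigidity for $\mathrm{Ric}\ge 0$ then forces the isometry with $(\mathbb{R}^n,\bar g)$; in dimension $n=3$ one can alternatively observe that Ricci-flat already means flat.
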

The Riemannian Penrose inequality has been proved by G.~Huisken and T.~Ilmanen in the case where the horizon boundary is connected using inverse mean curvature flow in \cite{HI}. For general horizon boundary, it has been obtained by H.~L.~Bray \cite{Bray} using his quasi-static flow. H.~L.~Bray's technique has been extended to higher dimensions  in his joint work \cite{BrayLee} with D.~A.~Lee.
\begin{thm}[{\cite[Theorem 1.4]{BrayLee}}] \label{RPI no boundary} Let $(\tilde M,\tilde g)$ be an asymptotically flat manifold of dimension $3\leq n\leq7$ with horizon boundary $\tilde \Sigma\subset \tilde M$ such that $R(\tilde g)\geq 0$ in $M(\tilde \Sigma)$. There holds
	\begin{align} \label{RPI}  
	\tilde m(\tilde g)\geq\frac12\,\left(\frac{|\tilde \Sigma|_{\tilde g}}{\omega_{n-1}}\right)^{\frac{n-2}{n-1}}.
	\end{align} 
	If $(\tilde M,\tilde g)$ is a  spin manifold, equality holds if and only if $(\tilde M(\tilde \Sigma),\tilde g)$ is isometric to the exterior region of a Schwarzschild space \eqref{Schwarzschild space}.
\end{thm}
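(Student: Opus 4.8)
The plan is to follow the reflection-and-conformal-change strategy of \cite[\S6]{BrayLee} for the spin case, replacing the one place where the spin hypothesis enters --- the verification that the horizon $\tilde\Sigma$ is totally geodesic --- by the perturbation technique of Lemma \ref{perturbation}. This reduces rigidity in the Penrose inequality to rigidity in the positive mass theorem, Theorem \ref{pmt no boundary}. Concretely, let $(\hat M,\hat g)$ be obtained by reflecting $(\tilde M(\tilde\Sigma),\tilde g)$ across $\tilde\Sigma$, and let $\hat u\in C^2(\hat M)$ be the unique harmonic function with $\hat u\to1$ in the original end and $\hat u\to0$ in the reflected end. If $(\hat M,\hat g)$ is scalar-flat and of class $C^2$, then $\hat g'=\hat u^{4/(n-2)}\hat g$ is again scalar-flat, the end where $\hat u\to0$ compactifies to a point (since $\hat u$ decays at the rate of the Green's function), and a standard computation expresses the mass of the resulting one-ended asymptotically flat manifold in terms of $\tilde m(\tilde g)$ and $|\tilde\Sigma|_{\tilde g}$; the equality hypothesis $\tilde m(\tilde g)=\frac12(|\tilde\Sigma|_{\tilde g}/\omega_{n-1})^{(n-2)/(n-1)}$ is exactly the statement that this mass vanishes. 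By Theorem \ref{pmt no boundary}, $(\hat M,\hat g')$ is then isometric to $(\mathbb{R}^n,\bar g)$, and unwinding the conformal factor and the reflection identifies $(\tilde M(\tilde\Sigma),\tilde g)$ with the exterior region of a Schwarzschild space \eqref{Schwarzschild space}.

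Thus everything reduces to showing that $(\hat M,\hat g)$ is a $C^2$ (in fact $C^{2,\alpha}$) scalar-flat manifold, i.e.\ that $R(\tilde g)=0$ in $\tilde M(\tilde\Sigma)$ and $h(\tilde\Sigma,\tilde g)=0$. For the first, I would repeat the mass-decreasing argument of Lemma \ref{scalar flat}: if $R(\tilde g)\neq0$, solve $-\frac{4(n-1)}{n-2}\Delta_{\tilde g}v+R(\tilde g)(1+v)=0$ with $v=0$ on $\tilde\Sigma$ and $v\to0$ at infinity, so that $v$ is non-constant with $-1<v<0$ and $D(\tilde g)_{\nu(\tilde\Sigma,\tilde g)}v<0$ on $\tilde\Sigma$ by the maximum principle. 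Setting $\tilde g_t=(1+tv)^{4/(n-2)}\tilde g$, Lemma \ref{conformal change} gives $R(\tilde g_t)\geq0$ and $H(\tilde\Sigma,\tilde g_t)>0$ for $t>0$, so $\tilde g_t$ has a horizon boundary $\tilde\Sigma_t\to\tilde\Sigma$; then, as in Lemma \ref{scalar flat} and using Lemma \ref{asymptotic growth}, $\lim_{t\searrow0}t^{-1}(\tilde m(\tilde g_t)-\tilde m(\tilde g))<0$ while $\lim_{t\searrow0}t^{-1}(|\tilde\Sigma_t|_{\tilde g_t}-|\tilde\Sigma|_{\tilde g})=0$, contradicting \eqref{RPI} at small $t>0$.

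For $h(\tilde\Sigma,\tilde g)=0$, suppose $h(\tilde\Sigma,\tilde g)\neq0$ at some point. Since $\tilde\Sigma$ is minimal, its principal curvatures sum to zero, so near a non-umbilic point there is an orthonormal frame of principal directions ordered as in \eqref{p order}, and I would transcribe the construction of Lemma \ref{perturbation} with $\tilde\Sigma$ in the role of $\partial M$: in a Fermi chart $\tilde g=\gamma_s+ds^2$ about $\tilde\Sigma$, build a symmetric $2$-tensor $\sigma^{\varepsilon,\delta}$ that is trace-free along the slices $\{s=\mathrm{const}\}$, supported in a shrinking bump, and cut off in the normal direction by the degenerate weight $\eta_\delta$, with a large free constant $K$ on its normal component. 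The family $\tilde g_t=\tilde g+t\,\sigma^{\varepsilon,\delta}$ then satisfies $\tilde g_t=\tilde g$ off a small set, $\tilde g_t\to\tilde g$ smoothly, $\lim_{t\searrow0}t^{-1}(|\tilde\Sigma|_{\tilde g_t}-|\tilde\Sigma|_{\tilde g})=0$, $\lim_{t\searrow0}t^{-1}H(\tilde\Sigma,\tilde g_t)=0$, and $\lim_{t\searrow0}t^{-1}R(\tilde g_t)\geq0$ with strict inequality at a point, once $K$ is large and $\varepsilon,\delta$ are small. Finally, mimicking Lemma \ref{totally geodesic} (with the Dirichlet condition $u_t=1$ on $\tilde\Sigma$ in place of the boundary Neumann condition), solve the linearized conformal equation to get $\hat g_t=(1+u_t)^{4/(n-2)}\tilde g_t$, which is scalar-flat with horizon boundary $\hat\Sigma_t\to\tilde\Sigma$, has $\lim_{t\searrow0}t^{-1}(|\hat\Sigma_t|_{\hat g_t}-|\tilde\Sigma|_{\tilde g})=0$ (because $\dot u=0$ and $H(\tilde\Sigma,\tilde g)=0$ on $\tilde\Sigma$), and --- since $\dot u$ is non-constant, hence $\dot u<0$ in the interior --- has $\lim_{t\searrow0}t^{-1}(\tilde m(\hat g_t)-\tilde m(\tilde g))<0$ by Lemma \ref{asymptotic growth}. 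This contradicts \eqref{RPI} at small $t>0$, so $\tilde\Sigma$ is totally geodesic.

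The step I expect to be the main obstacle is the perturbation construction in the third paragraph: $\sigma^{\varepsilon,\delta}$ must be simultaneously trace-free along the horizon slices (to freeze $|\tilde\Sigma|$ to first order), engineered so that the first variation of $H(\tilde\Sigma,\cdot)$ vanishes, and still force a pointwise non-negative --- and somewhere strictly positive --- leading term for $t^{-1}R(\tilde g_t)$. As in Lemma \ref{perturbation}, this succeeds only because the ``bad'' term $[h(e_1,e_1)-h(e_2,e_2)]\,\eta_\delta'\,\rho_\varepsilon$ has the favorable sign dictated by \eqref{p order}, while the curvature terms involving $D^2(\bar g|_{\mathbb{R}^{n-1}})\rho_\varepsilon$ are dominated after scaling the normal component of $\sigma^{\varepsilon,\delta}$ by the large constant $K$; tracking the $O(\cdot)$ errors in the order $\varepsilon\searrow0$ then $\delta\searrow0$ is delicate, but it is a direct adaptation of the half-space computation carried out in Lemma \ref{perturbation}. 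Everything downstream --- the compactification of the $\hat u\to0$ end, the mass formula, and the appeal to Theorem \ref{pmt no boundary} --- is the classical Bray--Lee reduction and poses no new difficulty.
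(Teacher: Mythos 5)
Your proposal addresses only the rigidity (equality) part of the statement and never touches the inequality \eqref{RPI} itself. Indeed, your argument repeatedly uses \eqref{RPI} to derive contradictions --- both when showing $R(\tilde g)=0$ and when showing $h(\tilde\Sigma,\tilde g)=0$ --- so it presupposes the Penrose inequality rather than proving it. The inequality is the central technical content of \cite{BrayLee} and rests on the conformal flow of metrics (and, in the case $n=3$ with connected horizon, on weak inverse mean curvature flow as in \cite{HI}); it cannot be obtained by the reflection-and-perturbation method you describe. In fact, this paper does not re-prove Theorem \ref{RPI no boundary} at all: it is cited verbatim from \cite{BrayLee}, and what you have reproduced is instead the proof of the later Theorem \ref{RPI rigidity}, which is the equality characterization \emph{without} the spin hypothesis, taking the inequality as a black box.

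That proof does follow the template you lay out --- reflect across $\tilde\Sigma$, verify via the arguments of Lemma \ref{scalar flat} and Lemma \ref{perturbation} that the reflected metric is $C^2$ and scalar-flat, invoke the harmonic conformal factor to compactify the second end, and reduce to rigidity in the positive mass theorem (Theorem \ref{pmt no boundary}) --- and you have correctly identified that the spin hypothesis can be dispensed with because the variational perturbation of Lemma \ref{perturbation} replaces Witten's spinor argument. Two caveats: first, the transplant of Lemma \ref{perturbation} from the half-space setting (where the perturbation sits on the non-compact boundary $\partial M$, disjoint from the horizon $\Sigma$) to the manifold setting (where the perturbation sits directly on the compact horizon $\tilde\Sigma$) changes which hypotheses do which work, and the paper merely asserts the adapted conclusion rather than carrying out the transcription; your proposal gestures at this but does not do it either. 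Second, you should state explicitly that the ``if'' direction of the equality statement (Schwarzschild implies equality) is a direct computation, and that the inequality itself is assumed as input. As a plan for proving the rigidity strengthening Theorem \ref{RPI rigidity} this is accurate, but as a proof of the full Theorem \ref{RPI no boundary} it is missing the entire derivation of the inequality.
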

\begin{rema}
	The assumption that $(\tilde M,\tilde g)$ be spin is not necessary; see \cite[Theorem 1.1]{LuMiao} and Theorem \ref{RPI rigidity}.
\end{rema}
\begin{rema}
	 G.~Lam \cite[Corollary 20]{Lam} and L.-H.~Huang and D.~Wu \cite[Theorem 2]{HuangWu} have showed that Theorem \ref{RPI no boundary} holds in all dimensions if $(\tilde M,\tilde g)$ is an asymptotically flat hypersurface of a Euclidean space.
\end{rema}
	\section{Riemannian geometry}
	In this section, we recall some facts from Riemannian geometry.  
\begin{lem}[{\cite[\S3]{kazdanwarner}}] \label{conformal change}
Let $(M,g)$ be a Riemannian manifold of dimension $n\geq 3$ and  $u\in C^\infty(M)$ be a positive function. Let
$$
g^u=u^{\frac{4}{n-2}}\,g
$$	
and suppose that $\Sigma\subset M$ is a two-sided  hypersurface with unit normal $\nu(\Sigma,g)$ and mean curvature $H(\Sigma,g)$ computed as the divergence of $\nu(\Sigma,g)$ along $\Sigma$.	\\ 
	\indent There holds
$$
R(g^u)=u^{-\frac{n+2}{n-2}}\,\left(-\frac{4\,(n-1)}{n-2}\Delta_g u+R(g)\,u\right)
$$
and
$$
H(\Sigma,g^u)=u^{-\frac{n}{n-2}}\left(\frac{2\,(n-1)}{n-2}\,D(g)_{\nu(\Sigma,g)}u+H(\Sigma,g)\,u\right).
$$
	\end{lem}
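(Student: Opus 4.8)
Both identities are classical and lie at the heart of the analytic approach to the Yamabe problem; they are recorded in \cite[\S3]{kazdanwarner}. The plan is to obtain them by a direct computation of how the relevant curvature quantities transform under a conformal rescaling. Write $g^u=e^{2f}\,g$ with $f=\frac{2}{n-2}\,\log u$, so that $e^{2f}=u^{\frac{4}{n-2}}$.

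\emph{Scalar curvature.} First I would record the transformation of the Levi-Civita connection, $\Gamma(g^u)^k_{ij}=\Gamma(g)^k_{ij}+\delta^k_i\,\partial_jf+\delta^k_j\,\partial_if-g_{ij}\,g^{k\ell}\,\partial_\ell f$, substitute this into the formula for the Riemann curvature tensor, and contract twice. This yields the standard identity
\begin{equation*}
R(g^u)=e^{-2f}\Big(R(g)-2\,(n-1)\,\Delta_gf-(n-1)\,(n-2)\,|\nabla_gf|^2_g\Big).
\end{equation*}
Plugging in $f=\frac{2}{n-2}\,\log u$, using $\Delta_gf=\frac{2}{n-2}\big(u^{-1}\Delta_gu-u^{-2}|\nabla_gu|_g^2\big)$ and $|\nabla_gf|_g^2=\frac{4}{(n-2)^2}\,u^{-2}\,|\nabla_gu|_g^2$, the two terms carrying $|\nabla_gu|_g^2$ cancel identically; it is precisely this cancellation that singles out the exponent $\frac{4}{n-2}$. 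What remains is $R(g^u)=u^{-\frac{4}{n-2}}\big(R(g)-\frac{4\,(n-1)}{n-2}\,u^{-1}\,\Delta_gu\big)$, and factoring out $u^{-1}$ together with $u^{-\frac{4}{n-2}}\,u^{-1}=u^{-\frac{n+2}{n-2}}$ gives the first claimed formula.

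\emph{Mean curvature.} The unit normal of $\Sigma$ with respect to $g^u$ is $\nu(\Sigma,g^u)=e^{-f}\,\nu(\Sigma,g)$. Using the connection change formula once more on tangential vectors $X$ (so that $g(X,\nu(\Sigma,g))=0$), one finds $\nabla^{g^u}_X\nu(\Sigma,g^u)=e^{-f}\big(\nabla^{g}_X\nu(\Sigma,g)+(D(g)_{\nu(\Sigma,g)}f)\,X\big)$, hence the second fundamental forms satisfy $\mathrm{II}(\Sigma,g^u)=e^{f}\big(\mathrm{II}(\Sigma,g)+(D(g)_{\nu(\Sigma,g)}f)\,g|_\Sigma\big)$. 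Taking the trace with respect to $g^u|_\Sigma=e^{2f}\,g|_\Sigma$ and using $\dim\Sigma=n-1$ yields $H(\Sigma,g^u)=e^{-f}\big(H(\Sigma,g)+(n-1)\,D(g)_{\nu(\Sigma,g)}f\big)$. Substituting $f=\frac{2}{n-2}\,\log u$ and simplifying the exponents as above (now $u^{-\frac{2}{n-2}}\,u^{-1}=u^{-\frac{n}{n-2}}$) produces the second claimed formula.

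\emph{Main obstacle.} There is no genuine obstacle: both formulas are standard. The only care required is in the bookkeeping of conventions — the sign of $H$ and the orientation of $\nu(\Sigma,g)$ (here $H(\Sigma,g)$ is the divergence of $\nu(\Sigma,g)$ along $\Sigma$, i.e.\ the trace of the second fundamental form with this orientation), the convention that $\Delta_g$ is the trace of the Hessian, and the algebra of the exponent $\frac{4}{n-2}$ that forces the first-order gradient terms in the scalar curvature identity to drop out.
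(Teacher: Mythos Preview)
Your computation is correct and is exactly the standard derivation of these conformal change formulas. The paper does not supply its own proof of this lemma; it merely records the identities with a citation to \cite[\S3]{kazdanwarner}, so there is nothing to compare against beyond noting that your direct calculation is the expected one.
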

	\begin{lem} \label{geodesic lemma}
	Let $n\geq 2$.	There exists  $\varepsilon>0$ with the following property. Suppose that $\sigma$ is a symmetric $(0,2)$-tensor on $\mathbb{R}^n_+\cap B^n_1(0)$ with
		\begin{align} \label{geodesic lemma sigma} 
		|\sigma|_{\bar g}+|D(\bar g)\sigma|_{\bar g}+|D^2(\bar g)\sigma|_{\bar g}<\varepsilon
		\end{align} 
		and let $g=\bar g+\sigma$. The map
		$$
		\{y\in\mathbb{R}^{n-1}:|y|_{\bar g}<1/2\}\times[0,1/2)\to \mathbb{R}^n_+\cap B^n_1(0)\qquad\text{given by}\qquad (y,t)\mapsto\exp(g)_{y}({t\,\nu(\partial \mathbb{R}^n_+,g)(y)})
		$$
		is injective. 
	\end{lem}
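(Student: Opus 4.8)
The plan is to recognize the map in the statement, call it $\Psi$, as a $C^1$-small perturbation of the inclusion $\{y\in\mathbb{R}^{n-1}:|y|_{\bar g}<1/2\}\times[0,1/2)\hookrightarrow\mathbb{R}^n_+\cap B^n_1(0)$ — which is precisely what $\Psi$ becomes when $\sigma=0$ — and then to conclude injectivity from the convexity of the domain via the mean value inequality. Writing $D=\{y\in\mathbb{R}^{n-1}:|y|_{\bar g}<1/2\}\times[0,1/2)$, the goal is to arrange, by choosing $\varepsilon>0$ in \eqref{geodesic lemma sigma} small, that $\Psi$ is well defined and of class $C^1$ on $D$ and satisfies $\|D\Psi-\operatorname{Id}\|_{C^0(D)}<1/2$. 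Granting this, for distinct $p,q\in D$ the segment $\{q+s\,(p-q):s\in[0,1]\}$ lies in $D$, so
$$
\Psi(p)-\Psi(q)=(p-q)+\int_0^1\big(D\Psi-\operatorname{Id}\big)\big|_{q+s\,(p-q)}\,(p-q)\,\mathrm{d}s,
$$
whence $|\Psi(p)-\Psi(q)|_{\bar g}\geq\tfrac12\,|p-q|_{\bar g}>0$ and $\Psi$ is injective.

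To establish the $C^1$-estimate I would argue in two steps. First, the inward $g$-unit normal $\nu(\partial\mathbb{R}^n_+,g)(y)$ is obtained pointwise from the coefficients of $g$ at $y$ by linear algebra and normalization, with $\nu(\partial\mathbb{R}^n_+,\bar g)=e_n$; hence, after shrinking $\varepsilon$, there holds $|\nu(\partial\mathbb{R}^n_+,g)-e_n|_{\bar g}+|D(\bar g)\nu(\partial\mathbb{R}^n_+,g)|_{\bar g}=O(\varepsilon)$ on $\{y\in\mathbb{R}^{n-1}:|y|_{\bar g}<1/2\}$, all implicit constants here and below depending only on $n$. Second, $s\mapsto\exp(g)_y(s\,v)$ is the base-point component of the flow of the geodesic vector field of $g$ (extended, for $\varepsilon$ small, to a $C^2$ metric on an open neighborhood of $\mathbb{R}^n_+\cap B^n_{3/4}(0)$ in $\mathbb{R}^n$), an ordinary differential equation whose right-hand side is quadratic in the velocity with coefficients the Christoffel symbols $\Gamma(g)$; since $\Gamma(\bar g)=0$, \eqref{geodesic lemma sigma} gives $|\Gamma(g)|_{\bar g}+|D(\bar g)\Gamma(g)|_{\bar g}=O(\varepsilon)$. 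For $\varepsilon$ small, the $g$-geodesics issued from $\partial\mathbb{R}^n_+$ in the inward $g$-unit normal direction remain in $\mathbb{R}^n_+\cap B^n_{3/4}(0)$ for parameters in $[0,1/2)$, being $C^1$-close to the straight segments $s\mapsto(y,s)$, so that $\Psi$ is defined on all of $D$. Continuous dependence of the flow of a vector field on the vector field, in the $C^1$ topology, applied to the geodesic flow and composed with the estimate for $\nu(\partial\mathbb{R}^n_+,g)$, then shows that $\Psi$ is of class $C^1$ on $D$ with $\|D\Psi-\operatorname{Id}\|_{C^0(D)}=O(\varepsilon)$, which is less than $1/2$ once $\varepsilon$ is small enough.

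I expect the genuine (if still routine) point to be this uniform $C^1$-closeness of $\Psi$ to the inclusion: the derivative of $\Psi$ in the $y$-directions is controlled by the variational equation of the geodesic flow, whose coefficients involve $D(\bar g)\Gamma(g)$ and hence second derivatives of $g$ — which is exactly why \eqref{geodesic lemma sigma} needs to bound $|D^2(\bar g)\sigma|_{\bar g}$ in addition to $|\sigma|_{\bar g}$ and $|D(\bar g)\sigma|_{\bar g}$. Once the $C^1$-estimate is in place, the convexity and mean value argument of the first paragraph finishes the proof.
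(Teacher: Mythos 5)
Your proof is correct, and it takes a genuinely different route from the paper. The paper fixes two base points $y_1\neq y_2$, considers the two geodesics $\omega_1,\omega_2$ emanating in the $g$-normal direction, and runs a bootstrap (continuity) argument directly on the ODE: it defines the maximal $s$ for which $\omega_1$ and $\omega_2$ stay in $B^n_{3/4}(0)$, the separation $|\omega_1(t)-\omega_2(t)|_{\bar g}$ remains comparable to $|y_1-y_2|_{\bar g}$, and the velocity gap stays small; a Gronwall-type integration of $|\ddot\omega_1-\ddot\omega_2|=O(\varepsilon)|y_1-y_2|_{\bar g}$ then forces $s=1/2$. You instead repackage the whole map $\Psi$, observe that at $\sigma=0$ it is the inclusion of the convex set $D$, bound $\|D\Psi-\operatorname{Id}\|_{C^0(D)}$ by $O(\varepsilon)$ using continuous dependence of the geodesic flow and the normal field on $g$ in $C^1$, and conclude injectivity by the mean value inequality along segments of $D$. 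The two arguments use the same input — notably that the variational equation of the geodesic spray sees $D^2\sigma$, which is why \eqref{geodesic lemma sigma} must control second derivatives — but yours has a small structural advantage: the mean-value estimate $|\Psi(p)-\Psi(q)|_{\bar g}\geq\frac12|p-q|_{\bar g}$ directly compares arbitrary pairs $(y_1,t_1)$ and $(y_2,t_2)$, whereas the paper's estimate as written only compares $\omega_1(t)$ and $\omega_2(t)$ at \emph{equal} parameter values and leaves the case $t_1\neq t_2$ (including $y_1=y_2$, $t_1\neq t_2$) for the reader to supply, e.g.\ by also tracking the $n$-th coordinate of $\omega_i(t)$. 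So your reformulation is somewhat more self-contained; the paper's is more elementary in that it avoids invoking $C^1$-dependence of flows as a black box and works with the geodesic equation directly.
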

\begin{proof}
	Let $y_1,\,y_2\in\mathbb{R}^{n-1}$ with $|y_1|_{\bar g},\,|y_2|_{\bar g}<1/2$. Let 
	$$
	\omega_1(t)=\exp(g)_{y_1}({t\,\nu(\partial \mathbb{R}^n_+,g)(y_1)})
	\qquad\text{and} \qquad 
\omega_2(t)=\exp(g)_{y_2}({t\,\nu(\partial \mathbb{R}^n_+,g)(y_2)}).
$$
Moreover, let
	$s\in [0,1/2]$ be maximal such that, for all $t\in[0,s]$, 
	\begin{equation*} 
	\begin{aligned} 
&\circ\qquad \omega_1(t),\,\omega_2(t)\in \mathbb{R}^n_+\cap B^n_{3/4}(0),\\
&\circ\qquad	\frac12\,|y_1-y_2|_{\bar g}\leq |\omega_1(t)-\omega_2(t)|_{\bar g}\leq 2\,|y_1-y_2|_{\bar g},\text{ and}\\
&\circ \qquad |\dot \omega_1(t)-\dot \omega_2(t)|_{\bar g}\leq \frac12\,|y_1-y_2|_{\bar g}.
	\end{aligned} 
	\end{equation*}
 By  \eqref{geodesic lemma sigma}, as $\varepsilon\searrow 0$,
	$$
	|\dot \omega_1(0)-\dot \omega_2(0)|_{\bar g}=O(\varepsilon)\,|y_1-y_2|_{\bar g}.
	$$
In particular, $s>0$. By the geodesic equation,
	$$
	|\ddot \omega_1(t)-\ddot \omega_2(t)|_{\bar g}=O(\varepsilon)\,|y_1-y_2|_{\bar g}
	$$
	on $[0,s]$. Integrating, it follows that $s= 1/2$ provided that $\varepsilon>0$ is sufficiently small. \\ \indent The assertion follows.
\end{proof}
\section{Laplace operator on asymptotically flat half-spaces with horizon boundary} 
In \cite[Proposition 3.3]{ABDL}, S.~Almaraz, E.~Barbosa, and L.~L.~de Lima have proved an existence and uniqueness result for the Laplace equation on  asymptotically flat half-spaces. In this section, we explain how their result can be adapted to an asymptotically flat half-space with horizon boundary. \\ \indent 
Let $(M,g)$ be an asymptotically flat half-space of rate $\tau>(n-2)/2$ with horizon boundary. \\ \indent  We fix an asymptotically flat chart $\Phi:\{x\in\mathbb{R}^n_+:|x|_{\bar g}>1/2\}\to M$. We may assume that $\operatorname{Im}(\Phi)\cap \Sigma=\emptyset$. Let $K\subset M$ be the connected compact set with $\partial K=\Sigma\cup \Phi(\{x\in\mathbb{R}^n_+:|x|_{\bar g}\geq 2\})$. \\ \indent Given $\alpha\in(0,1)$, an integer $k\geq 0$, and $u\in C_{loc}^{k,\alpha}(M)$, we adapt from \cite[\S3]{ABDL} the definition of the weighted Hölder norm 
\begin{equation} \label{weighted hoelder}
\begin{aligned}
&|u|_{C^{k,\alpha}_\tau(M(\Sigma))}
\\&\quad =\,|u|_{C^{k,\alpha}(K)}+\sum_{i=0}^k\sup_{|x|_{\bar g}>1}|x|_{\bar g}^{i+\tau}\,|(D^i(\bar g)u)(x)|_{\bar g}\\&\quad \qquad  +\sup_{|x_1|_{\bar g}>1}\,\,\sup_{2\,|x_2-x_1|_{\bar g}<|x_1|_{\bar g}}|x_1|_{\bar g}^{k+\tau+\alpha}\,|x_1-x_2|_{\bar g}^{-\alpha}\, |(D^k(\bar g)u)(x_1)-(D^k(\bar g)u)(x_2)|_{\bar g}
\end{aligned}
\end{equation}
where $x,\,x_1,\,x_2\in \mathbb{R}^n_+$. We  define $$C^{k,\alpha}_\tau(M(\Sigma))=\{u\in C_{loc}^{k,\alpha}(M):|u|_{C^{k,\alpha}_\tau(M(\Sigma))}<\infty\}.$$
 \indent    Likewise, given $\beta\in(0,1)$, an integer $\ell\geq 1$, and $a\in C_{loc}^{\ell,\beta}(\partial M)$, we define 
\begin{equation*}
\begin{aligned}
|a|_{C^{\ell,\beta}_\tau( M(\Sigma)\cap\partial M)} =&\,|a|_{C^{\ell,\beta}( K\cap\partial M)}+\sum_{j=0}^\ell\sup_{|y|_{\bar g}>1}|y|_{\bar g}^{j+\tau}\,|(D^j(\bar g)a)(y)|_{\bar g}\\&\quad +\sup_{|y_1|_{\bar g}>1}\,\,\sup_{2\,|y_1-y_2|_{\bar g}<|y_1|_{\bar g}}|y_1|_{\bar g}^{\ell+\tau+\beta}\,|y_1-y_2|_{\bar g}^{-\beta}\, |(D(\bar g)^\ell a)(y_1)-(D(\bar g)^\ell a)(y_2)|_{\bar g}
\end{aligned}
\end{equation*} 
where $y,\,y_1,\,y_2\in\mathbb{R}^{n-1}\times\{0\}$. 
 We define 
 $$C_\tau^{\ell,\beta}( M(\Sigma)\cap\partial M)=\big\{a\in C_{loc}^{\ell,\beta}(\partial M):|a|_{C^{\ell,\beta}_\tau( M(\Sigma)\cap\partial M)}<\infty\text{ and } D(g|_{\partial M})_{\nu(\Sigma,g)}a=0\text{ on } \Sigma\cap\partial M\big\}.$$
\begin{prop} \label{PDE prop}
	Let $\alpha\in(0,1)$. There exists a constant $c>0$ with the following property. Given $\psi\in C^{0,\alpha}_{\tau}(M(\Sigma))$ and $a\in C^{1,\alpha}_{\tau}( M(\Sigma)\cap\partial M)$, there exists a unique solution $v\in C_\tau^{2,\alpha}(M(\Sigma))$ of
	\begin{equation} \label{neumann problem} 
	\begin{dcases}\qquad
	-\Delta_gv-\psi=0\qquad&\text{in }\operatorname{int}(M(\Sigma)),\qquad\qquad\qquad\qquad\qquad\qquad\qquad\qquad\\\qquad
	D(g)_{\nu(\partial M,g)}v-a=0&\text{on }  M(\Sigma)\cap\partial M,\text{ and}\\\qquad
	D(g)_{\nu(\Sigma,g)}v=0&\text{on } \Sigma.
	\end{dcases}
	\end{equation}
	There holds
	\begin{align} \label{hoelder estimate} 
	|v|_{C_{\tau}^{2,\alpha}(M(\Sigma))}\leq c\,\big(|\psi|_{C^{0,\alpha}_{\tau}(M(\Sigma))}+|a|_{C^{1,\alpha}_{\tau}( M(\Sigma)\cap\partial M)}\big).
	\end{align} 
\end{prop}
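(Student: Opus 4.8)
The plan is to follow the proof of \cite[Proposition 3.3]{ABDL}, which establishes the corresponding statement for an asymptotically flat half-space without horizon boundary, and to build in the homogeneous Neumann condition on $\Sigma$. There are three ingredients: a weighted Schauder estimate for solutions of \eqref{neumann problem}, triviality of the kernel (uniqueness), and existence. The only genuinely new feature compared with \cite{ABDL} is the compact minimal hypersurface $\Sigma$, whose free boundary components meet $\partial M$ orthogonally, so that near $\partial\Sigma=\Sigma\cap\partial M$ one has a mixed Neumann--Neumann problem at a right-angle corner.

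\emph{A priori estimate and uniqueness.} First I would show that every $v\in C^{2,\alpha}_\tau(M(\Sigma))$ solving \eqref{neumann problem} satisfies
\[
|v|_{C^{2,\alpha}_\tau(M(\Sigma))}\le c\,\big(|\psi|_{C^{0,\alpha}_\tau(M(\Sigma))}+|a|_{C^{1,\alpha}_\tau(M(\Sigma)\cap\partial M)}+\sup_{K}|v|\big)
\]
by patching local estimates: interior Schauder estimates; boundary Schauder estimates for the Neumann problem along the interior of $M(\Sigma)\cap\partial M$ and along the closed components of $\Sigma$; Schauder estimates for the mixed Neumann--Neumann problem in a neighbourhood of $\partial\Sigma$, which hold because $\Sigma$ meets $\partial M$ at a right angle and the compatibility condition $D(g|_{\partial M})_{\nu(\Sigma,g)}a=0$ on $\partial\Sigma$ --- built into the definition of $C^{1,\alpha}_\tau(M(\Sigma)\cap\partial M)$ --- is satisfied; and, on the asymptotically flat end, the rescaled Schauder estimates of \cite[\S3]{ABDL} that yield the weighted bounds with the correct powers of $|x|_{\bar g}$. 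For uniqueness, and in order to remove the term $\sup_K|v|$, suppose $v\in C^{2,\alpha}_\tau(M(\Sigma))$ solves the homogeneous problem. Integrating by parts over $M(\Sigma)\cap\{|x|_{\bar g}\le\lambda\}$, the boundary integrals over $\partial M$ and over $\Sigma$ vanish by the Neumann conditions, while the integral over the coordinate sphere $S^{n-1}_\lambda(0)$ is $O(\lambda^{n-2-2\tau})$ and hence tends to $0$ precisely because $\tau>(n-2)/2$. Thus $\nabla v\equiv 0$, so $v$ is constant, and since $v\to 0$ at infinity, $v\equiv 0$. A standard compactness argument then upgrades the a priori estimate to \eqref{hoelder estimate}.

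\emph{Existence.} With \eqref{hoelder estimate} and triviality of the kernel available, existence follows either by the method of continuity --- deforming $g$ on the end to $\bar g$ through asymptotically flat metrics with the corner geometry near $\partial\Sigma$ held fixed, along which \eqref{hoelder estimate} is uniform and for which solvability is classical --- or from Fredholm theory: the map $v\mapsto(-\Delta_g v,\,D(g)_{\nu(\partial M,g)}v|_{\partial M},\,D(g)_{\nu(\Sigma,g)}v|_\Sigma)$ between the relevant weighted spaces is Fredholm of index $0$, its index being unchanged from the flat model since the deformation of the operator is compactly supported on $M(\Sigma)$, so that the trivial kernel forces surjectivity onto the prescribed data. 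Alternatively, one can exhaust $M(\Sigma)$ by $\Omega_\lambda=M(\Sigma)\cap\{|x|_{\bar g}\le\lambda\}$, solve the mixed problem on $\Omega_\lambda$ with a homogeneous Neumann condition on $S^{n-1}_\lambda(0)$, use the a priori estimate to obtain uniform weighted bounds, and pass to the limit $\lambda\to\infty$.

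I expect the main obstacle to be the Schauder theory uniformly up to the corner $\partial\Sigma$: proving the Neumann--Neumann corner estimate and checking that neither the integration-by-parts identity nor the limiting procedures above are obstructed there. The orthogonality of $\Sigma$ and $\partial M$ together with the compatibility condition $D(g|_{\partial M})_{\nu(\Sigma,g)}a=0$ are exactly what make this work, since the right angle rules out corner singularities at leading order and the problem can be reduced to a standard Neumann estimate, e.g.\ by reflection across $\partial M$; everything on the asymptotically flat end is \cite[Proposition 3.3]{ABDL} essentially verbatim.
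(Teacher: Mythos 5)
Your strategy is correct in outline but takes a genuinely different route from the paper. You propose to prove the weighted Schauder estimate directly by patching local estimates, including a mixed Neumann--Neumann corner estimate at $\partial\Sigma$, then to obtain uniqueness by integration by parts and existence by continuity/Fredholm/exhaustion. The paper instead eliminates the Neumann condition on $\Sigma$ altogether by doubling $M(\Sigma)$ across $\Sigma$: it forms $\hat M=M(\Sigma)\times\{-1,1\}/\sim$ with the reflected metric $\hat g$, notes that since $\Sigma$ is \emph{minimal} the coefficients of $\Delta_{\hat g}$ are Lipschitz across $\pi^{-1}(\Sigma)$ even though $\hat g$ is only $C^{1,1}$ there, lifts $\psi$ and $a$ to $\hat\psi$ and $\hat a$ (using exactly the compatibility condition $D(g|_{\partial M})_{\nu(\Sigma,g)}a=0$ to get $\hat a\in C^{1,\alpha}_\tau(\partial\hat M)$), solves the Neumann problem on the doubled two-ended half-space directly from \cite[Proposition 3.3]{ABDL}, and then observes that by uniqueness $\hat v$ is symmetric, so it descends to the desired $v$ on $M(\Sigma)$ with the Neumann condition on $\Sigma$ automatic. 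This turns the corner $\partial\Sigma$ into an interior point of the smooth boundary $\partial\hat M$, so no corner Schauder theory is needed at all.

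One concrete caution about your outline: where you suggest handling the corner ``e.g.\ by reflection across $\partial M$,'' note that $\partial M$ need not be minimal, so the reflected first-order coefficient of the Laplacian (which equals $\pm H(\partial M,g)$) jumps across $\partial M$, leaving only $L^\infty$ coefficients; $C^{2,\alpha}$ Schauder theory does not then apply off the shelf, and one has to exploit cancellation coming from the Neumann condition in a separate bootstrap. Reflecting across $\Sigma$, which is what the paper does, is the choice that keeps the coefficients Hölder and lets the classical theory run. With that amendment your patching argument would go through, but the paper's global doubling across $\Sigma$ is simpler because it funnels the entire problem, including the weighted asymptotics, uniqueness, and existence, through the already-established \cite[Proposition 3.3]{ABDL}.
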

\begin{proof} 
	The case where $\Sigma=\emptyset$ has been proved in \cite[Proposition 3.3]{ABDL}. \\ \indent 
	If $\Sigma\subset M$ is a compact hypersurface whose components are closed hypersurfaces or free boundary hypersurfaces, we consider the differentiable manifold $\hat M=M(\Sigma)\times \{-1,\,1\}/\sim$ where
	\begin{itemize}
		\item[$\circ$] $(x_1,\pm1)\sim (x_2,\pm1)$ if and only if $x_1=x_2$ and
	\item[$\circ$]  $(x_1,\pm1)\sim (x_2,\mp1)$ if and only if $x_1,\,x_2\in\Sigma$ and $x_1=x_2.$
\end{itemize}
	  We define the Riemannian metric $\hat g$ on $\hat M$ by $\hat g(\hat x)=g(\pi(\hat x))$ where $\pi([(x,\pm1)])=x$.  Note that $(\hat M,\hat g)$ has two asymptotically flat ends and that $\hat g$ is of class $C^2$ away from $\pi^{-1}(\Sigma)$. Moreover, note that, although $\hat g$ is only Lipschitz, the coefficients of $\Delta_{\hat g}$ are still Lipschitz since $\Sigma$ is minimal. 
\\ \indent  Let $\hat \psi:\hat M\to\mathbb{R}$ be given by $\hat \psi(\hat x)=\psi(\pi(\hat x))$ and $\hat a: \partial\hat  M\to\mathbb{R}$ be given by $\hat a(\hat x)=a(\pi(\hat x))$. Clearly, $\hat \psi\in C^{0,\alpha}_\tau(\hat M)$. Moreover, using that $D(g|_{\partial M})_{\nu(\Sigma,g)}a=0$ on $ \Sigma\cap\partial M$, we conclude that $\hat a \in C^{1,\alpha}_{\tau}(\partial \hat M)$. By \cite[Proposition 3.3]{ABDL}, there exists a unique $\hat v\in C_\tau^{2,\alpha}(\hat M)$ with 
	\begin{equation} \label{Neumann}  
\begin{dcases}\qquad
-\Delta_{\hat g}\hat v-\hat \psi=0\qquad&\text{in }\hat M,\qquad\qquad\qquad\qquad\qquad\qquad\qquad\qquad\qquad\\\qquad
D(\hat g)_{\nu(\partial \hat M,\hat g)}\hat v-\hat a=0&\text{on } \partial \hat M
\end{dcases}
\end{equation}
and
	\begin{align*}  
|\hat v|_{C_{\tau}^{2,\alpha}(\hat M)}\leq c\,\big(|\hat \psi|_{C^{0,\alpha}_{\tau}(\hat M)}+|\hat a|_{C^{1,\alpha}_{\tau}(\partial \hat M)}\big).
\end{align*} 
Here, $c>0$ is a constant independent of $\hat \psi$ and $\hat a$. Moreover, by uniqueness, $\hat v([x,1])=\hat v([x,-1])$ for all $x\in M$. It follows that $v:M(\Sigma)\to\mathbb{R}$ given by $v(x)=\hat v([x,1])$ satisfies \eqref{neumann problem} and \eqref{hoelder estimate}.\\ \indent The assertion follows.
\end{proof}

\begin{prop} \label{PDE prop 2}
	Let $\alpha\in(0,1)$ and $\chi\in C^{0,\alpha}_{\tau}(M(\Sigma))$ with $\chi\geq 0$. There exists a constant $c>0$ with the following property. Given $\psi\in C^{0,\alpha}_{\tau}(M(\Sigma))$ and $b\in C^{2,\alpha}( \Sigma )$ with $D(g)_{\nu(\partial M,g)}b=0$ on $\partial \Sigma$, there exists a unique solution $v\in C_\tau^{2,\alpha}(M(\Sigma))$ of
	\begin{equation*} 
	\begin{dcases}\qquad
	-\Delta_gv+\chi\,v-\psi=0\qquad&\text{in }\operatorname{int}(M(\Sigma)),\qquad\qquad\qquad\qquad\qquad\qquad\qquad\qquad\\\qquad
	D(g)_{\nu(\partial M,g)}v=0&\text{on }  M(\Sigma)\cap\partial M,\text{ and}\\\qquad
	v-b=0&\text{on } \Sigma.
	\end{dcases}
	\end{equation*}
	There holds
	\begin{align*}
	|v|_{C_{\tau}^{2,\alpha}(M(\Sigma))}\leq c\,\big(|\psi|_{C^{0,\alpha}_{\tau}(M(\Sigma))}+|b|_{C^{2,\alpha}( \Sigma)}\big).
	\end{align*} 
\end{prop}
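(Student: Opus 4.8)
The plan is to imitate the proof of Proposition \ref{PDE prop}, but to double $M(\Sigma)$ across the \emph{Neumann} boundary $M(\Sigma)\cap\partial M$ rather than across $\Sigma$, since here $\Sigma$ carries Dirichlet data. Uniqueness comes first and is easy: if $v\in C^{2,\alpha}_\tau(M(\Sigma))$ solves the problem with $\psi=0$ and $b=0$, then, integrating $v\,(-\Delta_gv+\chi v)=0$ by parts over $M(\Sigma)$, the boundary terms on $\Sigma$, on $M(\Sigma)\cap\partial M$, and at infinity all vanish — because $v=0$ on $\Sigma$, $D(g)_{\nu(\partial M,g)}v=0$ on $M(\Sigma)\cap\partial M$, and $\tau>(n-2)/2$ forces $v$ and $D(g)v$ to decay fast enough at infinity — so that $\int_{M(\Sigma)}\big(|D(g)v|^2+\chi\,v^2\big)\,\mathrm{d}v(g)=0$. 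Since $\chi\geq0$, the function $v$ is locally constant, and $v=0$ on $\Sigma$ gives $v\equiv0$.

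For existence, I would form the doubled manifold $\hat M=M(\Sigma)\times\{-1,1\}/\sim$, where $(x_1,\pm1)\sim(x_2,\pm1)$ iff $x_1=x_2$ and $(x_1,\pm1)\sim(x_2,\mp1)$ iff $x_1,x_2\in\partial M$ and $x_1=x_2$, with projection $\pi([(x,\pm1)])=x$; as in Section \ref{gluing section}, $(\hat M,\hat g)$ with $\hat g=\pi^*g$ is a connected manifold with one asymptotically flat end and compact boundary $\hat\Sigma:=\pi^{-1}(\Sigma)$ of class $C^{1,1}$ (using $\Sigma\perp\partial M$), with $\hat g$ globally Lipschitz and smooth away from $\pi^{-1}(\partial M)$. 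Setting $\hat\psi=\psi\circ\pi$, $\hat\chi=\chi\circ\pi\geq0$, and $\hat b=b\circ\pi$ — where the compatibility $D(g)_{\nu(\partial M,g)}b=0$ on $\partial\Sigma$ together with $\Sigma\perp\partial M$ makes $\hat b$ of class $C^{2,\alpha}$ on $\hat\Sigma$, since its odd transverse derivative across $\pi^{-1}(\partial\Sigma)$ vanishes — I would solve, weakly,
\begin{equation*}
\begin{dcases}
\quad -\Delta_{\hat g}\hat v+\hat\chi\,\hat v-\hat\psi=0\qquad&\text{in }\hat M\setminus\hat\Sigma,\\
\quad \hat v-\hat b=0&\text{on }\hat\Sigma,\text{ and}\\
\quad \lim_{\hat x\to\infty}\hat v=0.
\end{dcases}
\end{equation*}
Existence of $\hat v$ in the relevant weighted Hölder space would follow by the scheme of \cite[Proposition 3.3]{ABDL}: reduce to homogeneous Dirichlet data by subtracting a fixed $C^{2,\alpha}$ extension of $\hat b$; produce a weak solution by the Lax--Milgram theorem (coercivity of the Dirichlet form only requires $\hat g$ Lipschitz); and combine the weighted Schauder estimates of \cite{ABDL} in the end with interior and $C^{1,1}$-boundary Schauder estimates, noting that standard interior estimates for divergence-form operators with Lipschitz coefficients give $\hat v\in C^{1,\alpha}$ across $\pi^{-1}(\partial M)$ and $C^{2,\alpha}$ away from it. Uniqueness of $\hat v$ is the integration-by-parts argument above, now carried out on $\hat M$.

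Since the reflection $\iota([(x,\pm1)])=[(x,\mp1)]$ is a $\hat g$-isometry fixing $\hat\psi$, $\hat\chi$, $\hat b$, and $\hat\Sigma$, uniqueness forces $\hat v\circ\iota=\hat v$. Hence $v(x):=\hat v([(x,1)])$ is a well-defined function on $M(\Sigma)$ that solves $-\Delta_gv+\chi v=\psi$ in $\operatorname{int}(M(\Sigma))$, equals $b$ on $\Sigma$, and — because $\hat v$ is even and of class $C^{1,\alpha}$ across $\pi^{-1}(\partial M)$ — satisfies $D(g)_{\nu(\partial M,g)}v=0$ on $M(\Sigma)\cap\partial M$. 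This is a mixed Dirichlet--Neumann problem with smooth metric and smooth boundary, $C^{0,\alpha}$ interior data, $C^{2,\alpha}$ Dirichlet data, and a right-angle corner $\partial\Sigma$: interior and away-from-corner boundary Schauder estimates give $C^{2,\alpha}$-regularity up to $\Sigma$ and up to $\partial M$, while at $\partial\Sigma$ the orthogonality $\Sigma\perp\partial M$ together with $D(g)_{\nu(\partial M,g)}b=0$ makes the corner regular — one may reflect once more across $\partial M$ to turn the mixed condition into a Dirichlet condition on a $C^{1,1}$ hypersurface — so that $v\in C^{2,\alpha}_{loc}(M(\Sigma))$, and the weighted estimates of \cite{ABDL} at infinity then give $v\in C^{2,\alpha}_\tau(M(\Sigma))$. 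Once existence and uniqueness are established, the quantitative estimate follows from the open mapping theorem applied to the bounded, bijective, linear map $v\mapsto(-\Delta_gv+\chi v,\,v|_\Sigma)$ between the corresponding Banach spaces.

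I expect the main obstacle to be the regularity near the corner $\partial\Sigma$ and across $\pi^{-1}(\partial M)$. In contrast with Proposition \ref{PDE prop}, where minimality of $\Sigma$ removes the singular part of $\Delta_{\hat g}$ along $\pi^{-1}(\Sigma)$, the doubled Laplacian here carries a term proportional to $H(\partial M,g)$ along $\pi^{-1}(\partial M)$, so $\hat v$ is genuinely only of class $C^{1,\alpha}$ there; one must therefore argue in the weak formulation on $\hat M$ and only afterwards recover one-sided $C^{2,\alpha}$-regularity on $M(\Sigma)$ by exploiting the symmetry under $\iota$. Verifying that the compatibility $D(g)_{\nu(\partial M,g)}b=0$ on $\partial\Sigma$ is precisely the condition needed — both to make $\hat b$ of class $C^{2,\alpha}$ on the $C^{1,1}$ hypersurface $\hat\Sigma$ and to obtain $C^{2,\alpha}$-regularity of $v$ up to $\partial\Sigma$ — will rely on the orthogonal intersection $\Sigma\perp\partial M$.
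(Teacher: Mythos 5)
Your proposal matches the paper's own proof: the paper likewise doubles across $\partial M$ (using the double $\tilde M$ from \eqref{tilde M}) and solves the resulting Dirichlet problem on $\tilde M(\tilde\Sigma)$, citing \cite[Proposition 2.2]{Bartnik} and \cite[Theorem 9.2]{LeeParker} for the weighted elliptic theory in place of your Lax--Milgram plus Schauder argument, which is only a cosmetic difference. Your explicit attention to the loss of regularity of the doubled Laplacian along $\pi^{-1}(\partial M)$ (caused by $H(\partial M,g)\neq 0$, in contrast with the minimal $\Sigma$ in Proposition \ref{PDE prop}) and to the role of the compatibility condition $D(g)_{\nu(\partial M,g)}b=0$ at the corner $\partial\Sigma$ correctly fills in the details that the paper's two-sentence sketch leaves implicit.
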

\begin{proof}
The proof is very similar to that of Proposition \ref{PDE prop} and we only sketch the necessary modifications. First, we consider an appropriate Dirichlet problem on the double \eqref{tilde M} of $(M,g)$ instead of the Neumann problem \eqref{Neumann} on  $(\hat M,\hat g)$. Second,  we use \cite[Proposition 2.2]{Bartnik} and \cite[Theorem 9.2]{LeeParker} instead of \cite[Proposition 3.3]{ABDL}.
\end{proof}
\section{Local perturbations of a Riemannian metric}
In this section, we construct  local perturbations of a Riemannian metric that are used in this paper. 
\begin{lem}\label{f conformal}
	Let $(M,g)$ be a Riemannian manifold of dimension $n\geq 3$ with boundary $\partial M$ oriented by the unit normal $\nu(\partial M,g)$ pointing towards $M$. Let  $\Sigma\subset M$ be a compact  hypersurface whose components are either closed  or free boundary hypersurfaces. 
	There exists a sequence $\{\psi_i\}_{i=1}^\infty$ of functions $\psi_i\in C^\infty(M)$ with the following properties:
	\begin{itemize}
		\item[$\circ$] $D(g)_{\nu(\Sigma,g)}\psi_i<0$ on $\Sigma$.
		\item[$\circ$] $D(g)_{\nu(\partial M,g)}\psi_i\leq 0$ on  $\partial M$.
		\item[$\circ$] $|\psi_i|+|D(g)\psi_i|_g+|D^2(g)\psi_i|_g=o(1)$ in $M$, as $i\to\infty$.
	\end{itemize}
	Moreover, if $W\subset M$ is a  neighborhood of $\Sigma$, then $\operatorname{spt}(\psi_i)\subset W$ for all but finitely many $i$. 
\end{lem}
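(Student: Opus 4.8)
The plan is to take $\psi_i$ to be a rescaled bump function supported in a neighborhood of $\Sigma$ that shrinks to $\Sigma$ as $i\to\infty$, built from a single defining function for $\Sigma$ that is adapted to $\partial M$.

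The first and only substantive step is to construct a neighborhood $N$ of $\Sigma$ in $M$ and a function $\rho\in C^\infty(N)$ such that $\{\rho=0\}=\Sigma$, such that $D(g)_{\nu(\Sigma,g)}\rho>0$ along $\Sigma$, and such that $D(g)_{\nu(\partial M,g)}\rho=0$ along $N\cap\partial M$. On the part of $N$ at positive distance from $\partial M$ — in particular on a full neighborhood of each closed component of $\Sigma$ — one may simply take $\rho$ to be the signed distance to $\Sigma$, oriented to increase in the direction of $\nu(\Sigma,g)$. Near $\partial\Sigma$ I would instead work in boundary normal coordinates $(y,s)\in\partial M\times[0,\delta_0)$, in which $g=\gamma_s+ds^2$ and $\partial\Sigma=\{y^1=0,\,s=0\}$. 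The hypothesis that $\Sigma$ meets $\partial M$ orthogonally forces $\partial_s$ to be tangent to $\Sigma$ along $\partial\Sigma$, so that locally $\Sigma$ is a graph $y^1=Y(y^2,\dots,y^{n-1},s)$ with $Y=0$ and $\partial_sY=0$ on $\{s=0\}$; in particular $Y=O(s^2)$. Orienting $y^1$ so that $y^1>0$ on the side of $\Sigma$ into which $\nu(\Sigma,g)$ points and setting $\rho=y^1-Y$ on this region, one gets $\{\rho=0\}=\Sigma$, $D(g)_{\nu(\partial M,g)}\rho=\partial_s\rho|_{s=0}=-\partial_sY|_{s=0}=0$, and $D(g)_{\nu(\Sigma,g)}\rho=1+O(s)>0$ along $\Sigma$ for $s$ small. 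Patching these two local constructions with a partition of unity on $N$ — arranged so that $\rho$ equals the boundary-adapted function on a whole neighborhood of $N\cap\partial M$ — produces the desired $\rho$; the only point to check in the patch is that the two local defining functions have the same sign near $\Sigma$, which holds since both increase in the direction of $\nu(\Sigma,g)$.

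With $\rho$ in hand, I would fix a cutoff $\xi\in C^\infty_c(N)$ with $\xi\equiv1$ on a neighborhood of $\Sigma$ and $D(g)_{\nu(\partial M,g)}\xi=0$ near $\partial M$ (both easily arranged), a function $\eta\in C^\infty_c((-1,1))$ with $\eta'(0)>0$ — for instance $\eta(t)=t\,\beta(t)$ for a standard cutoff $\beta$ with $\beta(0)=1$ — and sequences $\lambda_i\searrow0$, $\mu_i\nearrow\infty$ with $\lambda_i\mu_i^{\,2}\to0$, say $\lambda_i=i^{-3}$ and $\mu_i=i$. I would then define
$$\psi_i:=-\lambda_i\,\eta(\mu_i\,\rho)\,\xi$$
on $N$, extended by zero to $M$, which is smooth since $\operatorname{spt}(\eta(\mu_i\,\rho)\,\xi)\Subset N$. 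The remaining verifications are routine: one has $|\psi_i|=O(\lambda_i)$, $|D(g)\psi_i|_g=O(\lambda_i\mu_i)$, and $|D^2(g)\psi_i|_g=O(\lambda_i\mu_i^{\,2})$, all $o(1)$; along $\Sigma$, where $\rho=0$, $\xi\equiv1$, and $D(g)\xi=0$, one computes $D(g)_{\nu(\Sigma,g)}\psi_i=-\lambda_i\mu_i\,\eta'(0)\,D(g)_{\nu(\Sigma,g)}\rho<0$; along $\partial M$, where $D(g)_{\nu(\partial M,g)}\rho=0$ and $D(g)_{\nu(\partial M,g)}\xi=0$, one gets $D(g)_{\nu(\partial M,g)}\psi_i=0\le0$; and $\operatorname{spt}\psi_i\subset\{x\in\operatorname{spt}\xi:|\rho(x)|\le\mu_i^{-1}\}$, a decreasing sequence of compact sets whose intersection is $\Sigma$, so that $\operatorname{spt}\psi_i\subset W$ for all but finitely many $i$ whenever $W$ is a neighborhood of $\Sigma$.

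The main obstacle is the construction of $\rho$: the requirement $\{\rho=0\}=\Sigma$ forces $\rho$ to vary in the boundary normal direction near $\partial\Sigma$, which a priori conflicts with the Neumann-type condition $D(g)_{\nu(\partial M,g)}\rho=0$ on $\partial M$. This conflict is resolved precisely by the free boundary hypothesis, which makes the graph function $Y$ of $\Sigma$ vanish to second order in $s$ along $\partial M$; for the closed components of $\Sigma$ this step is trivial, and everything else is elementary.
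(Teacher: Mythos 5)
Your proof is correct and arrives at the same family of shrinking bump functions supported near $\Sigma$, but by a noticeably different technical device. The paper takes $\psi_i(x)=i^{-3}\big[\alpha(i\,a(x))+\beta(i\,a(x))\,\alpha(i\,b(x))\big]$ where $a$, $b$ are the signed distances to $\Sigma$ and $\partial M$; the Neumann-type inequality on $\partial M$ is then enforced a posteriori by the correction term $\beta(i\,a)\,\alpha(i\,b)$, and the free-boundary hypothesis enters only through the observations that $D_{\nu(\Sigma,g)}b=0$ on $\partial\Sigma$ and $D_{\nu(\partial M,g)}a=0$ on $\partial\Sigma$, hence both are $O(\mathrm{dist}(\cdot,\partial\Sigma))$, which is then absorbed by the scale $1/i$ of the support. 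You instead precondition the defining function: you build $\rho$ with $\{\rho=0\}=\Sigma$, $D_{\nu(\Sigma,g)}\rho>0$ and, crucially, $D_{\nu(\partial M,g)}\rho=0$ along $\partial M$, which makes the Neumann inequality hold identically (in fact with equality) rather than asymptotically, so no correction term is needed. The paper's route avoids the graph-and-partition-of-unity construction of $\rho$ at the cost of a somewhat more elaborate explicit formula and a short quantitative argument on $\partial M$; yours is conceptually cleaner in that it isolates exactly how the free-boundary condition is used (it is what makes the graph function $Y$ vanish to second order in $s$, hence permits a Neumann-adapted defining function to exist at all), at the cost of a bit more elementary but fussy differential geometry. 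One small remark: your cutoff $\xi$ must be chosen with $D(g)_{\nu(\partial M,g)}\xi=0$ on all of $\operatorname{spt}\psi_i\cap\partial M$, not merely at isolated boundary points; taking $\xi$ independent of the Fermi coordinate $s$ near $\partial M$, as you indicate, does achieve this.
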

\begin{proof}
	Let $\alpha\in C^\infty(\mathbb{R})$ be such that
	\begin{itemize}
		\item[$\circ$]  $\alpha(0)=0$,
		\item[$\circ$] $\alpha'(0)=-1$, and
		\item[$\circ$] $\alpha(t)=0$ if $|t|\geq 1$.
	\end{itemize}
	Let $\beta\in C^\infty(\mathbb{R})$ be non-negative such that 
	\begin{itemize}
		\item[$\circ$]  $\beta(t)=1$ if $|t|\leq 1$ and
		\item[$\circ$] $\beta(t)=0$ if $|t|\geq 2$.
		
	\end{itemize}
	Let $a,\,b:M\to\mathbb{R}$ be given by $a(x)=\operatorname{dist}(x,\Sigma,g)$ and $b(x)=\operatorname{dist}(x,\partial M,g)$, respectively. Here, $\operatorname{dist}(\,\cdot\,,\Sigma,g)$ and $\operatorname{dist}(\,\cdot\,,\partial M,g)$ are the signed distance functions that become positive in direction of the respective unit normals.  Given an integer $i\geq 1$, we define $\psi_i:M\to\mathbb{R}$ by
	$$
	\psi_i(x)=i^{-3}\,\big[\alpha(i\,a(x))+\beta(i\,a(x))\,\alpha(i\,b(x))\big].
	$$ 
	Note that $\psi_i$ is smooth provided that $i$ is sufficiently large and  that, as $i\to\infty$, 
	$$
	|\psi_i|+|D(g)\psi_i|_g+|D^2(g)\psi_i|_g=o(1).
	$$
	Moreover, if $W\subset M$ is a neighborhood of $\Sigma$, then $\operatorname{spt}(\psi_i)\subset W$ for all but finitely many $i$.	
	\\ \indent On $\Sigma$, there holds
	$$
i^2\,	D(g)_{\nu(\Sigma,g)}\psi_i= -1+\alpha'(i\,b(x))\,(D(g)_{\nu(\Sigma,g)}b)(x).
	$$
	Using that $\nu(\Sigma,g)(y)\in T_y\partial M$ for every $y\in\partial \Sigma$, we obtain $D(g)_{\nu(\Sigma,g)}b=0$ on $\partial \Sigma$. Consequently, $(D(g)_{\nu(\Sigma,g)}b)(x)=O(b(x))$ on $\Sigma$. Using that $\alpha'(i\,b(x))=0$ if $i\,b(x)\geq 1$, we conclude that
	$$
i^{2}\,	D(g)_{\nu(\Sigma,g)}\psi_i\leq-\frac12,
	$$
	provided that $i$ is sufficiently large. 
	\\ \indent 	
	On $\partial M$, we have
	$$
	i^{2}\,D(g)_{\nu(\partial M,g)}\psi_i=[\alpha'(i\,a(x))\,(D(g)_{\nu(\partial M,g)}a)(x)-\beta(i\,a(x))]
	$$
	If $i\,a(x)\geq 1$, we have
	$$
i^{2}\,	D(g)_{\nu(\partial M,g)}\psi_i=-\,\beta(i\,a(x))\leq 0.
	$$
	If $i\,a(x)<1$, we have, as before, $(D(g)_{\nu(\partial M,g)}a)(x)=O(a(x))$ while $\beta(i\,a(x))=1$. Consequently, 
	$$
i^{2}\,	D(g)_{\nu(\partial M,g)}\psi_i\leq-\frac12
	$$
	provided that $i$ is sufficiently large. \\
	\indent The assertion follows.
\end{proof} 
\begin{lem} Let $(M,g)$ be a Riemannian manifold of dimension $n\geq 3$ with boundary $\partial M$. Let $U\subset M$  open be such that $U\cap \partial M\neq \emptyset$.  There exists a function $\psi\in C^\infty(M)$ with the \label{scalar curvature for mean curvature} following properties. \begin{itemize}
	\item[$\circ$]	$\psi\geq 0$ in $M$ and $\operatorname{spt}(\psi)\subset U$. 
	\item[$\circ$] $\Delta_g\psi \geq 0$ in $M$ and $\Delta_g\psi>0$ at some point.
	\end{itemize}
\end{lem}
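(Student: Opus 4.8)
The plan is to reduce the construction to an explicit local model near a boundary point of $U$. By the maximum principle one cannot produce such a $\psi$ with support compactly contained in $\operatorname{int}(M)$ (a nonnegative subharmonic function attaining an interior maximum is locally constant), so the support of the function we seek must meet $\partial M$ — and this is exactly what the hypothesis $U\cap\partial M\neq\emptyset$ makes available. Accordingly, I fix $p\in U\cap\partial M$ and work in Fermi coordinates $(y,s)\in\mathbb{R}^{n-1}\times[0,\infty)$ centered at $p$, in which $g=ds^{2}+\gamma_s$ for a smooth family $\{\gamma_s\}$ of metrics on $\{|y|_{\bar g}<r_0\}$ and $\{s=0\}$ corresponds to $\partial M$. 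The function will have the form $\psi=G\circ v$, where $v$ is a defining function for a small half-ball of $M$ abutting $\partial M$ inside $U$ and $G$ is a ``flattening'' profile fixed at the very end.

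For $v$ I would take, in these coordinates,
\[
v(y,s)=\varepsilon^{2}-|y|_{\bar g}^{2}-(s+h)^{2},
\]
with $0<h<\varepsilon$ small (e.g.\ $h=\varepsilon/2$). Then $\{v>0\}$ is a half-ball whose closure, for $\varepsilon$ small, lies in the chart and in $U$; it meets $\partial M$ in the nonempty set $\{s=0,\ |y|_{\bar g}^{2}<\varepsilon^{2}-h^{2}\}$; and, crucially, since $g^{ss}\equiv1$ and $g^{si}\equiv0$ in Fermi coordinates, $|\nabla_g v|^{2}_g=(\partial_s v)^{2}+\gamma_s^{ij}\,\partial_i v\,\partial_j v\geq(\partial_s v)^{2}=4(s+h)^{2}\geq4h^{2}$ throughout the chart. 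Hence $m_0^{2}:=\inf_{\overline{\{v>0\}}}|\nabla_g v|^{2}_g\geq4h^{2}>0$, while $M_1:=\sup_{\overline{\{v>0\}}}|\Delta_g v|<\infty$ because $v$ is smooth, and $v$ takes values in $(0,\varepsilon^{2})$ on $\{v>0\}$. Placing the centre of the ball at the ``ghost point'' $s=-h$ below $\partial M$ is precisely what keeps $\nabla_g v$ from vanishing on $\overline{\{v>0\}}$.

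Next I would choose $G\in C^\infty(\mathbb{R})$ with $G\equiv0$ on $(-\infty,0]$, with $G>0$ and $G'>0$ on $(0,\infty)$, and with $G''/G'\geq (M_1/m_0^{2})+1$ on $(0,\varepsilon^{2}]$ — for instance $G(t)=e^{-N/t}$ for $t>0$ and $G(t)=0$ for $t\leq0$, which works once $N$ is large since there $G''/G'=N t^{-2}-2t^{-1}$, whose infimum over $(0,\varepsilon^{2}]$ tends to $+\infty$ as $N\to\infty$. Then $\psi:=G\circ v$ on the chart, extended by $0$, is well defined, smooth (any smooth $G$ vanishing on $(-\infty,0]$ automatically vanishes to infinite order at $0$, so $G\circ v$ glues smoothly with the zero function even though $v$ vanishes only to first order on $\{v=0\}$), nonnegative, and supported in the compact set $\overline{\{v>0\}}\subset U$. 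Finally, on $\{v>0\}$,
\begin{align*}
\Delta_g\psi&=G'(v)\,\Delta_g v+G''(v)\,|\nabla_g v|^{2}_g
=G'(v)\Big(\Delta_g v+\tfrac{G''(v)}{G'(v)}\,|\nabla_g v|^{2}_g\Big)\\
&\geq G'(v)\Big(-M_1+\big(\tfrac{M_1}{m_0^{2}}+1\big)m_0^{2}\Big)=G'(v)\,m_0^{2}>0,
\end{align*}
and $\Delta_g\psi\equiv0$ off $\{v>0\}$; so $\Delta_g\psi\geq0$ in $M$ with $\Delta_g\psi>0$ on the nonempty open set $\{v>0\}$. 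This yields all the asserted properties.

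The main obstacle is the tension between $\Delta_g\psi\geq0$ and $\psi$ being smooth and compactly supported: a sharp cutoff of a positive subharmonic function cannot be $C^{1}$ across the interior part of the boundary of its support (Hopf boundary point lemma), so the correct device is to pass to $\psi=G\circ v$ with $G$ infinitely flat at $0$. The content of the computation is then that the ``convexity gain'' $G''/G'$, which may be taken as large as one wishes on the compact range of $v$, absorbs the possibly negative term $\Delta_g v$ — and this works only because $|\nabla_g v|_g$ is bounded away from $0$ on all of $\overline{\{v>0\}}$, which is the reason for the shift by $h$. Everything else — that $e^{-N/t}$ extends smoothly by $0$, the elementary bound for $G''/G'$, and the smoothness of the glued function — is routine.
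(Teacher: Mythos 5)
Your proof is correct and uses the same essential device as the paper: a sublevel set of a defining function whose ``center'' sits at a ghost point outside $M$ (so the gradient never vanishes on the closed support), composed with an exponentially flat profile whose convexity absorbs the possibly negative Laplacian of the defining function. The packaging differs in a minor but pleasant way. The paper works with the explicit function $f_\varepsilon(x)=e^{-(n+2)\,(2\varepsilon-|x+\varepsilon e_n|_{\bar g})^{-1}}$ in a chart $\Phi_\varepsilon$ with quantitative closeness to the flat metric ($|\Phi_\varepsilon^*g-\bar g|_{\bar g}\le c\,\varepsilon$, $|D(\bar g)\Phi_\varepsilon^*g|_{\bar g}\le c$), and achieves $\Delta_g\psi_\varepsilon\ge0$ by shrinking the scale $\varepsilon$ so that the Euclidean Laplacian of $f_\varepsilon$ dominates all error terms from the metric; the dimensional constant $n+2$ is hard-wired. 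You instead factor $\psi=G\circ v$, compute in Fermi coordinates (where $g^{ss}=1$, $g^{si}=0$ gives the clean lower bound $m_0^2\ge 4h^2$ on $|\nabla_g v|_g^2$ for free), extract geometry-dependent constants $m_0$, $M_1$ once and for all, and absorb $\Delta_g v$ by enlarging the exponent $N$ in $G(t)=e^{-N/t}$ rather than shrinking $\varepsilon$. This decouples the geometric input from the analytic tuning and is dimension-independent, but buys nothing new in generality or strength; both routes deliver the lemma.
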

\begin{proof}
	Note that there is a constant $c>0$ such that the following holds. For every $\varepsilon>0$ sufficiently small, there exists a  map $\Phi_\varepsilon:B^n_{2\,\varepsilon}(0)\cap\mathbb{R}^n_+\to U$ such that
	\begin{figure}\centering
		\includegraphics[width=0.7\linewidth]{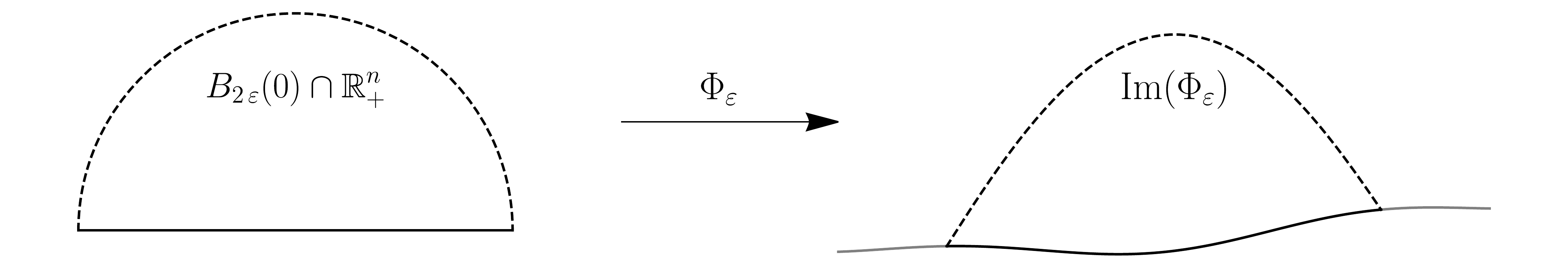}
		\caption{An illustration of the map $\Phi_\varepsilon$. $B^n_{2\,\varepsilon}(0)\cap \partial\mathbb{R}^n_+$ is presented by the solid black line on the left. The map $\Phi_\varepsilon$ maps $B^n_{2\,\varepsilon}(0)\cap \partial\mathbb{R}^n_+$ to a subset of $U\cap \partial M$ which is presented by the solid black line on the right.   $\partial M$ is showed by the solid gray line.  }
		\label{parametrization}
	\end{figure}
	\begin{itemize}
		\item[$\circ$] $\Phi_\varepsilon$ is an embedding,
		\item[$\circ$] $\Phi_\varepsilon^{-1}(U\cap \partial M)=\partial \mathbb{R}^n_+\cap B^n_{2\,\varepsilon}(0)$, 
		\item[$\circ$] $|\Phi_{\varepsilon}^*g-\bar g|_{\bar g}\leq c\,\varepsilon$, and
		\item[$\circ$] $|D(\bar g)\Phi_\varepsilon^*g|_{\bar g}\leq c$;
	\end{itemize}
	see Figure \ref{parametrization}. \\
	\indent 	Let $f_\varepsilon:\mathbb{R}_+^n\to \mathbb{R}$ be given by $$f_\varepsilon(x)=\begin{dcases}
	e^{-(n+2)\,(2\,\varepsilon-|x+\varepsilon\,e_n|_{\bar g})^{-1}}\quad &\text{if }|x+\varepsilon\,e_n|_{\bar g}< 2\,\varepsilon,\\
	0&\text{else}.
	\end{dcases}
	$$
	When $|x+\varepsilon\,e_n|_{\bar g}<2\,\varepsilon$, we compute
	\begin{align*} 
	\Delta_{\bar g} f_\varepsilon=&(n+2)^2\,(2\,\varepsilon-|x+\varepsilon\,e_n|_{\bar g})^{-4}\,f_\varepsilon\\&\qquad -2\,(n+2)\,(2\,\varepsilon-|x+\varepsilon\,e_n|_{\bar g})^{-3}\,f_{\varepsilon}\\&\qquad -(n-1)\,(n+2)\,(2\,\varepsilon-|x+\varepsilon\,e_n|_{\bar g})^{-2}\,|x+\varepsilon\,e_n|^{-1}_{\bar g}\,f_{\varepsilon}.
	\end{align*} 
	Since $|x+\varepsilon\,e_n|_{\bar g}\geq \varepsilon$ on $\mathbb{R}^n_+$, it follows that
	$$
	\Delta_{\bar g} f_\varepsilon\geq (n+2)\,(2\,\varepsilon-|x+\varepsilon\,e_n|_{\bar g})^{-4}\,f_{\varepsilon}.
	$$
	Likewise, 
	$$
	|D({\bar g}) f_\varepsilon|_{\bar g}\leq (n+2)\,\varepsilon^2\,(2\,\varepsilon-|x+\varepsilon\,e_n|_{\bar g})^{-4}\,f_{\varepsilon}$$ and $$
	|D^2(\bar g) f_\varepsilon|_{\bar g}\leq \sqrt{n}\,(n+2)^2\,(2\,\varepsilon-|x+\varepsilon\,e_n|_{\bar g})^{-4}\,f_{\varepsilon}.
	$$
	Let $\psi_\varepsilon: M\to\mathbb{R}$ be given by $$\qquad \psi_\varepsilon(x)=\begin{dcases}&f_\varepsilon(\Phi_\varepsilon^{-1}(x)) \qquad\text{if }x\in\operatorname{Im}(\Phi_\varepsilon),
	\\&0\qquad\qquad\qquad\,\,\text{else}.
	\end{dcases}$$
	Note that $\psi_\varepsilon\in C^\infty(M)$ and $\operatorname{spt}(\psi_\varepsilon)\subset U$. Moreover, 
	$
	\Delta_g \psi_\varepsilon\geq 0$ and $\Delta_g \psi_\varepsilon> 0$ at some point	provided that $\varepsilon>0$ is sufficiently small. 
	\\ \indent
	The assertion follows.
\end{proof}
\begin{lem} \label{scalar curvature balance}
	There exist a function $\rho\in C^\infty(\mathbb{R}^{n-1})$ and a constant $c>0$ with the following properties. 
	\begin{itemize}
		\item[$\circ$] $\rho(x)=0$ for all $x\in\mathbb{R}^{n-1}$ with $|x|_{\bar g}\geq 1$.
\item[$\circ$] $\rho(x)>0$ for all $x\in\mathbb{R}^{n-1}$ with $|x|_{\bar g}< 1$.		
\item[$\circ$]  $|\rho(x)|+|(D(\bar g)\rho)(x)|_{\bar g}+|(D^2( \bar g)\rho)(x)|_{\bar g}\leq c\, (\Delta_{\bar g} \rho)(x)$ for all $x\in\mathbb{R}^{n-1}$ with $1/2<|x|_{\bar g}< 1$.
	\end{itemize}
\end{lem}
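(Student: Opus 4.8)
\emph{Proof proposal.} The plan is to take $\rho$ to be a radial bump function of a carefully chosen profile. Set $\rho(x)=\psi(|x|_{\bar g}^2)$, where $\psi\in C^\infty(\mathbb{R})$ is the cut-off
$$
\psi(t)=\begin{dcases} e^{-b/(1-t)}&\text{if } t<1,\\ 0&\text{if } t\geq 1,\end{dcases}
$$
and $b>0$ is a constant to be taken large in terms of $n$ (for concreteness, $b=2\,(n+1)$). Since $x\mapsto|x|_{\bar g}^2$ is a polynomial and $\psi$ vanishes to infinite order at $t=1$, the function $\rho$ is smooth on $\mathbb{R}^{n-1}$; moreover $\rho(x)=0$ for $|x|_{\bar g}\geq 1$ and $\rho(x)>0$ for $|x|_{\bar g}<1$, so the first two asserted properties hold for any such $b$. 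Note that the third property is only required on the annulus $\{1/2<|x|_{\bar g}<1\}$, so there is no further constraint on $\rho$ near the origin beyond the smoothness and positivity that this choice already supplies.

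For the third property I would restrict to the annulus $\{1/2<|x|_{\bar g}<1\}$ and write $t=|x|_{\bar g}^2\in(1/4,1)$ and $\sigma=1-t\in(0,3/4)$. A direct computation gives $\psi'(t)=-b\,\sigma^{-2}\,e^{-b/\sigma}$ and $\psi''(t)=b\,(b-2\,\sigma)\,\sigma^{-4}\,e^{-b/\sigma}$, while the chain rule applied to $\rho(x)=\psi(|x|_{\bar g}^2)$ yields $D(\bar g)\rho=2\,\psi'(t)\,x$, $D^2(\bar g)\rho=4\,\psi''(t)\,x\otimes x+2\,\psi'(t)\,\bar g$, and $\Delta_{\bar g}\rho=4\,t\,\psi''(t)+2\,(n-1)\,\psi'(t)$. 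Hence
$$
\Delta_{\bar g}\rho=b\,\sigma^{-4}\,e^{-b/\sigma}\,\big[4\,t\,(b-2\,\sigma)-2\,(n-1)\,\sigma^2\big].
$$
Using $t>1/4$ and $\sigma<3/4$, one checks by an elementary estimate that for $b$ large enough in terms of $n$ the bracket is bounded below by $t\,b\geq b/4$, so that $\Delta_{\bar g}\rho\geq\tfrac14\,b^2\,\sigma^{-4}\,e^{-b/\sigma}>0$ on the annulus.

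It then remains to bound $|\rho|$, $|D(\bar g)\rho|_{\bar g}$ and $|D^2(\bar g)\rho|_{\bar g}$ from above by a dimensional multiple of $b^2\,\sigma^{-4}\,e^{-b/\sigma}$. From the formulas above one has $|\rho|=e^{-b/\sigma}$, $|D(\bar g)\rho|_{\bar g}=2\,|x|_{\bar g}\,|\psi'(t)|\leq 2\,b\,\sigma^{-2}\,e^{-b/\sigma}$, and $|D^2(\bar g)\rho|_{\bar g}\leq 4\,|\psi''(t)|+2\,\sqrt{n-1}\,|\psi'(t)|\leq(4+2\,\sqrt{n-1})\,b^2\,\sigma^{-4}\,e^{-b/\sigma}$, where I use $|x|_{\bar g}<1$, $\sigma<1$, $b\geq 1$ and $0<2\,\sigma<b$. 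Since each of these three quantities is therefore at most a dimensional multiple of $b^2\,\sigma^{-4}\,e^{-b/\sigma}$, combining with the lower bound for $\Delta_{\bar g}\rho$ gives $|\rho|+|D(\bar g)\rho|_{\bar g}+|D^2(\bar g)\rho|_{\bar g}\leq c\,\Delta_{\bar g}\rho$ on $\{1/2<|x|_{\bar g}<1\}$ with $c=c(n)$, as required.

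All the computations involved are elementary; the only delicate point—and the analogue of the factor $n+2$ appearing in the exponent of $f_\varepsilon$ in Lemma \ref{scalar curvature for mean curvature}—is the choice of the exponent constant $b$. It must be taken large relative to $n$ so that the positive term $4\,t\,\psi''(t)$ in $\Delta_{\bar g}\rho$ dominates both the negative term $2\,(n-1)\,\psi'(t)$ and the size of $\psi''(t)$ itself, which is exactly what makes $\Delta_{\bar g}\rho$ control the full $2$-jet of $\rho$ as $|x|_{\bar g}\to 1$. I expect this balancing of constants, rather than any conceptual difficulty, to be the main obstacle.
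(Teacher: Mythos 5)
Your proof is correct and takes essentially the same route as the paper: the paper's choice is $\rho(x)=\eta(|x|_{\bar g})$ with $\eta(s)=e^{-(n+1)/(1-s)}$ on $[1/2,1)$, whereas you use $\rho(x)=\psi(|x|_{\bar g}^2)$ with exponent constant $b=2(n+1)$, which is a cosmetic reparametrization of the same exponential profile and leads to the same $\sigma^{-4}e^{-b/\sigma}$ comparison between the $2$-jet of $\rho$ and $\Delta_{\bar g}\rho$ near $|x|_{\bar g}=1$.
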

\begin{proof}
Let $\eta\in C^\infty(\mathbb{R})$ be a function with 
	\begin{itemize}
		\item[$\circ$] $\eta(s)=0$ if $s\geq1$,
		\item[$\circ$] $\eta(s)=e^{-(n+1)\,(1-s)^{-1}}$ if $s\in[1/2,1)$,
			\item[$\circ$] $\eta(s)>0$ if $s\in(1/4,1/2)$, and
			\item[$\circ$] $\eta(s)=1$ if $s\leq1/4$.
	\end{itemize}
By a direct computation as in the proof of Lemma \ref{scalar curvature for mean curvature}, the function $\rho:\mathbb{R}^{n-1}\to\mathbb{R}$ given by $\rho(x)=\eta(|x|_{\bar g})$ satisfies the asserted properties.
\end{proof} 

\section{Asymptotic growth estimate for subharmonic functions}
In this section, we derive an asymptotic growth estimate for subharmonic functions on asymptotically flat half-spaces. The corresponding estimate for subharmonic functions on asymptotically flat manifolds has been stated by J.~Corvino in \cite[p.~164]{Corvino} and proved in detail by S.~Czimek in \cite[Proposition 2.6]{czimek2014static}. We note that the argument in \cite[Proposition 2.6]{czimek2014static} can be adapted to the setting of an asymptotically flat half-space. Below, we give a different, self-contained proof. \\ \indent Note that, in some sense, Lemma \ref{asymptotic growth} is a quantitative version of the Hopf boundary point lemma as stated in, e.g.\,, \cite[Lemma 3.4]{GT}.
\begin{lem} \label{asymptotic growth}
	Let $n\geq 3$ and $g$ be a $C^2$-asymptotically flat metric on $\mathbb{R}^n_+$.
	Suppose that there are a negative function $u\in C^{2,\alpha}_{\tau}(\mathbb{R}^n_+)$ and a number $\lambda_0>1$ such that
	\begin{itemize}
		\item[$\circ$] 	 $\Delta_gu\geq 0$ in $\mathbb{R}^n_+\setminus B^n_{\lambda_0}(0),$
\item[$\circ$]  $\Delta_gu$ is integrable, and
\item[$\circ$] $D(g)_{\nu(\partial\mathbb{R}^n_+,g)}u=0$ on $ \partial\mathbb{R}^n_+\setminus B^n_{\lambda_0}(0)$.
	\end{itemize}
  Then 
	$$
	\lim_{\lambda\to\infty}\lambda^{-1}\,\sum_{i=1}^n\int_{\mathbb{R}^n_+\cap S^{n-1}_\lambda(0)} x^i\,\partial_iu\,\mathrm{d}\mu(\bar g)>0.
	$$
\end{lem}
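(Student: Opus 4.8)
The plan is to combine a sub-mean-value argument with a Pohozaev-type flux identity adapted to the half-space, so that the boundary Neumann condition is absorbed by the reflection symmetry. First I would fix notation: write $B_\lambda = \mathbb{R}^n_+\cap B^n_\lambda(0)$, $S_\lambda = \mathbb{R}^n_+\cap S^{n-1}_\lambda(0)$, and $F(\lambda) = \lambda^{-1}\sum_{i=1}^n\int_{S_\lambda} x^i\,\partial_i u\,\mathrm{d}\mu(\bar g) = \lambda^{-1}\int_{S_\lambda} \langle x, D(\bar g) u\rangle_{\bar g}\,\mathrm{d}\mu(\bar g)$. Observe that, up to an error controlled by the decay $|D(\bar g)u|_{\bar g} = O(|x|_{\bar g}^{-\tau-1})$ and $|g - \bar g|_{\bar g} = O(|x|_{\bar g}^{-\tau})$, one has $F(\lambda) = \lambda^{-1}\int_{S_\lambda}\partial_r u \,\mathrm{d}\mu(\bar g) + O(\lambda^{-\tau})$, where $\partial_r$ is the Euclidean radial derivative; hence it suffices to control the radial flux $\Psi(\lambda) := \int_{S_\lambda}\partial_r u\,\mathrm{d}\mu(\bar g)$ and show $\liminf_{\lambda\to\infty}\lambda^{-1}\,\Psi(\lambda) > 0$, equivalently $\liminf_{\lambda\to\infty}\lambda^{n-2}\,\tfrac{d}{d\lambda}\!\left(\lambda^{-(n-2)}\,\text{(something)}\right) > 0$ — more cleanly, I would track the quantity $\frac{d}{d\lambda}\big(\fint_{S_\lambda} u\big)$, whose sign is governed by the flux.

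The key steps, in order. (1) \emph{Reduce to a flux monotonicity statement.} Using the divergence theorem on the half-annulus $B_\lambda\setminus B_{\lambda_0}$ with the Neumann condition $D(g)_{\nu}u = 0$ on $\partial\mathbb{R}^n_+$, the boundary term on the flat face vanishes (after accounting for the $O(|x|^{-\tau})$ discrepancy between $g$- and $\bar g$-normals, which is integrable against $\Delta_g u$ and decaying, so contributes a convergent correction), leaving $\int_{S_\lambda}\langle\nu,Du\rangle - \int_{S_{\lambda_0}}\langle\nu,Du\rangle = \int_{B_\lambda\setminus B_{\lambda_0}}\Delta_g u\,\mathrm{d}v(g) \ge \int_{B_\lambda\setminus B_{\lambda_0}}\Delta_g u\,\mathrm{d}v(g)$, and since $\Delta_g u \ge 0$ there and $\Delta_g u$ is integrable, $\Psi(\lambda)$ is (essentially) nondecreasing and converges to a limit $\Psi_\infty \in (-\infty,+\infty]$. (2) \emph{Rule out $\Psi_\infty \le 0$ by the strong maximum principle / Hopf boundary lemma.} This is the crucial point. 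I would argue: double $(\mathbb{R}^n_+, g)$ across $\partial\mathbb{R}^n_+$ to get a metric on $\mathbb{R}^n$ that is Lipschitz (and $C^{1,\alpha}$ away from the flat face; the coefficients of the Laplacian are well-behaved because $\partial\mathbb{R}^n_+$ is totally geodesic at infinity in the relevant sense — here one uses that $g$ is asymptotically flat, so the second fundamental form of the face decays) and extend $u$ to an even function $\hat u$ which, by the Neumann condition, is a weak subsolution $\Delta_{\hat g}\hat u \ge 0$ outside a compact set. Then $\hat u < 0$ is a negative subharmonic function on an exterior domain of an asymptotically flat manifold with $\hat u \to 0$ at infinity; by the cited estimate for asymptotically flat manifolds (Corvino \cite{Corvino}, Czimek \cite{czimek2014static}), its radial flux satisfies $\lim_{\lambda\to\infty}\lambda^{-1}\int_{S^{n-1}_\lambda}\langle x, D\hat u\rangle > 0$, and by the reflection symmetry this equals $2F(\lambda) + o(1)$. (3) \emph{Translate back.} Conclude $\liminf_\lambda F(\lambda) > 0$, which is the assertion.

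Alternatively, if one prefers the promised self-contained argument, step (2) is where the real work lies and I would replace the black-box citation with a direct construction: compare $u$ from outside with the barrier $w_\lambda(x) = -a\,|x|_{\bar g}^{2-n}$ on the Euclidean exterior, choosing $a > 0$ so that $w_\lambda \le u$ on some large sphere $S_R$ and $w_\lambda \to 0 = \lim u$ at infinity, then use $\Delta_g w_\lambda \le C|x|^{-n-\tau}$ (small compared to the homogeneity) together with $\Delta_g u \ge 0$ and the comparison principle on the half-annulus (the Neumann condition making the flat face a "free" boundary for the comparison) to get $u \ge w_\lambda$ everywhere outside $S_R$; feeding this into the flux identity of step (1) forces the limiting flux to be bounded below by the (strictly positive) flux of $w_\lambda$, namely $(n-2)\,a\,\omega_{n-1}/2 > 0$. \textbf{The main obstacle} I anticipate is making the comparison-principle / reflection argument on the half-space rigorous across the merely-$C^0$ (doubled) metric near the flat face — ensuring the Neumann boundary term genuinely vanishes and that $\hat u$ is a legitimate weak subsolution there — which requires care with the integrability of $\Delta_g u$ and the decay of the boundary second fundamental form, rather than any deep new idea.
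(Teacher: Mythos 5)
Your step (1) is correct and matches the paper's opening move: the divergence theorem together with $\Delta_g u\geq 0$, the integrability of $\Delta_g u$, and the Neumann condition shows that the flux $z:=\lim_{\lambda\to\infty}\lambda^{-1}\int_{S_\lambda}\langle x, D(\bar g)u\rangle_{\bar g}\,\mathrm{d}\mu(\bar g)$ exists. Your first route for step (2) — doubling across $\partial\mathbb{R}^n_+$ and invoking the corresponding estimate of Corvino/Czimek for asymptotically flat manifolds — is precisely the alternative the paper acknowledges ("the argument in Czimek, Proposition 2.6, can be adapted to the setting of an asymptotically flat half-space") but chooses not to pursue in favor of a self-contained argument. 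The concerns you flag about the doubled metric and the weak-subsolution status of $\hat u$ are the correct ones and would need to be carried out in detail; as written this route is a black box.

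Your second, would-be self-contained route, however, has genuine gaps beyond mere lack of detail. First, the comparison direction is backwards for the conclusion you want. If $u\geq w_\lambda=-a|x|^{2-n}$ pointwise with both functions negative and tending to $0$, then $u-w_\lambda\geq 0$ is an (approximately) harmonic nonnegative function decaying at infinity, so its leading coefficient in the $|x|^{2-n}$ expansion is $\geq 0$ and its flux is $\leq 0$; this yields the \emph{upper} bound $z\leq a(n-2)\omega_{n-1}/2$, not the lower bound you assert. Second, even correcting the sign to aim for $u\leq w_\lambda$, the barrier $w_\lambda=-a|x|_{\bar g}^{2-n}$ is not a valid comparison function: $\Delta_{\bar g}w_\lambda=0$ exactly, so $\Delta_g w_\lambda=O(|x|^{-n-\tau})$ has no definite sign, and $D(\bar g)_{e_n}w_\lambda=0$ on the flat face so $D(g)_\nu w_\lambda$ also has no definite sign. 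Both the interior maximum principle (which requires a strictly superharmonic barrier when $u$ is only subharmonic) and the Hopf boundary point lemma on $\partial\mathbb{R}^n_+$ therefore fail. The paper's barrier $f=-(\log|x|_{\bar g})^{-1}|x|_{\bar g}^{2-n}+\eta(|x|_{\bar g}^{-1}x_n)\,|x|_{\bar g}^{2-n-\tau}$ is engineered precisely to fix both defects: the logarithmic factor makes $\Delta_{\bar g}f\sim -(n-2)(\log|x|)^{-2}|x|^{-n}$, strictly negative and dominating the $O(|x|^{-n-\tau})$ perturbation, while the $\eta$-term supplies a strict sign for $D(g)_\nu f$ on the flat face.

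Finally, and most fundamentally, your outline omits the mechanism that actually produces the strict inequality $z>0$: pointwise control of $u$ (in either direction) does not transfer directly to a flux bound, and indeed the paper's barrier $f$ decays \emph{faster} than $|x|^{2-n}$, so $u<\delta f$ on its own says nothing about $z$. The paper's argument is an ODE contradiction. Setting $w(\lambda)=\lambda^{-1}\int_{S_\lambda}u\,\mathrm{d}\mu(\bar g)$, the flux identity gives $w'=(n-2)\lambda^{-1}w+\lambda^{-1}z+O(\lambda^{(n-3)-2\tau})$; assuming $z\leq 0$, and using $u<\delta f$ to get $w\leq -c(\log\lambda)^{-1}$ so that the first term dominates the error, one obtains the Gr\"onwall inequality $w'\leq\tfrac{n-2}{2}\lambda^{-1}w$ and hence $w(\lambda)\leq -c'\lambda^{(n-2)/2}$, contradicting the a priori decay $|w(\lambda)|=O(\lambda^{(n-2)-\tau})=o(\lambda^{(n-2)/2})$ coming from $u\in C^{2,\alpha}_\tau$ with $\tau>(n-2)/2$. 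This Gr\"onwall step, together with the reduction of $\Delta_g u\geq 0$ to the harmonic case by comparison with the solution of the exterior Neumann problem (Proposition \ref{PDE prop 2}), is what makes the argument close; both are missing from your plan.
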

\begin{proof} We first assume that $\Delta_gu=0$. \\ \indent 
Let $\eta\in C^\infty(\mathbb{R})$ be a non-negative function with $\eta'(0)=1$ and $\eta(s)=0$ if $s\geq 1/2$. Let $f:\mathbb{R}^n_+\setminus\{0\}\to\mathbb{R}$ be given by 
	$$
	f(x)=-(\log|x|_{\bar g})^{-1}\,|x|_{\bar g}^{-(n-2)}+\eta(|x|_{\bar g}^{-1}\,x_n)\,|x|_{\bar g}^{-(n-2)-\tau}.
	$$
	We compute
	$$
	\Delta_{\bar g}f=-(2+(n-2)\,\log|x|_{\bar g})\,(\log|x|_{\bar g})^{-3}\,|x|_{\bar g}^{-n}+O(|x|_{\bar g}^{-n-\tau}).
	$$
	Likewise,
	$$
D(\bar g)f=O((\log|x|_{\bar g})^{-1}\,|x|_{\bar g}^{-(n-1)})\qquad\text{and}\qquad 
	D^2(\bar g)f=O((\log|x|_{\bar g})^{-1}\,|x|_{\bar g}^{-n}).
	$$
	Moreover, on $\partial \mathbb{R}^n_+$, we have
	$$
	D(\bar g)_{e_n}f=-|x|_{\bar g}^{-(n-1)-\tau}.
	$$
	 Increasing $\lambda_0>1$  if necessary, we find that   $\Delta_gf<0$ in $\mathbb{R}^n_+\setminus B^n_{\lambda_0}(0)$ and $D(g)_{\nu(\partial\mathbb{R}^n_+,g)}f<0$ on $\partial\mathbb{R}^n_+\setminus B^n_{\lambda_0}(0)$. Let $\delta>0$ be such that $u<\delta\,f$ on $\mathbb{R}^n_+\cap S^n_{\lambda_0}(0)$. By the maximum principle, for every $\lambda>\lambda_0$,
	 $$
	u< \delta\,(f-\inf\{f(x):x\in\mathbb{R}^n_+\cap S^n_\lambda(0)\})
	 $$
	 in $\mathbb{R}^n_+\cap({B}^n_\lambda(0)\setminus B^n_{\lambda_0}(0))$. Letting $\lambda\to\infty$, we conclude that  	 
	 \begin{equation} \label{max princ est} 
	 u<\delta\, f\qquad\text{in }\mathbb{R}^n_+\setminus B^n_{\lambda_0}(0).
	 \end{equation}  \indent 
	Note that
	\begin{equation*}
	\lambda^{-1}\,\sum_{i=1}^n\,\int_{\mathbb{R}^n_+\cap S^{n-1}_\lambda(0)} x^i\,\partial_iu\,\mathrm{d}\mu(\bar g)=\int_{\mathbb{R}^n_+\cap S^{n-1}_\lambda(0)} D(g)_{\nu(\mathbb{R}^n_+\cap S^{n-1}_\lambda(0),g)}u\,\mathrm{d}\mu(g)+O(\lambda^{(n-2)-2\,\tau}).
	\end{equation*}
	By the divergence theorem,  the limit 
	\begin{equation} \label{integral exists}
	z=\lim_{\lambda\to\infty}\lambda^{-1}\,\sum_{i=1}^n\,\int_{\mathbb{R}^n_+\cap S^{n-1}_\lambda(0)} x^i\,\partial_iu\,\mathrm{d}\mu(\bar g)
	\end{equation}
	exists and 
		\begin{equation} \label{z est}
	\lambda^{-1}\,\sum_{i=1}^n\,\int_{\mathbb{R}^n_+\cap S^{n-1}_\lambda(0)} x^i\,\partial_iu\,\mathrm{d}\mu(\bar g)=z+O(\lambda^{(n-2)-2\,\tau}).
	\end{equation}  \indent 
	Suppose, for a contradiction, that $z\leq 0$. 	Let $w:[\lambda_0,\infty)\to\mathbb{R}$ be given by 
	$$
	w(\lambda)=\lambda^{-1}\,\int_{\mathbb{R}^n_+\cap S^{n-1}_\lambda(0)}u\,\mathrm{d}\mu(\bar g)
	$$
	and note that, using also \eqref{z est},
	$$
	w'=(n-2)\,\lambda^{-1}\,w+\lambda^{-2}\,\sum_{i=1}^n\,\int_{\mathbb{R}^n_+\cap S^{n-1}_\lambda(0)} x^i\,\partial_iu\,\mathrm{d}\mu(\bar g)\leq(n-2)\, \lambda^{-1}\,w+O(\lambda^{(n-3)-2\,\tau}).
	$$
On the one hand, using \eqref{max princ est}, we have 
	$$
	w\leq -\frac12\,\delta\,\omega_{n-1}\,(\log(\lambda))^{-1}
	$$
	and hence $w'\leq(n-2)/2\,\lambda^{-1}\,w$
	for every $\lambda\geq\lambda_0$ provided that $\lambda_0>1$ is sufficiently large. It follows that
	\begin{equation} \label{w upper estimate}
	w(\lambda)\leq \lambda_0^{-(n-2)/2}\,\lambda^{(n-2)/2}\,w(\lambda_0)
	\end{equation}  for all $\lambda>\lambda_0$ provided that $\lambda_0>1$ is sufficiently large. On the other hand, since $u\in C^{2,\alpha}_\tau (\mathbb{R}^n_+)$, we have
	$$
	w(\lambda)\geq -O(\lambda^{(n-2)-\tau}).
	$$
Note that this is not compatible with \eqref{w upper estimate}. This completes the proof in the case where $\Delta_g u=0$ in $\mathbb{R}^n_+\setminus B^n_{\lambda_0}(0)$. \\ \indent
Now, suppose that $\Delta_g u\geq 0$ in $\mathbb{R}^n_+\setminus B^n_{\lambda_0}(0)$ and that $\Delta_gu$ is integrable. By Proposition \ref{PDE prop 2}, there is $v\in C^{2,\alpha}_{\tau}(\mathbb{R}^n_+)$ with
$$
\begin{dcases}\qquad \Delta_gv=0\qquad\qquad\qquad  &\text{in } \mathbb{R}^n_+\setminus B^n_{\lambda_0}(0),\qquad \qquad \qquad \qquad \qquad \\
\qquad D(g)_{\nu(\partial M,g)}v=0&\text{on } \partial \mathbb{R}^n_+\setminus B^n_{\lambda_0}(0),\text{ and} \\
\qquad v=u&\text{on }\mathbb{R}^n_+\cap\partial  B^n_{\lambda_0}(0).
\end{dcases}
$$
By the maximum principle, $u\leq v$ in $\mathbb{R}^n_+\setminus B^n_{\lambda_0}(0)$. In particular,
$$
\lambda^{1-n}\,\int_{\mathbb{R}^n_+\cap S^{n-1}_\lambda(0)}(u-v)\,\text{d}\bar\mu(g)\leq 0
$$
for every $\lambda\geq\lambda_0$. Moreover, using that $u,v\in C^{2,\alpha}_{\tau}(\mathbb{R}^n_+),$ we have
$$
\lim_{\lambda\to\infty}\lambda^{1-n}\,\int_{\mathbb{R}^n_+\cap S^{n-1}_\lambda(0)}(u-v)\,\text{d}\bar\mu(g)= 0.
$$
Consequently,
$$
\limsup_{\lambda\to\infty} \lambda^{-1}\,\sum_{i=1}^n\int_{\mathbb{R}^n_+\cap S^{n-1}_\lambda(0)}x^i\,\partial_i(u-v)\,\mathrm{d}\mu(\bar g)\geq 0.
$$
We have already showed that
$$
\lim_{\lambda\to\infty} \lambda^{-1}\,\sum_{i=1}^n\int_{\mathbb{R}^n_+\cap S^{n-1}_\lambda(0)}x^i\,\partial_iv\,\mathrm{d}\mu(\bar g)>0.
$$ 
Moreover, since $\Delta_g u$ is integrable, the argument that led to \eqref{integral exists} shows that
$$
\lim_{\lambda\to\infty} \lambda^{-1}\,\sum_{i=1}^n\int_{\mathbb{R}^n_+\cap S^{n-1}_\lambda(0)}x^i\,\partial_iu\,\mathrm{d}\mu(\bar g)
$$
exists.
\\ \indent The assertion follows. 
\end{proof} 

\section{Scalar curvature rigidity results} \label{scalar appendix}
In this section, we give an overview of several techniques that have been used to derive scalar curvature rigidity results in mathematical relativity. \\ \indent 
The  proofs given by R.~Schoen and S.-T.~Yau in \cite[Theorem 1]{schoenyautorus} and, independently, by M.~Gromov and H.~Lawson in \cite[Corollary A]{gromovlawson} of the following result are in some sense a precursor to the positive mass theorem, stated here as Theorem \ref{pmt no boundary}.
\begin{thm}[\cite{schoenyautorus,gromovlawson}] Let $n\geq 3$ be an integer and $T^n=S^1\times\cdots\times S^1$ be the torus of dimension $n$. Let $g$ be a Riemannian metric on $T^n$ with $R(g)\geq0$. Then $g$ is flat.  \label{torus rigidity} 
\end{thm}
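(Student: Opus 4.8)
The plan is to reduce the statement, by conformal and first-order deformations of $g$, to two inputs: that $T^n$ carries no metric of positive scalar curvature, and that a Ricci-flat metric on $T^n$ is flat. The first is the substantive input; it is precisely what the minimal hypersurface descent of Schoen--Yau \cite{schoenyautorus} and the index-theoretic argument of Gromov--Lawson \cite{gromovlawson} establish. In the descent one minimises area in the class of $H_{n-1}(T^n;\mathbb{Z})$ Poincar\'e dual to a generator of $H^1(T^n;\mathbb{Z})$, uses the stability inequality together with the Gauss equation to produce a metric of non-negative scalar curvature on a hypersurface that still admits a degree-one map onto $T^{n-1}$, and iterates; for $3\le n\le 7$ the minimiser is a smooth embedded closed hypersurface, and since $T^n$ is spin the Lichnerowicz--Bochner formula yields an alternative argument valid in all dimensions. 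Granting this, I would proceed as follows; this is the minimal-surface-flavoured precursor to Theorem \ref{pmt no boundary} alluded to above.

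First I would show that $R(g)\equiv 0$. The conformal Laplacian $L_g u=-\frac{4(n-1)}{n-2}\Delta_g u+R(g)\,u$ has quadratic form $\int_{T^n}\big(\frac{4(n-1)}{n-2}|\nabla u|_g^2+R(g)\,u^2\big)\,\mathrm{d}v(g)$, which is strictly positive on every non-zero $u\in C^\infty(T^n)$ whenever $R(g)\geq 0$ and $R(g)\not\equiv 0$: non-constant $u$ contribute through the gradient term, and constant $u$ through $\int_{T^n}R(g)\,\mathrm{d}v(g)$. Hence the first eigenvalue $\lambda_1$ of $L_g$ is positive, with positive eigenfunction $u_1$, and by Lemma \ref{conformal change} the metric $u_1^{4/(n-2)}g$ has scalar curvature $\lambda_1\,u_1^{-4/(n-2)}>0$. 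This contradicts the non-existence of positive scalar curvature metrics on $T^n$, so $R(g)\equiv 0$.

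Next I would show that $g$ is Ricci-flat. Suppose not, and consider the family $g_t=g-t\,\operatorname{Ric}(g)$. Since $\operatorname{tr}_g\operatorname{Ric}(g)=R(g)=0$ and $\operatorname{div}_g\operatorname{Ric}(g)=\frac12\,dR(g)=0$ by the contracted Bianchi identity, the linearisation formula for scalar curvature \eqref{kazdanwarner} gives $\frac{d}{dt}\big|_{t=0}R(g_t)=|\operatorname{Ric}(g)|_g^2$, which is non-negative and not identically zero. The first eigenvalue $\lambda_1(L_{g_t})$ is simple, depends smoothly on $t$, and vanishes at $t=0$ with constant first eigenfunction; differentiating the Rayleigh quotient at that eigenfunction shows that $\frac{d}{dt}\big|_{t=0}\lambda_1(L_{g_t})$ is a positive multiple of $\int_{T^n}|\operatorname{Ric}(g)|_g^2\,\mathrm{d}v(g)>0$. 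Hence $\lambda_1(L_{g_t})>0$ for small $t>0$, and as in the previous paragraph this produces a metric of positive scalar curvature on $T^n$, a contradiction. Therefore $\operatorname{Ric}(g)=0$.

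Finally, a Ricci-flat metric on $T^n$ is flat: by the Bochner formula, every harmonic $1$-form on a compact Ricci-flat manifold is parallel, and since $b_1(T^n)=n$ this produces a global parallel coframe, so the curvature tensor vanishes. The only genuinely hard step is the non-existence of positive scalar curvature metrics on $T^n$; once that is granted, the argument is soft, combining the eigenvalue properties of the conformal Laplacian, the first variation of scalar curvature already used in Lemma \ref{perturbation}, and Bochner's theorem.
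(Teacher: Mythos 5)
Your proof is correct and follows precisely the route the paper itself sketches around this theorem: you take the non-existence of positive scalar curvature metrics on $T^n$ (from \cite{schoenyautorus,gromovlawson}) as the essential input, deduce $R(g)\equiv 0$ via positivity of the first eigenvalue of the conformal Laplacian and Lemma~\ref{conformal change}, upgrade to $\operatorname{Ric}(g)=0$ by the Bourguignon--Kazdan--Warner deformation $g_t=g-t\operatorname{Ric}(g)$ together with the linearisation formula \eqref{kazdanwarner} and the same eigenvalue argument, and conclude flatness from Bochner's theorem and $b_1(T^n)=n$. The paper does not supply its own proof but cites the same sources and attributes the Ricci-flatness step to Bourguignon via \cite[Lemma 5.2]{kazdanwarner}, which is exactly the argument you have reconstructed.
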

The proofs in \cite{schoenyautorus,gromovlawson} show that any Riemannian metric $g$ on $T^n$ with $R(g)\geq 0$ must actually satisfy $R(g)=0$. Studying the variation of scalar curvature \eqref{kazdanwarner}, J.~P.~Bourguignon had previously observed that, unless $\operatorname{Ric}(g)=0$, such a metric can be perturbed to a metric of positive scalar curvature; see \cite[Lemma 5.2]{kazdanwarner}.  \\ \indent 
Let $(\tilde M,\tilde g)$ be an asymptotically flat manifold with $R(\tilde g)\geq 0$ and $m(\tilde g)=0$; see Appendix \ref{af appendix}.
The rigidity statement in the positive mass theorem, stated here as Theorem \ref{pmt no boundary}, can be viewed as a generalization of Theorem \ref{torus rigidity} to non-compact spaces.  By constructing a global variation of $\tilde g$, R.~Schoen and S.-T.~Yau have showed that, unless $\operatorname{Ric}(\tilde g)=0$, $\tilde g$ can be perturbed to a metric of non-negative scalar curvature and negative mass; see \cite[\S3]{SchoenYau}.  If $(\tilde M,\tilde g)$ is spin, the alternative argument of E.~Witten implies the existence of certain parallel spinors. The existence of these spinors implies that $(\tilde M,\tilde g)$ is flat; see \cite[\S3]{Witten}. We note that Y.~Shi and L.-F.~Tam have adapted this argument to settings with lower regularity; see \cite[\S3]{ShiTam}. Recently, S.~Lu and P.~Miao \cite[Proposition 2.1]{LuMiao} have extended the rigidity statement in Theorem \ref{pmt no boundary} to metrics with a corner. Their proof uses an argument of D.~McFeron and G.~Sz\'{e}kelyhidi \cite{McFeron} based on the observation that the mass is constant along Ricci flow. \\ \indent 
If $n=3$, the proof of the positive mass theorem in \cite{SchoenYau} suggests that there are no non-compact properly embedded   area-minimizing surfaces in $\tilde M$ unless $(\tilde M,\tilde g)$ is flat $\mathbb{R}^3$. This conjecture of R.~Schoen has been confirmed by O.~Chodosh and the first-named author in \cite[Theorem 1.6]{CCE}. In the proof, they use local perturbations of $\tilde g$ to construct a local foliation of a neighborhood of such a minimal surface by non-compact properly embedded  area-minimizing  surfaces obtained as limits of solutions of the Plateau problem. We remark that related rigidity results that restrict the topology of a horizon boundary are known; see \cite[Corollary 1.4]{IDRR} and the references therein.   \\ \indent 
Let $(\tilde M,\tilde g)$ be asymptotically flat of dimension $3\leq n\leq7$ with horizon boundary $\tilde \Sigma\subset \tilde M$. If $n=3$ and $\tilde \Sigma$ is connected, G.~Huisken and T.~Ilmanen have proved the Riemannian Penrose inequality, stated here as Theorem \ref{RPI no boundary}, by evolving $\tilde \Sigma$ by inverse mean curvature flow to a large coordinate sphere in the asymptotically flat chart. By an explicit calculation, they have showed that the Hawking mass of the evolving horizon is non-decreasing and, in fact, constant if $(\tilde M(\tilde \Sigma),\tilde g)$ is scalar flat and foliated by totally umbilic constant mean curvature spheres. To prove Theorem \ref{RPI no boundary} in the general case where $3\leq n\leq7$ and $\tilde \Sigma$ is possibly disconnected, H.~L.~Bray and D.~A.~Lee have used a conformal flow of the metric $\tilde g$ along which the mass is non-increasing while the area of the horizon boundary remains constant. Their proof shows that the mass is, in fact, constant along this flow if and only if a suitable conformal transformation of the double of $\tilde M(\tilde \Sigma)$ obtained by reflection across $\tilde \Sigma$ has zero mass. If $(\tilde M,\tilde g)$ is spin, the rigidity results in \cite[\S3]{ShiTam} apply to the possibly non-smooth double. As a consequence, $(\tilde M(\tilde \Sigma),\tilde g)$ is isometric to the exterior region of a Schwarzschild space \eqref{Schwarzschild space}. The work of S.~Lu and P.~Miao \cite{LuMiao} shows that the assumption that $(\tilde M,\tilde g)$ be spin is not necessary.\\ \indent 
Finally, in Theorem \ref{RPI rigidity}, we give a short variational proof of rigidity in the Riemannian Penrose inequality that does not require the spin assumption. To this end, we show that if equality holds in \eqref{RPI}, then the double of $(\tilde M(\tilde \Sigma),\tilde g)$ obtained by reflection across $\tilde \Sigma$ is smooth. In fact, we observe that if $\tilde \Sigma$ has non-vanishing second fundamental form, we can locally perturb the metric $\tilde g$ to increase $R(\tilde g)$ without decreasing the area of $\tilde \Sigma$. By a global conformal transformation to zero scalar curvature, we may then decrease $\tilde m(\tilde g)$ without  decreasing the area of $\tilde \Sigma$ by much. 
\end{appendices}

\begin{bibdiv}
	\begin{biblist}
		
		\bib{Almaraz}{article}{
			author={Almaraz, S\'{e}rgio},
			title={Convergence of scalar-flat metrics on manifolds with boundary
				under a {Y}amabe-type flow},
			date={2015},
			ISSN={0022-0396},
			journal={J. Differential Equations},
			volume={259},
			number={7},
			pages={2626\ndash 2694},
			url={https://doi.org/10.1016/j.jde.2015.04.011},
			review={\MR{3360653}},
		}
		
		\bib{ABDL}{article}{
			author={Almaraz, S\'{e}rgio},
			author={Barbosa, Ezequiel},
			author={de~Lima, Levi~Lopes},
			title={A positive mass theorem for asymptotically flat manifolds with a
				non-compact boundary},
			date={2016},
			ISSN={1019-8385},
			journal={Comm. Anal. Geom.},
			volume={24},
			number={4},
			pages={673\ndash 715},
			url={https://doi.org/10.4310/CAG.2016.v24.n4.a1},
			review={\MR{3570413}},
		}
		
		\bib{Luciano}{article}{
			author={Almaraz, S\'{e}rgio},
			author={de~Lima, Levi~Lopes},
			author={Mari, Luciano},
			title={Spacetime positive mass theorems for initial data sets with
				non-compact boundary},
			date={2021},
			ISSN={1073-7928},
			journal={Int. Math. Res. Not. IMRN},
			number={4},
			pages={2783\ndash 2841},
			url={https://doi.org/10.1093/imrn/rnaa226},
			review={\MR{4218338}},
		}
		
		\bib{ADM}{article}{
			author={Arnowitt, Richard},
			author={Deser, Stanley},
			author={Misner, Charles},
			title={Coordinate invariance and energy expressions in general
				relativity},
			date={1961},
			ISSN={0031-899X},
			journal={Phys. Rev. (2)},
			volume={122},
			pages={997\ndash 1006},
			review={\MR{127946}},
		}
		
		\bib{BarbosaMeira}{article}{
			author={Barbosa, Ezequiel},
			author={Meira, Adson},
			title={A positive mass theorem and {P}enrose inequality for graphs with
				noncompact boundary},
			date={2018},
			ISSN={0030-8730},
			journal={Pacific J. Math.},
			volume={294},
			number={2},
			pages={257\ndash 273},
			url={https://doi.org/10.2140/pjm.2018.294.257},
			review={\MR{3770113}},
		}
		
		\bib{Bartnik}{article}{
			author={Bartnik, Robert},
			title={The mass of an asymptotically flat manifold},
			date={1986},
			ISSN={0010-3640},
			journal={Comm. Pure Appl. Math.},
			volume={39},
			number={5},
			pages={661\ndash 693},
			url={https://doi.org/10.1002/cpa.3160390505},
			review={\MR{849427}},
		}
		
		\bib{Bray}{article}{
			author={Bray, Hubert~L.},
			title={Proof of the {R}iemannian {P}enrose inequality using the positive
				mass theorem},
			date={2001},
			ISSN={0022-040X},
			journal={J. Differential Geom.},
			volume={59},
			number={2},
			pages={177\ndash 267},
			url={http://projecteuclid.org/euclid.jdg/1090349428},
			review={\MR{1908823}},
		}
		
		\bib{BrayLee}{article}{
			author={Bray, Hubert~L.},
			author={Lee, Dan~A.},
			title={On the {R}iemannian {P}enrose inequality in dimensions less than
				eight},
			date={2009},
			ISSN={0012-7094},
			journal={Duke Math. J.},
			volume={148},
			number={1},
			pages={81\ndash 106},
			url={https://doi.org/10.1215/00127094-2009-020},
			review={\MR{2515101}},
		}
		
		\bib{BrendleAsian}{article}{
			author={Brendle, Simon},
			title={A generalization of the {Y}amabe flow for manifolds with
				boundary},
			date={2002},
			ISSN={1093-6106},
			journal={Asian J. Math.},
			volume={6},
			number={4},
			pages={625\ndash 644},
			url={https://doi.org/10.4310/AJM.2002.v6.n4.a2},
			review={\MR{1958085}},
		}
		
		\bib{BrendleChen}{article}{
			author={Brendle, Simon},
			author={Chen, Szu-Yu~Sophie},
			title={An existence theorem for the {Y}amabe problem on manifolds with
				boundary},
			date={2014},
			ISSN={1435-9855},
			journal={J. Eur. Math. Soc. (JEMS)},
			volume={16},
			number={5},
			pages={991\ndash 1016},
			url={https://doi.org/10.4171/JEMS/453},
			review={\MR{3210959}},
		}
		
		\bib{CCE}{article}{
			author={Carlotto, Alessandro},
			author={Chodosh, Otis},
			author={Eichmair, Michael},
			title={Effective versions of the positive mass theorem},
			date={2016},
			ISSN={0020-9910},
			journal={Invent. Math.},
			volume={206},
			number={3},
			pages={975\ndash 1016},
			url={https://doi.org/10.1007/s00222-016-0667-3},
			review={\MR{3573977}},
		}
		
		\bib{CarlottoSchoen}{article}{
			author={Carlotto, Alessandro},
			author={Schoen, Richard},
			title={Localizing solutions of the {E}instein constraint equations},
			date={2016},
			ISSN={0020-9910},
			journal={Invent. Math.},
			volume={205},
			number={3},
			pages={559\ndash 615},
			url={https://doi.org/10.1007/s00222-015-0642-4},
			review={\MR{3539922}},
		}
		
		\bib{Corvino}{article}{
			author={Corvino, Justin},
			title={Scalar curvature deformation and a gluing construction for the
				{E}instein constraint equations},
			date={2000},
			ISSN={0010-3616},
			journal={Comm. Math. Phys.},
			volume={214},
			number={1},
			pages={137\ndash 189},
			url={https://doi.org/10.1007/PL00005533},
			review={\MR{1794269}},
		}
		
		\bib{czimek2014static}{thesis}{
			author={Czimek, Stefan},
			title={On the static metric extension problem},
			type={Master's thesis},
			date={2014},
			note={\url{https://www.math.uni-leipzig.de/~czimek/mthesis.pdf}},
		}
		
		\bib{deLima}{article}{
			author={de~Lima, Levi~Lopes},
			title={Conserved quantities in general relativity: the case of initial
				data sets with a noncompact boundary.},
			date={2021},
			journal={arXiv preprint arXiv:2103.06061},
			url={https://arxiv.org/abs/2103.06061},
			note={to appear in Perspectives in Scalar Curvature, edited by M.
				Gromov and H.B. Lawson, Jr. World Scientific, 2022.},
		}
		
		\bib{IDRR}{article}{
			author={Eichmair, Michael},
			author={Galloway, Gregory~J.},
			author={Mendes, Abra\~{a}o},
			title={Initial data rigidity results},
			date={2021},
			ISSN={0010-3616},
			journal={Comm. Math. Phys.},
			volume={386},
			number={1},
			pages={253\ndash 268},
			url={https://doi.org/10.1007/s00220-021-04033-x},
			review={\MR{4287186}},
		}
		
		\bib{Escobar}{article}{
			author={Escobar, Jos\'{e}~F.},
			title={The {Y}amabe problem on manifolds with boundary},
			date={1992},
			ISSN={0022-040X},
			journal={J. Differential Geom.},
			volume={35},
			number={1},
			pages={21\ndash 84},
			url={http://projecteuclid.org/euclid.jdg/1214447805},
			review={\MR{1152225}},
		}
		
		\bib{GT}{book}{
			author={Gilbarg, David},
			author={Trudinger, Neil~S.},
			title={Elliptic partial differential equations of second order},
			edition={Second},
			series={Grundlehren der mathematischen Wissenschaften [Fundamental
				Principles of Mathematical Sciences]},
			publisher={Springer-Verlag, Berlin},
			date={1983},
			volume={224},
			ISBN={3-540-13025-X},
			url={https://doi.org/10.1007/978-3-642-61798-0},
			review={\MR{737190}},
		}
		
		\bib{gromovlawson}{article}{
			author={Gromov, Mikhael},
			author={Lawson, Herbert~Blaine},
			title={Spin and scalar curvature in the presence of a fundamental group.
				{I}},
			date={1980},
			ISSN={0003-486X},
			journal={Ann. of Math. (2)},
			volume={111},
			number={2},
			pages={209\ndash 230},
			url={https://doi.org/10.2307/1971198},
			review={\MR{569070}},
		}
		
		\bib{HuangWu}{article}{
			author={Huang, Lan-Hsuan},
			author={Wu, Damin},
			title={The equality case of the {P}enrose inequality for asymptotically
				flat graphs},
			date={2015},
			ISSN={0002-9947},
			journal={Trans. Amer. Math. Soc.},
			volume={367},
			number={1},
			pages={31\ndash 47},
			url={https://doi.org/10.1090/S0002-9947-2014-06090-X},
			review={\MR{3271252}},
		}
		
		\bib{HI}{article}{
			author={Huisken, Gerhard},
			author={Ilmanen, Tom},
			title={The inverse mean curvature flow and the {R}iemannian {P}enrose
				inequality},
			date={2001},
			ISSN={0022-040X},
			journal={J. Differential Geom.},
			volume={59},
			number={3},
			pages={353\ndash 437},
			url={http://projecteuclid.org/euclid.jdg/1090349447},
			review={\MR{1916951}},
		}
		
		\bib{kazdanwarner}{inproceedings}{
			author={Kazdan, Jerry~L.},
			author={Warner, Frank~W.},
			title={Prescribing curvatures},
			date={1975},
			booktitle={Differential geometry ({P}roc. {S}ympos. {P}ure {M}ath., {V}ol.
				{XXVII}, {S}tanford {U}niv., {S}tanford, {C}alif., 1973), {P}art 2},
			pages={309\ndash 319},
			review={\MR{0394505}},
		}
		
		\bib{thomas_penrose}{article}{
			author={Koerber, Thomas},
			title={The {R}iemannian {P}enrose inequality for asymptotically flat
				manifolds with non-compact boundary},
			date={2019},
			journal={arXiv preprint arXiv:1909.13283},
			url={https://arxiv.org/abs/1909.13283},
			note={to appear in J. Differential Geom.},
		}
		
		\bib{Lam}{book}{
			author={Lam, Mau-Kwong~George},
			title={The {G}raph {C}ases of the {R}iemannian {P}ositive {M}ass and
				{P}enrose {I}nequalities in {A}ll {D}imensions},
			publisher={ProQuest LLC, Ann Arbor, MI},
			date={2011},
			ISBN={978-1124-63054-0},
			url={http://gateway.proquest.com/openurl?url_ver=Z39.88-2004&rft_val_fmt=info:ofi/fmt:kev:mtx:dissertation&res_dat=xri:pqdiss&rft_dat=xri:pqdiss:3454195},
			note={Thesis (Ph.D.)--Duke University},
			review={\MR{2873434}},
		}
		
		\bib{LeeParker}{article}{
			author={Lee, John~M.},
			author={Parker, Thomas~H.},
			title={The {Y}amabe problem},
			date={1987},
			ISSN={0273-0979},
			journal={Bull. Amer. Math. Soc. (N.S.)},
			volume={17},
			number={1},
			pages={37\ndash 91},
			url={https://doi.org/10.1090/S0273-0979-1987-15514-5},
			review={\MR{888880}},
		}
		
		\bib{LuMiao}{article}{
			author={Lu, Siyuan},
			author={Miao, Pengzi},
			title={Rigidity of {R}iemannian {P}enrose inequality with corners and
				its implications},
			date={2021},
			ISSN={0022-1236},
			journal={J. Funct. Anal.},
			volume={281},
			number={10},
			pages={Paper No. 109231, 11},
			url={https://doi.org/10.1016/j.jfa.2021.109231},
			review={\MR{4309448}},
		}
		
		\bib{Marquardt}{article}{
			author={Marquardt, Thomas},
			title={Weak solutions of inverse mean curvature flow for hypersurfaces
				with boundary},
			date={2017},
			ISSN={0075-4102},
			journal={J. Reine Angew. Math.},
			volume={728},
			pages={237\ndash 261},
			url={https://doi.org/10.1515/crelle-2014-0116},
			review={\MR{3668996}},
		}
		
		\bib{MiaoCormick}{article}{
			author={McCormick, Stephen},
			author={Miao, Pengzi},
			title={On a {P}enrose-like inequality in dimensions less than eight},
			date={2019},
			ISSN={1073-7928},
			journal={Int. Math. Res. Not. IMRN},
			number={7},
			pages={2069\ndash 2084},
			url={https://doi.org/10.1093/imrn/rnx181},
			review={\MR{3938317}},
		}
		
		\bib{McFeron}{article}{
			author={McFeron, Donovan},
			author={Sz\'{e}kelyhidi, G\'{a}bor},
			title={On the positive mass theorem for manifolds with corners},
			date={2012},
			ISSN={0010-3616},
			journal={Comm. Math. Phys.},
			volume={313},
			number={2},
			pages={425\ndash 443},
			url={https://doi.org/10.1007/s00220-012-1498-8},
			review={\MR{2942956}},
		}
		
		\bib{meeks1982existence}{article}{
			author={Meeks, William~W., III},
			author={Yau, Shing-Tung},
			title={The existence of embedded minimal surfaces and the problem of
				uniqueness},
			date={1982},
			ISSN={0025-5874},
			journal={Math. Z.},
			volume={179},
			number={2},
			pages={151\ndash 168},
			url={https://doi.org/10.1007/BF01214308},
			review={\MR{645492}},
		}
		
		\bib{Miao}{article}{
			author={Miao, Pengzi},
			title={Positive mass theorem on manifolds admitting corners along a
				hypersurface},
			date={2002},
			ISSN={1095-0761},
			journal={Adv. Theor. Math. Phys.},
			volume={6},
			number={6},
			pages={1163\ndash 1182 (2003)},
			url={https://doi.org/10.4310/ATMP.2002.v6.n6.a4},
			review={\MR{1982695}},
		}
		
		\bib{Penrosenaked}{article}{
			author={Penrose, Roger},
			title={Naked singularities},
			date={1973},
			journal={Annals of the New York Academy of Sciences},
			volume={224},
			number={1},
			pages={125\ndash 134},
			eprint={https://nyaspubs.onlinelibrary.wiley.com/doi/pdf/10.1111/j.1749-6632.1973.tb41447.x},
			url={https://nyaspubs.onlinelibrary.wiley.com/doi/abs/10.1111/j.1749-6632.1973.tb41447.x},
		}
		
		\bib{Raulot}{article}{
			author={Raulot, Simon},
			title={Green functions for the {D}irac operator under local boundary
				conditions and applications},
			date={2011},
			ISSN={0232-704X},
			journal={Ann. Global Anal. Geom.},
			volume={39},
			number={4},
			pages={337\ndash 359},
			url={https://doi.org/10.1007/s10455-010-9236-y},
			review={\MR{2776767}},
		}
		
		\bib{SchoenYamabe}{article}{
			author={Schoen, Richard},
			title={Conformal deformation of a {R}iemannian metric to constant scalar
				curvature},
			date={1984},
			ISSN={0022-040X},
			journal={J. Differential Geom.},
			volume={20},
			number={2},
			pages={479\ndash 495},
			url={http://projecteuclid.org/euclid.jdg/1214439291},
			review={\MR{788292}},
		}
		
		\bib{schoenvariational}{incollection}{
			author={Schoen, Richard},
			title={Variational theory for the total scalar curvature functional for
				{R}iemannian metrics and related topics},
			date={1989},
			booktitle={Topics in calculus of variations ({M}ontecatini {T}erme, 1987)},
			series={Lecture Notes in Math.},
			volume={1365},
			publisher={Springer, Berlin},
			pages={120\ndash 154},
			url={https://doi.org/10.1007/BFb0089180},
			review={\MR{994021}},
		}
		
		\bib{SchoenYau}{article}{
			author={Schoen, Richard},
			author={Yau, Shing-Tung},
			title={On the proof of the positive mass conjecture in general
				relativity},
			date={1979},
			ISSN={0010-3616},
			journal={Comm. Math. Phys.},
			volume={65},
			number={1},
			pages={45\ndash 76},
			url={http://projecteuclid.org/euclid.cmp/1103904790},
			review={\MR{526976}},
		}
		
		\bib{schoenyautorus}{article}{
			author={Schoen, Richard},
			author={Yau, Shing-Tung},
			title={On the structure of manifolds with positive scalar curvature},
			date={1979},
			ISSN={0025-2611},
			journal={Manuscripta Math.},
			volume={28},
			number={1-3},
			pages={159\ndash 183},
			url={https://doi.org/10.1007/BF01647970},
			review={\MR{535700}},
		}
		
		\bib{ShiTam}{article}{
			author={Shi, Yuguang},
			author={Tam, Luen-Fai},
			title={Positive mass theorem and the boundary behaviors of compact
				manifolds with nonnegative scalar curvature},
			date={2002},
			ISSN={0022-040X},
			journal={J. Differential Geom.},
			volume={62},
			number={1},
			pages={79\ndash 125},
			url={http://projecteuclid.org/euclid.jdg/1090425530},
			review={\MR{1987378}},
		}
		
		\bib{volkmann2015free}{thesis}{
			author={Volkmann, Alexander},
			title={Free boundary problems governed by mean curvature},
			type={{P}h{D} thesis},
			date={2015},
			note={\url{https://d-nb.info/1067442340/34}},
		}
		
		\bib{Witten}{article}{
			author={Witten, Edward},
			title={A new proof of the positive energy theorem},
			date={1981},
			ISSN={0010-3616},
			journal={Comm. Math. Phys.},
			volume={80},
			number={3},
			pages={381\ndash 402},
			url={http://projecteuclid.org/euclid.cmp/1103919981},
			review={\MR{626707}},
		}
		
	\end{biblist}
\end{bibdiv}
\end{document}